 \newtheorem{theorem}{Theorem}[section]
 \newtheorem{lemma}[theorem]{Lemma}
 \newtheorem{proposition}[theorem]{Proposition}
 \newtheorem{corollary}[theorem]{Corollary}
 \newtheorem*{theorem*}{Theorem}
\newtheorem*{proposition*}{Proposition}
\newtheorem*{lemma*}{Lemma}
\theoremstyle{definition}
 \newtheorem{definition}[theorem]{Definition}
 \theoremstyle{remark}
 \newtheorem{example}[theorem]{Example}
 \newtheorem{remark}[theorem]{Remark}
   \newtheorem*{claim*}{Claim}
\newcommand{\op}[1]{\operatorname{#1}}
\newcommand{\cf}{\emph{cf}.}
\newcommand{\norm}[1]{\ensuremath{\left\|#1\right\|}}
\newcommand{\NormN}[1]{\left\vvvert #1\right\vvvert_N}
\newcommand{\acou}[2]{\ensuremath{\left\langle #1 , #2 \right\rangle}}
\newcommand{\brak}[1]{\ensuremath{\langle #1\rangle}}
\newcommand{\acoup}[2]{\ensuremath{\left(#1|#2\right)}}
\newcommand{\acoups}[2]{\ensuremath{\left(#1|#2\right)_s}}
\newcommand{\Tr}{\ensuremath{\op{Tr}}}
\newcommand{\Tra}{\ensuremath{\op{Trace}}}
\newcommand{\Trace}{\ensuremath{\op{Trace}}}
\def\XXint#1#2#3{{\setbox0=\hbox{$#1{#2#3}{\int}$}
\vcenter{\hbox{$#2#3$}}\kern-.5\wd0}}
\newcommand{\ind}{\op{ind}}
\newcommand{\C}{\ensuremath{\mathbb{C}}} 
\newcommand{\N}{\ensuremath{\mathbb{N}}} 
\newcommand{\R}{\ensuremath{\mathbb{R}}} 
\newcommand{\T}{\ensuremath{\mathbb{T}}} 
\newcommand{\Z}{\ensuremath{\mathbb{Z}}}
\newcommand{\Rn}{\ensuremath{\R^{n}}}
\newcommand{\Ca}[1]{\ensuremath{\mathscr{#1}}}
\newcommand{\cA}{\mathscr{A}}
\newcommand{\cB}{\Ca{B}}
\newcommand{\cF}{\ensuremath{\mathscr{F}}}
\newcommand{\cH}{\ensuremath{\mathscr{H}}}
\newcommand{\cI}{\ensuremath{\mathscr{I}}}
\newcommand{\cK}{\ensuremath{\mathscr{K}}}
\newcommand{\cL}{\ensuremath{\mathscr{L}}}
\newcommand{\cS}{\ensuremath{\mathscr{S}}}
\newcommand{\cT}{\ensuremath{\mathscr{T}}}
\newcommand{\cU}{\ensuremath{\mathscr{U}}}
\newcommand{\cV}{\ensuremath{\mathscr{V}}}
\newcommand{\psido}{$\Psi$DO} 
\newcommand{\psidos}{$\Psi$DOs}
\newcommand{\stS}{\mathbb{S}}
\def\dba{{\mathchar'26\mkern-12mu d}}
\newcommand{\dbar}{{\, \dba}}
\newcommand{\coker}{\op{coker}}
\newcommand{\ran}{\op{ran}}
\newcommand{\Sp}{\op{Sp}}
\numberwithin{equation}{section}
\begin{document}

\title{Pseudodifferential Calculus on Noncommutative Tori, II. Main Properties}

\author{Hyunsu Ha}
 \address{Department of Mathematical Sciences, Seoul National University, Seoul, South Korea}
 \email{mamaps@snu.ac.kr}

 \author{Gihyun Lee}
 \address{Department of Mathematical Sciences, Seoul National University, Seoul, South Korea}
 \email{gihyun.math@gmail.com}

\author{Rapha\"el Ponge}
 \address{School of Mathematics, Sichuan University, Chengdu, China}
 \email{ponge.math@icloud.com}

 \thanks{The research for this article was partially supported by  
  NRF grants 2013R1A1A2008802 and 2016R1D1A1B01015971 (South Korea).}
 
\keywords{Noncommutative geometry, noncommutative tori, pseudodifferential operators}
 
\subjclass[2010]{58B34, 58J40}
 
\begin{abstract}
This paper is the 2nd part of a two-paper series whose aim is to give a detailed description of Connes' pseudodifferential calculus on noncommutative $n$-tori, $n\geq 2$.  
We make use of the tools introduced in the 1st part to deal with the main properties of pseudodifferential operators on noncommutative tori of any dimension~$n\geq 2$. This includes the main results mentioned in~\cite{Ba:CRAS88, Co:CRAS80, CT:Baltimore11}. We also obtain further results regarding action on Sobolev spaces, spectral theory of elliptic operators, and Schatten-class properties of pseudodifferential operators of negative order, including a trace-formula for pseudodifferential operators of order~$<-n$. 
\end{abstract}

\maketitle 

\section{Introduction}

This paper is the 2nd part of a two-paper series whose aim is to give a thorough account on the pseudodifferential calculus on noncommutative tori of Connes~\cite{Co:CRAS80} (see also~\cite{Ba:CRAS88}). Following the seminal paper of Connes-Tretkoff~\cite{CT:Baltimore11}, this  pseudodifferential calculus has been used in numerous recent papers 
(see~\cite{BM:LMP12, CF:arXiv16, CM:JAMS14, DS:JMP13, DS:SIGMA15, DGK:arXiv18, FGK:MPAG17, Fa:JMP15, FG:SIGMA16, FK:JNCG12,  FK:JNCG13, FK:LMP13, FK:JNCG15, FW:JPDOA11, FGK:arXiv16, KM:JGP14,  KS:arXiv17,  LM:GAFA16, LNP:TAMS16, Liu:JGP17, Liu:arXiv18a,Liu:arXiv18b, Si:JPDOA14}). However, a detailed description of this calculus is still missing. 

We consider $n$-dimensional tori associated with anti-symmetric real $n\times n$-matrices $\theta=(\theta_{jl})$, with $n\geq 2$. The datum of such a matrix gives rise to a $C^*$-algebra $A_\theta$ generated by unitaries $U_1,\ldots, U_n$ satisfying the relations, 
\begin{equation*}
 U_lU_j=e^{2i\pi \theta_{jl}} U_jU_l, \qquad j,l=1,\ldots, n. 
\end{equation*}
A dense spanning linearly independent set is provided by the unitaries, 
\begin{equation*}
 U^k=U_1^{k_1}\cdots U_n^{k_n}, \qquad k=(k_1,\ldots, k_n)\in \Z^n.
\end{equation*}
We may think of series $\sum u_k U^k$, $u_k\in \C$,  as analogues of the Fourier series on the ordinary $n$-torus 
$\T^n=\R^n\slash 2\pi \Z^n$. The analogue of the integral is provided by the normalized state $\tau:A_\theta \rightarrow \C$ such that $\tau(1)=1$ and $\tau(U^k)=0$ for $k\neq 0$. The associated GNS representation provides us with a unital $*$-representation of $A_\theta$ into a Hilbert space $\cH_\theta$, which is the analogue of the Hilbert space $L^2(\T^n)$ (see Section~\ref{section:NCtori}). In particular, for $\theta=0$ we recover the representation of continuous functions on $\T^n$ by multiplication operators on $L^2(\T^n)$. 

There is a natural periodic $C^*$-action $(s,u)\rightarrow \alpha_s(u)$ of $\R^n$ on $A_\theta$, so that we obtain a  $C^*$-dynamical system. We are interested in the dense subalgebra $\cA_\theta$ of smooth elements of this action. These are elements of the form $u=\sum_{k\in \Z^n} u_k U^k$, where the sequence $(u_k)_{k\in \Z^n}$ has rapid decay (see Section~\ref{section:NCtori}). When $\theta=0$ we recover the algebra of smooth functions on $\T^n$.  In general, $\cA_\theta$ is a Fr\'echet $*$-algebra which is stable under holomorphic functional calculus.  In addition, the action of $\R^n$ induces a smooth action on $\cA_\theta$ which is infinitesimally generated  by
the derivations $\delta_1, \ldots, \delta_n$ such that $\delta_j(U_k)=\delta_{jk}U_k$, $j,k=1,\ldots, n$. 
They are the analogues of the partial derivatives $\frac{1}{i} \partial_{x_1},\ldots, \frac{1}{i} \partial_{x_n}$ on  $\T^n$. Differential operators on $\cA_\theta$ are then defined as operators of the form $\sum_{|\alpha|\leq N} a_\alpha \delta^\alpha$, $a_\alpha \in \cA_\theta$, where $\delta^\alpha=\delta_1^{\alpha_1} \cdots \delta_n^{\alpha_n}$ (see~\cite{Co:CRAS80, Co:NCG}). For instance, the Laplacian $\Delta=\delta_1^2+\cdots +\delta_n^2$ is such an operator. 

In the 1st part~\cite{HLP:Part1} (referred thereafter as Part~I) we have introduced an oscillating integral for $\cA_\theta$-amplitudes and have used it to give a precise explanation of the definition of pseudodifferential operators (\psidos) on $\cA_\theta$. In this paper we use the tools introduced in Part~I to deal with the main properties of \psidos\ on noncommutative tori. This includes the main results mentioned in~\cite{Ba:CRAS88, Co:CRAS80, CT:Baltimore11}, along with some further results regarding action on Sobolev spaces, spectral theory of elliptic operators, and Schatten-class properties of \psidos\ of negative order.  

The oscillating integrals considered in Part~I are of the form, 
\begin{equation*}
 I(a)=(2\pi)^{-n} \iint e^{is\cdot \xi} a(s,\xi) ds d\xi,
\end{equation*}
 where $a(s,\xi)$ is an $\cA_\theta$-valued amplitude and the right-hand side is defined by means of suitable integration by parts (see Part~I and Section~\ref{section:Amplitudes}). An $\cA_\theta$-valued amplitude is a smooth map $a:\R^n\times \R^n\rightarrow \cA_\theta$ such that there is some $m\in \R$ so that all the derivatives $\delta^\alpha \partial_s^\beta \partial_\xi^\gamma a(s,\xi)$ are $\op{O}((|s|+|\xi|)^m)$ at infinity (see Section~\ref{section:Amplitudes} for the precise definition).  
 
 Following~\cite{Co:CRAS80} a \psido\ is a linear operator $P:\cA_\theta \rightarrow \cA_\theta$ of the form, 
 \begin{equation}
 Pu = (2\pi)^{-n} \iint e^{is\cdot \xi} \rho(\xi) \alpha_{-s}(u) ds d\xi, \qquad u\in \cA_\theta, 
 \label{eq:Intro.integral-Pu}
\end{equation}
where $\rho:\R^n \rightarrow \cA_\theta$ is a symbol. This means that there is $m\in \R$ such that every partial derivative 
$\delta^\alpha \partial_\xi^\beta \rho(\xi)$ is $\op{O}(|\xi|^{m-|\beta|})$ at infinity (see Section~\ref{section:Symbols} for the precise definition). We consider standard symbols and classical symbols. The latter admits an expansion $\rho(\xi) \sim \sum_{j \geq 0} \rho_{q-j}(\xi)$, where $q\in \C$ and $\rho_{q-j}(\xi)$ is homogeneous of degree $q-j$ (see Section~\ref{section:Symbols}). 

As $\rho(\xi) \alpha_{-s}(u)$ is an amplitude the right-hand side makes sense as an oscillating integral. Differential operators are pseudodifferential operator associated with polynomial symbols. For instance the Laplacian $\Delta$ above is associated with the symbol $|\xi|^2=\xi_1^2+\cdots +\xi_n^2$.  More generally, given any amplitude $a(s,\xi)$ and $u\in \cA_\theta$, the map $(s,\xi)\rightarrow a(s,\xi)\alpha_{-s}(u)$ is an amplitude as well, and so we can associate with any amplitude a \psido\  by means of the formula~(\ref{eq:Intro.integral-Pu}) (see Part~I and Section~\ref{sec:PsiDOs}). 

Two important properties of \psidos\ in~\cite{Ba:CRAS88, Co:CRAS80} concern the stability of the pseudodifferential calculus under composition and taking adjoints of 
\psidos\ together with explicit formulas for the corresponding symbols. We derive these properties from the construction of a product $\sharp$ for amplitudes such that, given any amplitudes $a_1(s,\xi)$ and $a_2(s,\xi)$, we have 
\begin{equation}
 P_{a_1\sharp a_2}=P_{a_1} P_{a_2}, 
 \label{eq:Intro.composition}
\end{equation}
where $P_{a_1}$ (resp., $P_{a_2}$, $P_{a_1\sharp a_2}$) is the \psido\ associated with the amplitude $a_1(s,\xi)$ (resp., $a_2(s,\xi)$, $a_1\sharp a_2(s,\xi)$) (see Proposition~\ref{prop:Composition-Amp.amplitudes-composition}). When $a_1(s,\xi)$ and $a_2(s,\xi)$ are compactly supported we have
\begin{equation*}
 a_1\sharp a_2(s,\xi):=(2\pi)^{-n} \iint e^{it\cdot\eta} a_1(t,\eta+\xi)\alpha_{-t}[a_2(s-t,\xi)]dt d\eta. 
\end{equation*}
For general amplitudes the integral makes sense as an oscillating integral. We refer to Section~\ref{sec:Composition-Amplitudes} for the construction of this product and its main properties. In particular, this defines a continuous bilinear map on amplitudes (Proposition~\ref{prop:Composition-Amp.sharp-bilinear-conti}) and this product is associative (Proposition~\ref{prop:Composition-Amp.associativity}). 

When $\rho_1(\xi)$ is a symbol of order $m_1$ and $\rho_2(\xi)$ is a symbol of order $m_2$ it can be shown that $\rho_1\sharp \rho_2$ gives rise to a symbol of order $m_1+m_2$, so that the corresponding composition $P_{\rho_1}P_{\rho_2}$ is a \psido\ (see Proposition~\ref{prop:Composition-Sym.sharp-property}). Furthermore, by using the properties of the product $\sharp$ for amplitudes we obtain the asymptotic expansion, 
\begin{equation}
 \rho_1 \sharp \rho_2(\xi) \sim \sum_{\alpha} \frac{1}{\alpha!} \partial_\xi^\alpha \rho_1(\xi) \delta^\alpha \rho_2(\xi). 
 \label{eq:Intro.symbol-product}
\end{equation}
This is the asymptotic expansion of~\cite{Ba:CRAS88}. We further show that the remainder terms of the asymptotic expansion give rise to continuous bilinear maps on the spaces of symbols (see Proposition~\ref{prop:Composition-Symb.continuity-RN}). This result is important for dealing with \psidos\ with parameters and holomorphic families of \psidos\ in connection with the derivation of heat trace asymptotics and the construction of complex powers and zeta functions of elliptic operators. 

To deal with the formal adjoints of \psidos\ we first look at the case of a \psido\ $P$ associated with an amplitude $a(s,\xi)$. Thanks to the properties of the oscillating integral there are no difficulties to show that $P$ admits a formal adjoint which is precisely the \psido\ associated with the amplitude 
$a^\dagger(s,\xi):=\alpha_{-s}[a(-s,\xi)^*]$ (Proposition~\ref{prop:Adjoints.formal-adjoint}). Specializing this to a symbol $\rho(\xi)$, the amplitude $\rho^\dagger(s,\xi)$ is not  a symbol, since it depends on the variable $s$. Nevertheless, we observe that the formula~(\ref{eq:Intro.composition}) for the constant amplitude $a_2(s,\xi)=1$ shows that $\rho^\dagger(s,\xi)$ defines the same \psido\ as the symbol, 
\begin{equation*}
 \rho^\star(\xi):= \rho^\dagger \sharp 1(\xi) =(2\pi)^{-n}\iint e^{it\cdot \eta} \alpha_{-t}\left[\rho(\xi+\eta)^*\right]dtd\eta. 
 \end{equation*}
 Furthermore, this symbol satisfies the asymptotic expansion, 
 \begin{equation*}
 \rho^\star(\xi)\sim\sum_\alpha\frac{1}{\alpha !}\delta^\alpha\partial_\xi^\alpha \left[\rho(\xi)^*\right] .
\end{equation*}
This is the asymptotic expansion of~\cite{Ba:CRAS88, CT:Baltimore11}. Therefore, the formal adjoint of a classical \psido\ is a classical \psido. In the same way as with \psidos\ on ordinary manifolds, this allows us to show that every \psido\ uniquely extends to a continuous linear operators on the  strong dual $\cA_\theta'$ (Proposition~\ref{Adjoints.PsiDOs-extension}). In particular, this provides us with a convenient framework for studying PDEs on noncommutative tori. 

Sobolev spaces on noncommutative tori were introduced by Spera~\cite{Sp:Padova92} and have been considered by several other authors~\cite{GK:JAMS04, Lu:CM06, Po:DocMath04, Po:PJM06, Ro:APDE08, XXY:MAMS18}. They are associated with the one-parameter group $\Lambda^s=(1+\Delta)^{\frac{s}2}$, $s\in \R$, where 
$\Delta$ is the Laplacian of $\cA_\theta$ mentioned above. Each operator $\Lambda^s$ is a \psido\ of order~$s$, and hence extends to a continuous endomorphism on $\cA_\theta'$. The Sobolev space $\cH_\theta^{(s)}$ then consists of $u\in\cA_\theta'$ such that $\Lambda^su\in \cH_\theta$. Thus these spaces are the analogues of the Sobolev spaces $W^{s,2}(\T^n)$ on the ordinary torus $\T^n$. For $s=0$ we recover the Hilbert space $\cH_\theta$. 
Each Sobolev space $\cH_\theta^{(s)}$ has a natural structure of a Hilbert space and has a natural characterization in terms of Fourier series (see~\cite{Sp:Padova92} and Section~\ref{section:Sobolev}). 

The Sobolev spaces satisfy the same type of properties as the Sobolev spaces of closed manifolds. For $s<s'$ the inclusion of $\cH_\theta^{(s')}$ into $\cH_\theta^{(s)}$ is compact (see~\cite{Sp:Padova92} and Section~\ref{section:Sobolev}). The duality between $\cA_\theta$ and $\cA_\theta'$ gives rise to a duality between $\cH_\theta^{(-s)}$ and $\cH_\theta^{(s)}$ (see Proposition~\ref{prop:Sobolev.Hs-duality}). In addition, the Sobolev spaces provide us with a natural topological vector space scale interpolating between $\cA_\theta$ and $\cA_\theta'$, in the sense that as a locally convex space $\cA_\theta$ (resp., $\cA_\theta'$) is the projective (resp., inductive) limit of the Hilbert spaces $\cH_\theta^{(s)}$ (see Proposition~\ref{prop:Sobolev.Hs-inclusion-cAtheta}). As consequences we obtain natural characterizations in terms of actions on Sobolev spaces of endomorphisms on $\cA_\theta$ (Corollary~\ref{cor:Sobolev.op-cAtheta}) and smoothing operators (Corollary~\ref{cor:Soboloev.smoothing-condition}).  

It was mentioned in~\cite{Ba:CRAS88, Co:CRAS80} that zeroth order \psidos\ gives rise to bounded operators on $\cH_\theta$. We give a proof of this result that takes advantage of the Fourier series decomposition on $\cH_\theta$ (see Section~\ref{sec:Boundedness}). 
This proof seems to be new. More generally, we show that  if $P$ is a \psido\ of order $m$, then $P$ gives rise to a bounded operator $P:\cH_\theta^{(s+m)}\rightarrow \cH_\theta^{(s)}$ for every $s\in \R$ (Proposition~\ref{prop:Sob-Mapping.rho-on-Hs}). 
In particular, \psidos\ of negative order gives rise to compact operators on $\cH_\theta$.

A (classical) \psido\ is called elliptic when its principal symbol is pointwise invertible. In fact, the inverse is a smooth map,  and hence this is a symbol (see Section~\ref{sec:Ellipticity}). Combining this with the asymptotic expansion~(\ref{eq:Intro.symbol-product}) allows us to carry out the standard parametrix construction. A (classical) \psido\ admits a \psido\ parametrix if and only if it is elliptic. Furthermore, in this case we have an explicit algorithm to compute the homogeneous components of the symbol of any parametrix (Proposition~\ref{prop:Elliptic.existence-parametrice}). In particular, the principal symbol of any parametrix is the inverse of the principal symbol of the original operator. As a consequence,  the inverse of an elliptic \psido\ is always a \psido\ (see Corollary~\ref{cor:Elliptic.inverse-PsiDO}). 

In the same way as with \psidos\ on ordinary closed manifolds, by combining the aforementioned parametrix construction with the Sobolev mapping properties of \psidos\ allows us to get a version of the elliptic regularity theorem for \psidos\ on noncommutative tori (Proposition~\ref{prop:Elliptic.regularity}). Namely, if $P$ is elliptic and we denote by $m$ the (real part of the) order of $P$, then we have 
\begin{equation*}
 Pu \in \cH_\theta^{(s)}  \Longleftrightarrow u\in \cH_\theta^{(s+m)}. 
\end{equation*}
In particular, we see that $Pu\in \cA_\theta$ if and only if $u\in \cA_\theta$.  
As a consequence, for every $\lambda\in \C$, the space of solutions of the equation $(P-\lambda)u=0$ is contained in $\cA_\theta$ (Corollary~\ref{cor:Elliptic.P-lambda-hypoell}). 

We also look at the spectral properties of elliptic (classical) \psidos. If $P$ is an elliptic (classical) \psido\ of order $q$ , then it gives rise to a Fredholm operator $P:\cH^{(s+m)}_\theta \rightarrow \cH_\theta^{(s)}$ for every $s\in \R$ (where $m=\Re(q)$). Moreover, the Fredholm index is independent of $s$ and is a homotopy invariant of its principal symbol (Proposition~\ref{prop:Spectrum.Fredholm}). 
If $P$ has positive order, then it gives rise to a closed unbounded operator on $\cH_\theta$ with domain $\cH_\theta^{(m)}$ which is selfadjoint (resp., normal) when $P$ agrees (resp., commutes) with its formal adjoint (Proposition~\ref{prop:Spectrum.closedness}). This allows us to define its spectrum. 
As in the setting of ordinary closed manifolds, we have the following alternative for this spectrum: this is either all $\C$ or a discrete set consisting of isolated eigenvalues with finite multiplicity (Proposition~\ref{prop:Spectral.spectrum-P}). In particular, we are in the first situation when the Fredholm index of $P$ is non-zero. 
When $P$ is normal we further obtain the existence of an orthonormal eigenbasis consisting of elements of $\cA_\theta$ (see Proposition~\ref{prop:spectrum.normal}). 

Finally, we look at the Schatten-class properties of \psidos\ of negative order. We refer to Section~\ref{sec:Schatten}, and the references therein, for background on the Schatten classes 
$\cL^p$ and the weak Schatten classes $\cL^{(p,\infty)}$, $p\geq 1$. They are important examples of Banach ideals in $\cL(\cH_\theta)$. 
If $P$ has order $m\in [-n,0)$, then $P$ is in the class $\cL^{(p,\infty)}$, where $p=n|m|^{-1}$ (see Proposition~\ref{prop:Trace.Classifying-Prho}), and so $P\in \cL^q$ for all $q>p$. In particular, when $m=-n$ we see that $P$ is in the Dixmier ideal $\cL^{(1,\infty)}$. This was shown in~\cite{FK:LMP13} for noncommutative 2-tori. If $P$ has order~$<-n$, then $P$ is trace-class and its trace is given by
\begin{equation}
 \Tra (P) = \sum_{k\in \Z^n} \tau\left[ \rho(k)\right],
 \label{eq:intro.trace-formula}
\end{equation}
where $\rho(\xi)$ is the symbol of $P$ and $\tau$ is the normalized trace introduced above (see Proposition~\ref{prop:Schatten.trace-class}). As it turns out, some authors claimed an integral trace formula, 
\begin{equation*}
 \Tra(P)= \int \tau \big[ \rho(\xi)\big] d\xi. 
\end{equation*}
Obviously, this is not consistent with~(\ref{eq:intro.trace-formula}). However, as we explain we still have an integral trace formula provided the symbol is chosen suitably (see Proposition~\ref{prop:trace.integral-formula} for the precise statement).

Although this paper and Part~I focus exclusively on noncommutative tori, most of the results of the papers continue hold for $C^*$-dynamical systems associated with the action of $\R^n$ on a unital $C^*$-algebras. In fact, all the results that that do not rely on Fourier series decompositions hold \emph{verbatim} in this general setting. 

There are various approaches to noncommutative tori. Accordingly, there are various types of pseudodifferential calculi depending on the perspective on noncommutative tori. In this paper and Part~I we regard noncommutative tori as $C^*$-dynamical systems, which is natural from the point of view of noncommutative geometry. 
We refer to Part~I for the equivalence of our class of \psidos\ of this paper with the toroidal \psidos\ of~\cite{LNP:TAMS16}. A pseudodifferential calculus for Heisenberg modules over noncommutative tori is given in~\cite{LM:GAFA16}. Noncommutative tori can also be interpreted as twisted crossed-products (see, e.g., \cite{Ri:CM90}). We refer to~\cite{BLM:JOT13, DL:RMP11, LMR:RIMS10, MPR:Bucharest05} for pseudodifferential calculi for twisted crossed-products with coefficients in $C^*$-algebras.  Another approach is to look at pseudodifferential operators on noncommutative tori as periodic pseudodifferential operators on noncommutative Euclidean spaces (see~\cite{GJP:MAMS17}). We also refer to~\cite{Liu:JNCG18} for an asymptotic pseudodifferential calculus on noncommutative toric manifolds. 

This paper is organized as follows. In Section~\ref{section:NCtori} we review the main background on noncommutative tori. In Section~\ref{section:Symbols}, we review the classes of symbols on noncommutative tori that are used in this paper. 
In Section~\ref{section:Amplitudes}, we recall the construction and the main properties of the oscillating integral for 
$\cA_\theta$-valued amplitudes. 
In Section~\ref{sec:PsiDOs}, we recall the definition of \psidos\ associated with symbols and amplitudes and some of their properties established in Part~I. In Section~\ref{sec:Composition-Amplitudes}, we construct the product~$\sharp$ for amplitudes. 
In Section~\ref{sec:Composition-Symbols}, we look at the composition of \psidos\ associated with symbols. In Section~\ref{sec:Adjoints}, we deal with the formal adjoints of \psidos\ and their action on $\cA_\theta'$. 
In Section~\ref{section:Sobolev}, we recall the definition and main properties of the Sobolev spaces on noncommutative tori. In Section~\ref{sec:Boundedness}, we study the boundedness and Sobolev  mapping properties of \psidos\ on noncommutative tori.  
In Section~\ref{sec:Ellipticity}, we explicitly construct parametrices of (classical) elliptic \psidos\ and obtain a version of the elliptic regularity theorem for such operators.  In Section~\ref{sec:Spectrum}, we look at the spectral properties of elliptic \psidos\ on noncommutative tori. 
In Section~\ref{sec:Schatten}, we study the trace-class and Schatten-classes properties of \psidos\ on noncommutative tori. Finally, in Appendix~\ref{Appendix:Sobolev} we include proofs of some of the properties of Sobolev spaces stated in Section~\ref{section:Sobolev}. 

Jim Tao~\cite{Ta:JPCS18} has independently announced a detailed account on Connes' pseudodifferential calculus on noncommutative tori. The full details of his approach were not available at the time of completion of our paper series. 

This paper is part of the PhD dissertations of the first two named authors under the guidance of the third named author at Seoul National University (South Korea). 

\subsection*{Acknowledgements}
The authors would like to thank Edward McDonald, Hanfeng Li, Masoud Khalkhali, Max Lein, Franz Luef, Henri Moscovici, Javier Parcet, Fedor Sukochev, Jim Tao, Xiao Xiong, and Dmitriy Zanin for for useful discussions related to the subject matter of this paper. R.P.\ also wishes to thank McGill University (Montr\'eal, Canada) for its hospitality during the preparation of this paper.  

\section{Noncommutative Tori} \label{section:NCtori}
In this section, we survey the main definitions and properties of noncommutative $n$-tori, $n\geq 2$. We refer to Part~I, and the references therein (including~\cite{Co:NCG, Ri:PJM81, Ri:CM90}), for a more comprehensive account. 

 \subsection{The Noncommutative torus $A_\theta$}  
Throughout this paper, we let $\theta =(\theta_{jk})$ be a real anti-symmetric $n\times n$-matrix ($n\geq 2$). We denote by $\theta_1, \ldots, \theta_n$ its column vectors. In what follows we also let $\T^n=\R^n\slash 2\pi \Z^n$ be the ordinary $n$-torus, and we equip $L^2(\T^n)$ with the  inner product, 
\begin{equation} \label{eq:NCtori.innerproduct-L2}
 \acoup{\xi}{\eta}= \int_{\T^n} \xi(x)\overline{\eta(x)}\dbar x, \qquad \xi, \eta \in L^2(\T^n), 
\end{equation}
 where we have set $\dbar x= (2\pi)^{-n} dx$.  For $j=1,\ldots, n$ let $U_j:L^2(\T^n)\rightarrow L^2(\T^n)$ be the unitary operator given by 
 \begin{equation*}
 \left( U_j\xi\right)(x)= e^{ix_j} \xi\left( x+\pi \theta_j\right), \qquad \xi \in L^2(\T^n). 
\end{equation*}
 We then have the relations, 
 \begin{equation} \label{eq:NCtori.unitaries-relations}
 U_kU_j = e^{2i\pi \theta_{jk}} U_jU_k, \qquad j,k=1, \ldots, n. 
\end{equation}

The \emph{noncommutative torus} $A_\theta$ is the $C^*$-algebra generated by the unitary operators $U_1, \ldots, U_n$.  For $\theta=0$ we obtain the $C^*$-algebra $C^{0}(\T^n)$ of continuous functions on the ordinary $n$-torus $\T^n$. Note that~(\ref{eq:NCtori.unitaries-relations}) implies that $A_\theta$ is the closure in $\cL(L^2(\T^n))$ of the algebra $\cA_\theta^0$, where $\cA_\theta^0$ is the span of the unitary operators, 
 \begin{equation*}
 U^k:=U_1^{k_1} \cdots U_n^{k_n}, \qquad k=(k_1,\ldots, k_n)\in \Z^n. 
\end{equation*}
 When $n=2$ simple characterizations of isomorphism classes and Morita equivalence classes of noncommutative tori have been given by Rieffel~\cite{Ri:PJM81}. 
 The extension of Rieffel's results to higher dimensional noncommutative tori is non-trivial (see~\cite{EL:ActaM07, EL:MathA08, Li:Crelle04, RS:IJM99}).

 Let $\tau:\cL(L^2(\T^n))\rightarrow \C$ be the state defined by the constant function $1$, i.e., 
 \begin{equation*}
 \tau (T)= \acoup{T1}{1}=\int_{\T^n} (T1)(x) \dbar x, \qquad T\in \cL\left(L^2(\T^n)\right).
\end{equation*}
In particular, we have
 \begin{equation*} 
 \tau\left( U^k\right) = \left\{
\begin{array}{ll}
 1 &  \text{if $k=0$},  \\
 0 &  \text{otherwise.}
 \end{array}\right. 
\end{equation*}
This induces a continuous linear trace on the $C^*$-algebra $A_\theta$ (see, e.g., Part~I). The GNS construction allows us to associate with $\tau$ a $*$-representation of $A_\theta$ as follows. 

Let $\acoup{\cdot}{\cdot}$ be the sesquilinear form on $A_\theta$ defined by
\begin{equation}
 \acoup{u}{v} = \tau\left( u v^*\right), \qquad u,v\in A_\theta. 
 \label{eq:NCtori.cAtheta-innerproduct}
\end{equation}
The family $\{ U^k; k \in \Z^n\}$ is orthonormal with respect to this sesquilinear form. In particular, we have a pre-inner product on the dense subalgebra $\cA_\theta^0$. 

\begin{definition}
 $\cH_\theta$ is the Hilbert space arising from the completion of $\cA_\theta^0$ with respect to the pre-inner product~(\ref{eq:NCtori.cAtheta-innerproduct}). 
\end{definition}

When $\theta=0$ we recover the Hilbert space $L^2(\T^n)$ with the inner product~(\ref{eq:NCtori.innerproduct-L2}). In what follows we shall denote by $\|\cdot\|_0$ the norm of $\cH_\theta$. This notation allows us to distinguish it from the norm of $A_\theta$, which we denote by $\|\cdot\|$.

By construction $(U^k)_{k \in \Z^n}$ is an orthonormal basis of $\cH_\theta$. Thus, every $u\in \cH_\theta$ can be uniquely written as 
\begin{equation} \label{eq:NCtori.Fourier-series-u}
 u =\sum_{k \in \Z^n} u_k U^k, \qquad u_k=\acoup{u}{U^k}, 
\end{equation}
where the series converges in $\cH_\theta$. When $\theta =0$ we recover the Fourier series decomposition in  $L^2(\T^n)$. By analogy with the case $\theta=0$ we shall call the series $\sum_{k \in \Z^n} u_k U^k$ in~(\ref{eq:NCtori.Fourier-series-u}) the Fourier series of $u\in \cH_\theta$. Note that the Fourier series makes sense for all $u\in A_\theta$. 

In addition, as a special case of the GNS construction (see, e.g., \cite{Ar:Springer81}; see also Part~I) we have the following result. 

\begin{proposition}\label{prop:NCTori.GNS-representation}
The following holds.
\begin{enumerate}
\item The multiplication of $\cA_\theta^0$ uniquely extends to a continuous bilinear map $A_\theta\times \cH_\theta \rightarrow \cH_\theta$. This provides us with a 
         unital $*$-representation of $A_\theta$. In particular, we have
          \begin{equation*} 
                      \left\| u \right\| = \sup_{\|v\|_0=1} \|uv\|_0 \qquad \forall u \in A_\theta. 
           \end{equation*}
\item The inclusion of $\cA_\theta^0$ into $\cH_\theta$ uniquely extends to a continuous embedding of $A_\theta$ into $\cH_\theta$. 
\end{enumerate}
\end{proposition}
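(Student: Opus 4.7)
The statement is an instance of the GNS construction for the tracial state $\tau$ on $A_\theta$, and my proof would follow standard lines.

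For part (1), the heart of the matter is to show that for $u,v \in \cA_\theta^0$, left multiplication by $u$ satisfies $\|uv\|_0 \leq \|u\| \cdot \|v\|_0$. I would expand
\begin{equation*}
\|uv\|_0^2 = \tau\bigl((uv)(uv)^*\bigr) = \tau(uvv^*u^*) = \tau(v^*u^*uv),
\end{equation*}
using that $\tau$ is a trace on $A_\theta$, and then invoke the standard $C^*$-inequality $v^*u^*uv \leq \|u\|^2 v^*v$ (which follows from $\|u\|^2\cdot 1 - u^*u \geq 0$) combined with positivity and traciality of $\tau$ to get $\tau(v^*u^*uv)\leq \|u\|^2\tau(vv^*) = \|u\|^2\|v\|_0^2$. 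This shows $v\mapsto uv$ extends by density and continuity to a bounded operator $\pi(u)\in \cL(\cH_\theta)$ with $\|\pi(u)\|\leq \|u\|$. Since $u\mapsto \pi(u)$ is a unital $*$-homomorphism on $\cA_\theta^0$, the estimate lets us extend it by continuity and density to a unital $*$-representation $\pi: A_\theta \to \cL(\cH_\theta)$.

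The equality $\|u\| = \sup_{\|v\|_0=1}\|uv\|_0$ amounts to the isometry of $\pi$, which is equivalent to the faithfulness of $\tau$. To prove $\tau$ is faithful, suppose $\tau(u^*u) = 0$ for some $u\in A_\theta$. Viewing $u$ as an element of $\cL(L^2(\T^n))$ via the defining representation, we have $\|u\cdot 1\|_{L^2}^2 = \tau(u^*u) = 0$, so $u$ annihilates $1\in L^2(\T^n)$. For any $k\in \Z^n$, the trace property yields
\begin{equation*}
\tau\bigl((uU^k)^*(uU^k)\bigr) = \tau(U^{-k}u^*uU^k) = \tau(u^*u) = 0,
\end{equation*}
hence $uU^k \cdot 1 = 0$ in $L^2(\T^n)$ as well. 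A direct calculation using the formula for $U_j$ shows that $U^k\cdot 1$ is a nonzero scalar multiple of the exponential $e^{ik\cdot x}$, and these exponentials span a dense subspace of $L^2(\T^n)$, so $u = 0$. Faithfulness is thus established, and the equality follows because an injective $*$-homomorphism of $C^*$-algebras is automatically isometric.

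For part (2), I would simply estimate $\|v\|_0^2 = \tau(vv^*) \leq \|vv^*\| = \|v\|^2$ for $v\in \cA_\theta^0$, using that $\tau$ is a state (hence $\tau(T)\leq \|T\|$ for $T\geq 0$). This shows that the inclusion $\cA_\theta^0\hookrightarrow \cH_\theta$ extends to a continuous linear map $A_\theta\to \cH_\theta$; its injectivity is immediate from faithfulness, since any $v\in A_\theta$ mapping to $0$ satisfies $\tau(vv^*)=0$ and hence $v=0$. The main technical point in the whole argument is the faithfulness of $\tau$, needed both for the isometry in (1) and the embedding property in (2); I expect the cyclicity-plus-traciality route outlined above to give the cleanest proof.
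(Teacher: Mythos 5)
Your proof is correct and follows exactly the route the paper intends: the paper states this result as a special case of the GNS construction and defers the details to Arveson and to Part~I, and your argument (the $C^*$-inequality $v^*u^*uv\leq\|u\|^2v^*v$ plus traciality and positivity of $\tau$, extension by density, and faithfulness of $\tau$ via cyclicity of the constant function $1$ in $L^2(\T^n)$ to get the isometry) is precisely that standard construction carried out explicitly. No gaps.
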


In particular, the 2nd part  allows us to identify any $u \in A_\theta$ with the sum of its Fourier series in $\cH_\theta$. In general this Fourier series need not converge in $A_\theta$, but it does converge when $\sum |u_k|<\infty$.

\subsection{The Smooth noncommutative torus $\cA_\theta$} The natural action of $\R^n$ on $\T^n$ by translation gives rise to a unitary representation $s\rightarrow V_s$ of $\R^n$ given by
\begin{equation*}
\left( V_s \xi\right)(x)=\xi(x+s), \qquad \xi \in L^2(\T^n), \ s\in \R^n. 
\end{equation*}
We then get an action $(s,T)\rightarrow \alpha_s(T)$ of $\R^n$ on $\cL(L^2(\T^n))$ given by 
\begin{equation*} 
 \alpha_s(T)=V_s TV_s^{-1}, \qquad T\in \cL(L^2(\T^n)), \ s\in \R^n. 
\end{equation*}
Note also that, for all $s\in \R^n$ and $T\in  \cL(L^2(\T^n))$, we have 
\begin{equation*}
\alpha_s(T^*)=\alpha_s(T)^*, \qquad \|\alpha_s(T)\|= \|T\|, \qquad \tau\left[ \alpha_s(T)\right]=\tau\left[T\right].  
\end{equation*}
We also have
\begin{equation*}
\alpha_s(U^k)= e^{is\cdot k} U^k, \qquad  k\in \Z^n. 
\end{equation*}
This last property implies that the $C^*$-algebra $A_\theta$ is preserved by the action of $\R^n$. In fact, the induced action on $A_\theta$ is continuous, and so  the triple $(A_\theta, \R^n, \alpha)$ is a $C^*$-dynamical system (see, e.g., Part~I for a proof). 

We are especially interested in the subalgebra of smooth elements of the $C^*$-dynamical system $(A_\theta, \R^n, \alpha)$, i.e., the \emph{smooth noncommutative torus}, 
\begin{equation*}
 \cA_\theta:=\left\{ u \in A_\theta; \ \alpha_s(u) \in C^\infty(\R^n; A_\theta)\right\}. 
\end{equation*}
All the unitaries $U^k$, $k\in \Z^n$, are contained in $\cA_\theta$, and so $\cA_\theta$ is a dense subalgebra of $A_\theta$. When $\theta=0$ we recover the algebra $C^\infty(\T^n)$ of smooth functions on the ordinary torus $\T^n$. 

For $N\geq 1$, we also define 
\begin{equation}
 A_\theta^{(N)}:=\left\{ u \in A_\theta; \ \alpha_s(u) \in C^N(\R^n; A_\theta)\right\}. 
  \label{eq:Atheta.AthetaN}
\end{equation}
We also set $A_\theta^{(0)}=A_\theta$. We note that $\cA_\theta$ and $A^{(N)}_{\theta}$, $N\geq 1$, are involutive subalgebras of $A_\theta$ that are preserved by the action of $\R^n$. 

For $j=1,\ldots, n$ let $\delta_j:A_\theta^{(1)}\rightarrow A_\theta$ be the  derivation defined by 
\begin{equation*}
 \delta_j(u) = D_{s_j} \alpha_s(u)|_{s=0}, \qquad u\in A_\theta^{(1)}, 
\end{equation*}
where we have set $D_{s_j}=\frac{1}{i}\partial_{s_j}$. We have the following properties: 
\begin{gather*}
 \delta_j(uv)=\delta_j(u)v+u\delta_j(v), \qquad u,v\in A_\theta^{(1)},\\
  \delta_j(u^*)=-\delta_j(u)^*, \qquad u\in A_\theta^{(1)},\\
    D_{s_j} \alpha_s(u) = \delta_j \left( \alpha_s(u)\right) =  \alpha_s\left( \delta_j(u)\right), \qquad u \in A_\theta^{(1)}, \ s\in \R^n,\\
     \delta_j \delta_l(u) = \delta_l \delta_j(u), \qquad u\in A_\theta^{(2)}, \   j,l=1,\ldots, n. 
\end{gather*}
When $\theta=0$ the derivation $\delta_j$ is just the derivation $D_{x_j}=\frac{1}{i}\frac{\partial}{\partial x_j}$ on $C^1(\T^n)$. In general, for $j,l=1,\ldots, n$, we have
\begin{equation*}
 \delta_j(U_l) = \left\{ 
 \begin{array}{ll}
 U_j & \text{if $l=j$},\\
 0 & \text{if $l\neq j$}. 
\end{array}\right.
\end{equation*}
In addition, we have the following result. 

\begin{lemma}[\cite{Ro:APDE08}]
 For $j=1,\ldots, n$, we have 
 \begin{gather}
 \tau\left[ \delta_j(u)\right] = 0 \qquad \forall u\in A_\theta^{(1)},\nonumber \\
 \tau\left[ u\delta_j(v)\right] =- \tau\left[ \delta_j(u)v\right] \qquad \forall u,v\in A_\theta^{(1)}. 
 \label{eq:NCtori.integration-by-parts}
\end{gather}
\end{lemma}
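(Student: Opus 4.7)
The plan is to derive both identities from the invariance of $\tau$ under the $\R^n$-action and from the Leibniz rule for $\delta_j$, both of which are listed among the properties stated just before the lemma.

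First, for the identity $\tau[\delta_j(u)]=0$, I would start from the relation $\tau\circ\alpha_s=\tau$ valid for every $s\in \R^n$. In particular, for $u\in A_\theta^{(1)}$ the function $s\mapsto \tau[\alpha_s(u)]$ is constant on $\R^n$, equal to $\tau(u)$. On the other hand, by the very definition of $A_\theta^{(1)}$ in~(\ref{eq:Atheta.AthetaN}), the map $s\mapsto \alpha_s(u)$ is $C^1$ with values in $A_\theta$. Since $\tau:A_\theta\rightarrow\C$ is continuous and linear, it commutes with differentiation of $A_\theta$-valued $C^1$-functions, and so
\begin{equation*}
0=D_{s_j}\tau\bigl[\alpha_s(u)\bigr]\bigr|_{s=0}=\tau\bigl[D_{s_j}\alpha_s(u)|_{s=0}\bigr]=\tau\bigl[\delta_j(u)\bigr].
\end{equation*}

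For the second identity, I would use the fact, mentioned in the excerpt, that $A_\theta^{(1)}$ is an involutive subalgebra of $A_\theta$; in particular $uv\in A_\theta^{(1)}$ whenever $u,v\in A_\theta^{(1)}$. Applying the Leibniz rule for $\delta_j$, which is among the properties just listed, yields
\begin{equation*}
\delta_j(uv)=\delta_j(u)v+u\delta_j(v).
\end{equation*}
Taking $\tau$ on both sides and invoking the first part of the lemma to annihilate the left-hand side gives $\tau[\delta_j(u)v]+\tau[u\delta_j(v)]=0$, which is the desired integration-by-parts formula.

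The only non-formal point is the exchange of $D_{s_j}$ and $\tau$ in the first step. This is not really an obstacle: it is the standard fact that a bounded linear map between Banach spaces intertwines strong differentiation, applied to $\tau:A_\theta\rightarrow\C$. Once this is noted, the rest is a direct consequence of the basic properties of $\alpha_s$ and $\delta_j$ already recorded, and no further difficulty arises.
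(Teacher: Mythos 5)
Your proof is correct and is the standard argument (the paper itself only cites \cite{Ro:APDE08} and Part~I for this lemma): the first identity follows by differentiating the constant function $s\mapsto\tau[\alpha_s(u)]$ using that $\tau$ is a bounded linear functional and $s\mapsto\alpha_s(u)$ is $C^1$, and the second follows from the Leibniz rule applied to $uv\in A_\theta^{(1)}$ together with the first identity. Both ingredients you invoke --- the $\alpha$-invariance of $\tau$ and the fact that $A_\theta^{(1)}$ is a subalgebra --- are recorded in the text just before the lemma, so there is no gap.
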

\begin{proof}
See~\cite{Ro:APDE08} (see also Part~I). 
\end{proof}

More generally, given $u\in A_\theta^{(N)}$, $N\geq 2$, and $\beta \in \N_0^n$, $|\beta|=N$, we define 
\begin{equation*}
 \delta^\beta(u) = D_s^\beta \alpha_s(u)|_{s=0} = \delta_1^{\beta_1} \cdots \delta_n^{\beta_n}(u). 
\end{equation*}
We then equip $\cA_\theta$ with the locally convex topology defined by the semi-norms, 
\begin{equation*}
 \cA_\theta \ni u \longrightarrow \left\|\delta^\beta (u)\right\| ,  \qquad \beta\in \N_0^n. 
\end{equation*}
In particular, a sequence $(u_\ell)_{\ell \geq 0} \subset \cA_\theta$ converges to $u$ with respect to this topology if and only if 
\begin{equation*}
 \|\delta^\beta(u_\ell -u)\| \longrightarrow 0 \qquad \text{for all $\beta \in \N_0^n$}.
\end{equation*}
In addition,  every multi-order derivation $\delta^\beta$ induces a continuous linear map $\delta^\beta: \cA_\theta\rightarrow \cA_\theta$.   

\begin{proposition} \label{prop:NCtori.cAtheta-Frechet}
The following holds.
\begin{enumerate}
 \item $\cA_\theta$ is a unital  Fr\'echet $*$-algebra.
 
 \item The action of $\R^n$ on $\cA_\theta$ is continuous, i.e., $(s,u)\rightarrow \alpha_s(u)$ is a continuous map from $\R^n\times \cA_\theta$ to $\cA_\theta$. 
 
 \item For every $u\in \cA_\theta$, the map $s\rightarrow \alpha_s(u)$ is a smooth map from $\R^n$ to $\cA_\theta$. 
\end{enumerate}
\end{proposition}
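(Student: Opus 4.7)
The plan is to verify the three assertions in order, using that the family of seminorms $u\mapsto\|\delta^\beta u\|$, $\beta\in\N_0^n$, is countable and separating (since the $\beta=0$ seminorm is the $C^*$-norm), and hence defines a Hausdorff metrizable locally convex topology on $\cA_\theta$. For Part~(1), closure of $\cA_\theta$ under multiplication and involution, together with joint continuity of these operations, follows from the Leibniz rule $\delta^\beta(uv)=\sum_{\alpha\le\beta}\binom{\beta}{\alpha}\delta^\alpha u\,\delta^{\beta-\alpha}v$ and the iterated identity $\delta^\beta(u^*)=(-1)^{|\beta|}(\delta^\beta u)^*$, combined with submultiplicativity of the $C^*$-norm. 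The substantive point is completeness. Given a Cauchy sequence $(u_\ell)\subset\cA_\theta$, each sequence $\delta^\beta u_\ell$ is Cauchy in $A_\theta$ and converges to some $u^{(\beta)}\in A_\theta$. The key identity $\partial_s^\beta\alpha_s(u_\ell)=i^{|\beta|}\alpha_s(\delta^\beta u_\ell)$ together with the isometry of $\alpha_s$ shows that the smooth $A_\theta$-valued functions $s\mapsto\alpha_s(u_\ell)$ converge, along with every partial derivative, locally uniformly. A standard Banach-valued calculus argument then yields that $s\mapsto\alpha_s(u^{(0)})$ is smooth into $A_\theta$ with $\partial_s^\beta\alpha_s(u^{(0)})=i^{|\beta|}\alpha_s(u^{(\beta)})$; setting $s=0$ places $u^{(0)}$ in $\cA_\theta$ with $\delta^\beta u^{(0)}=u^{(\beta)}$, which gives $u_\ell\to u^{(0)}$ in $\cA_\theta$.

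For Part~(2), given $s_k\to s$ in $\R^n$ and $u_k\to u$ in $\cA_\theta$, I want to show $\|\delta^\beta(\alpha_{s_k}u_k-\alpha_s u)\|\to 0$ for every $\beta$. Since $\delta^\beta$ commutes with $\alpha_s$, this reduces to $\alpha_{s_k}(\delta^\beta u_k)\to\alpha_s(\delta^\beta u)$ in $A_\theta$; adding and subtracting $\alpha_{s_k}(\delta^\beta u)$ and using isometry gives
\begin{equation*}
\|\alpha_{s_k}(\delta^\beta u_k)-\alpha_s(\delta^\beta u)\|\leq\|\delta^\beta u_k-\delta^\beta u\|+\|(\alpha_{s_k}-\alpha_s)(\delta^\beta u)\|,
\end{equation*}
and both terms tend to zero: the first by hypothesis, the second by continuity of the $C^*$-dynamical system $(A_\theta,\R^n,\alpha)$ at the fixed element $\delta^\beta u\in A_\theta$.

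For Part~(3), I would argue inductively: it suffices to show that for each $u\in\cA_\theta$ and each $j\in\{1,\ldots,n\}$, the map $s\mapsto\alpha_s(u)$ is differentiable into $\cA_\theta$ with $j$-th partial $i\alpha_s(\delta_j u)$. Since $\delta_j u$ is again in $\cA_\theta$, the same argument can then be iterated, and continuity of the resulting $\cA_\theta$-valued partial derivatives is supplied by Part~(2). For the single step, applying $\delta^\beta$ and using commutation gives
\begin{equation*}
\left\|\delta^\beta\!\left(\tfrac{1}{t}\bigl[\alpha_{s+te_j}u-\alpha_s u\bigr]-i\alpha_s(\delta_j u)\right)\right\|=\left\|\tfrac{1}{t}\bigl[\alpha_{s+te_j}-\alpha_s\bigr](\delta^\beta u)-i\alpha_s(\delta^\beta\delta_j u)\right\|,
\end{equation*}
which vanishes as $t\to 0$ because $\delta^\beta u\in\cA_\theta$ makes $s\mapsto\alpha_s(\delta^\beta u)$ a $C^1$ function into $A_\theta$ with derivative $i\alpha_s(\delta_j\delta^\beta u)=i\alpha_s(\delta^\beta\delta_j u)$.

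The main obstacle is the completeness step in Part~(1): one must upgrade Cauchy-ness of each seminorm $\|\delta^\beta(u_\ell-u_m)\|$ into smoothness of the limit function $s\mapsto\alpha_s(u^{(0)})$, which is what certifies $u^{(0)}\in\cA_\theta$. The remaining work is systematic use of the commutation $\delta^\beta\circ\alpha_s=\alpha_s\circ\delta^\beta$ and the isometry property of $\alpha_s$.
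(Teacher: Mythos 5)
Your proposal is correct, and since the paper only cites Part~I for this proposition there is no in-text proof to diverge from; your argument is the standard one that such a reference points to. The three key mechanisms you use -- upgrading seminorm-wise Cauchyness to smoothness of the orbit map $s\mapsto\alpha_s(u^{(0)})$ via uniform convergence of all derivatives (using that $\alpha_s$ is isometric, so the convergence $\alpha_s(\delta^\beta u_\ell)\to\alpha_s(u^{(\beta)})$ is in fact uniform on all of $\R^n$), the triangle-inequality splitting for joint continuity, and the inductive differentiability step reducing to the $C^1$-property of $s\mapsto\alpha_s(\delta^\beta u)$ in $A_\theta$ -- are all sound and correctly deployed.
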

\begin{proof}
 See, e.g., Part~I.  
\end{proof}

In the following we denote by $\cS(\Z^n)$ the space of rapid-decay sequences $(a_k)_{k\in \Z^n}\subset \C$, i.e., sequences such that $\sup |k^\beta a_k|<\infty$ for all $\beta \in \N_0^n$. We equip it with the semi-norms,
\begin{equation*}
\cS(\Z^n)\ni (a_k)_{k\in \Z^n} \longrightarrow \sup_{k\in \Z^n} |k^\beta a_k|, \qquad  \beta \in \N_0^n. 
\end{equation*}
This turns $\cS(\Z^n)$ into a nuclear Fr\'echet-Montel space~\cite{Gr:MAMS55, Tr:AP67}.

We have the following characterization of the elements of $\cA_\theta$. 

\begin{proposition}[\cite{Co:Foliations82}]\label{prop:NCtori.condition-cAtheta}
 The following holds. 
 \begin{enumerate}
\item  Let $u\in \cH_\theta$ have Fourier series $\sum u_k U^k$. Then $u\in \cA_\theta$ if and only if the sequence $(u_k)_{k \in \Z^n}$ is contained in $\cS(\Z^n)$. Moreover, in this case the Fourier series converges to $u$ in $\cA_\theta$. 

\item  The map $(u_k)\rightarrow \sum u_k U^k$ is a linear homeomorphism from $\cS(\Z^n)$ onto $\cA_\theta$.

\item $\cA_\theta$ is a nuclear Fr\'echet-Montel space. In particular, it is reflexive.
\end{enumerate}
\end{proposition}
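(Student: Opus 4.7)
The plan is to prove part (1) first, since parts (2) and (3) then follow from standard functional-analytic arguments; the key bookkeeping identity is that termwise differentiation of the Fourier series is justified, i.e., $\acoup{\delta^\beta(u)}{U^k}=k^\beta u_k$ whenever $u\in\cA_\theta$. For the sufficiency in (1), suppose $(u_k)\in\cS(\Z^n)$. Since $(k^\beta u_k)$ is again rapidly decaying, it is absolutely summable, so using $\delta_j(U^k)=k_jU^k$ and that each $U^k$ is unitary in $A_\theta$, the partial sums $S_N:=\sum_{|k|\leq N}u_kU^k$ satisfy $\delta^\beta(S_N)=\sum_{|k|\leq N}k^\beta u_kU^k\longrightarrow \sum_k k^\beta u_kU^k$ in $A_\theta$ for every $\beta$. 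Completeness of $\cA_\theta$ (Proposition~\ref{prop:NCtori.cAtheta-Frechet}) then forces $S_N$ to converge in $\cA_\theta$ to some $v\in\cA_\theta$. Because the inclusion $A_\theta\hookrightarrow\cH_\theta$ is continuous (Proposition~\ref{prop:NCTori.GNS-representation}) and Fourier coefficients are unique in $\cH_\theta$, the element $v$ has Fourier coefficients $(u_k)$, hence $v=u$; this yields $u\in\cA_\theta$ together with convergence of the Fourier series to $u$ in $\cA_\theta$.

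For the necessity, assume $u\in\cA_\theta$. A short induction on $|\beta|$, using the integration-by-parts identity~(\ref{eq:NCtori.integration-by-parts}) together with the relation $\delta_j\big((U^k)^*\big)=-k_j(U^k)^*$ (itself a consequence of $\delta_j(v^*)=-\delta_j(v)^*$), shows that $\tau\big[\delta^\beta(u)(U^k)^*\big]=k^\beta u_k$. Since $\tau$ is a state and $U^k$ is unitary, we deduce $|k^\beta u_k|\leq \|\delta^\beta(u)\|$ for every $\beta$, so $(u_k)\in\cS(\Z^n)$.

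For (2), let $\Phi:\cS(\Z^n)\to\cA_\theta$ denote the map $(u_k)\mapsto \sum u_k U^k$; part~(1) shows $\Phi$ is a bijection. The bound $|k^\beta u_k|\leq \|\delta^\beta(\Phi((u_k)))\|$ gives continuity of $\Phi^{-1}$, and the summability estimate
\[
\|\delta^\beta(\Phi((u_k)))\|\leq \sum_k|k^\beta u_k|\leq C\sup_k(1+|k|)^{n+1}|k^\beta u_k|
\]
(obtained by pulling out a factor $(1+|k|)^{-n-1}$) gives continuity of $\Phi$, hence a linear homeomorphism. Part (3) is then immediate: $\cS(\Z^n)$ is a nuclear Fr\'echet--Montel space, and both properties transfer under linear homeomorphism; Montel spaces are reflexive. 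The main technical point I anticipate is the justification of termwise differentiation of the Fourier series---both identifying the coefficients of $\delta^\beta(u)$ via the integration-by-parts lemma and establishing $\cA_\theta$-convergence via absolute summability of the derived series in $A_\theta$.
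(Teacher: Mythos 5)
Your proof is correct, and since the paper defers this proposition entirely to Part~I, your argument is exactly the standard one expected there: absolute summability of $\sum k^\beta u_k U^k$ in $A_\theta$ plus completeness of $\cA_\theta$ for sufficiency, and the integration-by-parts identity $\tau[\delta^\beta(u)(U^k)^*]=k^\beta u_k$ with $|\tau(x)|\leq\|x\|$ for necessity. The explicit two-sided seminorm estimates in your part~(2) also correctly avoid any appeal to the open mapping theorem, and part~(3) follows as you say.
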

\begin{proof}
 See, e.g., Part~I. 
 \end{proof}

Let us now turn to the group of invertible elements of $\cA_\theta$. We shall denote this group by $\cA_\theta^{-1}$. We 
will also denote by $A_\theta^{-1}$ the  invertible group of $A_\theta$. 

\begin{proposition}[\cite{Co:AdvM81}] \label{prop:NCtori.invertibility-cAtheta}
The following holds. 
\begin{enumerate}
 \item We have $\cA_\theta^{-1}= A_\theta^{-1} \cap \cA_\theta$. 
 
 \item $\cA_\theta^{-1}$ is an open set of $\cA_\theta$ and $u\rightarrow u^{-1}$ is a continuous map from $\cA_\theta^{-1}$ to itself. 
 
 \item $\cA_\theta$ is stable under holomorphic functional calculus. 
\end{enumerate}
\end{proposition}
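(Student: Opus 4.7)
The plan is to establish the three assertions in order; part (1) does most of the work, and (2) and (3) will follow by standard arguments once (1) is in hand. For (1), the inclusion $\cA_\theta^{-1}\subseteq A_\theta^{-1}\cap\cA_\theta$ is immediate, so the content lies in the reverse inclusion: given $u\in\cA_\theta\cap A_\theta^{-1}$ I must show that $s\mapsto\alpha_s(u^{-1})$ is smooth from $\R^n$ to $A_\theta$. Since each $\alpha_s$ is a unital $*$-automorphism of $A_\theta$, one has $\alpha_s(u^{-1})=\alpha_s(u)^{-1}$, and this is well defined for every $s$ because $\alpha_s$ preserves $A_\theta^{-1}$. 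The map $s\mapsto\alpha_s(u)$ is smooth into $A_\theta$ by $u\in\cA_\theta$, and the inversion map $v\mapsto v^{-1}$ is smooth on the open set $A_\theta^{-1}\subseteq A_\theta$, as follows from the standard Neumann-series argument in any Banach algebra. Composing these two smooth maps will give smoothness of $s\mapsto\alpha_s(u^{-1})$, whence $u^{-1}\in\cA_\theta$.

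For (2), openness of $\cA_\theta^{-1}$ in $\cA_\theta$ follows at once from (1): the inclusion $\cA_\theta\hookrightarrow A_\theta$ is continuous, $A_\theta^{-1}$ is open in $A_\theta$, and $\cA_\theta^{-1}$ is the preimage of $A_\theta^{-1}$ under this inclusion. For continuity of inversion in $\cA_\theta$, I would differentiate the identity $uu^{-1}=1$, which yields $\delta_j(u^{-1})=-u^{-1}\delta_j(u)u^{-1}$; a straightforward induction on $|\beta|$ then shows that for every $\beta\in\N_0^n$ the element $\delta^\beta(u^{-1})$ is a finite sum of products of the form $u^{-1}\delta^{\alpha_1}(u)u^{-1}\cdots u^{-1}\delta^{\alpha_r}(u)u^{-1}$ with $|\alpha_1|+\cdots+|\alpha_r|=|\beta|$. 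Combined with continuity of inversion in $A_\theta$ and joint continuity of multiplication in $A_\theta$, this will bound each semi-norm $\|\delta^\beta(u^{-1})\|$ continuously in the semi-norms of $u$, which is what is needed.

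For (3), I would use (1) as a spectral-invariance statement: for $u\in\cA_\theta$ and $\lambda\in\C$, the element $\lambda-u$ is invertible in $\cA_\theta$ if and only if it is invertible in $A_\theta$, so the spectrum of $u$ computed in either algebra is the same. Given $f$ holomorphic on a neighborhood $U$ of $\Sp_{A_\theta}(u)$ and a contour $\Gamma\subset U\setminus\Sp_{A_\theta}(u)$ enclosing the spectrum, the resolvent $\lambda\mapsto(\lambda-u)^{-1}$ takes values in $\cA_\theta$ by (1), and the semi-norm bounds established in (2), together with continuity of the resolvent in $A_\theta$, will make this map continuous from $\Gamma$ to $\cA_\theta$. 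The Cauchy integral
\begin{equation*}
f(u)=\frac{1}{2\pi i}\oint_\Gamma f(\lambda)(\lambda-u)^{-1}d\lambda
\end{equation*}
then converges in the Fr\'echet space $\cA_\theta$, yielding $f(u)\in\cA_\theta$. The main hurdle I expect is the smoothness-of-inversion step used in (1); once that is in place, the passage to (2) and (3) amounts essentially to bookkeeping and a standard application of the holomorphic functional calculus.
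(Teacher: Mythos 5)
The paper defers the proof of this proposition to Part~I (and to the cited reference of Connes), so there is no in-text argument to compare against; your proposal is the standard proof of spectral invariance for smooth subalgebras of $C^*$-dynamical systems and is correct. In particular, the key step---writing $\alpha_s(u^{-1})=\alpha_s(u)^{-1}$ and composing the smooth orbit map $s\mapsto\alpha_s(u)$ with the (real-analytic) inversion map on the open set $A_\theta^{-1}$ of the Banach algebra $A_\theta$---is exactly the expected argument, and your Leibniz induction expressing $\delta^\beta(u^{-1})$ as sums of products $u^{-1}\delta^{\alpha_1}(u)u^{-1}\cdots u^{-1}\delta^{\alpha_r}(u)u^{-1}$ for part~(2), together with the Cauchy-integral argument for part~(3), is sound.
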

\begin{proof}
See, e.g., Part~I.
\end{proof}

Given any $u\in \cA_\theta$ we shall denote by $\Sp(u)$ its \emph{spectrum}, i.e., 
\begin{equation*}
 \Sp(u)=\left\{\lambda \in \C;\  u-\lambda \not\in \cA_\theta^{-1}\right\}. 
\end{equation*}
Thanks to Proposition~\ref{prop:NCtori.invertibility-cAtheta} there is no distinction between being invertible in $\cA_\theta$ or $A_\theta$. This implies that the spectrum of $u$ 
 relatively to $\cA_\theta$ agrees with its spectrum relatively to $A_\theta$, i.e., $\cA_\theta$ is spectral invariant in $A_\theta$. As $A_\theta$ is a $C^*$-algebra, it then follows that $\cA_\theta$ is spectral invariant in any $C^*$-algebra containing $\cA_\theta$. In particular, as we have a $*$-representation of $A_\theta$ in $\cH_\theta$, we obtain the following result. 
 
\begin{corollary}\label{cor:NCtori.spectrum-cHtheta}
For all $u\in \cA_\theta$, we have
\begin{equation*}
 \Sp (u) =\left\{\lambda \in \C; \ \text{$u-\lambda:\cH_\theta\rightarrow \cH_\theta$ is not a bijection}\right\}. 
\end{equation*}
\end{corollary}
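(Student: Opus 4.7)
The plan is to chain together three equivalences that successively reduce the computation of $\Sp(u)$ to bijectivity on $\cH_\theta$. Throughout, fix $u\in\cA_\theta$ and $\lambda\in\C$.

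First I would apply Proposition~\ref{prop:NCtori.invertibility-cAtheta}(1), which asserts $\cA_\theta^{-1}=A_\theta^{-1}\cap \cA_\theta$. Since $u-\lambda\in\cA_\theta$, this equality shows that $u-\lambda\in\cA_\theta^{-1}$ if and only if $u-\lambda\in A_\theta^{-1}$. Consequently, $\Sp(u)$ (defined inside $\cA_\theta$) equals the spectrum of $u$ viewed as an element of the $C^*$-algebra $A_\theta$. This is precisely the spectral invariance already noted immediately before the corollary.

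Next I would use Proposition~\ref{prop:NCTori.GNS-representation}(1). The norm equality $\|u\|=\sup_{\|v\|_0=1}\|uv\|_0$ shows that the GNS $*$-representation $\pi:A_\theta\rightarrow \cL(\cH_\theta)$ is unital and isometric, hence injective, so $\pi(A_\theta)$ is a unital $C^*$-subalgebra of $\cL(\cH_\theta)$. By the standard fact that the spectrum of an element of a unital $C^*$-algebra is invariant under inclusion into a larger unital $C^*$-algebra (a direct consequence of the fact that $C^*$-algebras are stable under inverses taken in any ambient unital $C^*$-algebra), $u-\lambda$ is invertible in $A_\theta$ if and only if $\pi(u-\lambda)=u-\lambda$ is invertible in $\cL(\cH_\theta)$.

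Finally, an element $T\in\cL(\cH_\theta)$ is invertible in $\cL(\cH_\theta)$ if and only if it is a bijection $\cH_\theta\rightarrow \cH_\theta$; this is immediate from the open mapping theorem applied to the Hilbert space $\cH_\theta$. Chaining the three equivalences gives $\lambda\notin\Sp(u)$ iff $u-\lambda:\cH_\theta\rightarrow \cH_\theta$ is a bijection, which is the claim.

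There is no serious obstacle; the only point to record carefully is that the $*$-representation is faithful, which is automatic from the norm formula quoted above, and that $C^*$-spectral invariance then lets us replace invertibility in $A_\theta$ by invertibility as a bounded operator on $\cH_\theta$. All other ingredients are either general functional analysis or explicitly stated in the preceding propositions.
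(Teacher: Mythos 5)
Your proposal is correct and follows essentially the same route as the paper: spectral invariance of $\cA_\theta$ in $A_\theta$ via Proposition~\ref{prop:NCtori.invertibility-cAtheta}, then $C^*$-spectral permanence under the (faithful) representation on $\cH_\theta$, then the identification of invertibility in $\cL(\cH_\theta)$ with bijectivity. Your explicit verification that the GNS representation is isometric (hence faithful) via the norm formula of Proposition~\ref{prop:NCTori.GNS-representation} is a small point the paper leaves implicit, but it is not a different argument.
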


\subsection{The Dual space $\cA_\theta'$} \label{subsection:NCtori.Distributions}
Let $\cA_\theta'$ be the topological dual of $\cA_\theta$. We equip it with its {strong topology}. This is the locally convex topology generated by the semi-norms,
\begin{equation*}
v\longrightarrow\sup_{u\in B}|\acou{v}{u}| , \qquad \text{$B\subset \cA_\theta$ bounded}. 
\end{equation*}
It is tempting  to think of elements of $\cA_\theta'$ as distributions on $\cA_\theta$. This is consistent with the definition of distributions on $\mathbb{T}^{n}$ as continuous linear forms on $C^\infty(\mathbb{T}^n)$. 
 Any $u \in \cA_\theta$ defines a linear form on $\cA_\theta$ by
\begin{equation*}
 \acou{u}{v} =\tau(uv) \qquad \text{for all $v\in \cA_\theta$}.  
\end{equation*}
 Note that, for all $u,v \in \cA_\theta$, we have 
 \begin{equation} \label{eq:NCtori.distrb-innerproduct-eq}
  \acou{u}{v} =\acoup{v}{u^*}=\acoup{u}{v^*}.
\end{equation}
 In particular, given any $u \in \cA_\theta$, the map $v\rightarrow \acou{u}{v}$ is a continuous linear form on $\cA_\theta$. Moreover, as $\acou{u}{u^*}=\| u\|_0^2\neq 0$ if $u\neq 0$, we get a natural embedding of $\cA_\theta$ into $\cA_\theta'$. This allows us to identify $\cA_\theta$ with a subspace of $\cA_\theta'$. Furthermore, given any 
 bounded set $B\subset \cA_\theta$, we have 
\begin{equation*}
 \sup_{v \in B} |\acou{u}{v}| = \sup_{v \in B} |\acoup{u}{v^*}| \leq  \| u\|  \sup_{v \in B}\| v\|.
\end{equation*}
Therefore, we see that the embedding of $\cA_\theta$ into $\cA_\theta'$ is continuous. It is immediate from~(\ref{eq:NCtori.distrb-innerproduct-eq}) that this embedding uniquely extends to a continuous embedding of $\cH_\theta$ into $\cA_\theta$. 

Combining~(\ref{eq:NCtori.distrb-innerproduct-eq}) with the above embedding allows us to extend the definition of Fourier series to $\cA_\theta'$. Namely, given any $v\in \cA_\theta'$ its Fourier series is the series, 
\begin{equation}
 \sum_{k\in \Z^n} v_k U^k, \qquad \text{where}\ v_k:= \acou{v}{(U^k)^*}. 
 \label{eq:NCtori.Fourier-series}
\end{equation}
Here the unitaries $U^k$, $k\in \Z^n$, are regarded as elements of $\cA_\theta'$. 

In what follows we denote by $\cS'(\Z^n)$ the (topological) dual of $\cS(\Z^n)$. It is naturally identified 
with the space of sequences $(v_k)_{k\in\Z^n}\subset \C$ for which there are $N\geq 0$ and $C_N>0$ such that 
\begin{equation*} 
|v_k|\leq C_N(1+|k|)^N \qquad \text{for all $k\in\Z^n$}.
\end{equation*}

\begin{proposition}\label{prop:NCtori.distributions-Fourier-series}
Let $v\in \cA_\theta'$ have Fourier series $ \sum_{k\in \Z^n} v_k U^k$. Then $(v_k)_{k\in \Z^n}\in \cS'(\Z^n)$ and $v$ is equal to the sum of its Fourier series in $\cA_\theta'$.  
\end{proposition}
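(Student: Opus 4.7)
The plan is to use the Fréchet structure of $\cA_\theta$ together with the linear homeomorphism $\cA_\theta\cong\cS(\Z^n)$ from Proposition~\ref{prop:NCtori.condition-cAtheta} to reduce the statement to elementary estimates on tempered sequences.

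First I would prove $(v_k)\in\cS'(\Z^n)$. Since $\cA_\theta$ is Fréchet with seminorms $u\mapsto\|\delta^\beta u\|$, continuity of $v$ furnishes $C>0$ and $N\geq 0$ with $|\langle v,u\rangle|\leq C\sum_{|\beta|\leq N}\|\delta^\beta u\|$ for every $u\in\cA_\theta$. Testing this with $u=(U^k)^*$, and using $\delta_j(u^*)=-\delta_j(u)^*$ together with $\delta_j(U^k)=k_j U^k$, yields $\|\delta^\beta (U^k)^*\|=|k^\beta|$, hence $|v_k|\leq C'(1+|k|)^N$.

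Next I would identify the partial sums $w_R:=\sum_{|k|\leq R} v_k U^k$ explicitly as truncations in the dual pairing. For $u=\sum_\ell u_\ell U^\ell\in\cA_\theta$, the series converges in $\cA_\theta$ by Proposition~\ref{prop:NCtori.condition-cAtheta}, so continuity of $v$ gives $\langle v,u\rangle=\sum_\ell u_\ell a_\ell$ with $a_\ell:=\langle v,U^\ell\rangle$. For the other pairing, $\langle w_R,u\rangle=\tau(w_R u)=\sum_{|k|\leq R} v_k\tau(U^k u)$; expanding $u$ by continuity of left multiplication and of $\tau$, and using $U^k U^\ell=e^{i\phi(k,\ell)}U^{k+\ell}$ together with $\tau(U^m)=\delta_{m,0}$, one obtains $\tau(U^k u)=c(k) u_{-k}$ where $c(k):=e^{i\phi(k,-k)}$ satisfies $U^k U^{-k}=c(k)$. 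Unitarity of $U^k$ forces $(U^k)^*=\overline{c(k)}\,U^{-k}$, whence $a_\ell=c(-\ell)v_{-\ell}$; substituting and reindexing $k\leftrightarrow -\ell$ gives $\langle w_R,u\rangle=\sum_{|\ell|\leq R} u_\ell a_\ell$, so that $\langle v-w_R,u\rangle=\sum_{|\ell|>R} u_\ell a_\ell$.

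For strong convergence, let $B\subset\cA_\theta$ be bounded. By the homeomorphism of Proposition~\ref{prop:NCtori.condition-cAtheta} the corresponding set of Fourier coefficients is bounded in $\cS(\Z^n)$, so for every $M$ there is $C_M>0$ with $|u_\ell|\leq C_M(1+|\ell|)^{-M}$ uniformly for $u\in B$. Combined with $|a_\ell|\leq C'(1+|\ell|)^N$ (the Step~1 estimate applied to $U^\ell$), any $M>N+n$ yields
\[
\sup_{u\in B}|\langle v-w_R,u\rangle|\leq C'C_M\sum_{|\ell|>R}(1+|\ell|)^{N-M}\longrightarrow 0 \quad\text{as } R\to\infty,
\]
proving $w_R\to v$ in $\cA_\theta'$. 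The only nontrivial aspect is the bookkeeping of the phases $c(k)$ that link $v_k=\langle v,(U^k)^*\rangle$ to $a_k=\langle v,U^k\rangle$: the reindexing in the partial sum for $\langle w_R,u\rangle$ must be aligned so that the phases cancel and one recovers $u_\ell a_\ell$ term by term. Once this algebraic bookkeeping is arranged, the remaining estimates are routine consequences of the rapid-decay/polynomial-growth duality between $\cS(\Z^n)$ and $\cS'(\Z^n)$.
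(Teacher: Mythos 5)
Your argument is correct: the polynomial bound on $v_k$ from continuity of $v$ against the seminorms $\|\delta^\beta(\cdot)\|$, the phase bookkeeping identifying $\acou{w_R}{u}=\sum_{|\ell|\leq R}u_\ell\acou{v}{U^\ell}$, and the uniform rapid decay of $(u_\ell)$ over bounded sets $B$ via Proposition~\ref{prop:NCtori.condition-cAtheta} all check out, and together they give strong convergence of the partial sums. The paper defers this proof to Part~I, but this is precisely the standard route one would expect there, so there is nothing further to flag.
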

\begin{proof}
See~Part~I. 
\end{proof}

We observe that in any Fourier series~(\ref{eq:NCtori.Fourier-series}) every summand is an element of $\cA_\theta$. Therefore, as an immediate consequence of Proposition~\ref{prop:NCtori.distributions-Fourier-series} we obtain the following density result. 

\begin{corollary}
 The inclusion of $\cA_\theta$ into $\cA_\theta'$ has dense range. 
\end{corollary}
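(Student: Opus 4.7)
The plan is to extract the density directly from the Fourier series convergence statement in Proposition~\ref{prop:NCtori.distributions-Fourier-series}. Given any $v\in \cA_\theta'$, that proposition tells us that $v$ is the sum of its Fourier series $\sum_{k\in \Z^n} v_k U^k$, with convergence taking place in the strong topology of $\cA_\theta'$.

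The key observation is that every partial sum $S_N(v):=\sum_{|k|\leq N} v_k U^k$ is a \emph{finite} linear combination of the unitaries $U^k$. Since each $U^k$ belongs to $\cA_\theta^0\subset \cA_\theta$, we have $S_N(v)\in \cA_\theta$ for every $N$. Hence $v$ is the limit in $\cA_\theta'$ of a sequence (or net, depending on how one orders the partial sums) of elements of $\cA_\theta$. Because $v\in\cA_\theta'$ was arbitrary, this shows that the image of the continuous embedding $\cA_\theta\hookrightarrow \cA_\theta'$ is dense.

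There is no real obstacle here, provided one accepts Proposition~\ref{prop:NCtori.distributions-Fourier-series}; the corollary is essentially a restatement of the convergence of Fourier series in $\cA_\theta'$ combined with the trivial fact that partial Fourier sums lie in $\cA_\theta^0\subset\cA_\theta$. The only mild care needed is to note that convergence of the Fourier series in the strong dual topology is exactly what is meant by density for the strong topology that $\cA_\theta'$ was equipped with in Section~\ref{subsection:NCtori.Distributions}.
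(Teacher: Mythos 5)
Your argument is exactly the paper's: the Fourier series of any $v\in\cA_\theta'$ converges to $v$ in the strong topology by Proposition~\ref{prop:NCtori.distributions-Fourier-series}, and each partial sum is a finite linear combination of the $U^k$, hence lies in $\cA_\theta$. The proposal is correct and takes essentially the same (immediate) route as the paper.
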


\subsection{Differential operators}  In this subsection, we review a few facts on differential operators on noncommutative tori. 

\begin{definition}[\cite{CT:Baltimore11}]
 A differential operator of order~$m$ on $\cA_\theta$ is a linear operator $P: \cA_\theta \rightarrow \cA_\theta$ of the form, 
\begin{equation} \label{eq:NCtori.differential-operator}
 P= \sum_{|\alpha|\leq m} a_\alpha \delta^\alpha, \qquad a_\alpha \in \cA_\theta. 
\end{equation}
\end{definition}

\begin{remark}
 In~(\ref{eq:NCtori.differential-operator}) each coefficient $a_\alpha$, $|\alpha|\leq m$,  is identified with the operator of left-multiplication by $a_\alpha$. Thus, (\ref{eq:NCtori.differential-operator}) means that
\begin{equation*}
 Pu= \sum_{|\alpha|\leq m} a_\alpha \delta^\alpha u \qquad \text{for all $u\in \cA_\theta$}. 
\end{equation*}
\end{remark}

\begin{remark}
 Any differential operator is a continuous linear operator on $\cA_\theta$. 
\end{remark}

\begin{remark} \label{rem:NCtori.differential-op}
 Let $ P= \sum_{|\alpha|\leq m} a_\alpha \delta^\alpha$ be a differential operator of order~$m$. The \emph{symbol} of $P$ is the polynomial map $\rho: \R^n \rightarrow \cA_\theta$ defined by 
\begin{equation*}
 \rho(\xi)  =  \sum_{|\alpha|\leq m} a_\alpha \xi^\alpha, \qquad \xi\in \R^n. 
\end{equation*}
Its $m$-th degree component $\rho_m(\xi)=   \sum_{|\alpha|= m} a_\alpha \xi^\alpha$ is called the \emph{principal symbol} of $P$. 
\end{remark}

\begin{example}
 The (flat) Laplacian of $\cA_\theta$ is the 2nd order differential operator, 
\begin{equation} \label{eq:NCtori.flat-Laplacian}
 \Delta = \delta_1^2+ \cdots + \delta_n^2. 
\end{equation}
Its symbol is $\rho(\xi)= \xi_1^2 + \cdots + \xi_n^2= |\xi|^2$. 
\end{example}

\begin{example}[\cite{HP:Laplacian}] \label{ex:NCtori.Laplacian-Riemannian}
 In the terminology of~\cite{Ro:SIGMA13} a Riemannian metric on $\cA_\theta$ is given by a positive invertible matrix $g=(g_{ij})\in M_n(\cA_\theta)$ whose entries are selfadjoint elements of $\cA_\theta$. Its determinant is defined by
\begin{equation*}
 \det (g):=\exp\big( \Tr[ \log(g)]\big), 
\end{equation*}
where $\log(g) \in M_n(\cA_\theta)$ is defined by holomorphic functional calculus and $\Tr$ is the matrix trace (see~\cite{HP:Laplacian}). The determinant $\det(g)$ is a positive invertible element of $\cA_\theta$, and so $\nu(g):= \sqrt{\det(g)}$ is a positive invertible element of $\cA_\theta$. Let $g^{-1}=(g^{ij})$ be the inverse matrix of $g$. In~\cite{HP:Laplacian} the Laplace-Beltrami operator associated with $g$ is the 2nd order differential operator $\Delta_g:\cA_\theta \rightarrow \cA_\theta$ given by
\begin{equation*}
 \Delta_g u =  \nu(g)^{-1} \sum_{1\leq i,j\leq n} \delta_i \left( \sqrt{\nu(g)} g^{ij} \sqrt{\nu(g)}\delta_j(u)\right), \qquad u\in \cA_\theta. 
\end{equation*}
When $g_{ij}=\delta_{ij}$ we recover the flat Laplacian~(\ref{eq:NCtori.flat-Laplacian}). When $\theta=0$ we have $\sqrt{\nu(g)} g^{ij} \sqrt{\nu(g)}=g^{ij}\nu(g)$, and so we recover the usual expression for the Laplace-Beltrami operator in Euclidean coordinates with $\delta_j=\frac{1}{\sqrt{-1}} \partial_{j}$. 
\end{example}

The following result shows that differential operators form a graded algebra. 

\begin{proposition}[\cite{CT:Baltimore11}]
 Suppose that $P$ and $Q$ are differential operators on $\cA_\theta$ of respective orders $m$ and $m'$. Then $PQ$ is a differential operator of order~$\leq m+m'$. 
\end{proposition}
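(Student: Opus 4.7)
The plan is to show the statement by direct computation, using the multi-variable Leibniz rule for the derivations $\delta_{j}$.

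First I would write $P = \sum_{|\alpha|\leq m} a_{\alpha} \delta^{\alpha}$ and $Q = \sum_{|\beta|\leq m'} b_{\beta} \delta^{\beta}$, so that for $u\in \cA_{\theta}$,
\begin{equation*}
PQu = \sum_{|\alpha|\leq m}\sum_{|\beta|\leq m'} a_{\alpha}\, \delta^{\alpha}\!\left(b_{\beta}\, \delta^{\beta} u\right).
\end{equation*}
The key step is a Leibniz rule for the multi-order derivations, namely
\begin{equation*}
\delta^{\alpha}(vw) = \sum_{\gamma \leq \alpha} \binom{\alpha}{\gamma}\, \delta^{\gamma}(v)\, \delta^{\alpha-\gamma}(w), \qquad v,w\in \cA_{\theta}.
\end{equation*}
This I would establish by induction on $|\alpha|$, starting from the single-variable Leibniz rule $\delta_{j}(vw)=\delta_{j}(v)w+v\delta_{j}(w)$ recalled in the excerpt and using the commutativity $\delta_{j}\delta_{l}=\delta_{l}\delta_{j}$ on $\cA_{\theta}$ to reorder the derivations into the multi-index form.

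Applying this to $\delta^{\alpha}(b_{\beta}\, \delta^{\beta} u)$ and using $\delta^{\alpha-\gamma}\circ \delta^{\beta} = \delta^{\alpha-\gamma+\beta}$ yields
\begin{equation*}
PQu = \sum_{|\alpha|\leq m}\sum_{|\beta|\leq m'} \sum_{\gamma\leq \alpha} \binom{\alpha}{\gamma}\, a_{\alpha}\, \delta^{\gamma}(b_{\beta})\, \delta^{\alpha-\gamma+\beta}(u).
\end{equation*}
Regrouping by $\sigma := \alpha - \gamma + \beta$, which satisfies $|\sigma|\leq |\alpha|+|\beta|\leq m+m'$, I would rewrite this as
\begin{equation*}
PQu = \sum_{|\sigma|\leq m+m'} c_{\sigma}\, \delta^{\sigma}(u), \qquad c_{\sigma} := \sum_{\substack{\gamma\leq \alpha,\, |\alpha|\leq m,\, |\beta|\leq m' \\ \alpha-\gamma+\beta = \sigma}} \binom{\alpha}{\gamma}\, a_{\alpha}\, \delta^{\gamma}(b_{\beta}).
\end{equation*}

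Finally I would observe that each coefficient $c_{\sigma}$ lies in $\cA_{\theta}$: the $a_{\alpha}$ and $b_{\beta}$ are in $\cA_{\theta}$ by assumption, each $\delta^{\gamma}$ maps $\cA_{\theta}$ into itself (as noted just before Proposition~\ref{prop:NCtori.cAtheta-Frechet}), and $\cA_{\theta}$ is an algebra by Proposition~\ref{prop:NCtori.cAtheta-Frechet}. This exhibits $PQ$ in the form~(\ref{eq:NCtori.differential-operator}) of a differential operator of order at most $m+m'$. There is no real obstacle; the only care needed is the multi-index bookkeeping in verifying the Leibniz rule, which is entirely analogous to the classical commutative case because the derivations $\delta_{j}$ pairwise commute on $\cA_{\theta}$.
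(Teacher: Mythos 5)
Your proof is correct and is the standard argument; the paper itself defers the proof to Part~I, and the route there is the same direct computation via the multi-index Leibniz rule for the commuting derivations $\delta_j$, followed by regrouping and the observation that the coefficients $a_\alpha\,\delta^\gamma(b_\beta)$ remain in the algebra $\cA_\theta$.
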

\begin{proof}
See, e.g., Part~I. 
\end{proof}

Let $P=  \sum_{|\alpha|\leq m} a_\alpha \delta^\alpha$ be a differential operator of order $m$ with symbol $\rho(\xi)  =  \sum_{|\alpha|\leq m} a_\alpha \xi^\alpha$. 
Given $u\in \cA_\theta$, formal integration by parts and manipulations with the Fourier transform (\cf\ Part~I) show that we have 
\begin{equation}
 Pu=  \iint  e^{is\cdot \xi}  \rho(\xi) \alpha_{-s}(u)ds\dbar \xi \qquad \text{for all $u\in \cA_\theta$}. 
 \label{eq:NCtori.diff-op-integral}
\end{equation}
This formula is the main impetus for the definition of pseudodifferential operators on noncommutative tori (\cf\ Section~\ref{sec:PsiDOs}). We can give a full justification of this formula by using of the oscillating integral of Part~I.  The construction of this oscillating integral is recalled in Section~\ref{section:Amplitudes}.

\section{Classes of Symbols on Noncommutative Tori} \label{section:Symbols}
In this section, we review the main classes of symbols on noncommutative tori. 

\subsection{Standard symbols} 

\begin{definition}[\cite{Ba:CRAS88, Co:CRAS80}]
$\stS^m (\Rn ; \cA_\theta)$, $m\in\R$, consists of maps $\rho(\xi)\in C^\infty (\Rn ; \cA_\theta)$ such that, for all multi-orders $\alpha$ and $\beta$, there exists $C_{\alpha \beta} > 0$ such that
\begin{equation} 
\label{eq:Symbols.standard-estimates}
\norm{\delta^\alpha \partial_\xi^\beta \rho(\xi)} \leq C_{\alpha \beta} \left( 1 + | \xi | \right)^{m - | \beta |} \qquad \forall \xi \in \R^n .
\end{equation}
\end{definition}

\begin{remark}\label{rmk:Symbols.amlitudes-intersection}
 We have 
\begin{equation*}
 \bigcap_{m\in\R}\stS^m( \R^n;\cA_\theta)  = \cS(\R^n; \cA_\theta), 
\end{equation*}\
where $\cS(\R^n; \cA_\theta)$ is the space of Schwartz-class maps $\rho:\R^n\rightarrow \cA_\theta$ (\cf\ Part~I). 
\end{remark}

In what follows, we endow each space $\stS^m(\Rn;\cA_\theta)$, $m\in \R$, with the locally convex topology generated by the semi-norms,
\begin{equation*} 
p_N^{(m)}(\rho):=\sup_{|\alpha|+|\beta|\leq N} \sup_{\xi\in\Rn}(1+|\xi|)^{-m+|\beta|}\norm{\delta^\alpha\partial_\xi^\beta\rho(\xi)}, \qquad N\in\N_0 .
\end{equation*}

\begin{proposition}[\cite{Ba:CRAS88}] \label{prop:Symbols.standard-Frechetspace}
$\stS^m(\R^n ;\cA_\theta)$, $m\in \R$,  is a Fr\'{e}chet space.
\end{proposition}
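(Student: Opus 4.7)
The plan is to show that the topology of $\stS^m(\R^n;\cA_\theta)$ is Hausdorff and generated by a countable family of seminorms, which already gives metrizability, and then verify completeness. Hausdorffness is immediate: if $p_N^{(m)}(\rho)=0$ for all $N$, then already $p_0^{(m)}(\rho)=0$ forces $\|\rho(\xi)\|=0$ pointwise, hence $\rho\equiv 0$. Thus the only real content is completeness.

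Let $(\rho_k)_{k\geq 0}$ be a Cauchy sequence in $\stS^m(\R^n;\cA_\theta)$. By definition of the seminorms $p_N^{(m)}$, for every pair of multi-indices $(\alpha,\beta)$ the weighted family $(1+|\xi|)^{-m+|\beta|}\,\delta^\alpha\partial_\xi^\beta\rho_k(\xi)$ is uniformly Cauchy in $\xi$ with respect to the $C^\ast$-norm of $A_\theta$. Since $A_\theta$ is complete, for each $\xi\in\R^n$ and each $(\alpha,\beta)$ the limit
\begin{equation*}
\rho_{\alpha\beta}(\xi):=\lim_{k\to\infty}\delta^\alpha\partial_\xi^\beta\rho_k(\xi)
\end{equation*}
exists in $A_\theta$, and the convergence is uniform in $\xi$ after multiplication by $(1+|\xi|)^{-m+|\beta|}$. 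Set $\rho(\xi):=\rho_{0,0}(\xi)$.

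The next step, which is the main obstacle, is to verify that $\rho(\xi)\in\cA_\theta$ for every $\xi$ and that $\delta^\alpha\rho(\xi)=\rho_{\alpha,0}(\xi)$. Here I use that each $\alpha_s$ is a $\ast$-automorphism of $A_\theta$ with $\|\alpha_s(\cdot)\|=\|\cdot\|$, so for fixed $\xi$ the maps $s\mapsto\alpha_s(\rho_k(\xi))$ are $C^\infty$ maps $\R^n\to A_\theta$ with derivatives $D_s^\alpha\alpha_s(\rho_k(\xi))=\alpha_s(\delta^\alpha\rho_k(\xi))$. As $k\to\infty$ these derivatives converge uniformly in $s$ on all of $\R^n$ (isometry of $\alpha_s$ turns uniform convergence in $A_\theta$ into uniform convergence in $s$), so by the standard iterated fundamental-theorem-of-calculus argument for Banach-space-valued maps, $s\mapsto\alpha_s(\rho(\xi))$ is $C^\infty$ with $D_s^\alpha\alpha_s(\rho(\xi))=\alpha_s(\rho_{\alpha,0}(\xi))$; evaluating at $s=0$ gives $\rho(\xi)\in\cA_\theta$ together with $\delta^\alpha\rho(\xi)=\rho_{\alpha,0}(\xi)$. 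A parallel and strictly easier argument in the $\xi$-variable (the convergence $\partial_\xi^\beta\rho_k\to\rho_{0,\beta}$ is uniform on compact sets in $A_\theta$) shows that $\rho$ is smooth in $\xi$ with $\partial_\xi^\beta\rho(\xi)=\rho_{0,\beta}(\xi)$. Combining the two conclusions yields $\delta^\alpha\partial_\xi^\beta\rho(\xi)=\rho_{\alpha\beta}(\xi)$ for all $(\alpha,\beta)$.

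Finally, the uniform Cauchy property gives uniform bounds $p_N^{(m)}(\rho_k)\leq C_N$ for every $N$; passing to the pointwise limit preserves these weighted estimates, so $\rho\in\stS^m(\R^n;\cA_\theta)$. Likewise, letting $\ell\to\infty$ in $p_N^{(m)}(\rho_k-\rho_\ell)<\varepsilon$ for $k,\ell\geq k_0$ yields $p_N^{(m)}(\rho_k-\rho)\leq\varepsilon$ for $k\geq k_0$, i.e.\ $\rho_k\to\rho$ in $\stS^m(\R^n;\cA_\theta)$. This completes the proof of completeness and hence of the Fréchet-space property.
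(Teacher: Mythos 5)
Your proof is correct and is the standard completeness argument one expects here (the paper itself only cites Part~I for this statement, where the same route is taken): metrizability from the countable Hausdorff family $p_N^{(m)}$, pointwise limits of all derivatives in the $C^*$-norm, and then the limit-of-derivatives theorem to identify them. You also handle the one genuinely delicate point correctly, namely that the limit takes values in $\cA_\theta$ (not merely $A_\theta$) and is smooth as a map into the Fr\'echet space $\cA_\theta$, which you get by running the uniform-convergence argument on $s\mapsto\alpha_s(\rho_k(\xi))$ using the isometry of $\alpha_s$, and seminorm by seminorm (i.e.\ on each $\delta^\gamma\rho_k$) in the $\xi$-variable.
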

\begin{proof} 
See Part~I. 
\end{proof}

\begin{remark}
 It follows from the very definition of the spaces $\stS^m(\R^n ;\cA_\theta)$, $m\in \R$,  that, given any multi-orders $\alpha$ and $\beta$, the partial differentiation $ \delta^\alpha\partial_\xi^\beta$ gives rise to a continuous linear operator from $\stS^m(\R^n ;\cA_\theta)$ to $\stS^{m-|\beta|}(\R^n ;\cA_\theta)$ for every $m\in \R$.  
\end{remark}

\begin{lemma} \label{lem:Symbols.standard-product}
Let $m_1, m_2\in\R$. Then the product of $\cA_\theta$ gives rise to a continuous bilinear map from $\stS^{m_1}(\Rn;\cA_\theta)\times\stS^{m_2}(\Rn;\cA_\theta)$ to $\stS^{m_1+m_2}(\Rn;\cA_\theta)$. 
\end{lemma}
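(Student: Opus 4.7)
The plan is to verify the two required properties of the product map: first, that $\rho_1\rho_2$ belongs to $\stS^{m_1+m_2}(\R^n;\cA_\theta)$ whenever $\rho_i\in\stS^{m_i}(\R^n;\cA_\theta)$, and second, that the resulting bilinear map is continuous with respect to the Fr\'echet topologies defined by the seminorms $p_N^{(m)}$. Both will follow from a single Leibniz-rule computation combined with the submultiplicativity of the norm on $\cA_\theta$.

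First, I would fix multi-orders $\alpha,\beta\in\N_0^n$ and expand $\delta^\alpha \partial_\xi^\beta(\rho_1(\xi)\rho_2(\xi))$ using the Leibniz rule. Since each $\delta_j$ is a derivation of $\cA_\theta$ and the $\partial_{\xi_j}$ commute with the $\delta_j$'s and act on $\C$-valued coordinates, one obtains
\begin{equation*}
\delta^\alpha\partial_\xi^\beta\bigl(\rho_1(\xi)\rho_2(\xi)\bigr)
 = \sum_{\alpha_1+\alpha_2=\alpha}\sum_{\beta_1+\beta_2=\beta}
 \binom{\alpha}{\alpha_1}\binom{\beta}{\beta_1}
 \bigl(\delta^{\alpha_1}\partial_\xi^{\beta_1}\rho_1(\xi)\bigr)
 \bigl(\delta^{\alpha_2}\partial_\xi^{\beta_2}\rho_2(\xi)\bigr).
\end{equation*}
Since $\|uv\|\leq\|u\|\|v\|$ in the $C^*$-algebra $A_\theta$ (and hence in $\cA_\theta$), applying the defining estimate \eqref{eq:Symbols.standard-estimates} to each factor and using $(m_1-|\beta_1|)+(m_2-|\beta_2|)=(m_1+m_2)-|\beta|$ yields a bound
\begin{equation*}
\bigl\|\delta^\alpha\partial_\xi^\beta(\rho_1\rho_2)(\xi)\bigr\|
 \leq C_{\alpha\beta}(1+|\xi|)^{(m_1+m_2)-|\beta|},
\end{equation*}
which shows $\rho_1\rho_2\in\stS^{m_1+m_2}(\R^n;\cA_\theta)$.

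For continuity, the same estimate can be made uniform in the seminorms: for $|\alpha|+|\beta|\leq N$, every pair $(\alpha_i,\beta_i)$ in the Leibniz sum satisfies $|\alpha_i|+|\beta_i|\leq N$, so that bounding each factor by $p_N^{(m_i)}(\rho_i)$ and collecting the (finitely many) binomial coefficients into a constant $C_N$ depending only on $N$ and $n$ gives
\begin{equation*}
p_N^{(m_1+m_2)}(\rho_1\rho_2)\leq C_N\, p_N^{(m_1)}(\rho_1)\, p_N^{(m_2)}(\rho_2).
\end{equation*}
This is the standard continuity criterion for bilinear maps between Fr\'echet spaces, so the bilinear map $(\rho_1,\rho_2)\mapsto\rho_1\rho_2$ is continuous from $\stS^{m_1}(\R^n;\cA_\theta)\times\stS^{m_2}(\R^n;\cA_\theta)$ to $\stS^{m_1+m_2}(\R^n;\cA_\theta)$. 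There is no real obstacle here: the only mild point to verify cleanly is that the derivations $\delta^{\alpha_i}$ land in $\cA_\theta$ (so that the product in $\cA_\theta$ makes sense and the submultiplicativity of $\|\cdot\|$ applies), which is immediate from the fact that $\delta_j:\cA_\theta\to\cA_\theta$ is a continuous derivation (Proposition \ref{prop:NCtori.cAtheta-Frechet} and the subsequent discussion).
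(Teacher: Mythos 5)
Your proof is correct and is precisely the standard argument: the Leibniz rule for $\delta^\alpha\partial_\xi^\beta$ applied to the product, combined with the submultiplicativity of the $C^*$-norm of $A_\theta$, yields both the membership $\rho_1\rho_2\in\stS^{m_1+m_2}(\R^n;\cA_\theta)$ and the seminorm estimate $p_N^{(m_1+m_2)}(\rho_1\rho_2)\leq C_N\,p_N^{(m_1)}(\rho_1)\,p_N^{(m_2)}(\rho_2)$, which is the standard continuity criterion for bilinear maps between Fr\'echet spaces. The paper itself only cites Part~I for this lemma, and your argument is exactly the one carried out there; the only point you leave implicit (harmlessly) is that the pointwise product of two smooth $\cA_\theta$-valued maps is again smooth, which holds because the multiplication of the Fr\'echet algebra $\cA_\theta$ is jointly continuous.
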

\begin{proof}
See Part~I. 
\end{proof}

We have the following approximation result. 

\begin{proposition}[see Part~I]\label{prop:Symbols.standard-density}
Let $\rho(\xi)\in \stS^m(\Rn;\cA_\theta)$, $m\in\R$, and let $\chi(\xi)\in \cS(\Rn)$ be such that $\chi(0)=1$.  For $0<\epsilon\leq 1$, let $\rho_\epsilon(\xi)\in \cS(\Rn; \cA_\theta)$ be defined by 
\begin{equation*}
 \rho_\epsilon(\xi)= \chi(\epsilon \xi) \rho(\xi), \qquad \xi \in \R^n. 
\end{equation*}
 Then the family $(\rho_\epsilon(\xi))_{0<\epsilon\leq 1}$ is bounded in $\stS^m(\Rn;\cA_\theta)$ and, as $\epsilon \rightarrow 0^+$, it converges to $\rho(\xi)$  in $\stS^{m'}(\Rn;\cA_\theta)$ for every $m'>m$. 
\end{proposition}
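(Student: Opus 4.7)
My plan is to follow the classical Friedrichs-type approximation argument for symbols, adapted to the $\cA_\theta$-valued setting. Since $\chi$ is Schwartz and $\rho(\xi)$ has polynomial growth together with its derivations $\delta^\alpha\partial_\xi^\beta \rho$, the product $\chi(\epsilon\xi)\rho(\xi)$ immediately lies in $\cS(\R^n;\cA_\theta)$; this handles the first claim (and situates $\rho_\epsilon$ inside every $\stS^{m''}$ by Remark~\ref{rmk:Symbols.amlitudes-intersection}). The two real tasks are then the uniform bound in $\stS^m$ and the convergence in $\stS^{m'}$ for $m'>m$.

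For the uniform bound, I would expand $\partial_\xi^\beta\rho_\epsilon(\xi)$ by the Leibniz rule as $\sum_{\beta_1+\beta_2=\beta}\binom{\beta}{\beta_1}\epsilon^{|\beta_1|}(\partial^{\beta_1}\chi)(\epsilon\xi)\,\partial_\xi^{\beta_2}\rho(\xi)$; since $\chi(\epsilon\xi)$ is scalar the derivations $\delta^\alpha$ only hit $\rho$. The key elementary inequality
\begin{equation*}
\epsilon^{|\beta_1|}(1+|\xi|)^{|\beta_1|} \leq (1+\epsilon|\xi|)^{|\beta_1|}, \qquad 0<\epsilon\leq 1,
\end{equation*}
combined with the Schwartz decay estimate $|(\partial^{\beta_1}\chi)(\eta)|\leq C_{\beta_1}(1+|\eta|)^{-|\beta_1|}$ gives $\epsilon^{|\beta_1|}|(\partial^{\beta_1}\chi)(\epsilon\xi)|\leq C_{\beta_1}(1+|\xi|)^{-|\beta_1|}$ uniformly in $\epsilon\in(0,1]$. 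Multiplied against $\|\delta^\alpha\partial_\xi^{\beta_2}\rho(\xi)\|\leq C(1+|\xi|)^{m-|\beta_2|}$ one gets $(1+|\xi|)^{m-|\beta|}$, so $p_N^{(m)}(\rho_\epsilon)$ is bounded uniformly in $\epsilon$.

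For the convergence in $\stS^{m'}$, $m'>m$, I write $\rho_\epsilon(\xi)-\rho(\xi)=(\chi(\epsilon\xi)-1)\rho(\xi)$ and split according to whether $|\xi|\leq R$ or $|\xi|>R$. On the outer region, the uniform bound of the previous step applied to both $\rho$ and $\rho_\epsilon$ yields
\begin{equation*}
(1+|\xi|)^{-m'+|\beta|}\bigl\|\delta^\alpha\partial_\xi^\beta(\rho_\epsilon-\rho)(\xi)\bigr\|\leq C_N (1+|\xi|)^{m-m'}\leq C_N (1+R)^{m-m'},
\end{equation*}
which can be made arbitrarily small by choosing $R$ large, independently of $\epsilon$. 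On the compact region $|\xi|\leq R$, $\chi(\epsilon\xi)\to 1$ and $\epsilon^{|\beta_1|}(\partial^{\beta_1}\chi)(\epsilon\xi)\to 0$ uniformly for $|\beta_1|\geq 1$ as $\epsilon\to 0^+$, so a Leibniz expansion shows that the finite number of terms involved in each semi-norm $p_N^{(m')}(\rho_\epsilon-\rho)$ tend to $0$ uniformly on $\{|\xi|\leq R\}$. Combining the two regimes gives $p_N^{(m')}(\rho_\epsilon-\rho)\to 0$.

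The only step requiring genuine care is the uniform boundedness in $\stS^m$: a naive estimate of $\epsilon^{|\beta_1|}(\partial^{\beta_1}\chi)(\epsilon\xi)$ only gives an $\epsilon$-independent bound that is insensitive to $|\xi|$, which is too weak to absorb the loss of $|\beta_1|$ derivatives on $\rho$. The elementary interpolation trick $(\epsilon+\epsilon|\xi|)^{|\beta_1|}\leq (1+\epsilon|\xi|)^{|\beta_1|}$, together with the Schwartz decay of $\chi$ at the rate matching $|\beta_1|$, is what makes the bound fall into the right symbol class; everything else is routine.
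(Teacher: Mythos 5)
Your proof is correct and is the standard Friedrichs-type approximation argument that the paper defers to Part~I; the key inequality $\epsilon^{|\beta_1|}(1+\epsilon|\xi|)^{-|\beta_1|}\leq (1+|\xi|)^{-|\beta_1|}$ for $0<\epsilon\leq 1$, combined with the Schwartz decay of $\partial^{\beta_1}\chi$, is exactly what is needed to place $\rho_\epsilon$ uniformly in $\stS^m(\R^n;\cA_\theta)$, and the two-region splitting for the convergence in $\stS^{m'}$, $m'>m$, is the expected route. No gaps.
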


\begin{remark}\label{rmk:Symbols.reduction-cS}
In the rest of the paper, we will often use Proposition~\ref{prop:Symbols.standard-density} to reduce the proof of equalities for continuous functionals on standard symbols to proving them for maps in $C^\infty_c(\R^n; \cA_\theta)$ or in $\cS(\R^n; \cA_\theta)$.   
\end{remark}

\begin{definition}[\cite{Ba:CRAS88}] \label{def:Symbols.standard-asymptotic}
Let $\rho(\xi)\in\stS^m(\Rn;\cA_\theta)$, $m\in\R$, and, for $j=0, 1, \ldots $, let $\rho_j(\xi)\in\stS^{m-j}(\Rn;\cA_\theta)$. We shall write 
$\rho(\xi)\sim\sum_{j\geq 0}\rho_j(\xi)$ when 
\begin{equation*} 
\rho(\xi)-\sum_{j<N}\rho_j(\xi)\in\stS^{m-N}(\Rn;\cA_\theta) \qquad \text{for all $N\geq 1$}. 
\end{equation*}
\end{definition}

\subsection{Homogeneous and classical symbols} 

\begin{definition}[Homogeneous Symbols]
$S_q (\R^n; \cA_\theta )$, $q \in \C$, consists of  maps $\rho(\xi) \in C^\infty(\R^n\backslash 0;\cA_\theta)$ that are homogeneous of degree $q$, i.e., 
\begin{equation*}
\rho( \lambda \xi ) = \lambda^q \rho(\xi) \qquad \text{for all $\xi \in \R^n \backslash 0$ and $\lambda > 0$}. 
\end{equation*}
\end{definition}

\begin{remark} \label{rem:Symbols.homogeneous-differentiation}
 Let $\rho(\xi) \in S_q (\R^n; \cA_\theta )$, $q\in \C$. Then $\delta^\alpha \partial_\xi^\beta \rho(\xi)\in S_{q-|\beta|} (\R^n; \cA_\theta )$ for all $\alpha,\beta \in \N_0^n$. \end{remark}

\begin{remark} \label{rem:Symbols.homogeneous-involution}
 Let $\rho(\xi) \in S_q (\R^n; \cA_\theta )$, $q\in \C$. Then $\rho(\xi)^*$ is homogeneous of degree $\overline{q}$, and so $\rho(\xi)^*\in S_{\overline{q}} (\R^n; \cA_\theta )$. 
\end{remark}

\begin{definition}[Classical Symbols; \emph{cf}.~\cite{Ba:CRAS88}]\label{def:Symbols.classicalsymbols}
$S^q (\R^n; \cA_\theta )$, $q \in \C$, consists of maps $\rho(\xi)\in C^\infty(\R^n;\cA_\theta)$ that admit an asymptotic expansion, 
\begin{equation*}
\rho(\xi) \sim \sum_{j \geq 0} \rho_{q-j} (\xi),  \qquad \rho_{q-j} \in S_{q-j} (\R^n; \cA_\theta ). 
\end{equation*}
Here $\sim$ means that, for all integers $N$ and multi-orders $\alpha$, $\beta$, there exists $C_{N\alpha\beta} >0$ such that, for all $\xi \in \R^n$, $| \xi | \geq 1$, we have
\begin{equation} \label{eq:Symbols.classical-estimates}
\bigg\| \delta^\alpha \partial_\xi^\beta \biggl( \rho - \sum_{j<N} \rho_{q-j} \biggr)(\xi) \biggr\| \leq C_{N\alpha\beta} | \xi |^{\Re{q}-N-| \beta |} .
\end{equation}
\end{definition}

\begin{remark} \label{rem:Symbols.classical-uniqueness}
 The symbol $\rho_{q-j}(\xi)$ in~(\ref{eq:Symbols.classical-estimates}) is called the homogeneous symbol of degree $q-j$ of $\rho(\xi)$. The symbol $\rho_q(\xi)$ is called the \emph{principal symbol} of $\rho(\xi)$. 
 These homogeneous symbols are uniquely determined by~$\rho(\xi)$ since~(\ref{eq:Symbols.classical-estimates}) implies that, for all $\xi \in \R^n\setminus 0$, we have 
\begin{gather*}
 \rho_q(\xi) = \lim_{\lambda \rightarrow \infty} \lambda^{-q} \rho(\lambda \xi), \\
   \rho_{q-j}(\xi) = \lim_{\lambda \rightarrow \infty} \lambda^{-q+j}\biggl( \rho(\lambda \xi)- \sum_{\ell <j} \lambda^{q-\ell} \rho_{q-\ell}(\xi)\biggl), \qquad j\geq 1. 
\end{gather*}
\end{remark}

\begin{example}
 Every polynomial map $\rho(\xi)=\sum_{|\alpha|\leq m} a_\alpha \xi^\alpha$, $a_\alpha\in \cA_\theta$, is a classical symbol of order $m$. Its principal part is 
 $\rho_m(\xi):=\sum_{|\alpha|= m} a_\alpha \xi^\alpha$. 
\end{example}

\begin{example} \label{ex:Symbols.example-symbol}
 For $\xi\in \R^n$ set $\brak{\xi}=(1+|\xi|^2)^{\frac12}$. Given any $s\in \C$, the function $\brak{\xi}^s$ is a classical symbol of order~$s$. This can be seen by using the binomial expansion, 
\begin{equation*}
 \brak{\xi}^s =|\xi|^s\left(1+|\xi|^{-2}\right)^{\frac{s}2} = \sum_{j \geq 0} \binom{\frac{s}2}{j} |\xi|^{s-2j}, \qquad |\xi|>1. 
\end{equation*}
In particular, the principal symbol of $ \brak{\xi}^s$ is equal to $|\xi|^s$. 
\end{example}

\begin{remark} \label{rmk:Symbols.classical-inclusion}
Let $q\in \C$. Then we have an inclusion, 
\begin{equation*}
 S^q(\R^n;\cA_\theta)\subset \stS^{\Re{q}}(\R^n;\cA_\theta). 
\end{equation*}
\end{remark}

\begin{remark}\label{rmk:Symbols.derivatives-classical}
 Given $q\in\C$, let $\rho(\xi)\in S^q(\R^n;\cA_\theta)$, $\rho(\xi) \sim \sum \rho_{q-j}(\xi)$. Then~(\ref{eq:Symbols.classical-estimates}) and Remark~\ref{rem:Symbols.homogeneous-differentiation} imply that, for all multi-orders $\alpha$ and $\beta$, the partial derivative $\delta^\alpha\partial_\xi^\beta \rho(\xi)$ is a symbol in $S^{q-|\beta|}(\Rn;\cA_\theta)$ and $\delta^\alpha\partial_\xi^\beta \rho(\xi)\sim\sum \delta^\alpha\partial_\xi^\beta \rho_{q-j}(\xi)$.
\end{remark}

\begin{remark} \label{rem:Symbols.classical-involution}
 Given $q\in\C$, let $\rho(\xi)\in S^q(\R^n;\cA_\theta)$, $\rho(\xi) \sim \sum \rho_{q-j}(\xi)$. Then~(\ref{eq:Symbols.classical-estimates}) and Remark~\ref{rem:Symbols.homogeneous-involution} imply that 
 $ \rho(\xi)^* \in S^{\overline{q}}(\Rn;\cA_\theta)$ and we have $ \rho(\xi)^*\sim\sum \rho_{q-j}(\xi)^*$.
\end{remark}

\begin{proposition}[see~Part~I] \label{prop:Symbols.classical-construction}
Let $q\in\C$ and, for $j=0,1,\ldots$ let $\rho_{q-j}(\xi) \in S_{q-j}(\Rn;\cA_\theta)$. Then there exists a symbol $\rho(\xi)\in S^q (\Rn ; \cA_\theta)$ such that $\rho(\xi) \sim \sum_{j \geq 0} \rho_{q-j} (\xi)$. Moreover, such a symbol is unique modulo $\cS(\R^n;\cA_\theta)$. 
\end{proposition}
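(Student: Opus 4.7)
The plan is to carry out the standard Borel-type construction, suitably adapted to $\cA_\theta$-valued maps; no new difficulty arises from the noncommutative setting because the cutoffs involved are scalar and commute with the derivations $\delta_j$.

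\emph{Uniqueness.} If $\rho$ and $\tilde\rho$ in $S^q(\R^n;\cA_\theta)$ share the same asymptotic expansion $\sum\rho_{q-j}$, then Definition~\ref{def:Symbols.classicalsymbols} forces $\rho-\tilde\rho\in\stS^{\Re q-N}(\R^n;\cA_\theta)$ for every $N\geq 1$, hence $\rho-\tilde\rho\in\bigcap_{m\in\R}\stS^m(\R^n;\cA_\theta)=\cS(\R^n;\cA_\theta)$ by Remark~\ref{rmk:Symbols.amlitudes-intersection}.

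\emph{Existence.} I would fix a cutoff $\chi\in C^\infty(\R^n)$ with $\chi(\xi)=0$ for $|\xi|\leq 1/2$ and $\chi(\xi)=1$ for $|\xi|\geq 1$, set $\chi_t(\xi):=\chi(\xi/t)$, and try
\begin{equation*}
 \rho(\xi):=\chi(\xi)\rho_q(\xi)+\sum_{j\geq 1}\chi_{t_j}(\xi)\rho_{q-j}(\xi)
\end{equation*}
for a sequence $t_j\to\infty$ to be chosen. Each summand is well-defined and smooth on all of $\R^n$ because $\chi_{t_j}$ vanishes near the origin, where $\rho_{q-j}$ may be singular. The technical heart of the argument is a Leibniz estimate: combining the homogeneity bound $\|\delta^\alpha\partial_\xi^\beta\rho_{q-j}(\xi)\|=O(|\xi|^{\Re q-j-|\beta|})$ from Remark~\ref{rem:Symbols.homogeneous-differentiation} with the scaling bound $|\partial_\xi^\gamma\chi_{t_j}(\xi)|\leq C_\gamma t_j^{-|\gamma|}$, which is supported on the corona $\{t_j/2\leq|\xi|\leq t_j\}$, yields
\begin{equation*}
 \left\|\delta^\alpha\partial_\xi^\beta[\chi_{t_j}(\xi)\rho_{q-j}(\xi)]\right\|\leq C_{j,\alpha,\beta}(1+|\xi|)^{\Re q-j-|\beta|}\mathbf{1}_{|\xi|\geq t_j/2}.
\end{equation*}
Because the right-hand side is supported in $|\xi|\geq t_j/2$, I can trade $(1+|\xi|)^{-(j-N)}\leq(1+t_j/2)^{-(j-N)}$ when $j>N$, turning a $\stS^{\Re q-j}$ bound into an arbitrarily small $\stS^{\Re q-N}$ seminorm. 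A diagonal choice of $t_j$ ensuring $C_{j,\alpha,\beta}(1+t_j/2)^{-1}\leq 2^{-j}$ for all $|\alpha|+|\beta|\leq j$ then makes the tail $\sum_{j\geq N}\chi_{t_j}\rho_{q-j}$ converge in $\stS^{\Re q-N}(\R^n;\cA_\theta)$ for every $N\geq 1$.

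To finish, I would verify the expansion by writing, for every $N\geq 1$,
\begin{equation*}
 \rho(\xi)-\sum_{j<N}\rho_{q-j}(\xi)=\sum_{j<N}(\chi_{t_j}(\xi)-1)\rho_{q-j}(\xi)+\sum_{j\geq N}\chi_{t_j}(\xi)\rho_{q-j}(\xi).
\end{equation*}
The first sum is compactly supported and smooth, hence in $\cS(\R^n;\cA_\theta)\subset\stS^{\Re q-N}(\R^n;\cA_\theta)$; the second lies in $\stS^{\Re q-N}(\R^n;\cA_\theta)$ by the preceding convergence. Together with the homogeneous content, this gives $\rho\in S^q(\R^n;\cA_\theta)$ with $\rho\sim\sum\rho_{q-j}$. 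The main obstacle I anticipate is the bookkeeping needed for the diagonal choice of $t_j$ that simultaneously dominates all the seminorm bounds; beyond that, the argument is routine and the $\cA_\theta$-valued aspect is harmless since the cutoffs are scalar and commute with $\delta^\alpha$.
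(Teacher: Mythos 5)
Your construction is the standard Borel-type summation, which is indeed how this is done in Part~I (the paper defers the proof there), and the skeleton — cutoffs $\chi_{t_j}$, the Leibniz/homogeneity estimate on the corona, the diagonal choice of $t_j$ forcing the tails to converge in $\stS^{\Re q-N}(\Rn;\cA_\theta)$ — is sound. The uniqueness argument is also fine (one should just note that the estimates~(\ref{eq:Symbols.classical-estimates}) are stated only for $|\xi|\geq 1$, and the global smoothness of $\rho-\tilde\rho$ takes care of the ball $|\xi|\leq 1$).

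There is, however, one concrete error in the last step. You claim that $\sum_{j<N}(\chi_{t_j}(\xi)-1)\rho_{q-j}(\xi)$ is ``compactly supported and smooth, hence in $\cS(\R^n;\cA_\theta)$.'' It is compactly supported, but it is \emph{not} smooth on $\R^n$: near the origin $\chi_{t_j}\equiv 0$, so this sum equals $-\sum_{j<N}\rho_{q-j}(\xi)$ there, and the homogeneous symbols $\rho_{q-j}$ are in general singular at $\xi=0$ — a point you yourself invoke two paragraphs earlier to justify inserting the cutoffs in the first place. Consequently this sum does not lie in $\cS(\R^n;\cA_\theta)$, and the remainder $\rho-\sum_{j<N}\rho_{q-j}$ cannot lie in $\stS^{\Re q-N}(\Rn;\cA_\theta)$ either (it is not even defined, let alone smooth, at $\xi=0$). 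The repair is immediate: Definition~\ref{def:Symbols.classicalsymbols} only demands the remainder estimates for $|\xi|\geq 1$. On that region the first sum is a smooth map supported in the compact shell $1\leq|\xi|\leq\max_{j<N}t_j$, hence satisfies any bound of the form $C|\xi|^{\Re q-N-|\beta|}$ there, and the second sum obeys the required estimates by your tail argument. With that correction the proof goes through.
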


\begin{remark} \label{rem:Symbols.classical-homogeneouspart}
 Let $\rho(\xi) \in C^\infty(\Rn;\cA_\theta)$ be such that $\rho(\xi)\sim \sum_{\ell \geq 0}\rho^{(\ell)}(\xi)$, where $\rho^{(\ell)}(\xi)\in S^{q-\ell}(\Rn;\cA_\theta)$ and $\sim$ is taken in the sense of Definition~\ref{def:Symbols.standard-asymptotic}. Then $\rho(\xi)$ is a symbol in $S^{q}(\Rn;\cA_\theta)$, and we have $\rho(\xi) \sim \sum_{j \geq 0} \rho_{q-j}(\xi)$ in the sense of~(\ref{eq:Symbols.classical-estimates}), where 
\begin{equation*} 
\rho_{q-j}(\xi)= \sum_{\ell \leq j} \rho_{q-j}^{(\ell)}(\xi) , \qquad j \geq 0. 
\end{equation*}
Here $\rho_{q-j}^{(\ell)}(\xi)$ is the symbol of degree $q-j$ of $\rho^{(\ell)}(\xi)$.
\end{remark}

Finally, the following result shows that the product of $\cA_\theta$ gives rise to a (graded) bilinear map on classical symbols. 

\begin{proposition} \label{prop:Symbols.classical-product}
Let $\rho(\xi) \in S^{q}(\Rn;\cA_\theta)$ and $\sigma(\xi) \in S^{q'}(\Rn;\cA_\theta)$, $q,q'\in \C$. Then $\rho(\xi)\sigma(\xi)$ is in $S^{q+q'}(\Rn;\cA_\theta)$, and we have $\rho(\xi)\sigma(\xi)\sim\sum_{j\geq 0} (\rho\sigma)_{q+q'-j}(\xi)$, where
\begin{equation*}
           (\rho\sigma)_{q+q'-j}(\xi)=  \sum_{p+r=j}\rho_{q-p}(\xi)\sigma_{q'-r}(\xi) , \qquad j\geq 0. 
\end{equation*}
\end{proposition}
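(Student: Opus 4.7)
The plan is to verify the defining estimates~\eqref{eq:Symbols.classical-estimates} directly for $\rho\sigma$. The product $\rho\sigma$ is smooth on all of $\R^n$, and, by Lemma~\ref{lem:Symbols.standard-product} together with Remark~\ref{rmk:Symbols.classical-inclusion}, it already lies in $\stS^{\Re(q+q')}(\R^n;\cA_\theta)$. For each $j\geq 0$ and each pair with $p+r=j$, the map $\rho_{q-p}(\xi)\sigma_{q'-r}(\xi)$ is smooth on $\R^n\setminus 0$ and homogeneous of degree $(q-p)+(q'-r)=q+q'-j$; hence the finite sum $(\rho\sigma)_{q+q'-j}(\xi):=\sum_{p+r=j}\rho_{q-p}(\xi)\sigma_{q'-r}(\xi)$ is a candidate element of $S_{q+q'-j}(\R^n;\cA_\theta)$.

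Fixing $N\geq 1$, I would introduce the remainders $R_N^\rho(\xi):=\rho(\xi)-\sum_{p<N}\rho_{q-p}(\xi)$ and $R_N^\sigma(\xi):=\sigma(\xi)-\sum_{r<N}\sigma_{q'-r}(\xi)$ on the region $\{|\xi|\geq 1\}$, where the individual homogeneous symbols make sense. By hypothesis both remainders satisfy~\eqref{eq:Symbols.classical-estimates} with respective degrees $\Re{q}-N$ and $\Re{q'}-N$, and each homogeneous piece $\rho_{q-p}$, $\sigma_{q'-r}$ obeys standard-symbol bounds of order $\Re(q-p)$, $\Re(q'-r)$ on this region. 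Expanding the product on $\{|\xi|\geq 1\}$ and regrouping by the degree $p+r$ yields
$$
\rho\sigma - \sum_{j<N}(\rho\sigma)_{q+q'-j} = \sum_{\substack{p,r<N \\ p+r\geq N}}\rho_{q-p}\sigma_{q'-r} + \sum_{p<N}\rho_{q-p}R_N^\sigma + R_N^\rho\sum_{r<N}\sigma_{q'-r} + R_N^\rho R_N^\sigma.
$$
Each term on the right, together with every derivative $\delta^\alpha\partial_\xi^\beta$, is $\op{O}(|\xi|^{\Re(q+q')-N-|\beta|})$ on $|\xi|\geq 1$: the cross-terms of the first sum by direct homogeneity counting and Leibniz's rule (since $p+r\geq N$), and the three remaining terms by the standard-symbol product of Lemma~\ref{lem:Symbols.standard-product} combined with Remark~\ref{rmk:Symbols.classical-inclusion}.

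This verifies~\eqref{eq:Symbols.classical-estimates} for $\rho\sigma$ with the announced homogeneous components, establishing that $\rho\sigma\in S^{q+q'}(\R^n;\cA_\theta)$ with $(\rho\sigma)_{q+q'-j}=\sum_{p+r=j}\rho_{q-p}\sigma_{q'-r}$ for all $j\geq 0$. The argument is essentially bookkeeping on degrees; the only slightly delicate point is the first sum of cross-terms with $p+r\geq N$, but each such summand is a product of homogeneous symbols whose real degrees add to at most $\Re(q+q')-N$, so no genuine obstacle arises.
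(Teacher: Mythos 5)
Your argument is correct and is the standard decomposition-into-partial-sums-plus-remainders proof of the product rule for classical symbols; the paper itself only cites Part~I for this statement, and your write-up supplies essentially that canonical argument in full, with the algebraic identity for the difference $\rho\sigma-\sum_{j<N}(\rho\sigma)_{q+q'-j}$ and the degree bookkeeping both checking out. One cosmetic point: for the terms involving $R_N^\rho$, $R_N^\sigma$ and the homogeneous pieces you should not literally invoke Lemma~\ref{lem:Symbols.standard-product} (those maps need not be standard symbols on all of $\R^n$, only estimated on $|\xi|\geq 1$); the required bound there is just Leibniz's rule combined with the homogeneity bounds and the estimates~(\ref{eq:Symbols.classical-estimates}) on $\{|\xi|\geq 1\}$, which is what your computation actually uses.
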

\begin{proof}
See Part~I. 
\end{proof}

\section{Amplitudes and Oscillating Integrals} \label{section:Amplitudes}
In this section, we recall the construction in Part~I of the oscillating integral for $\cA_\theta$-valued amplitudes. Our construction is done for amplitudes of any order. We refer to~\cite{Ri:MAMS93} for an alternating construction of the oscillating integrals of zeroth order amplitudes with values in Fr\'echet spaces. 

We also refer to Appendix~B of Part~I for background on the integration of maps with values in locally convex spaces. The extension of Lebesgue's integral is carried out for maps with values in quasi-complete Suslin locally convex spaces. The smooth noncommutative torus $\cA_\theta$ is such a space, since this is a separable Fr\'echet space. 

\subsection{Spaces of amplitudes} \label{subsection:Amplitudes.amplitudes} 
We shall use the following classes of $\cA_\theta$-valued amplitudes. 

\begin{definition}
$A^m ( \Rn \times \Rn ; \cA_\theta )$, $m \in \R$, consists of maps $a(s,\xi)$ in $C^{\infty} (\Rn \times \Rn ; \cA_\theta )$ such that, for all multi-orders $\alpha$, $\beta$, $\gamma$, there is $C_{\alpha \beta \gamma} > 0$ such that
\begin{equation} \label{eq:Amplitudes.amplitudes-estimates}
\norm{\delta^\alpha \partial_s^\beta \partial_\xi^\gamma  a(s,\xi)} \leq C_{\alpha \beta \gamma} \left( 1 + |s| + |\xi| \right)^m \quad \forall (s,\xi) \in \Rn \times \Rn .
\end{equation}
\end{definition}

\begin{remark}
 In the same way as in Remark~\ref{rmk:Symbols.amlitudes-intersection} we have
 \begin{equation*}
\bigcap_{m\in\R}A^m(\Rn\times\Rn;\cA_\theta)= \cS(\Rn\times\Rn;\cA_\theta).  
\end{equation*}
\end{remark}

We also define 
\begin{equation*} 
 A^{+\infty}(\Rn\times\Rn;\cA_\theta):=\bigcup_{m\in\R} A^m(\Rn\times\Rn;\cA_\theta) .
\end{equation*}

In what follows, we endow the space $A^m(\Rn\times\Rn;\cA_\theta)$, $m\in \R$, with the locally convex topology generated by the semi-norms,
\begin{equation*} 
q_N^{(m)} (a) := \sup_{|\alpha|+|\beta|+|\gamma| \leq N}\sup_{(s,\xi) \in \Rn \times \Rn} (1 + |s| + |\xi| )^{-m} \norm{\delta^\alpha \partial_s^\beta \partial_\xi^\gamma a(s,\xi)}, \quad N\in\N_0 .
\end{equation*}
In particular, the inclusion of $\C$ into the center of $\cA_\theta$ gives rise to a continuous embedding of $A^m(\Rn\times\Rn)$ into $A^m(\Rn\times\Rn;\cA_\theta)$. In addition, the natural inclusion of $A^m(\Rn\times\Rn;\cA_\theta)$ into $C^\infty (\Rn \times \Rn ; \cA_\theta )$ is continuous. 

\begin{proposition}[see~Part~I] \label{prop:Amplitudes.amplitudes-Frechetspace} 
$A^m(\Rn\times\Rn;\cA_\theta)$, $m\in \R$,  is a Fr\'{e}chet space.
\end{proposition}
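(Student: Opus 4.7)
The topology on $A^m(\R^n\times\R^n;\cA_\theta)$ is generated by the countable family of seminorms $\{q_N^{(m)}\}_{N\in\N_0}$, which separates points (if $q_0^{(m)}(a)=0$ then $a\equiv 0$ in $\cA_\theta$ pointwise). Hence $A^m(\R^n\times\R^n;\cA_\theta)$ is a Hausdorff, metrizable locally convex space, and the only thing to verify is sequential completeness.

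Let $(a_k)_{k\geq 0}$ be a Cauchy sequence. The key observation is that for any multi-orders $\alpha,\beta,\gamma$, choosing $N\geq|\alpha|+|\beta|+|\gamma|$ gives
\begin{equation*}
\big\|\delta^\alpha\partial_s^\beta\partial_\xi^\gamma\bigl(a_k(s,\xi)-a_\ell(s,\xi)\bigr)\big\|\leq q_N^{(m)}(a_k-a_\ell)\,(1+|s|+|\xi|)^m,
\end{equation*}
uniformly on $\R^n\times\R^n$. In particular, for each fixed $(s,\xi)$ and each $\alpha$, the sequence $\delta^\alpha a_k(s,\xi)$ is Cauchy in $(A_\theta,\|\cdot\|)$. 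Since the seminorms $u\mapsto\|\delta^\alpha u\|$ generate the Fr\'echet topology of $\cA_\theta$ (see Proposition~\ref{prop:NCtori.cAtheta-Frechet}), the sequence $a_k(s,\xi)$ is Cauchy in $\cA_\theta$ and therefore converges to some $a(s,\xi)\in\cA_\theta$. The same argument applied to the partial derivatives shows that $\partial_s^\beta\partial_\xi^\gamma a_k(s,\xi)$ converges in $\cA_\theta$ for every $\beta,\gamma$, and that this convergence is locally uniform in $(s,\xi)$.

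From here, a standard argument (integrating along segments in $\R^n\times\R^n$ and passing to the limit, which is justified because $\cA_\theta$ is complete and the convergence is uniform on compacta) shows that the pointwise limit $a(s,\xi)$ is smooth from $\R^n\times\R^n$ to $\cA_\theta$, with $\partial_s^\beta\partial_\xi^\gamma a(s,\xi)=\lim_{k}\partial_s^\beta\partial_\xi^\gamma a_k(s,\xi)$ and, similarly, $\delta^\alpha$ commuting with the limit by continuity of $\delta^\alpha$ on $\cA_\theta$. To obtain the amplitude estimates~(\ref{eq:Amplitudes.amplitudes-estimates}) and the convergence $a_k\to a$ in $A^m$, fix $N$ and $\epsilon>0$; choose $K$ such that $q_N^{(m)}(a_k-a_\ell)\leq\epsilon$ for $k,\ell\geq K$; then letting $\ell\to\infty$ inside the supremum and using the just-established pointwise convergence of all relevant derivatives gives $q_N^{(m)}(a_k-a)\leq\epsilon$ for $k\geq K$. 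Applied with $k=K$ this yields the required bound for $a$ itself, showing that $a\in A^m(\R^n\times\R^n;\cA_\theta)$.

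The only mild obstacle is the passage from pointwise convergence of all derivatives to joint smoothness of the limit in the $\cA_\theta$-valued setting; this is a routine adaptation of the scalar case, made available because $\cA_\theta$ is a Fr\'echet space in which Lebesgue integration of continuous maps with values in $\cA_\theta$ is well-defined (cf.\ Part~I, Appendix~B).
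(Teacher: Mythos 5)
Your argument is correct and is the standard completeness proof for weighted spaces of smooth vector-valued maps, which is exactly the route taken in Part~I (the present paper only cites Part~I for this proposition): metrizability from the countable separating family $q_N^{(m)}$, pointwise convergence in the complete space $\cA_\theta$, joint smoothness of the limit via uniform-on-compacta convergence of all derivatives, and recovery of the estimates by letting $\ell\to\infty$ in the Cauchy bound. Nothing is missing.
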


Any map $ \R^n \ni \xi \rightarrow \rho(\xi)\in \cA_\theta$ can be seen as a map $\R^n\times \R^n \ni (s,\xi)\rightarrow \rho(\xi)\in \cA_\theta$ that does not depend on the variable $s$. In particular, this allows us to regard $C^\infty(\R^n; \cA_\theta)$ as a subspace of $C^\infty(\R^n\times \R^n; \cA_\theta)$. Keeping in mind this identification, we have the following relationship between (standard) symbols and amplitudes. 

\begin{lemma}[see~Part~I] \label{lem:Amplitudes.symbol-inclusion}
 Let $m \in \R$, and set $m_+=\op{max}(m,0)$. Then we have a continuous inclusion, 
 \begin{equation*}
 \stS^m(\R^n; \cA_\theta)\subset A^{m_+}(\R^n\times \R^n; \cA_\theta). 
\end{equation*}
\end{lemma}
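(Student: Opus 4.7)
The plan is to verify the two required estimates directly on the symbol seen as an amplitude. Given $\rho(\xi)\in\stS^m(\R^n;\cA_\theta)$, I would identify it with the $s$-independent map $a(s,\xi):=\rho(\xi)$ on $\R^n\times\R^n$. Then $\partial_s^\beta a(s,\xi)=0$ whenever $|\beta|\geq 1$, so the amplitude estimate \eqref{eq:Amplitudes.amplitudes-estimates} is automatic in that case (with any constant). Only the case $\beta=0$ requires work, and there one must compare the symbol bound
\begin{equation*}
\|\delta^\alpha\partial_\xi^\gamma \rho(\xi)\|\leq p_N^{(m)}(\rho)(1+|\xi|)^{m-|\gamma|},\qquad |\alpha|+|\gamma|\leq N,
\end{equation*}
with the required amplitude bound $(1+|s|+|\xi|)^{m_+}$.

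The key elementary inequality is
\begin{equation*}
(1+|\xi|)^{m-|\gamma|}\leq (1+|s|+|\xi|)^{m_+}\qquad\text{for all }s,\xi\in\R^n,\ \gamma\in\N_0^n.
\end{equation*}
I would prove this by splitting into cases according to the sign of $m-|\gamma|$. If $m-|\gamma|\leq 0$, then the left-hand side is $\leq 1$, while the right-hand side is $\geq 1$ since $m_+\geq 0$, so the inequality holds. If $m-|\gamma|>0$, then in particular $m>0$ so $m_+=m$, and $(1+|\xi|)^{m-|\gamma|}\leq (1+|\xi|)^m\leq (1+|s|+|\xi|)^m$.

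Combining the symbol estimate with this inequality yields
\begin{equation*}
\|\delta^\alpha\partial_s^\beta\partial_\xi^\gamma a(s,\xi)\|\leq p_N^{(m)}(\rho)(1+|s|+|\xi|)^{m_+}\qquad\text{whenever }|\alpha|+|\beta|+|\gamma|\leq N,
\end{equation*}
which is exactly the amplitude estimate, so $a\in A^{m_+}(\R^n\times\R^n;\cA_\theta)$. Moreover this same inequality shows $q_N^{(m_+)}(a)\leq p_N^{(m)}(\rho)$ for every $N\in\N_0$, establishing continuity of the inclusion between the two Fr\'echet topologies.

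There is essentially no obstacle here: the entire content is the case-split on the sign of $m-|\gamma|$, which handles both the ``symbol grows in $\xi$'' regime (absorbed by $(1+|s|+|\xi|)^m$) and the ``symbol decays in $\xi$'' regime (trivially bounded by $1\leq(1+|s|+|\xi|)^{m_+}$). The necessity of using $m_+$ rather than $m$ on the amplitude side is exactly to cover this latter regime when $m<0$, since then $(1+|s|+|\xi|)^m$ is not bounded from below as $|s|\to\infty$.
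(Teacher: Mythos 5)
Your proof is correct and is the natural (essentially the only) way to verify this inclusion: the case split on the sign of $m-|\gamma|$ is exactly the point where $m_+$ rather than $m$ is needed, and your seminorm bound $q_N^{(m_+)}(a)\leq p_N^{(m)}(\rho)$ gives the continuity. The paper defers the proof to Part~I, but the argument there is the same direct verification.
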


We also have the following version of Proposition~\ref{prop:Symbols.standard-density}. 

\begin{proposition}[see~Part~I] \label{prop:Amplitudes.amplitudes-density}
Let $a(s,\xi)\in A^m(\Rn\times\Rn;\cA_\theta)$, $m\in\R$, and let $\chi(s,\xi)\in C^\infty_c(\Rn\times\Rn)$ be such that $\chi(0,0)=1$.  For $0<\epsilon\leq 1$, define $a_\epsilon(s,\xi)\in C^\infty_c(\Rn\times\Rn;\cA_\theta)$ by 
\begin{equation*}
 a_\epsilon(s,\xi)= \chi(\epsilon s,\epsilon \xi) a(s,\xi), \qquad (s,\xi) \in \R^n\times \R^n. 
\end{equation*}
 Then the family $(a_\epsilon(s,\xi))_{0<\epsilon\leq 1}$ is contained in $C^\infty_c(\Rn\times\Rn; \cA_\theta)$, it is bounded in $A^m(\Rn\times\Rn;\cA_\theta)$ and, as $\epsilon \rightarrow 0^+$, it converges to $a(s,\xi)$  in $A^{m'}(\Rn\times\Rn;\cA_\theta)$ for every $m'>m$. 
\end{proposition}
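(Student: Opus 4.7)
The plan is to imitate the proof of the analogous density statement for standard symbols (Proposition~\ref{prop:Symbols.standard-density}), taking advantage of the fact that $\chi(s,\xi)$ is scalar-valued and therefore central in $\cA_\theta$, so that the inner derivations $\delta^\alpha$ never hit $\chi$.

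First I would check the easy structural claims. Since $\chi\in C_c^\infty(\R^n\times\R^n)$, the function $(s,\xi)\mapsto \chi(\epsilon s,\epsilon \xi)$ has support contained in $\epsilon^{-1}\supp(\chi)$, hence $a_\epsilon$ lies in $C_c^\infty(\R^n\times\R^n;\cA_\theta)$. To prove boundedness in $A^m$, I expand $\delta^\alpha\partial_s^\beta\partial_\xi^\gamma a_\epsilon(s,\xi)$ by the Leibniz rule; because $\chi$ is scalar, only $\partial_s^{\beta'}\partial_\xi^{\gamma'}$ with $\beta'\leq \beta$, $\gamma'\leq \gamma$ appear acting on $\chi(\epsilon s,\epsilon\xi)$, which produces an overall factor of $\epsilon^{|\beta'|+|\gamma'|}\leq 1$. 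Using $\|\partial^{(\beta',\gamma')}\chi\|_\infty<\infty$ and the defining estimate~(\ref{eq:Amplitudes.amplitudes-estimates}) for $a$ then gives a bound independent of $\epsilon\in(0,1]$ on the seminorm $q_N^{(m)}(a_\epsilon)$.

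For the convergence $a_\epsilon\to a$ in $A^{m'}(\R^n\times\R^n;\cA_\theta)$ for every $m'>m$, I split the Leibniz expansion of $\delta^\alpha\partial_s^\beta\partial_\xi^\gamma(a_\epsilon-a)$ into (i) the single term where no derivative falls on $\chi$, namely $\bigl[\chi(\epsilon s,\epsilon\xi)-1\bigr]\,\delta^\alpha\partial_s^\beta\partial_\xi^\gamma a(s,\xi)$, and (ii) all remaining terms, each carrying a factor $\epsilon^{|\beta'|+|\gamma'|}$ with $|\beta'|+|\gamma'|\geq 1$. The type (ii) terms are of the form
\begin{equation*}
 \epsilon^{|\beta'|+|\gamma'|}(\partial_s^{\beta'}\partial_\xi^{\gamma'}\chi)(\epsilon s,\epsilon\xi)\,\delta^\alpha\partial_s^{\beta-\beta'}\partial_\xi^{\gamma-\gamma'}a(s,\xi),
\end{equation*}
and since $\partial_s^{\beta'}\partial_\xi^{\gamma'}\chi$ is bounded and $a\in A^m\subset A^{m'}$, the explicit $\epsilon$ prefactor gives supremum-norm decay against $(1+|s|+|\xi|)^{-m'}$.

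The only nontrivial point is the type (i) term. Here I would use the standard two-region split: given $\eta>0$, choose $R>0$ large enough that $\|\chi\|_\infty\,(1+R)^{m-m'}$ is smaller than $\eta$ after multiplication by the fixed constant coming from the $A^m$-seminorm of $a$; on $\{|s|+|\xi|\geq R\}$ the factor $(1+|s|+|\xi|)^{m-m'}$ then dominates the whole expression. On the complementary compact set, $\chi(\epsilon s,\epsilon\xi)-1\to 0$ uniformly as $\epsilon\to 0^+$ because $\chi$ is continuous with $\chi(0,0)=1$, so the contribution there is $O(\epsilon)$ times a constant. Combining the two estimates yields $q_N^{(m')}(a_\epsilon-a)\to 0$ as $\epsilon\to 0^+$, which is the desired convergence. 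The genuinely delicate step is this type (i) estimate, since one must trade the missing $\epsilon$-power for the decay gap $m'-m>0$ through the two-region argument; the rest is essentially bookkeeping.
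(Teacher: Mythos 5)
Your argument is correct and is the standard one; the paper itself defers the proof to Part~I, and your two-region estimate for the term $[\chi(\epsilon s,\epsilon\xi)-1]\,\delta^\alpha\partial_s^\beta\partial_\xi^\gamma a(s,\xi)$ (trading the absent power of $\epsilon$ for the decay $(1+|s|+|\xi|)^{m-m'}$ with $m'-m>0$) is exactly the right way to handle the only delicate point, with the type~(ii) terms and the uniform $A^m$-bound following from the Leibniz rule as you say. One cosmetic remark: the $\delta_j$ are not \emph{inner} derivations — they generate the $\R^n$-action — but the property you actually use, namely that they are $\C$-linear and hence annihilate the scalar factor $\chi(\epsilon s,\epsilon\xi)$, is correct.
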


\begin{remark}
Similarly to Remark~\ref{rmk:Symbols.reduction-cS},  Proposition~\ref{prop:Amplitudes.amplitudes-density} allows us to reduce the proof of equalities for continuous functionals on amplitudes to checking them for maps in $C^\infty_c(\Rn\times\Rn; \cA_\theta)$.  
\end{remark}

\subsection{$\cA_\theta$-Valued oscillating integrals}
Let $a(s,\xi)\in A^m (\Rn\times\Rn;\cA_\theta)$, $m<-2n$. The estimates~(\ref{eq:Amplitudes.amplitudes-estimates}) imply that, for every multi-order $\alpha$, we have 
\begin{equation*}
\iint \left\| \delta^\alpha\left(a(s,\xi)\right)\right\| dsd\xi  \leq \iint  \left(1+|s|+|\xi|\right)^m dsd\xi <\infty. 
\end{equation*}
As the semi-norms $u\rightarrow \| \delta^\alpha(u)\|$, 
$\alpha \in \N_0^n$, generate the topology of the separable Fr\'echet space $\cA_\theta$, this shows that the map 
 $\R^n\times \R^n \ni(s,\xi)\rightarrow e^{is\cdot \xi}a(s,\xi)\in \cA_\theta$ is integrable in the sense of Definition~B.11 of Part~I. 
 Therefore, we may define the $\cA_\theta$-valued integral, 
\begin{equation} \label{eq:Amplitudes.definition-J0}
J_0(a) := \iint e^{is\cdot\xi} a(s,\xi) ds\dbar\xi , 
\end{equation}
where we have set $\dbar\xi=(2\pi)^{-n}d\xi$.  More precisely, this is the unique element of $\cA_\theta$ such that
\begin{equation*}
  \varphi \left( \iint e^{is\cdot \xi} a(s,\xi)ds\dbar \xi \right) =  \iint e^{is\cdot \xi} \varphi[a(s,\xi)]ds\dbar \xi \qquad \text{for all $\varphi \in \cA_\theta'$}. 
\end{equation*}
We obtain a linear map $J_0:A^m (\Rn\times\Rn;\cA_\theta)\rightarrow \cA_\theta$. Furthermore, we have the following result. 

\begin{lemma}[see~Part~I] \label{lem:Amplitudes.J0-continuity}
 The linear map $J_0:A^m (\Rn\times\Rn;\cA_\theta)\rightarrow \cA_\theta$ given by~(\ref{eq:Amplitudes.definition-J0}) is continuous for every $m<-2n$. 
\end{lemma}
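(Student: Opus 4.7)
The plan is to reduce everything to a pointwise estimate on $\|\delta^\beta a(s,\xi)\|$ together with the observation that $(1+|s|+|\xi|)^m$ is integrable over $\R^n\times\R^n$ once $m<-2n$. Since the topology of $\cA_\theta$ is generated by the seminorms $u\mapsto \|\delta^\beta u\|$, $\beta\in\N_0^n$, it suffices to show that for each $\beta$ there exist $N\in\N_0$ and $C>0$ such that
\begin{equation*}
\bigl\|\delta^\beta\bigl(J_0(a)\bigr)\bigr\|\leq C\, q_N^{(m)}(a)\qquad\forall\,a\in A^m(\R^n\times\R^n;\cA_\theta).
\end{equation*}

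First I would justify that $\delta^\beta$ commutes with the $\cA_\theta$-valued integral defining $J_0(a)$. This is where I would lean on the formalism of Appendix~B of Part~I: the derivation $\delta^\beta$ is a continuous linear endomorphism of $\cA_\theta$, and for any $\varphi\in\cA_\theta'$ the composition $\varphi\circ\delta^\beta$ lies again in $\cA_\theta'$. Hence, by the defining property of the weak integral, $\varphi\bigl(\delta^\beta J_0(a)\bigr)=\iint e^{is\cdot\xi}\varphi\bigl[\delta^\beta a(s,\xi)\bigr]ds\dbar\xi=\varphi\bigl(J_0(\delta^\beta a)\bigr)$, so $\delta^\beta J_0(a)=J_0(\delta^\beta a)$.

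Next I would bound $\|J_0(b)\|$ for any $b\in A^m(\R^n\times\R^n;\cA_\theta)$, $m<-2n$. Using the triangle inequality for the $\cA_\theta$-valued integral (again a standard property of the integral from Appendix~B of Part~I), together with $|e^{is\cdot\xi}|=1$, one gets $\|J_0(b)\|\leq (2\pi)^{-n}\iint \|b(s,\xi)\|\,ds\,d\xi$. Applied to $b=\delta^\beta a$, the amplitude estimate~(\ref{eq:Amplitudes.amplitudes-estimates}) with $\alpha=\beta$ and $\beta=\gamma=0$ yields $\|\delta^\beta a(s,\xi)\|\leq q_{|\beta|}^{(m)}(a)\,(1+|s|+|\xi|)^m$. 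Hence
\begin{equation*}
\bigl\|\delta^\beta J_0(a)\bigr\|\leq (2\pi)^{-n}\, q_{|\beta|}^{(m)}(a)\iint (1+|s|+|\xi|)^m\, ds\, d\xi.
\end{equation*}
The integral on the right is finite for $m<-2n$ (polar coordinates in the $2n$-dimensional $(s,\xi)$-space, or a direct Fubini argument), so we may take $C=(2\pi)^{-n}\iint(1+|s|+|\xi|)^m ds\,d\xi$ and $N=|\beta|$, which gives the desired continuity estimate.

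I do not expect a real obstacle here. The only point requiring a touch of care is the interchange of $\delta^\beta$ with the integral defining $J_0$, and that is handled cleanly via the weak characterization of $J_0(a)$ inherited from the Fréchet-space integration framework of Part~I. Everything else is a pointwise amplitude bound and the standard convergence of $\iint(1+|s|+|\xi|)^m ds\, d\xi$ for $m<-2n$.
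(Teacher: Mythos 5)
Your proposal is correct and follows essentially the same route as the paper: the discussion immediately preceding the lemma already records the key estimate $\iint\|\delta^\alpha a(s,\xi)\|\,ds\,d\xi\leq C\iint(1+|s|+|\xi|)^m\,ds\,d\xi<\infty$ for $m<-2n$, and the continuity is obtained exactly as you do, by commuting $\delta^\beta$ with the weak integral and bounding the result by $q_{|\beta|}^{(m)}(a)$ times the convergent integral. The two points you flag as needing care (the interchange of $\delta^\beta$ with $J_0$, and the seminorm--integral inequality) are precisely the properties of the $\cA_\theta$-valued integral supplied by the Fr\'echet-space integration framework of Part~I, so there is no gap.
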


In what follows, given any differential operator $P$ on $\R^n\times \R^n$, we denote by $P^t$ its transpose. This is the differential operator on $\R^n\times \R^n$ such that 
\begin{equation*} 
 \iint Pu(s,\xi) v(s,\xi) ds d\xi  =  \iint u(s,\xi) P^tv(s,\xi) ds d\xi \qquad \text{for all $u,v\in C^\infty_c(\R^n\times \R^n)$}. 
\end{equation*}
In fact, if we set $P= \sum c_{\beta\gamma}(s,\xi) \partial_s^\beta \partial_\xi^\gamma$, $c_{\beta\gamma}(s,\xi) \in C^\infty(\R^n\times \R^n)$, then we have
\begin{equation*}
 P^tu (s,\xi) = \sum (-1)^{|\beta|+|\gamma|}  \partial_s^\beta \partial_\xi^\gamma \left( c_{\beta\gamma}(s,\xi)u(s,\xi)\right) \qquad  
 \text{for all $u\in C^\infty(\R^n\times \R^n)$}. 
\end{equation*}

We shall now explain how to extend the linear map $J_0$ to the whole class $A^{+\infty}(\Rn\times\Rn;\cA_\theta)$. To reach this end let $\chi(s,\xi) \in C_c^{\infty} (\Rn \times \Rn)$ be such that $\chi(s,\xi)= 1$ near $(s,\xi)=(0,0)$, and set 
\begin{equation*}
L:= \chi(s,\xi) + \frac{1-\chi(s,\xi)}{|s|^2 + |\xi|^2} \sum_{1 \leq j \leq n} (\xi_j D_{s_j} + s_j D_{\xi_j}) , 
\end{equation*}
where we have set $D_{x_j}=\frac{1}{i}\partial_{x_j}$, $j=1, \ldots, n$. We note that
\begin{equation*}
L(e^{is\cdot\xi}) = e^{is\cdot\xi}.
\end{equation*}
We also denote by $L^t$ the transpose of $L$. 

\begin{lemma}[see~Part~I] \label{lem:Amplitudes.L-transpose-continuity} Let $m \in \R$. Then the differential operator $L^t$ gives rise to a continuous linear map,
\begin{equation*}
L^t : A^m (\Rn \times \Rn ; \cA_\theta) \longrightarrow A^{m-1} (\Rn \times \Rn ; \cA_\theta) .
\end{equation*}
\end{lemma}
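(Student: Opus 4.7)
The plan is to expand $L^t$ explicitly using the transpose formula, estimate the resulting coefficient functions, and conclude via Leibniz's rule. Setting
\begin{equation*}
c_j(s,\xi) := \frac{(1-\chi(s,\xi))\xi_j}{|s|^2+|\xi|^2}, \qquad d_j(s,\xi) := \frac{(1-\chi(s,\xi))s_j}{|s|^2+|\xi|^2},
\end{equation*}
for $j=1,\ldots, n$, direct computation from the transpose formula gives
\begin{equation*}
L^t u = \chi u - \sum_{j=1}^n D_{s_j}(c_j u) - \sum_{j=1}^n D_{\xi_j}(d_j u).
\end{equation*}
Since $1-\chi$ vanishes to all orders in a neighborhood of the origin, the functions $c_j$ and $d_j$ are smooth on $\R^n\times \R^n$ despite the denominator $|s|^2+|\xi|^2$.

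Next, I would verify that $c_j$ and $d_j$ satisfy the weighted estimates
\begin{equation*}
\bigl|\partial_s^\alpha \partial_\xi^\beta c_j(s,\xi)\bigr| + \bigl|\partial_s^\alpha \partial_\xi^\beta d_j(s,\xi)\bigr|\leq C_{\alpha \beta}(1+|s|+|\xi|)^{-1-|\alpha|-|\beta|}
\end{equation*}
for all multi-orders $\alpha, \beta$ and all $(s,\xi)\in \R^n\times \R^n$. Outside a compact neighborhood of the origin this follows from the quotient rule applied to $\xi_j/(|s|^2+|\xi|^2)$ and $s_j/(|s|^2+|\xi|^2)$, using that each derivative of a function homogeneous of degree $k$ off the origin produces one homogeneous of degree $k-1$; on the compact set where $\chi$ is non-trivial, the estimate is immediate by smoothness.

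Finally, using these estimates and the Leibniz rule, I would check that for any smooth function $c(s,\xi)$ satisfying weighted estimates with exponent $-1-|\alpha|-|\beta|$ as above, the multiplication operator $u \mapsto c(s,\xi)u$ is continuous from $A^m(\R^n\times \R^n;\cA_\theta)$ to $A^{m-1}(\R^n\times \R^n;\cA_\theta)$, and that $D_{s_j}, D_{\xi_j}$ are continuous endomorphisms of each $A^m(\R^n\times \R^n;\cA_\theta)$. Combined with the observation that $\chi u$ is compactly supported so that $u\mapsto \chi u$ is continuous from $A^m$ into every $A^k$, these facts imply that $L^t:A^m \to A^{m-1}$ is well-defined and continuous.

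The main obstacle is the bookkeeping in the second step: verifying inductively via the quotient rule that each derivative of $c_j$ or $d_j$ yields an extra factor of decay $(1+|s|+|\xi|)^{-1}$. No deeper idea is required, however, and the remaining steps are formal consequences of the Leibniz rule and the definitions of the seminorms $q_N^{(m)}$ on $A^m(\R^n\times\R^n;\cA_\theta)$.
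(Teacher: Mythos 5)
Your argument is correct and is the standard (and essentially only natural) way to prove this: expand $L^t$ via the transpose formula, check that the scalar coefficients $c_j$, $d_j$ are smooth and satisfy $\op{O}\bigl((1+|s|+|\xi|)^{-1-|\alpha|-|\beta|}\bigr)$ estimates by homogeneity off a compact set, and conclude by Leibniz together with the continuity of $D_{s_j}$, $D_{\xi_j}$ and of multiplication by $\chi$ on the amplitude classes. The paper defers the proof to Part~I, but your route is the same one taken there, and the details you sketch (in particular that $\delta^\alpha$ commutes with multiplication by the scalar coefficients, so only the $s,\xi$-derivatives need bookkeeping) all go through.
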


We are now in a position to extend the linear map~(\ref{eq:Amplitudes.definition-J0}) to amplitudes of any order. 

\begin{proposition}[see Part~I]\label{prop:Amplitudes.extension-J0}
The linear map~(\ref{eq:Amplitudes.definition-J0}) has a unique extension to a linear map $J:A^{+\infty}(\Rn\times\Rn;\cA_\theta)\rightarrow \cA_\theta$ which is continuous on each space $A^m(\Rn\times\Rn;\cA_\theta)$, $m\in\R$. More precisely, for every $a\in A^m(\Rn\times\Rn;\cA_\theta)$, $m\in\R$, we have
\begin{equation} \label{eq:Amplitudes.linearmap-J}
J(a)=\iint e^{is\cdot\xi}(L^t)^N [a(s,\xi)]ds\dbar\xi ,
\end{equation}
where $N$ is any non-negative integer $>m+2n$.  
\end{proposition}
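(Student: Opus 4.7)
The plan is to exploit the identity $L(e^{is\cdot\xi})=e^{is\cdot\xi}$ together with an integration-by-parts argument. First I would observe that for compactly supported amplitudes $a \in C^\infty_c(\R^n\times\R^n;\cA_\theta)$ and any integer $N\geq 0$, the identity
\begin{equation*}
 J_0(a) = \iint e^{is\cdot \xi} (L^t)^N[a(s,\xi)]\, ds \dbar \xi
\end{equation*}
holds. This reduces, by pairing with continuous linear forms $\varphi\in \cA_\theta'$ and using Lemma~\ref{lem:Amplitudes.J0-continuity} together with the definition of the $\cA_\theta$-valued integral, to the scalar statement on $C_c^\infty(\R^n\times \R^n)$, where Fubini and ordinary integration by parts apply since all boundary terms vanish.

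Next, for a general $a \in A^m(\R^n\times\R^n;\cA_\theta)$ and an integer $N > m+2n$, iteration of Lemma~\ref{lem:Amplitudes.L-transpose-continuity} yields a continuous linear map $(L^t)^N : A^m(\R^n\times\R^n;\cA_\theta) \to A^{m-N}(\R^n\times\R^n;\cA_\theta)$. Since $m-N < -2n$, the right-hand side of~\eqref{eq:Amplitudes.linearmap-J} is then well-defined as $J_0\bigl((L^t)^N a\bigr) \in \cA_\theta$, and I would take this as the definition of $J(a)$. Continuity of $J$ on $A^m$ is immediate from realizing it as the composition of continuous maps $A^m \xrightarrow{(L^t)^N} A^{m-N} \xrightarrow{J_0} \cA_\theta$, using Lemma~\ref{lem:Amplitudes.J0-continuity}.

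The remaining substantive content is well-definedness: independence of $N$ (and of the cutoff $\chi$ entering $L$). For this I would invoke the density Proposition~\ref{prop:Amplitudes.amplitudes-density}: given $a \in A^m$, there is a family $(a_\epsilon)_{0<\epsilon\leq 1} \subset C^\infty_c(\R^n\times\R^n;\cA_\theta)$ bounded in $A^m$ and converging to $a$ in $A^{m'}$ for every $m' > m$. Given two admissible choices $N, N' > m+2n$, I would pick $m' > m$ small enough that both $m'-N$ and $m'-N'$ are still $<-2n$; the preliminary identity then yields $J_0\bigl((L^t)^N a_\epsilon\bigr) = J_0\bigl((L^t)^{N'} a_\epsilon\bigr)$ for each $\epsilon$, and continuity of $J_0 \circ (L^t)^N$ and $J_0 \circ (L^t)^{N'}$ on $A^{m'}$ allows me to pass to the limit $\epsilon \to 0^+$. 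The same scheme handles independence of the auxiliary cutoff $\chi$.

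Finally, uniqueness among extensions that are continuous on every $A^m$ follows from the same approximation principle: any such extension $\tilde J$ must coincide with $J_0$ on $C^\infty_c(\R^n\times\R^n;\cA_\theta)$ (which sits inside each $A^m$ with $m<-2n$), and hence with $J$ on each $A^m$ by density. I expect the only mildly delicate point to be the bookkeeping between the order shift $A^m \to A^{m-N}$ produced by $(L^t)^N$ and the weaker topology $A^{m'}$ in which the approximation converges; but this is fully controlled by Lemmas~\ref{lem:Amplitudes.J0-continuity} and~\ref{lem:Amplitudes.L-transpose-continuity} once $m'$ is chosen appropriately.
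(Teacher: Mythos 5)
Your proposal is correct and follows exactly the route the paper intends (the proof is deferred to Part~I, but the three lemmas you invoke — Lemma~\ref{lem:Amplitudes.J0-continuity}, Lemma~\ref{lem:Amplitudes.L-transpose-continuity}, and Proposition~\ref{prop:Amplitudes.amplitudes-density} — are stated immediately beforehand precisely for this purpose): define $J=J_0\circ (L^t)^N$, verify the integration-by-parts identity on $C^\infty_c$ by pairing with elements of $\cA_\theta'$, and obtain well-definedness, consistency with $J_0$ (the case $N=0$), and uniqueness by the density-plus-continuity argument with $m'>m$ chosen so that $m'-N<-2n$. The one delicate point you flag — that convergence of the cutoff approximations only holds in the weaker $A^{m'}$-topology — is handled correctly by your choice of $m'$.
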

Let   $\Phi:\cA_\theta \rightarrow \cA_\theta$  be a an $\R$-linear map. By Proposition~C.17 of Part~I  the composition with $\Phi$ gives rise to an $\R$-linear map, 
\begin{equation} \label{eq:Amplitudes.amplitudes-composition-Phi}
 C^\infty(\Rn\times\Rn;\cA_\theta)\ni a(s,\xi) \longrightarrow \Phi\left[a(s,\xi)\right] \in C^\infty(\Rn\times\Rn;\cA_\theta). 
\end{equation}
Moreover, given any $ a(s,\xi)\in C^\infty(\R^n\times \Rn; \cA_\theta)$, for all multi-orders $\beta$ and $\gamma$, we have 
\begin{equation*} 
 \partial_s^\beta \partial_\xi^\gamma \left( \Phi\left[ a(s,\xi)\right] \right)= \Phi\left[   \partial_s^\beta \partial_\xi^\gamma a(s,\xi)\right]. 
\end{equation*}

\begin{lemma}[see~Part~I] \label{lem:Amplitudes.J-Phi-compatibility}
 Let $\Phi:\cA_\theta \rightarrow \cA_\theta$ be a continuous $\R$-linear map.
 \begin{enumerate}
 \item[(i)] The linear map~(\ref{eq:Amplitudes.amplitudes-composition-Phi}) induces a continuous $\R$-linear map from $A^m(\Rn\times\Rn;\cA_\theta)$ to itself for every $m\in \R$. 
 
 \item[(ii)] If $\Phi$ is $\C$-linear, then, for  all $a(s,\xi)\in A^{+\infty}(\Rn\times\Rn;\cA_\theta)$, we have 
\begin{equation*}
 J\left( \Phi(a)\right) = \Phi\left(J(a)\right).
\end{equation*}

\item[(iii)] If $\Phi$ is anti-linear, then, for  all $a(s,\xi)\in A^{+\infty}(\Rn\times\Rn;\cA_\theta)$, we have 
\begin{equation*} 
 J\left( \Phi(a)\right) = \Phi\left[J(a\left(-s,\xi)\right)\right].
\end{equation*}
\end{enumerate}
\end{lemma}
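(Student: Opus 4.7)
\medskip

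\noindent\textbf{Proof plan.} My plan is to reduce each part to the absolutely convergent case (i.e., amplitudes of order $m<-2n$, or even compactly supported ones via Proposition~\ref{prop:Amplitudes.amplitudes-density}), where Fubini-type arguments together with the defining characterization of the $\cA_\theta$-valued integral allow one to test against functionals in $\cA_\theta'$. The extension to arbitrary order then follows from continuity of $J$ on each $A^m$ (Proposition~\ref{prop:Amplitudes.extension-J0}) together with part~(i).

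For part~(i), the idea is simply that continuity of $\Phi$ between the Fr\'echet spaces $\cA_\theta$ implies that for every multi-order $\alpha$ there exist $C_\alpha>0$ and $N_\alpha\in \N_0$ with $\|\delta^\alpha\Phi(u)\|\leq C_\alpha \sum_{|\alpha'|\leq N_\alpha}\|\delta^{\alpha'} u\|$ for all $u\in \cA_\theta$. Since continuous $\R$-linear maps commute with differentiation in parameters (as these are limits of difference quotients), $\partial_s^\beta \partial_\xi^\gamma\bigl[\Phi(a(s,\xi))\bigr]=\Phi\bigl[\partial_s^\beta\partial_\xi^\gamma a(s,\xi)\bigr]$, and combining this with the above estimate turns each amplitude seminorm $q_N^{(m)}(\Phi(a))$ into a bounded combination of seminorms $q_{N'}^{(m)}(a)$. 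This yields both $\Phi(a)\in A^m$ and continuity of the induced map.

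For part~(ii), I first handle $a\in C_c^\infty(\R^n\times\R^n;\cA_\theta)$, where the integrand is genuinely integrable. For any $\varphi\in \cA_\theta'$, the composition $\varphi\circ\Phi$ is again continuous $\C$-linear, so using the defining property of the Bochner-type integral,
\begin{equation*}
\varphi\bigl[J_0(\Phi(a))\bigr] = \iint e^{is\cdot\xi}(\varphi\circ\Phi)\bigl[a(s,\xi)\bigr]\,dsd\dbar\xi = (\varphi\circ\Phi)\bigl[J_0(a)\bigr]=\varphi\bigl[\Phi(J_0(a))\bigr].
\end{equation*}
Since $\cA_\theta'$ separates points of $\cA_\theta$, this forces $J_0(\Phi(a))=\Phi(J_0(a))$. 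To promote this to general $a\in A^m$, I invoke Proposition~\ref{prop:Amplitudes.amplitudes-density} to approximate $a$ by compactly supported $a_\epsilon$ converging in $A^{m'}$ for some $m'>m$; part~(i) then gives $\Phi(a_\epsilon)\to \Phi(a)$ in $A^{m'}$, and continuity of $J$ on $A^{m'}$ plus continuity of $\Phi$ yields equality in the limit.

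For part~(iii), the key observation is that if $\Phi$ is continuous anti-linear, then for each $\varphi\in \cA_\theta'$ the formula $\tilde\varphi(u):=\overline{\varphi(\Phi(u))}$ defines an element of $\cA_\theta'$. Working first with compactly supported $a$ and using this identity together with the substitution $s\mapsto -s$:
\begin{equation*}
\varphi\bigl[J_0(\Phi(a))\bigr]=\iint e^{is\cdot\xi}\overline{\tilde\varphi\bigl[a(s,\xi)\bigr]}\,dsd\dbar\xi = \overline{\iint e^{is\cdot\xi}\tilde\varphi\bigl[a(-s,\xi)\bigr]\,dsd\dbar\xi} = \overline{\tilde\varphi\bigl[J_0(a(-s,\xi))\bigr]},
\end{equation*}
and the right-hand side equals $\varphi\bigl[\Phi(J_0(a(-s,\xi)))\bigr]$ by construction of $\tilde\varphi$. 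Separation by $\cA_\theta'$ then gives the identity for compactly supported amplitudes, and the density argument from part~(ii) extends it to all $a\in A^m$, noting that $a(s,\xi)\mapsto a(-s,\xi)$ is trivially continuous on each $A^m$. The main subtlety, and the step I expect to require the most care, is the anti-linear case: one must introduce the auxiliary functional $\tilde\varphi$ and correctly track the complex conjugation and the reflection $s\mapsto -s$ that together are responsible for the shape of the formula.
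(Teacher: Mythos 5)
Your proof is correct and follows exactly the strategy the paper itself sets up for such identities: reduce to compactly supported amplitudes via Proposition~\ref{prop:Amplitudes.amplitudes-density}, verify the identity there by testing against functionals in $\cA_\theta'$ (with the auxiliary functional $\tilde\varphi(u)=\overline{\varphi(\Phi(u))}$ and the reflection $s\mapsto -s$ correctly accounting for the anti-linear case), and pass to the limit using part~(i) and the continuity of $J$ on each $A^m(\R^n\times\R^n;\cA_\theta)$. The paper defers the actual proof to Part~I, but your argument is complete and consistent with the machinery recalled here.
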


We gather the main properties of the linear map $J$ in the following statement. 

\begin{proposition}[see Part~I] \label{prop:Amplitudes.J-properties}
Let $a(s,\xi)\in A^m(\Rn\times\Rn;\cA_\theta)$, $m\in\R$. The following holds.
\begin{enumerate}
\item[(i)] For all $b_1, b_2\in\cA_\theta$, we have 
\begin{equation*}
J(b_1a b_2)=b_1J(a)b_2. 
\end{equation*}

\item[(ii)] Set $a^*(s,\xi)= a(-s,\xi)^*$, $s,\xi\in \R^n$. Then $a^*(s,\xi)\in A^m(\Rn\times\Rn;\cA_\theta)$, and we have
\begin{equation*}
J(a)^* = J(a^*) .
\end{equation*}

\item[(iii)]  For every multi-order $\alpha$, we have
\begin{equation*} 
\delta^\alpha J(a) = J(\delta^\alpha a) .
\end{equation*}

\item[(iv)] For all multi-orders $\alpha$, $\beta$, we have
\begin{equation*} 
\label{eq:Amplitudes.J-Delta-property}
J\left(D_s^\alpha D_\xi^\beta a\right) =  (-1)^{|\alpha|+|\beta|} J\left(s^\beta \xi^\alpha a\right).  
\end{equation*}
\end{enumerate}
\end{proposition}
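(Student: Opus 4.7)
The overall plan is to use Proposition~\ref{prop:Amplitudes.amplitudes-density} together with the continuity of $J$ on each $A^m(\R^n\times\R^n;\cA_\theta)$ from Proposition~\ref{prop:Amplitudes.extension-J0} to reduce each identity to the case of amplitudes in $C^\infty_c(\R^n\times\R^n;\cA_\theta)$. For such compactly supported amplitudes we may appeal to the original integral definition~(\ref{eq:Amplitudes.definition-J0}) and to Fubini, rather than to the regularized formula~(\ref{eq:Amplitudes.linearmap-J}). Each side of the four identities is continuous in $a$ on every $A^m$: for (i) and (iii) this is clear, for (ii) we use that $a\mapsto a^*$ is continuous on each $A^m$ (since $\|\alpha_{-s}(a(-s,\xi)^*)\|=\|a(-s,\xi)\|$ and derivations commute with the involution up to a sign), and for (iv) continuity follows from the fact that $D_s^\alpha D_\xi^\beta$ and multiplication by the polynomial $s^\beta\xi^\alpha$ are continuous from $A^m$ into $A^{m+|\beta|+|\alpha|}$ (any polynomial growth is absorbed by enlarging $m$). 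Hence it suffices to prove each identity on $C^\infty_c(\R^n\times\R^n;\cA_\theta)$.

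The first three properties follow directly from Lemma~\ref{lem:Amplitudes.J-Phi-compatibility}. For (i), the map $\Phi:\cA_\theta\to\cA_\theta$ given by $\Phi(u)=b_1ub_2$ is a continuous $\C$-linear map (because $\cA_\theta$ is a Fr\'echet algebra, so multiplication is continuous), and by Lemma~\ref{lem:Amplitudes.J-Phi-compatibility}(i)--(ii) we have $J(\Phi(a))=\Phi(J(a))$, which is the desired identity. For (iii), the derivation $\delta^\alpha:\cA_\theta\to\cA_\theta$ is continuous and $\C$-linear, so another application of Lemma~\ref{lem:Amplitudes.J-Phi-compatibility}(ii) yields $\delta^\alpha J(a)=J(\delta^\alpha a)$. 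For (ii), the involution $\Phi(u)=u^*$ is a continuous \emph{anti}-linear map, so Lemma~\ref{lem:Amplitudes.J-Phi-compatibility}(iii) gives $J(a)^*=\Phi(J(a))=J(\Phi(a)(-s,\xi))=J(a^*)$, with our convention $a^*(s,\xi)=a(-s,\xi)^*$.

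The only identity requiring real computation is (iv), and the plan is simply integration by parts. For $a\in C^\infty_c(\R^n\times\R^n;\cA_\theta)$ the integral~(\ref{eq:Amplitudes.definition-J0}) is an honest compactly supported Bochner integral, and using the identities $D_s^\alpha e^{is\cdot\xi}=\xi^\alpha e^{is\cdot\xi}$ and $D_\xi^\beta e^{is\cdot\xi}=s^\beta e^{is\cdot\xi}$ together with $|\alpha|+|\beta|$ integrations by parts (each producing a factor $-1$, since $D=\tfrac{1}{i}\partial$ is formally skew-adjoint on compactly supported functions) gives
\begin{equation*}
\iint e^{is\cdot\xi}\,D_s^\alpha D_\xi^\beta a(s,\xi)\,ds\,\dbar\xi
= (-1)^{|\alpha|+|\beta|}\iint \bigl(D_s^\alpha D_\xi^\beta e^{is\cdot\xi}\bigr) a(s,\xi)\,ds\,\dbar\xi
= (-1)^{|\alpha|+|\beta|}\iint e^{is\cdot\xi}\,s^\beta\xi^\alpha a(s,\xi)\,ds\,\dbar\xi.
\end{equation*}
Strictly speaking, since $a$ is $\cA_\theta$-valued, these integration-by-parts manipulations are justified by composing with an arbitrary continuous linear functional $\varphi\in\cA_\theta'$ and reducing to the scalar-valued case, as in the characterization of the Bochner integral used to define $J_0$.

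The only mildly subtle step is verifying that the reduction to compactly supported amplitudes is legitimate for property~(iv), because multiplying by $s^\beta\xi^\alpha$ shifts the amplitude order by $|\alpha|+|\beta|$; but since both sides of (iv) are continuous maps $A^m(\R^n\times\R^n;\cA_\theta)\to\cA_\theta$ for every fixed $m$, and since the approximations $a_\epsilon\to a$ from Proposition~\ref{prop:Amplitudes.amplitudes-density} take place in $A^{m'}$ for every $m'>m$, passing to the limit presents no difficulty.
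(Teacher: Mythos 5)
Your proof is correct, and it uses exactly the tools the paper has set up for this purpose: Lemma~\ref{lem:Amplitudes.J-Phi-compatibility} gives (i)--(iii) immediately by choosing $\Phi$ to be left/right multiplication, the derivation $\delta^\alpha$, or the involution $u\mapsto u^*$ respectively, and (iv) follows by the density-plus-continuity reduction to $C^\infty_c$ amplitudes from Proposition~\ref{prop:Amplitudes.amplitudes-density} combined with integration by parts. This is the same route the series takes. One small remark: your opening paragraph announces the density reduction for all four identities, but your actual argument (correctly) only needs it for (iv); items (i)--(iii) hold on every $A^m$ directly from Lemma~\ref{lem:Amplitudes.J-Phi-compatibility} with no approximation step, so the framing could be tightened.
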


In this paper we will also need to consider oscillating integrals  associated with families of amplitudes. In particular, we will make use of the following result. 

\begin{proposition}[see Part~I] \label{prop:Amplitudes.J-partial-compatibility-family}
Suppose $U$ is an open subset of $\R^d$, $d\geq 1$. Given $m\in\R$, let $a(x;s,\xi)\in C^\infty (U\times\Rn\times\Rn ;\cA_\theta)$ be such that, for all compact sets $K\subset U$ and for all multi-orders $\alpha\in\N_0^d$ and $ \beta, \gamma, \lambda \in \N_0^n$, there is $C_{K\alpha\beta\gamma\lambda}>0$ such that, for all $(x,s,\xi)\in K\times\Rn\times\Rn $, we have
\begin{equation} \label{eq:Amplitudes.amplitudes-family-estimates}
\norm{\partial_x^\alpha \partial_s^\beta \partial_\xi^\gamma \delta^\lambda a(x;s,\xi)}\leq C_{K\alpha\beta\gamma\lambda} (1+|s|+|\xi|)^m .
\end{equation}
Then $x\rightarrow J(a(x;\cdot,\cdot))$ is a smooth map from $U$ to $\cA_\theta$, and, for every multi-order $\alpha$, we have 
\begin{equation*}
\partial_x^\alpha J\left(a(x;\cdot,\cdot)\right) = J\left[(\partial_x^\alpha a)(x;\cdot,\cdot)\right]  \qquad  \forall x\in U .
\end{equation*}
\end{proposition}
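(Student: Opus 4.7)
The plan is to reduce, via iterated application of the regularization operator $L^t$ from Lemma~\ref{lem:Amplitudes.L-transpose-continuity}, to the absolutely convergent case, and then to differentiate under the integral sign in the Fr\'echet space $\cA_\theta$. Concretely, I would pick an integer $N > m+2n$ and set $b(x;s,\xi) := (L^t)^N a(x;s,\xi)$. Because the coefficients of $L$ do not depend on $x$, the operator $(L^t)^N$ commutes with each $\partial_x^\alpha$, so $\partial_x^\alpha b(x;s,\xi) = (L^t)^N[\partial_x^\alpha a(x;s,\xi)]$. Iterating the pointwise estimate underlying Lemma~\ref{lem:Amplitudes.L-transpose-continuity}, with $x \in K$ treated as a parameter, shows that $b(x;s,\xi)$ still satisfies the hypothesis~(\ref{eq:Amplitudes.amplitudes-family-estimates}), but now with $m$ replaced by $m-N < -2n$. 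Formula~(\ref{eq:Amplitudes.linearmap-J}) then gives
\begin{equation*}
J\bigl(a(x;\cdot,\cdot)\bigr) = \iint e^{is\cdot \xi}\, b(x;s,\xi)\, ds\, \dbar\xi = J_0\bigl(b(x;\cdot,\cdot)\bigr),
\end{equation*}
so it is enough to prove smoothness of $x \mapsto J_0(b(x;\cdot,\cdot))$ and to check that $\partial_x^\alpha$ commutes with $J_0$, under the stronger hypothesis that $b(x;s,\xi)$ has order $<-2n$.

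For this reduced statement I would fix $x_0 \in U$ and a compact neighborhood $K \subset U$ of $x_0$. The hypothesis supplies, for each $\alpha \in \N_0^d$ and $\lambda \in \N_0^n$, a constant $C>0$ such that
\begin{equation*}
\norm{\delta^\lambda \partial_x^\alpha b(x;s,\xi)} \leq C (1+|s|+|\xi|)^{m-N}, \qquad x \in K,\ (s,\xi) \in \R^n\times\R^n,
\end{equation*}
and the right-hand side is integrable over $\R^n\times\R^n$. Since the semi-norms $u \mapsto \|\delta^\lambda u\|$ generate the topology of $\cA_\theta$, this is exactly the dominated-convergence bound needed to differentiate the $\cA_\theta$-valued integral~(\ref{eq:Amplitudes.definition-J0}) with respect to $x$. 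A straightforward induction on $|\alpha|$ via the mean-value theorem, worked out semi-norm by semi-norm on $\cA_\theta$, produces
\begin{equation*}
\partial_x^\alpha J_0\bigl(b(x;\cdot,\cdot)\bigr) = J_0\bigl(\partial_x^\alpha b(x;\cdot,\cdot)\bigr),
\end{equation*}
together with continuity in $x$ of each side via Lemma~\ref{lem:Amplitudes.J0-continuity}. Combining this with $\partial_x^\alpha b = (L^t)^N[\partial_x^\alpha a]$ and~(\ref{eq:Amplitudes.linearmap-J}) yields the desired formula for $J$.

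The main obstacle is technical rather than conceptual: upgrading the classical scalar-valued dominated-convergence/differentiation-under-the-integral-sign argument to the $\cA_\theta$-valued setting. This is handled by first testing against continuous linear forms $\varphi \in \cA_\theta'$ to obtain the identity weakly, and then invoking the quasi-complete Suslin framework of Appendix~B of Part~I, together with the uniform domination on $K$, to promote the weak identity to a genuine identity in $\cA_\theta$ and to smoothness of $x \mapsto J(a(x;\cdot,\cdot))$ in the strong topology.
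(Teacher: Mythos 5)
Your argument is correct, and it is essentially the expected one: the paper itself gives no proof here (it defers to Part~I), and the route you describe --- conjugating by $(L^t)^N$, which commutes with $\partial_x^\alpha$ because the coefficients of $L$ depend only on $(s,\xi)$, checking that the family estimates descend to order $m-N<-2n$, and then differentiating the absolutely convergent integral $J_0$ under the integral sign --- is the standard reduction and the one carried out in Part~I. The only step worth making fully explicit is the mean-value argument: writing the first-order Taylor remainder of $b$ in $x$ with a second-derivative bound shows that the difference quotients $h^{-1}(b(x+he_j;\cdot,\cdot)-b(x;\cdot,\cdot))$ converge to $\partial_{x_j}b(x;\cdot,\cdot)$ in the topology of $A^{m-N}(\R^n\times\R^n;\cA_\theta)$ (uniformly in $(s,\xi)$ after dividing by the weight), so that Lemma~\ref{lem:Amplitudes.J0-continuity} yields strong differentiability directly, without needing to pass through the weak identity first.
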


\section{pseudodifferential operators on Noncommutative Tori}\label{sec:PsiDOs}
In this section, we recall the definition of pseudodifferential operators (\psidos) associated with symbols and amplitudes. 

\subsection{$\mathbf{\Psi}$DOs associated with amplitudes} 
The results of the previous section allow us to give sense to the integral appearing at the end of Section~\ref{section:NCtori}. More generally, we can give sense to integrals of the form, 
\begin{equation*}
 \iint e^{is\cdot \xi} a(s,\xi) \alpha_{-s}(u) ds \dbar \xi,
\end{equation*}
where $a(s,\xi)\in A^m(\Rn\times\Rn;\cA_\theta)$, $m\in\R$, and $u\in \cA_\theta$. Namely, such an integral is the oscillating integral associated with $a(s,\xi) \alpha_{-s}(u)$. Thus, we only have to justify that $a(s,\xi) \alpha_{-s}(u)$ is an amplitude. In fact, we have the following result. 

\begin{lemma}[see Part~I] \label{lem:PsiDOs.amplitudes-alpha-product-continuity}
 Let $m\in \R$. Then the map $(a,u)\rightarrow a(s,\xi) \alpha_{-s}(u)$ is a continuous bilinear map from $A^m(\Rn\times\Rn;\cA_\theta)\times \cA_\theta$ to  $A^m(\Rn\times\Rn;\cA_\theta)$. 
\end{lemma}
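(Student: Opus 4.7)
The plan is to verify the amplitude estimates~(\ref{eq:Amplitudes.amplitudes-estimates}) for $b(s,\xi):= a(s,\xi)\alpha_{-s}(u)$ by a direct Leibniz-rule computation, and to read off from the resulting estimate a bound of the form $q_N^{(m)}(b)\leq C_N\, q_N^{(m)}(a)\, p_N(u)$, where $p_N$ is a continuous semi-norm on $\cA_\theta$. Continuity of the bilinear map then follows immediately from such an estimate.

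First I would check that $b$ is smooth with values in $\cA_\theta$. The map $a(s,\xi)$ is smooth by hypothesis, and Proposition~\ref{prop:NCtori.cAtheta-Frechet}(iii) guarantees that $s\mapsto \alpha_{-s}(u)$ is a smooth map from $\R^n$ to $\cA_\theta$. Since $\cA_\theta$ is a Fr\'echet algebra (Proposition~\ref{prop:NCtori.cAtheta-Frechet}(i)), the product of two smooth $\cA_\theta$-valued maps is smooth, so $b\in C^\infty(\R^n\times\R^n;\cA_\theta)$.

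Next I would compute $\delta^\alpha\partial_s^\beta \partial_\xi^\gamma b(s,\xi)$ via Leibniz. Two ingredients are used:
\begin{enumerate}
\item[(a)] $\partial_\xi^\gamma$ hits only $a$, since $\alpha_{-s}(u)$ does not depend on $\xi$.
\item[(b)] The derivations $\delta_j$ commute with $\alpha_{-s}$, so $\delta^\lambda \alpha_{-s}(u)=\alpha_{-s}(\delta^\lambda u)$ for all $\lambda$; and the identity $D_{s_j}\alpha_s(u)=\alpha_s(\delta_j u)$ from Section~\ref{section:NCtori} yields $\partial_s^\mu\alpha_{-s}(u)=(-i)^{|\mu|}\alpha_{-s}(\delta^\mu u)$.
\end{enumerate}
These give the explicit expansion
\begin{equation*}
\delta^\alpha\partial_s^\beta\partial_\xi^\gamma b(s,\xi)
= \sum_{\alpha'\leq \alpha}\sum_{\beta'\leq \beta}\binom{\alpha}{\alpha'}\binom{\beta}{\beta'}(-i)^{|\beta-\beta'|}\, \delta^{\alpha'}\partial_s^{\beta'}\partial_\xi^\gamma a(s,\xi)\cdot \alpha_{-s}\bigl(\delta^{\alpha-\alpha'+\beta-\beta'}u\bigr).
\end{equation*}

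Now I would estimate each summand. Because $\alpha_{-s}$ extends to an isometry of $A_\theta$, we have $\|\alpha_{-s}(v)\|=\|v\|$ for every $v\in\cA_\theta$, and thus
\begin{equation*}
\bigl\|\alpha_{-s}(\delta^{\alpha-\alpha'+\beta-\beta'}u)\bigr\| \leq \bigl\|\delta^{\alpha-\alpha'+\beta-\beta'}u\bigr\|.
\end{equation*}
Combining with the amplitude estimate for $a$ and setting $p_N(u):=\max_{|\lambda|\leq N}\|\delta^\lambda u\|$, for $|\alpha|+|\beta|+|\gamma|\leq N$ one gets
\begin{equation*}
\bigl\|\delta^\alpha\partial_s^\beta\partial_\xi^\gamma b(s,\xi)\bigr\|
\leq C_N\, q_N^{(m)}(a)\, p_N(u)\, (1+|s|+|\xi|)^m,
\end{equation*}
for some combinatorial constant $C_N$. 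This simultaneously proves that $b\in A^m(\R^n\times\R^n;\cA_\theta)$ and that $q_N^{(m)}(b)\leq C_N\, q_N^{(m)}(a)\, p_N(u)$ for every $N$, which is the desired continuity of the bilinear map $(a,u)\mapsto a(s,\xi)\alpha_{-s}(u)$.

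The proof is mainly bookkeeping; the only real ingredients are the isometry property of $\alpha_{-s}$, the commutation of $\delta^\lambda$ with $\alpha_{-s}$, and the smoothness of $s\mapsto \alpha_{-s}(u)$ in the Fr\'echet topology of $\cA_\theta$. The hardest part is keeping track of which derivatives fall on which factor in the Leibniz expansion; no estimate in the argument requires more than the amplitude bound on $a$ and the semi-norms $p_N(u)$ on $\cA_\theta$.
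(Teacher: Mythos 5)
Your proof is correct and follows what is essentially the standard (and surely the intended) argument: the paper defers the proof to Part~I, but the combination of the Leibniz rule, the identities $\delta^\lambda\alpha_{-s}(u)=\alpha_{-s}(\delta^\lambda u)$ and $\partial_s^\mu\alpha_{-s}(u)=(-i)^{|\mu|}\alpha_{-s}(\delta^\mu u)$, the isometry $\|\alpha_{-s}(v)\|=\|v\|$, and the submultiplicativity of the $C^*$-norm is exactly what yields the semi-norm estimate $q_N^{(m)}(a\,\alpha_{-s}(u))\leq C_N\,q_N^{(m)}(a)\,p_N(u)$, which gives both membership in $A^m(\Rn\times\Rn;\cA_\theta)$ and joint continuity. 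No gaps.
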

 
 Given any amplitude $a(s,\xi)\in A^m(\Rn\times\Rn;\cA_\theta)$, $m\in\R$, the above lemma allows us to define a linear operator $P_a:\cA_\theta \rightarrow \cA_\theta$ by
\begin{align*}
P_a u &= J\left(a(s,\xi)\alpha_{-s} (u)\right) \\\nonumber
&= \iint e^{is\cdot\xi} a(s,\xi) \alpha_{-s} (u) ds\dbar\xi, \qquad u\in\cA_\theta .
\end{align*}
 Thanks to the continuity contents of Proposition~\ref{prop:Amplitudes.extension-J0} and Lemma~\ref{lem:PsiDOs.amplitudes-alpha-product-continuity} we have the following result.

\begin{proposition} \label{prop:PsiDOs.pdos-continuity}
Let $m\in\R$. Then the map $(a,u)\rightarrow P_a u$ is a continuous bilinear map from $A^m(\Rn\times\Rn;\cA_\theta)\times\cA_\theta$ to $\cA_\theta$. In particular, 
for every $a(s,\xi)\in A^m(\Rn\times\Rn;\cA_\theta)$, the linear operator $P_a:\cA_\theta \rightarrow \cA_\theta$ is continuous. 
\end{proposition}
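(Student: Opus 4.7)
The plan is to observe that the definition $P_a u = J\bigl(a(s,\xi)\alpha_{-s}(u)\bigr)$ exhibits the map $(a,u)\mapsto P_a u$ as a composition of two maps whose continuity properties have already been recorded in the excerpt. Specifically, I would factor the map through the amplitude space $A^m(\Rn\times\Rn;\cA_\theta)$, writing $P_a u = J\circ B(a,u)$, where $B(a,u)(s,\xi):=a(s,\xi)\alpha_{-s}(u)$.

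The first step is to invoke Lemma~\ref{lem:PsiDOs.amplitudes-alpha-product-continuity}, which tells us that $B$ is a continuous bilinear map from $A^m(\Rn\times\Rn;\cA_\theta)\times \cA_\theta$ to $A^m(\Rn\times\Rn;\cA_\theta)$. The second step is to invoke Proposition~\ref{prop:Amplitudes.extension-J0}, which asserts that the oscillating integral $J$ restricts to a continuous linear map from $A^m(\Rn\times\Rn;\cA_\theta)$ to $\cA_\theta$. The composition of a continuous bilinear map with a continuous linear map is continuous bilinear, so this immediately gives the first claim.

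For the second claim, continuity of $P_a:\cA_\theta\to \cA_\theta$ for a fixed $a\in A^m(\Rn\times\Rn;\cA_\theta)$ follows from partial application: composing $B$ with the inclusion $u\mapsto (a,u)$ produces a continuous linear map $\cA_\theta\to A^m(\Rn\times\Rn;\cA_\theta)$, and composing with $J$ yields the continuous linear map $P_a:\cA_\theta\to \cA_\theta$.

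There is essentially no main obstacle here, as the proposition is really a bookkeeping corollary of the two results just cited. The only care needed is to make sure that the continuity is read off at the correct order $m$, i.e., one fixes $m$ at the start and uses the \emph{same} $m$ in both invocations so that the composition lands in $\cA_\theta$ via a single continuous linear map out of $A^m(\Rn\times\Rn;\cA_\theta)$, rather than needing to pass through different orders.
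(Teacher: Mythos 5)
Your proof is correct and follows exactly the paper's argument: the paper also obtains this proposition by combining the continuity of the bilinear map $(a,u)\mapsto a(s,\xi)\alpha_{-s}(u)$ from Lemma~\ref{lem:PsiDOs.amplitudes-alpha-product-continuity} with the continuity of $J$ on $A^m(\Rn\times\Rn;\cA_\theta)$ from Proposition~\ref{prop:Amplitudes.extension-J0}.
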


In what follows we denote by $\cL(\cA_\theta)$ the algebra of continuous linear maps $T:\cA_\theta \rightarrow \cA_\theta$. We equip it with its strong dual topology (a.k.a.\ uniform bounded convergence topology). This is the locally convex topology generated  by the semi-norms,
\begin{equation*}
T\longrightarrow\sup_{u\in B}\norm{\delta^\alpha Tu} , \qquad \alpha \in \N_0^n, \quad \text{$B\subset \cA_\theta$ bounded}. 
\end{equation*}
 
\begin{corollary}[see~Part~I] \label{cor:PsiDOs.amplitudes-to-pdos-continuity}
 Let $m \in \R$. Then the map $a(s,\xi)\rightarrow P_a$ is a continuous linear map from $A^m(\Rn\times\Rn;\cA_\theta)$ to $\cL(\cA_\theta)$. 
\end{corollary}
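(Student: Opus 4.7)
The plan is to deduce the corollary directly from the joint continuity statement of Proposition~\ref{prop:PsiDOs.pdos-continuity}. Linearity of $a\mapsto P_a$ is immediate from the definition of $P_a$ as an oscillating integral and the linearity of $J$. The real content is the continuity from $A^m(\R^n\times\R^n;\cA_\theta)$ to $\cL(\cA_\theta)$ equipped with its strong topology.

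First I would unpack what needs to be shown. The topology of $\cL(\cA_\theta)$ is generated by the semi-norms $T\mapsto \sup_{u\in B}\|\delta^\alpha Tu\|$, where $\alpha\in\N_0^n$ and $B\subset\cA_\theta$ is bounded. So it suffices to prove that, for every such $\alpha$ and every bounded set $B$, the map $a\mapsto \sup_{u\in B}\|\delta^\alpha P_a u\|$ is a continuous semi-norm on $A^m(\R^n\times\R^n;\cA_\theta)$.

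Next, I would invoke Proposition~\ref{prop:PsiDOs.pdos-continuity}, which asserts that $(a,u)\mapsto P_a u$ is jointly continuous from $A^m\times\cA_\theta$ to the Fr\'echet space $\cA_\theta$. Composing with the continuous linear map $\delta^\alpha:\cA_\theta\to\cA_\theta$, the map $(a,u)\mapsto \delta^\alpha P_a u$ is also jointly continuous into $\cA_\theta$. Because $A^m(\R^n\times\R^n;\cA_\theta)$ and $\cA_\theta$ are both metrizable (indeed Fr\'echet, by Proposition~\ref{prop:Amplitudes.amplitudes-Frechetspace} and Proposition~\ref{prop:NCtori.cAtheta-Frechet}), a jointly continuous bilinear map admits estimates of the form
\begin{equation*}
\bigl\|\delta^\alpha P_a u\bigr\|\leq q(a)\,r(u),
\end{equation*}
for some continuous semi-norms $q$ on $A^m(\R^n\times\R^n;\cA_\theta)$ and $r$ on $\cA_\theta$ (both depending on $\alpha$).

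Finally, since $B\subset\cA_\theta$ is bounded, the constant $C_B:=\sup_{u\in B} r(u)$ is finite, and hence
\begin{equation*}
\sup_{u\in B}\bigl\|\delta^\alpha P_a u\bigr\|\leq C_B\, q(a)
\qquad\text{for all } a\in A^m(\R^n\times\R^n;\cA_\theta).
\end{equation*}
The right-hand side is a continuous semi-norm in $a$, which gives the required continuity of $a\mapsto P_a$ into $\cL(\cA_\theta)$ with its strong topology. The only subtle point is the passage from jointly continuous bilinear maps between Fr\'echet spaces to the existence of a product estimate $q(a)r(u)$; this is a standard consequence of metrizability (each variable admits a countable base of semi-norms), so there is no substantive obstacle. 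Everything else is a direct repackaging of Proposition~\ref{prop:PsiDOs.pdos-continuity}.
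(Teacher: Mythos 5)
Your proof is correct and follows essentially the same route as the paper: the corollary is a direct repackaging of the joint continuity in Proposition~\ref{prop:PsiDOs.pdos-continuity} together with the definition of the strong topology on $\cL(\cA_\theta)$, via the product estimate $\|\delta^\alpha P_a u\|\leq q(a)r(u)$ and the boundedness of $r$ on bounded sets. One small remark: the product estimate for a \emph{jointly} continuous bilinear map only requires local convexity (continuity at the origin plus bilinearity), not metrizability --- metrizability would matter only if you were upgrading separate continuity to joint continuity, which you do not need here since Proposition~\ref{prop:PsiDOs.pdos-continuity} already gives joint continuity.
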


\subsection{$\mathbf{\Psi}$DOs associated with symbols} 
As mentioned above, any symbol $\rho(\xi)\in \stS^m(\Rn;\cA_\theta)$, $m\in\R$, can be regarded as an amplitude in $A^{m_+}(\Rn\times\Rn;\cA_\theta)$. We thus can define a continuous linear operator $P_\rho:\cA_\theta \rightarrow \cA_\theta$ as in~(\ref{eq:NCtori.diff-op-integral}). We thus obtain the formula given in~\cite{Co:CRAS80}, 
\begin{equation*}
 P_\rho u= \iint e^{is\cdot\xi} \rho (\xi) \alpha_{-s} (u) ds\dbar\xi, \qquad u\in\cA_\theta ,
\end{equation*}
where the integral is meant as an oscillating integral, i.e., this is $J[\rho(\xi) \alpha_{-s} (u)]$. 

Combining Lemma~\ref{lem:Amplitudes.symbol-inclusion} with Proposition~\ref{prop:PsiDOs.pdos-continuity} and Corollary~\ref{cor:PsiDOs.amplitudes-to-pdos-continuity} proves the following result.

\begin{proposition}\label{prop:PsiDOs.symbols-to-pdos-continuity}
Let $m\in\R$. The following holds. 
\begin{enumerate}
\item The map $(\rho,u)\rightarrow P_\rho u$ is a continuous bilinear map from $\stS^m(\Rn;\cA_\theta)\times\cA_\theta$ to $\cA_\theta$. 

\item The map $\rho\rightarrow P_\rho$ is  a continuous linear map from $\stS^m(\Rn;\cA_\theta)$ to $\cL(\cA_\theta)$.
\end{enumerate}
\end{proposition}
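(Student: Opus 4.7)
The plan is to reduce both continuity statements to the amplitude versions already established (Proposition~\ref{prop:PsiDOs.pdos-continuity} and Corollary~\ref{cor:PsiDOs.amplitudes-to-pdos-continuity}) via the inclusion of symbols into amplitudes provided by Lemma~\ref{lem:Amplitudes.symbol-inclusion}. The key observation is that if $\rho(\xi)\in\stS^m(\R^n;\cA_\theta)$ is viewed as an $s$-independent amplitude $a(s,\xi)=\rho(\xi)$, then by the very definition of $P_a$ and $P_\rho$ we have $P_\rho u = P_a u$ for every $u\in\cA_\theta$, since both are computed as the oscillating integral $J[\rho(\xi)\alpha_{-s}(u)]$.

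First I would observe that Lemma~\ref{lem:Amplitudes.symbol-inclusion} yields a continuous linear inclusion
\begin{equation*}
\iota:\stS^m(\R^n;\cA_\theta)\longrightarrow A^{m_+}(\R^n\times\R^n;\cA_\theta), \qquad \iota(\rho)(s,\xi):=\rho(\xi),
\end{equation*}
where $m_+=\max(m,0)$. For part~(1), compose $\iota\times\op{id}_{\cA_\theta}$ with the continuous bilinear map of Proposition~\ref{prop:PsiDOs.pdos-continuity} (applied with order $m_+$). Since $P_{\iota(\rho)}u=P_\rho u$ by the observation above, this factorization shows that $(\rho,u)\mapsto P_\rho u$ is continuous bilinear from $\stS^m(\R^n;\cA_\theta)\times\cA_\theta$ to $\cA_\theta$.

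For part~(2), the same identification gives $P_\rho=P_{\iota(\rho)}$ as elements of $\cL(\cA_\theta)$, so $\rho\mapsto P_\rho$ is the composition of the continuous linear map $\iota$ with the continuous linear map $a\mapsto P_a$ from $A^{m_+}(\R^n\times\R^n;\cA_\theta)$ to $\cL(\cA_\theta)$ furnished by Corollary~\ref{cor:PsiDOs.amplitudes-to-pdos-continuity}. Continuity of the composition is immediate.

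There is no real obstacle here: the only thing worth being careful about is verifying that $\iota$ is indeed continuous (which is the content of Lemma~\ref{lem:Amplitudes.symbol-inclusion}) and that the identification $P_\rho = P_{\iota(\rho)}$ holds as actual equality of operators (not merely as an equality of continuous extensions), but this follows tautologically from the definitions once one notices that $\iota(\rho)(s,\xi)\alpha_{-s}(u) = \rho(\xi)\alpha_{-s}(u)$.
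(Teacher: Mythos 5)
Your proof is correct and is essentially identical to the paper's argument, which likewise obtains the result by combining the continuous inclusion $\stS^m(\Rn;\cA_\theta)\subset A^{m_+}(\Rn\times\Rn;\cA_\theta)$ of Lemma~\ref{lem:Amplitudes.symbol-inclusion} with Proposition~\ref{prop:PsiDOs.pdos-continuity} and Corollary~\ref{cor:PsiDOs.amplitudes-to-pdos-continuity}. The tautological identification $P_\rho=P_{\iota(\rho)}$ that you single out is exactly the point the paper relies on.
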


\begin{proposition}[\cite{Ba:CRAS88}]
 Let $\rho(\xi)\in \stS^m(\R^n; \cA_\theta)$, $m\in \R$. For $j=1,\ldots, n$, we have 
 \begin{equation*}
 [\delta_j, P_\rho]= P_{\delta_j \rho}. 
\end{equation*}
\end{proposition}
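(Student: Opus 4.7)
The plan is to show the identity by applying the operator $\delta_j$ to the oscillating-integral formula for $P_\rho u$ and pushing $\delta_j$ inside. Concretely, for $u\in \cA_\theta$, I would start from
\begin{equation*}
 P_\rho u = J\bigl[\rho(\xi)\alpha_{-s}(u)\bigr] = \iint e^{is\cdot\xi}\rho(\xi)\alpha_{-s}(u)\, ds\,\dbar\xi.
\end{equation*}
By Proposition~\ref{prop:Amplitudes.J-properties}(iii), $\delta_j$ commutes with $J$, i.e.\ $\delta_j J(a) = J(\delta_j a)$ for any amplitude $a(s,\xi)$. Applying this to the amplitude $a(s,\xi)=\rho(\xi)\alpha_{-s}(u)$ (which is an amplitude by Lemma~\ref{lem:PsiDOs.amplitudes-alpha-product-continuity}) yields
\begin{equation*}
 \delta_j\bigl(P_\rho u\bigr) = J\bigl[\delta_j\bigl(\rho(\xi)\alpha_{-s}(u)\bigr)\bigr].
\end{equation*}

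Next I would use that $\delta_j$ is a derivation on $\cA_\theta$ together with the identity $\delta_j(\alpha_{-s}(u))=\alpha_{-s}(\delta_j u)$ (stated in Section~\ref{section:NCtori}). The Leibniz rule then gives, pointwise in $(s,\xi)$,
\begin{equation*}
 \delta_j\bigl[\rho(\xi)\alpha_{-s}(u)\bigr] = \delta_j\rho(\xi)\cdot \alpha_{-s}(u) + \rho(\xi)\cdot \alpha_{-s}(\delta_j u).
\end{equation*}
Since $J$ is $\C$-linear (Proposition~\ref{prop:Amplitudes.extension-J0}), splitting the integral and recognizing each piece by the definition of $P_\sigma$ yields
\begin{equation*}
 \delta_j(P_\rho u) = P_{\delta_j\rho}u + P_\rho(\delta_j u),
\end{equation*}
which rearranges to $[\delta_j,P_\rho]u = P_{\delta_j\rho}u$, as required.

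I don't expect genuine obstacles here; the only point that needs a brief justification is that the Leibniz decomposition above is legitimate at the level of amplitudes before applying $J$, but this is immediate since the identity holds pointwise in $(s,\xi)$ in $\cA_\theta$, and both summands are amplitudes: $\delta_j\rho(\xi)\in \stS^m(\R^n;\cA_\theta)$ (so it lies in $A^{m_+}$ by Lemma~\ref{lem:Amplitudes.symbol-inclusion}) and $\alpha_{-s}(\delta_j u)\in\cA_\theta$, so Lemma~\ref{lem:PsiDOs.amplitudes-alpha-product-continuity} applies to each summand. One could alternatively verify the identity first for $\rho\in\cS(\R^n;\cA_\theta)$, where all manipulations are absolutely convergent, and extend to general $\rho\in \stS^m(\R^n;\cA_\theta)$ by the density/continuity result of Proposition~\ref{prop:Symbols.standard-density} combined with the continuity of $\rho\mapsto P_\rho$ from Proposition~\ref{prop:PsiDOs.symbols-to-pdos-continuity}, but invoking Proposition~\ref{prop:Amplitudes.J-properties}(iii) directly avoids even this regularization step.
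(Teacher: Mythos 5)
Your argument is correct and is the natural one given the machinery the paper sets up: commute $\delta_j$ past $J$ via Proposition~\ref{prop:Amplitudes.J-properties}(iii), apply the Leibniz rule together with $\delta_j(\alpha_{-s}(u))=\alpha_{-s}(\delta_j u)$, and identify the two resulting oscillating integrals as $P_{\delta_j\rho}u$ and $P_\rho(\delta_j u)$. The paper defers its proof to Part~I, but this is essentially the intended argument, and your justification that each summand is a legitimate amplitude (so that $J$ may be applied termwise) closes the only point requiring care.
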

\begin{proof}
See Part~I. 
\end{proof}

\begin{definition}
 $\Psi^q(\cA_\theta)$,  $q\in \C$, consists of all linear operators $P:\cA_\theta\rightarrow \cA_\theta$ that are of the form $P=P_\rho$ for some symbol $\rho(\xi)\in S^q(\R^n; \cA_\theta)$. 
\end{definition}

\begin{remark}
Given $P\in \Psi^q(\cA_\theta)$ there is not a unique symbol $\rho(\xi)\in S^q(\R^n; \cA_\theta)$ such that $P=P_\rho$. However, the symbol is unique up to the addition of an element of $\cS(\R^n;\cA_\theta)$ (\emph{cf}.\ Corollary~\ref{cor:PsiDOs.uniqueness-symbol} \emph{infra}). As a result, the homogeneous symbols $\rho_{q-j}(\xi)\in S_{q-j}(\R^n;\cA_\theta)$, $j=0,1,\ldots$, are uniquely determined by $P$. Therefore, it makes sense to call $\rho_{q-j}(\xi)$ the \emph{symbol of degree $q-j$} of $P$. In particular, we shall call $\rho_q(\xi)$ the \emph{principal symbol} of $P$.
\end{remark}

\begin{proposition}[\cite{CT:Baltimore11}] \label{prop:PsiDOs.Prhou-equation}
Let $\rho(\xi)\in\stS^m(\Rn;\cA_\theta)$, $m\in\R$. Then, for every $u=\sum_{k\in\Z^n} u_k U^k\in\cA_\theta$, we have
\begin{equation}
 \label{eq:PsiDOs.Prhou-equation}
P_\rho u = \sum_{k\in\Z^n} u_k \rho(k)U^k .
\end{equation}
\end{proposition}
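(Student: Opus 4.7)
The plan is a two-stage reduction: first, exploit continuity of $P_\rho$ and the Fourier-series convergence in $\cA_\theta$ to reduce to a single mode $u=U^k$; then compute the remaining oscillating integral by approximating with compactly supported amplitudes, which converts it to an honest Lebesgue integral to which Fubini applies.

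For the first stage, Proposition~\ref{prop:NCtori.condition-cAtheta} shows that the Fourier series $\sum u_k U^k$ converges to $u$ in $\cA_\theta$. Since $\|\delta^\beta(\rho(k)U^k)\|\leq C_\beta (1+|k|)^{m+|\beta|}$ and $(u_k)\in\cS(\Z^n)$, the series $\sum u_k\rho(k)U^k$ also converges in $\cA_\theta$. The operator $P_\rho$ is continuous on $\cA_\theta$ by Proposition~\ref{prop:PsiDOs.symbols-to-pdos-continuity}, so it suffices to prove $P_\rho U^k=\rho(k)U^k$ for each $k\in\Z^n$. Fixing such a $k$ and using $\alpha_{-s}(U^k)=e^{-is\cdot k}U^k$ together with the right-multiplication property of Proposition~\ref{prop:Amplitudes.J-properties}(i), one obtains
\begin{equation*}
P_\rho U^k = J\!\left(e^{-is\cdot k}\rho(\xi)\right)U^k,
\end{equation*}
reducing the task to the identity $J(a)=\rho(k)$ for the amplitude $a(s,\xi):=e^{-is\cdot k}\rho(\xi)$, which belongs to $A^{m_+}(\R^n\times\R^n;\cA_\theta)$ because the $s$-derivatives of $e^{-is\cdot k}$ only produce bounded factors $(-k)^\beta$.

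For the second stage, pick $\chi\in C_c^\infty(\R^n\times\R^n)$ with $\chi(0,0)=1$ and set $a_\epsilon(s,\xi):=\chi(\epsilon s,\epsilon\xi)\,a(s,\xi)$. By Proposition~\ref{prop:Amplitudes.amplitudes-density}, $a_\epsilon\to a$ in $A^{m'}(\R^n\times\R^n;\cA_\theta)$ for every $m'>m_+$, so the continuity statement in Proposition~\ref{prop:Amplitudes.extension-J0} gives $J(a_\epsilon)\to J(a)$ in $\cA_\theta$. Since $a_\epsilon$ is compactly supported, $J(a_\epsilon)$ is an absolutely convergent Lebesgue integral; Fubini, followed by $s'=\epsilon s$ and then $\xi=k+\epsilon\eta$, gives
\begin{equation*}
J(a_\epsilon)=(2\pi)^{-n}\int \rho(k+\epsilon\eta)\,\tilde\chi\!\left(-\eta,\epsilon(k+\epsilon\eta)\right)d\eta,
\end{equation*}
where $\tilde\chi(\eta,\zeta):=\int \chi(s,\zeta)e^{-is\cdot\eta}\,ds$. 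The compact support of $\chi$ yields the uniform Schwartz bound $|\tilde\chi(-\eta,\zeta)|\leq C_N(1+|\eta|)^{-N}$, while $\|\rho(k+\epsilon\eta)\|\leq C(1+|k|+|\eta|)^{m_+}$ for $\epsilon\in(0,1]$. Dominated convergence then passes the limit $\epsilon\to 0^+$ under the integral, and Fourier inversion applied to $\chi(\cdot,0)$ yields $\int\tilde\chi(-\eta,0)\,d\eta=(2\pi)^n\chi(0,0)=(2\pi)^n$, so $J(a)=\rho(k)$ as required.

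The main obstacle is making the heuristic ``$\iint e^{is\cdot(\xi-k)}ds\,\dbar\xi=\delta_{\xi,k}$'' rigorous inside the oscillating-integral framework of Part~I; the key device is the simultaneous cutoff $\chi(\epsilon s,\epsilon\xi)$, which reduces the computation to an absolutely convergent Lebesgue integral controllable by Fubini and dominated convergence.
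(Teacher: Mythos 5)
Your proof is correct, and since the paper defers the proof of this proposition to Part~I, your argument is essentially the intended one: reduce to $u=U^k$ via continuity of $P_\rho$ and convergence of Fourier series in $\cA_\theta$, then evaluate $J\bigl(e^{-is\cdot k}\rho(\xi)\bigr)=\rho(k)$ by the cutoff-and-rescale computation. The only detail worth flagging is that the final dominated-convergence step is for an $\cA_\theta$-valued integral; this is routine in the integration framework of Part~I (e.g.\ test against $\varphi\in\cA_\theta'$ and note that continuous semi-norms of the integrand admit the same $\epsilon$-uniform integrable bound), so there is no gap.
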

\begin{proof} 
See Part~I. 
\end{proof}

\begin{remark}
 By using~(\ref{eq:PsiDOs.Prhou-equation}) it can be shown that any \psido\ with standard symbol is a toroidal \psido\ in the sense of~\cite{GJP:MAMS17, LNP:TAMS16} (see Part~I). Conversely, any toroidal \psido\ is a standard \psido (see Part~I as well). 
\end{remark}

\begin{corollary}\label{cor:PsiDOs.P-Prho-relation}
 Let $\rho_j(\xi)\in \stS^m(\R^n;\cA_\theta)$, $j=1,2$. Then $P_{\rho_1}=P_{\rho_2}$ if and only if $\rho_1(k)=\rho_2(k)$ for all $k\in \Z^n$. 
\end{corollary}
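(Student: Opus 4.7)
The plan is to derive both implications directly from the formula~(\ref{eq:PsiDOs.Prhou-equation}) of Proposition~\ref{prop:PsiDOs.Prhou-equation}. Since this formula gives a complete description of $P_\rho u$ in terms of the Fourier coefficients of $u$ and the values $\rho(k)$, $k\in\Z^n$, the corollary should reduce to elementary manipulations on Fourier series.

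For the forward direction, I would test the operators $P_{\rho_1}$ and $P_{\rho_2}$ against the basis elements $u=U^k$, $k\in\Z^n$. Applying Proposition~\ref{prop:PsiDOs.Prhou-equation} to $u = U^k$ (whose only nonzero Fourier coefficient is $u_k=1$), one gets $P_{\rho_j} U^k = \rho_j(k) U^k$ for $j=1,2$. Thus $P_{\rho_1} = P_{\rho_2}$ forces $\rho_1(k) U^k = \rho_2(k) U^k$, and multiplying on the right by the unitary $(U^k)^* = (U^k)^{-1}$ yields $\rho_1(k) = \rho_2(k)$ for every $k\in\Z^n$.

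For the reverse direction, assume $\rho_1(k) = \rho_2(k)$ for all $k\in\Z^n$. Given any $u=\sum_{k\in\Z^n} u_k U^k \in \cA_\theta$, Proposition~\ref{prop:PsiDOs.Prhou-equation} gives
\begin{equation*}
P_{\rho_1} u = \sum_{k\in\Z^n} u_k \rho_1(k) U^k = \sum_{k\in\Z^n} u_k \rho_2(k) U^k = P_{\rho_2} u,
\end{equation*}
so $P_{\rho_1} = P_{\rho_2}$ as operators on $\cA_\theta$.

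There is no real obstacle here: the entire content lies in Proposition~\ref{prop:PsiDOs.Prhou-equation}, which already encodes the fact that a \psido\ with symbol $\rho(\xi)$ acts on Fourier series by multiplying each Fourier coefficient $u_k$ by $\rho(k)$. Once this is granted, the corollary is immediate. The only mild point to note is the use of the unitarity of $U^k$ to cancel it when recovering $\rho_1(k)=\rho_2(k)$ in the forward direction.
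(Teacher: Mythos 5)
Your proof is correct and follows exactly the route the paper intends: the corollary is stated as an immediate consequence of Proposition~\ref{prop:PsiDOs.Prhou-equation}, with the forward direction obtained by testing on the unitaries $U^k$ and the reverse direction read off directly from the formula $P_\rho u=\sum u_k\rho(k)U^k$. Nothing is missing.
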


We are now in a position to justify the formula~(\ref{eq:NCtori.diff-op-integral}).  Namely, we have the following result. 

\begin{corollary}[see Part~I] 
 Let $P=\sum_{|\alpha|\leq m} a_\alpha \delta^\alpha$, $a_\alpha\in \cA_\theta$, be a differential operator of order~$m$. Then  $P=P_\rho$, where
 $\rho(\xi)=\sum_{|\alpha|\leq m} a_\alpha \xi^\alpha$ is the symbol of $P$. In particular, $P\in \Psi^m(\cA_\theta)$. 
\end{corollary}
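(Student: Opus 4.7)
The plan is to use the Fourier series formula of Proposition~\ref{prop:PsiDOs.Prhou-equation} as a bridge. Since $\rho(\xi)=\sum_{|\alpha|\leq m} a_\alpha \xi^\alpha$ is a polynomial map with values in $\cA_\theta$, every derivative $\delta^\beta\partial_\xi^\gamma\rho(\xi)$ is manifestly a polynomial, and the bound~(\ref{eq:Symbols.standard-estimates}) holds with exponent $m-|\gamma|$ at infinity; hence $\rho(\xi)\in S^m(\R^n;\cA_\theta)$ (it is classical with principal part $\sum_{|\alpha|=m} a_\alpha\xi^\alpha$). In particular, $P_\rho$ is well-defined and lies in $\Psi^m(\cA_\theta)$, so it suffices to show $Pu=P_\rho u$ for every $u\in\cA_\theta$.

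First I would compute the action of $\delta^\alpha$ on the canonical basis. From $\delta_j(U_l)=\delta_{jl}U_j$ and the Leibniz rule one checks inductively that $\delta_j(U^k)=k_jU^k$ for all $k\in\Z^n$, and therefore
\begin{equation*}
\delta^\alpha(U^k)=k^\alpha U^k \qquad \text{for all $\alpha\in\N_0^n$, $k\in\Z^n$.}
\end{equation*}
By Proposition~\ref{prop:NCtori.condition-cAtheta} any $u\in\cA_\theta$ has Fourier series $u=\sum_{k\in\Z^n} u_k U^k$ converging in $\cA_\theta$, with $(u_k)_{k\in\Z^n}\in \cS(\Z^n)$. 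Since $\delta^\alpha:\cA_\theta\to\cA_\theta$ is continuous, we may commute it with the Fourier series to get
\begin{equation*}
\delta^\alpha(u)=\sum_{k\in\Z^n} u_k\, k^\alpha U^k.
\end{equation*}

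Next, left multiplication by $a_\alpha\in\cA_\theta$ is also continuous on $\cA_\theta$, so summing over $|\alpha|\leq m$ yields
\begin{equation*}
Pu=\sum_{|\alpha|\leq m} a_\alpha \delta^\alpha(u)=\sum_{k\in\Z^n} u_k\bigg(\sum_{|\alpha|\leq m} a_\alpha k^\alpha\bigg) U^k=\sum_{k\in\Z^n} u_k\, \rho(k)\, U^k.
\end{equation*}
On the other hand, Proposition~\ref{prop:PsiDOs.Prhou-equation} gives exactly $P_\rho u=\sum_{k\in\Z^n} u_k\rho(k)U^k$. Thus $Pu=P_\rho u$ for every $u\in \cA_\theta$, which proves $P=P_\rho\in \Psi^m(\cA_\theta)$.

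There is no real obstacle here; the only point requiring a line of care is the interchange of $\delta^\alpha$ (and of left multiplication by $a_\alpha$) with the Fourier series, which is justified by the continuity of these operations on $\cA_\theta$ together with the convergence of the Fourier series in $\cA_\theta$ for $u\in\cA_\theta$.
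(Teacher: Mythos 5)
Your proof is correct and follows what is essentially the intended route: observe that the polynomial $\rho(\xi)$ is a classical symbol of order $m$, use $\delta^\alpha(U^k)=k^\alpha U^k$ together with the convergence of the Fourier series in $\cA_\theta$ to get $Pu=\sum_k u_k\rho(k)U^k$, and conclude by Proposition~\ref{prop:PsiDOs.Prhou-equation}. The interchange of $\delta^\alpha$ and left multiplication with the series is correctly justified by continuity, so there is nothing to add.
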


We also mention the following results regarding the lack of uniqueness of the symbol of a \psido. 

\begin{proposition}[see~Part~I]\label{prop:PsiDOs.vanishing-Prho}
 Let $\rho(\xi)\in \stS^m(\R^n;\cA_\theta)$, $m\in \R$, be such that $P_\rho=0$. Then $\rho(\xi)\in \cS(\R^n;\cA_\theta)$.  
\end{proposition}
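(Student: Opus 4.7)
The plan is to combine the evaluation formula $P_\rho U^k = \rho(k)U^k$ from Proposition~\ref{prop:PsiDOs.Prhou-equation} with a Taylor-expansion argument at lattice points, bootstrapping the vanishing of $\rho$ on $\Z^n$ up to full Schwartz decay. Applying $P_\rho = 0$ to $u = U^k$ forces $\rho(k) = 0$ for every $k\in\Z^n$; since the derivations $\delta^\alpha$ act pointwise in $\xi$, the same conclusion holds for each $\delta^\alpha\rho \in \stS^m(\R^n;\cA_\theta)$. It therefore suffices to prove the following key claim: if $f \in \stS^m(\R^n; X)$ with $X$ a Banach space satisfies $f(k) = 0$ for all $k \in \Z^n$, then $\|\partial_\xi^\beta f(\xi)\| = O((1+|\xi|)^{-N})$ for every multi-index $\beta$ and every $N$.

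For the claim, fix $\xi\in\R^n$ with $|\xi|$ large and choose a nearest lattice point $k_\xi \in \Z^n$, so that $|\xi - k_\xi|\leq \sqrt{n}/2$ and $(1+|\xi|)\asymp(1+|k_\xi|)$; fix also a large integer $L$. For each $\gamma\in\N_0^n$ with $|\gamma|<L$, Taylor's formula for $f$ at $k_\xi$ combined with the standard-symbol estimate~(\ref{eq:Symbols.standard-estimates}) on $\partial_\xi^L f$ yields
\begin{equation*}
0 = f(k_\xi+\gamma) = \sum_{|\alpha|<L} \frac{\gamma^\alpha}{\alpha!}\partial_\xi^\alpha f(k_\xi) + R_L(\gamma;k_\xi), \qquad \|R_L(\gamma;k_\xi)\| \leq C_L(1+|\xi|)^{m-L}.
\end{equation*}
Letting $\gamma$ range over the simplex $\{|\gamma|<L\}$, whose cardinality equals that of $\{|\alpha|<L\}$, one obtains a square linear system for the unknowns $\partial_\xi^\alpha f(k_\xi)$. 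Its coefficient matrix $(\gamma^\alpha/\alpha!)$ depends only on $L$ and $n$ and is the classical Lagrange interpolation matrix on the principal lattice, hence invertible. Solving therefore gives $\|\partial_\xi^\alpha f(k_\xi)\| \leq C_L'(1+|\xi|)^{m-L}$ for every $|\alpha| < L$. Taylor expanding $\partial_\xi^\beta f$ around $k_\xi$ up to order $L$ and using these bounds (with $L$ replaced by $L+|\beta|$) to control the Taylor coefficients $\partial_\xi^{\alpha+\beta}f(k_\xi)$ then yields $\|\partial_\xi^\beta f(\xi)\| \leq C_L''(1+|\xi|)^{m-|\beta|-L}$. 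Since $L$ is arbitrary, this establishes the claim, and applying it to each $\delta^\alpha\rho$ yields Schwartz decay of every $\delta^\alpha\partial_\xi^\beta\rho$, i.e., $\rho\in\cS(\R^n;\cA_\theta)$.

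The main technical input is the invertibility of the multivariate Lagrange interpolation matrix at the simplex grid $\{\gamma\in\N_0^n : |\gamma|<L\}$ --- a classical, but nontrivial in several variables, fact about interpolation on the principal lattice. Everything else is a routine application of Taylor's theorem and the defining symbol estimates.
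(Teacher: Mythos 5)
Your proof is correct, and it takes a different route from the one the paper relies on. The paper defers the argument to Part~I, where the statement is obtained from the correspondence between standard and toroidal symbols: after deducing $\rho(k)=0$ for all $k\in\Z^n$ from Proposition~\ref{prop:PsiDOs.Prhou-equation} exactly as you do, one invokes the comparison theorem asserting that a standard symbol and the interpolant $\sum_{k}\phi(\xi-k)\rho(k)$ of its restriction to $\Z^n$ (with $\phi$ as in Lemma~\ref{lem:trace.phi}) differ by an element of $\cS(\R^n;\cA_\theta)$; since here the interpolant vanishes identically, the conclusion follows. That mechanism leans on the special function $\phi$ and its finite-difference property (summation by parts over the lattice), whereas you replace it by a purely local linear-algebra step: recovering all $\partial_\xi^\alpha f(k_\xi)$, $|\alpha|<L$, up to $O((1+|\xi|)^{m-L})$ by inverting the evaluation matrix of polynomials of degree~$<L$ at the principal lattice $\{\gamma\in\N_0^n:|\gamma|<L\}$. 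Your version is self-contained modulo the classical unisolvence of that lattice (which you correctly flag as the one nontrivial multivariate ingredient; an explicit Lagrange basis of products of linear factors settles it), and it has the merit of being visibly quantitative: each Schwartz semi-norm of $\rho$ is controlled by finitely many symbol semi-norms. Two small points to record when writing it up: the key claim should be applied with $X=A_\theta$ rather than the Fr\'echet space $\cA_\theta$ --- harmless, since the estimates only involve the $C^*$-norm of $\delta^\alpha\partial_\xi^\beta\rho$ and Taylor's formula with integral remainder is available for $\cA_\theta$-valued maps --- and the comparability of $1+|\eta|$ with $1+|\xi|$ must be uniform over the points $\eta=k_\xi+t\gamma$ entering the remainders, which holds because they stay within distance $L+\sqrt{n}/2$ of $\xi$.
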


\begin{corollary}[see~Part~I]\label{cor:PsiDOs.uniqueness-symbol}
Let $P\in \Psi^q(\cA_\theta)$, $q\in \C$. Then
\begin{enumerate}
 \item[(i)] The symbol of $P$ is uniquely determined by $P$ up to the addition of an element of $\cS(\R^n;\cA_\theta)$. 
 
 \item[(ii)] The homogeneous components of the symbol of $P$ are uniquely determined by $P$. 
\end{enumerate}
\end{corollary}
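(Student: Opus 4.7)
The plan is to deduce both assertions directly from Proposition~\ref{prop:PsiDOs.vanishing-Prho}, which is the only substantive input needed; the corollary is essentially a repackaging of that vanishing result for classical symbols. The key enabling fact is the inclusion $S^q(\R^n;\cA_\theta)\subset \stS^{\Re q}(\R^n;\cA_\theta)$ from Remark~\ref{rmk:Symbols.classical-inclusion}, which lets us feed differences of classical symbols of order $q$ into Proposition~\ref{prop:PsiDOs.vanishing-Prho}.

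For part~(i), I would take two classical symbols $\rho_1(\xi),\rho_2(\xi)\in S^q(\R^n;\cA_\theta)$ with $P=P_{\rho_1}=P_{\rho_2}$. The map $\rho\mapsto P_\rho$ is linear (Proposition~\ref{prop:PsiDOs.symbols-to-pdos-continuity}), so $P_{\rho_1-\rho_2}=0$. Since $\rho_1-\rho_2\in S^q(\R^n;\cA_\theta)\subset \stS^{\Re q}(\R^n;\cA_\theta)$, Proposition~\ref{prop:PsiDOs.vanishing-Prho} applies and yields $\rho_1-\rho_2\in \cS(\R^n;\cA_\theta)$, which is exactly the claimed uniqueness modulo Schwartz symbols.

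For part~(ii), I would write $\rho_1\sim\sum_{j\geq 0}\rho_{1,q-j}$ and $\rho_2\sim\sum_{j\geq 0}\rho_{2,q-j}$ in the sense of Definition~\ref{def:Symbols.classicalsymbols}. By part~(i), $r:=\rho_1-\rho_2$ belongs to $\cS(\R^n;\cA_\theta)$, so for every $\ell\geq 0$ and every $\xi\in\R^n\setminus 0$ the rapid decay of $r$ gives $\lambda^{-q+\ell}r(\lambda\xi)\to 0$ in $\cA_\theta$ as $\lambda\to\infty$. Plugging this into the inductive limit formulas recalled in Remark~\ref{rem:Symbols.classical-uniqueness}, one sees that $\rho_1$ and $\rho_2$ produce the same homogeneous components at every step, so $\rho_{1,q-j}=\rho_{2,q-j}$ for all $j\geq 0$.

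There is no genuine obstacle in this argument: all the analytic work sits in Proposition~\ref{prop:PsiDOs.vanishing-Prho}, and the passage to homogeneous components is then a straightforward consequence of the fact that Schwartz symbols have trivial asymptotic expansion. The only item worth spelling out cleanly is that the limit characterisations of Remark~\ref{rem:Symbols.classical-uniqueness} are unaffected by the addition of an element of $\cS(\R^n;\cA_\theta)$, which is immediate from rapid decay in $\xi$.
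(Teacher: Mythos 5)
Your argument is correct and is essentially the intended one: part (i) is linearity of $\rho\mapsto P_\rho$ combined with Proposition~\ref{prop:PsiDOs.vanishing-Prho} via the inclusion $S^q(\R^n;\cA_\theta)\subset\stS^{\Re q}(\R^n;\cA_\theta)$, and part (ii) follows because a Schwartz-class difference is annihilated by the limit formulas of Remark~\ref{rem:Symbols.classical-uniqueness} (rapid decay beats the factor $\lambda^{-\Re q+j}$ for any fixed $\xi\neq 0$). This matches the route the paper takes (deferring the details to Part~I), so nothing further is needed.
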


Let $\Delta= \delta_1^2 + \cdots + \delta_n^2$ be the flat Laplacian of $\cA_\theta$.  This operator is isospectral to the ordinary Laplacian on the usual torus 
$\mathbb{T}^n=\R^n\slash 2\pi \Z^n$. More precisely, the family $(U^k)_{k\in \Z^n}$ forms an orthonormal eigenbasis of $\cH_\theta$ such that
\begin{equation} \label{eq:PsiDOs.Laplacian-eigenvalues}
 \Delta \left(U^k\right)= |k |^2 U^k \qquad \text{for all $k \in \Z^n$}. 
\end{equation}
 We have a positive selfadjoint operator on $\cH_\theta$ with domain, 
 \begin{equation*}
 \op{Dom}(\Delta)=\biggl\{ u=\sum_{k\in \Z^n} u_kU^k\in \cH_\theta; \ \sum_{k\in \Z^n} |k|^4|u_k|^2<\infty\biggr\}. 
\end{equation*}
For any $s\in \C$ we denote by $\Lambda^s$ the operator $(1+\Delta)^{\frac{s}2}$. Thus, 
\begin{equation*}
 \Lambda^s \left(U^k\right)=\left(1+ |k |^2\right)^{\frac{s}{2}} U^k \qquad \text{for all $k \in \Z^n$}.
\end{equation*}
For $\Re s\leq 0$ we obtain a bounded operator. For $\Re s>0$ we get a closed operator with domain, 
 \begin{equation} \label{eq:PsiDOs.Lambdas-domain}
\op{Dom}(\Lambda^s)= \biggl\{ u=\sum_{k\in \Z^n} u_kU^k\in \cH_\theta; \ \sum_{k\in \Z^n} |k|^{2 \Re s} |u_k|^2<\infty\biggr\}. 
\end{equation}
In particular, the domain of $\Lambda^s$ always contains $\cA_\theta$. We obtain a selfadjoint operator when $s\in \R$. We also have the property, 
\begin{equation*}
\Lambda^{s_1+s_2}=\Lambda^{s_1}\Lambda^{s_2}, \qquad s_1, s_2\in \C.
\end{equation*}
In addition, when $\Re s<0$ we have $(1+ |k |^2)^{\frac{s}2} \rightarrow 0$ as $|k|\rightarrow \infty$, and so $\Lambda^s$ is a compact operator on $\cH_\theta$. 

\begin{proposition}[see~Part~I] \label{prop:PsiDOs.Lambdas-properties}
 The family $(\Lambda^s)_{s\in \C}$ is a 1-parameter group of classical \psidos. For every $s\in \C$, the operator $\Lambda^s$ is the \psido\ associated with the symbol $\brak{\xi}^s$. In particular, $\Lambda^s\in \Psi^{s}(\cA_\theta)$ and the principal symbol of $\Lambda^s$ is $|\xi|^s$. 
\end{proposition}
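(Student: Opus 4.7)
The plan is to verify three things: (i) $\brak{\xi}^s$ is a classical symbol of order $s$ with principal part $|\xi|^s$; (ii) the $\Psi$DO associated with $\brak{\xi}^s$ acts on $\cA_\theta$ exactly as the operator $\Lambda^s$ defined via the spectral calculus of $\Delta$; and (iii) the family is a one-parameter group. All three reduce to results already in hand.

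For (i), I invoke Example \ref{ex:Symbols.example-symbol}: the scalar function $\brak{\xi}^s = (1+|\xi|^2)^{s/2}$ was shown there to lie in $S^s(\R^n;\cA_\theta)$ (regarded as taking values in $\C \cdot 1 \subset \cA_\theta$) with principal symbol $|\xi|^s$. Consequently the associated $\Psi$DO $P_{\brak{\cdot}^s}$ lies in $\Psi^s(\cA_\theta)$ and has principal symbol $|\xi|^s$.

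For (ii), I apply Proposition \ref{prop:PsiDOs.Prhou-equation} to the symbol $\rho(\xi) = \brak{\xi}^s$. For any $u = \sum_{k\in\Z^n} u_k U^k \in \cA_\theta$, the proposition gives
\begin{equation*}
 P_{\brak{\cdot}^s} u \;=\; \sum_{k\in\Z^n} u_k \brak{k}^s U^k \;=\; \sum_{k\in\Z^n} u_k (1+|k|^2)^{s/2} U^k,
\end{equation*}
the series converging in $\cA_\theta$ since $(u_k) \in \cS(\Z^n)$ and $\brak{k}^s$ has polynomial growth. On the other hand, since $\Delta(U^k) = |k|^2 U^k$ by the eigenvalue identity \eqref{eq:PsiDOs.Laplacian-eigenvalues} and $\cA_\theta \subset \op{Dom}(\Lambda^s)$ by \eqref{eq:PsiDOs.Lambdas-domain}, the spectral definition of $\Lambda^s = (1+\Delta)^{s/2}$ gives the same formula $\Lambda^s u = \sum_k u_k (1+|k|^2)^{s/2} U^k$. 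Hence $P_{\brak{\cdot}^s} = \Lambda^s$ on $\cA_\theta$, so $\Lambda^s \in \Psi^s(\cA_\theta)$ with the announced symbol and principal symbol.

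For (iii), the equality $\Lambda^{s_1+s_2} = \Lambda^{s_1}\Lambda^{s_2}$ was already recorded in the text preceding the statement; alternatively it may be read off directly from the Fourier series representation above, since multiplication by $(1+|k|^2)^{(s_1+s_2)/2}$ factors through $(1+|k|^2)^{s_1/2}$ and $(1+|k|^2)^{s_2/2}$. No step presents a real obstacle: the only mild subtlety is to note that the termwise convergence in $\cA_\theta$ is automatic from Proposition \ref{prop:NCtori.condition-cAtheta}, so no analytic issue arises in identifying the two operators on the dense domain $\cA_\theta$.
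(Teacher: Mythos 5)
Your proof is correct and is essentially the standard argument (the paper itself defers to Part~I for this statement, but the route you take --- identifying $P_{\brak{\cdot}^s}$ and $\Lambda^s$ on the Fourier basis via Proposition~\ref{prop:PsiDOs.Prhou-equation}, combined with Example~\ref{ex:Symbols.example-symbol} for the classicality and principal symbol, and the spectral multiplier description for the group law). The only implicit step worth flagging is that passing $\Lambda^s$ through the infinite sum $\sum u_k U^k$ uses closedness of $\Lambda^s$ together with the rapid decay of $(u_k\brak{k}^s)$, which you have correctly noted via Proposition~\ref{prop:NCtori.condition-cAtheta}.
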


\subsection{Smoothing operators} As mentioned in Section~\ref{subsection:NCtori.Distributions} the inclusion of $\cA_\theta$ into $\cA_\theta'$ is dense. This leads us to the following notion of smoothing operators. 

\begin{definition}
 A linear operator $R:\cA_\theta \rightarrow \cA_\theta'$ is called smoothing when it extends to a continuous linear operator $R:\cA_\theta'\rightarrow \cA_\theta$.
\end{definition}
 
 In what follows we shall denote by $\Psi^{-\infty}(\cA_\theta)$ the space of smoothing operators. As the following shows smoothing operators are precisely the \psidos\ associated with symbols in $\cS(\R^n; \cA_\theta)$. 

\begin{proposition}[see~Part~I] \label{prop:PsiDos.smoothing-condition}
 A linear operator $R:\cA_\theta \rightarrow \cA_\theta$ is smoothing if and only if there is a symbol $\rho(\xi) \in \cS(\R^n; \cA_\theta)$ such that $R=P_\rho$. 
\end{proposition}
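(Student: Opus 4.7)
The plan is to prove both directions via the Fourier series formula for \psidos\ in Proposition~\ref{prop:PsiDOs.Prhou-equation}, combined with the characterizations of $\cA_\theta$ and $\cA_\theta'$ in terms of Schwartz and tempered sequences (Propositions~\ref{prop:NCtori.condition-cAtheta} and~\ref{prop:NCtori.distributions-Fourier-series}).

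For the \emph{if} direction, suppose $\rho(\xi)\in\cS(\R^n;\cA_\theta)$. For any $v\in\cA_\theta'$ with Fourier series $\sum v_kU^k$, the sequence $(v_k)$ is of polynomial growth while $(\rho(k))$ is rapidly decreasing in $\cA_\theta$, so $(v_k\rho(k))$ belongs to $\cS(\Z^n;\cA_\theta)$. Hence
\begin{equation*}
\tilde Rv:=\sum_{k\in\Z^n} v_k\rho(k)U^k
\end{equation*}
converges in $\cA_\theta$ and, by Proposition~\ref{prop:PsiDOs.Prhou-equation}, coincides with $P_\rho v$ when $v\in\cA_\theta$. Continuity of $\tilde R:\cA_\theta'\to\cA_\theta$ follows because strongly bounded subsets of $\cA_\theta'$ correspond, via Fourier series, to uniformly polynomially bounded sequences in $\cS'(\Z^n)$; together with the rapid decay of $\delta^\alpha\rho(k)$ this yields uniform control of each semi-norm $\|\delta^\alpha\tilde Rv\|$ on bounded sets.

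For the \emph{only if} direction, let $R\in\Psi^{-\infty}(\cA_\theta)$ extend continuously to $R:\cA_\theta'\to\cA_\theta$. I first claim that the family $\{(1+|k|)^NU^k\}_{k\in\Z^n}$ is bounded in $\cA_\theta'$ for every $N\geq 0$. Indeed, using $\tau(U^kU^l)=0$ unless $k+l=0$, for any bounded $B\subset\cA_\theta$ and $v=\sum v_lU^l\in B$ we have $|\acou{(1+|k|)^NU^k}{v}|=(1+|k|)^N|v_{-k}|$, which is bounded uniformly in $k$ and in $v\in B$ by the rapid decay (uniform in $B$) of Fourier coefficients of elements of $B$. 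The continuity of $R$ then gives that $\{(1+|k|)^NR(U^k)\}_k$ is bounded in $\cA_\theta$ for every $N$. Setting $\rho_k:=R(U^k)(U^k)^*$, the Leibniz rule together with $\delta^\alpha U^k=k^\alpha U^k$ allows us to recover the semi-norms $\|\delta^\alpha\rho_k\|$ from those of $R(U^k)$ up to polynomial factors in $k$, and we conclude that $(\rho_k)_{k\in\Z^n}\in\cS(\Z^n;\cA_\theta)$.

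It remains to lift $(\rho_k)$ to a Schwartz symbol on $\R^n$. Choose $\psi\in C_c^\infty(\R^n)$ with $\psi\equiv 1$ near $0$ and $\supp\psi\subset (-1/2,1/2)^n$, and set
\begin{equation*}
\rho(\xi):=\sum_{k\in\Z^n}\rho_k\,\psi(\xi-k).
\end{equation*}
Since the translates $\psi(\,\cdot\,-k)$ have disjoint supports, the sum is locally finite, and the rapid decay of $(\delta^\alpha\rho_k)$ in $\cA_\theta$ for every $\alpha$ readily yields $\rho\in\cS(\R^n;\cA_\theta)$ with $\rho(k)=\rho_k$ for all $k\in\Z^n$. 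By Proposition~\ref{prop:PsiDOs.Prhou-equation} we have $P_\rho(U^k)=\rho(k)U^k=\rho_kU^k=R(U^k)$ for every $k\in\Z^n$, so $R$ and $P_\rho$ agree on $\cA_\theta^0$. Both are continuous on $\cA_\theta$ (by Proposition~\ref{prop:PsiDOs.pdos-continuity} for $P_\rho$, and by hypothesis for $R$), and $\cA_\theta^0$ is dense in $\cA_\theta$, so $R=P_\rho$.

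I expect the main technical point to be the identification of bounded subsets of $\cA_\theta'$ with tempered sequences of uniformly polynomial growth, which underlies both the continuity statement in the first direction and the rapid decay of $(\rho_k)$ in the second. The Schwartz extension step is routine once the compactly supported cutoff $\psi$ is in place.
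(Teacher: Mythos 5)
Your proof is correct and follows essentially the route the paper takes (via Part~I's toroidal-symbol machinery): read off the sequence $\rho_k=R(U^k)(U^k)^*$, show it is rapidly decaying using the boundedness of $\{(1+|k|)^NU^k\}$ in $\cA_\theta'$, and interpolate to a Schwartz symbol on $\R^n$; your compactly supported cutoff $\psi$ is a legitimate simplification of the paper's interpolating function (Lemma~\ref{lem:trace.phi}), whose extra finite-difference property is only needed for symbols of general order $m$, not for the Schwartz class. The one step stated too quickly is the continuity of $\tilde R$ in the ``if'' direction: boundedness on bounded sets implies continuity only because $\cA_\theta'$ is bornological (being the strong dual of the reflexive Fr\'echet space $\cA_\theta$, cf.\ Proposition~\ref{prop:NCtori.condition-cAtheta}); alternatively one gets continuity directly from the estimate $\|\delta^\beta\tilde Rv\|\leq C_\beta\sup_k\left|\acou{v}{(1+|k|)^{n+1}c_{\beta,k}(U^k)^*}\right|$, the set $\{(1+|k|)^{n+1}c_{\beta,k}(U^k)^*\}$ being bounded in $\cA_\theta$ by the rapid decay of $c_{\beta,k}:=\|\delta^\beta(\rho(k)U^k)\|$.
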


\section{Composition of $\Psi$DOs. Amplitudes}\label{sec:Composition-Amplitudes}
In this section, we look at the composition of \psidos\ associated with amplitudes. More specifically, we seek for constructing a bilinear map,
\begin{equation}
\sharp:A^{+\infty} (\Rn\times\Rn;\cA_\theta)\times A^{+\infty}(\Rn\times\Rn;\cA_\theta)\longrightarrow A^{+\infty}(\Rn\times\Rn;\cA_\theta),
\label{eq:Comp-Amplitudes.bilinear-sharp}
\end{equation}
such that
\begin{equation*}
P_{a_1}P_{a_2}=P_{a_1\sharp a_2} \qquad \forall a_j(s,\xi)\in A^{+\infty}(\Rn\times\Rn;\cA_\theta). 
\end{equation*}

We will need the following extension of the 3rd part of Proposition~\ref{prop:NCtori.cAtheta-Frechet}. 

\begin{lemma}\label{lem:Amplitudes.smoothness-action} 
Let $u(x)\in C^\infty(U; \cA_\theta)$, where $U$ is some open set of $\R^d$, $d \geq 1$. Then $\alpha_s[u(x)]$ is contained in $C^\infty(\R^n\times U;\cA_\theta)$ and, for  
 all multi-orders $\beta\in \N_0^n$ and $\gamma \in \N_0^d$, we have 
 \begin{equation}
 D_s^\beta\partial_x^\gamma \alpha_s\left[ u(x)\right]= \alpha_s\left[ \delta^\beta \partial_x^\gamma u(x)\right] = \alpha_s\left[ \partial_x^\gamma \delta^\beta  u(x)\right], 
 \qquad (s,x)\in \R^n \times U. 
 \label{eq:Composition.derivatives-alphas-u}
\end{equation}
\end{lemma}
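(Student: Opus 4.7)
The plan is to derive the formula and joint smoothness by separating the two kinds of derivatives, using Proposition~\ref{prop:NCtori.cAtheta-Frechet} for $s$-differentiation and the continuity of $\alpha_s$ on $\cA_\theta$ for $x$-differentiation, and then tying them together with the joint continuity of $(s,v)\mapsto \alpha_s(v)$.

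First, I would record two preliminary facts. Since each $\delta^\beta:\cA_\theta\rightarrow\cA_\theta$ is continuous linear and $u(x)\in C^\infty(U;\cA_\theta)$, the composition $\delta^\beta u(x)$ lies in $C^\infty(U;\cA_\theta)$, and moreover $\partial_x^\gamma\delta^\beta u(x)=\delta^\beta\partial_x^\gamma u(x)$ (because continuous linear maps commute with parameter differentiation). Similarly, each $\alpha_s:\cA_\theta\rightarrow\cA_\theta$ is continuous linear, so for fixed $s$ the map $x\mapsto \alpha_s[u(x)]$ is $C^\infty$ on $U$ with $\partial_x^\gamma\alpha_s[u(x)]=\alpha_s[\partial_x^\gamma u(x)]$. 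On the other hand, the third part of Proposition~\ref{prop:NCtori.cAtheta-Frechet} applied to the fixed element $u(x)\in\cA_\theta$ gives that $s\mapsto\alpha_s[u(x)]$ is smooth from $\R^n$ to $\cA_\theta$ with $D_s^\beta\alpha_s[u(x)]=\alpha_s[\delta^\beta u(x)]=\delta^\beta\alpha_s[u(x)]$.

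Next, I would prove joint continuity of $v(s,x):=\alpha_s[u(x)]$ on $\R^n\times U$. Given $(s_k,x_k)\to(s_0,x_0)$, continuity of $u$ yields $u(x_k)\to u(x_0)$ in $\cA_\theta$, and then the joint continuity of the action in part~(ii) of Proposition~\ref{prop:NCtori.cAtheta-Frechet} gives $\alpha_{s_k}[u(x_k)]\to\alpha_{s_0}[u(x_0)]$. I would then compute the first-order partial derivatives of $v$: by the second preliminary fact $\partial_{x_j}v(s,x)=\alpha_s[\partial_{x_j}u(x)]$, and by the first, $D_{s_j}v(s,x)=\alpha_s[\delta_j u(x)]$. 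Both expressions are of the form $\alpha_s[w(x)]$ with $w\in C^\infty(U;\cA_\theta)$, hence jointly continuous by the argument just given. This shows that $v$ is of class $C^1$.

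Finally, an induction on $|\beta|+|\gamma|$ closes the argument. Since each first-order partial of $v$ has the form $\alpha_s[w(x)]$ for some $w\in C^\infty(U;\cA_\theta)$, the inductive hypothesis applies to them, yielding the desired smoothness and the formula
\[
D_s^\beta\partial_x^\gamma\alpha_s[u(x)]=\alpha_s[\delta^\beta\partial_x^\gamma u(x)]=\alpha_s[\partial_x^\gamma\delta^\beta u(x)],
\]
the second equality being the commutativity recorded in the first step.

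There is no serious obstacle here; the only point that requires care is that one cannot infer joint smoothness from smoothness in each variable separately, and the cleanest way to bypass this is to use the joint continuity of $\alpha$ on $\R^n\times\cA_\theta$ to establish joint continuity of each partial as it is computed, and then iterate.
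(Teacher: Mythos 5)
Your proposal is correct and follows essentially the same route as the paper's proof: separate the $s$- and $x$-differentiations using Proposition~\ref{prop:NCtori.cAtheta-Frechet} and the continuity of the maps involved, establish joint continuity of each first-order partial via the joint continuity of the action $(s,v)\mapsto\alpha_s(v)$, conclude the map is $C^1$, and induct. No gaps.
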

\begin{proof}
Given any $\beta\in \N_0^n$, the differential operator $\delta^\beta$ defines a continuous endomorphism on $\cA_\theta$. Thus, it follows from the smoothness of $u(x)$ and Proposition~C.17 of Part~I  that $\delta^\beta u(x)\in C^\infty(U; \cA_\theta)$ and, for every multi-order $\gamma \in \N_0^d$, we have 
\begin{equation*}
  \partial_x^\gamma \left[ \delta^\beta u(x)\right] = \delta^\beta \left[  \partial_x^\gamma u(x)\right] \qquad \text{for all $x\in U$}.
\end{equation*}
We  also know by Proposition~\ref{prop:NCtori.cAtheta-Frechet} that the action of $\R^n$ on $\cA_\theta$ gives rise to continuous map from $\R^n \times \cA_\theta$ to $\cA_\theta$. Therefore, we see that $\alpha_s[\delta^\beta u(x)]\in C^0(\R^n\times U; \cA_\theta)$. 

Bearing this in mind, given any $x \in U$, the 3rd part of Proposition~\ref{prop:NCtori.cAtheta-Frechet}  ensures us that $s\rightarrow \alpha_s[u(x)]$ is a smooth map from $\R^n$ to $\cA_\theta$ such that, for $j=1,\ldots, n$, we have 
\begin{equation}
 D_{s_j} \alpha_s[u(x)] = \alpha_s\left[ \delta_j u(x)\right] \qquad \text{for all $s\in \R^n$}. 
 \label{eq:Composition.Dsj-alphas-u} 
\end{equation}
In particular, as  $ \alpha_s[ \delta_j u(x)] \in C^0(\R^n\times U; \cA_\theta)$ we see that $D_{s_j} \alpha_s[u(x)]\in C^0(\R^n\times U; \cA_\theta)$. 

In addition, given any $s\in \R^n$, the action of $\alpha_s$  gives rise to a continuous endomorphism on $\cA_\theta$. Therefore, Proposition~C.17 of Part~I ensures us that $x\rightarrow \alpha_s[u(x)]$ is a smooth map from $U$ to $\cA_\theta$ and, for $l=1,\ldots, d$, we have 
\begin{equation}
 \partial_{x_l} \alpha_s[u(x)] = \alpha_s\left[  \partial_{x_l}  u(x)\right] \qquad \text{for all $x\in U$}.  
 \label{eq:Composition.dxj-alphas-u}
\end{equation}
In the same way as above it can be shown that $\alpha_s\left[  \partial_{x_l}  u(x)\right] \in  C^0(\R^n\times U; \cA_\theta)$. Therefore, we see that 
$ \partial_{x_l} \alpha_s[u(x)] \in  C^0(\R^n\times U; \cA_\theta)$. 

 All this shows that the map $\R^n \times U \ni (s,x)\rightarrow \alpha_s[u(x)]\in \cA_\theta$ admits continuous first order partial  derivatives given 
 by~(\ref{eq:Composition.Dsj-alphas-u}) and~(\ref{eq:Composition.dxj-alphas-u}), and so this is a $C^1$-map. An induction then shows that, for every integer $N\geq 1$, this is a $C^N$-map whose partial derivatives of order~$\leq N$ satisfy~(\ref{eq:Composition.derivatives-alphas-u}). In particular, this is a smooth map. The proof is complete. 
\end{proof}

We also record the following version of Peetre's inequality.

\begin{lemma} \label{lem:Composition-Amp.Peetre-ineq}
Let $m\in\R$. Then, for all $(s,\xi,t,\eta)\in(\Rn)^4$, we have 
\begin{equation} \label{eq:Composition-Amp.Peetre-ineq}
(1+|s+t|+|\xi+\eta|)^m\leq (1+|s|+|\xi|)^m (1+|t|+|\eta|)^{|m|}.
\end{equation}
\end{lemma}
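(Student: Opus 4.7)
The plan is to reduce Peetre's inequality to a single elementary inequality via case analysis on the sign of $m$. When $m=0$ both sides equal $1$ and there is nothing to prove, so I can assume $m\neq 0$.

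For $m>0$, I would observe that $|m|=m$, so it suffices to establish the base inequality
\begin{equation*}
1+|s+t|+|\xi+\eta|\leq (1+|s|+|\xi|)(1+|t|+|\eta|),
\end{equation*}
and then raise both sides to the $m$-th power (permissible since both sides are nonnegative and $x\mapsto x^m$ is monotone on $[0,\infty)$). The base inequality follows by expanding the right-hand side as
\begin{equation*}
1+(|s|+|\xi|)+(|t|+|\eta|)+(|s|+|\xi|)(|t|+|\eta|)
\end{equation*}
and applying the triangle inequalities $|s+t|\leq |s|+|t|$ and $|\xi+\eta|\leq |\xi|+|\eta|$; the cross-term on the right is nonnegative and absorbs any slack.

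For $m<0$, write $m'=-m>0$, so $|m|=m'$. The desired inequality then rearranges to
\begin{equation*}
1+|s|+|\xi|\leq (1+|s+t|+|\xi+\eta|)(1+|t|+|\eta|),
\end{equation*}
after raising both sides to the $m'$-th power (which reverses nothing since we are multiplying, not inverting). This is proved by the ``reverse triangle'' trick: write $s=(s+t)-t$ and $\xi=(\xi+\eta)-\eta$, obtaining $|s|\leq |s+t|+|t|$ and $|\xi|\leq |\xi+\eta|+|\eta|$, and then expanding the product on the right exactly as in the previous case.

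The argument is entirely elementary; the only point requiring care is book-keeping of which pair of variables plays the role of the ``translate'' in each case. There is no real obstacle: Peetre's inequality in this symmetric $(1+|s|+|\xi|)$ form is even slightly cleaner than the usual one-variable version because no constant factor $2^{|m|}$ appears, thanks to the product term $(|s|+|\xi|)(|t|+|\eta|)$ supplying enough slack on its own.
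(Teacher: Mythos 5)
Your proof is correct and follows essentially the same route as the paper: establish the product inequality $1+|s+t|+|\xi+\eta|\leq(1+|s|+|\xi|)(1+|t|+|\eta|)$, raise to the power $m$ for $m\geq 0$, and handle $m<0$ by the substitution $s=(s+t)-t$, $\xi=(\xi+\eta)-\eta$ (which is exactly the paper's substitution of $(s+t,\xi+\eta,-t,-\eta)$ for $(s,\xi,t,\eta)$ in the positive-exponent case). No gaps.
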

\begin{proof} Let $(s,\xi,t,\eta)\in(\Rn)^4$. Then we have
\begin{equation*}
1+|s+t|+|\xi+\eta|\leq 1+|s|+|t|+|\xi|+|\eta| \leq (1+|s|+|\xi|)(1+|t|+|\eta|) .
\end{equation*}
This proves the inequality~(\ref{eq:Composition-Amp.Peetre-ineq}) when $m\geq 0$. Suppose $m<0$. Then we have
\begin{equation*}
(1+|s+t|+|\xi+\eta|)^{-m}\leq (1+|s|+|\xi|)^{-m} (1+|t|+|\eta|)^{|m|}. 
\end{equation*}
Substituting $(s+t,\xi+\eta,-t,-\eta)$ for $(s,\xi,t,\eta)$ we obtain
\begin{equation*}
(1+|s|+|\xi|)^{-m}\leq (1+|s+t|+|\xi+\eta|)^{-m}(1+|t|+|\eta|)^{|m|}. 
\end{equation*}
This gives the inequality~(\ref{eq:Composition-Amp.Peetre-ineq}) when $m<0$. The proof is complete.
\end{proof}

To figure out what the bilinear map~(\ref{eq:Comp-Amplitudes.bilinear-sharp}) must be it is worth looking at the special case of the composition of \psidos\ associated with compactly supported amplitudes. 

\begin{lemma}\label{lem:Composition-Amp.comp-supp}
 Let $a_j(s,\xi)\in C^\infty_c(\R^n\times \R^n; \cA_\theta)$, $j=1,2$, and set 
\begin{equation}
a_1\sharp a_2(s,\xi):=\iint e^{it\cdot\eta} a_1(t,\eta+\xi)\alpha_{-t}[a_2(s-t,\xi)]dt\dbar\eta, \qquad (s,\xi)\in\Rn\times\Rn .
\label{eq:Composition-Amp.comp-supp}
\end{equation}
Then $a_1\sharp a_2(s,\xi)\in C^\infty_c(\R^n\times \R^n; \cA_\theta)$ and $P_{a_1}P_{a_2}=P_{a_1\sharp a_2}$. 
\end{lemma}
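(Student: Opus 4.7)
The plan is to separate the lemma into its two assertions and treat each directly. Because both $a_1$ and $a_2$ are compactly supported, every integral that appears in the argument is a proper $\cA_\theta$-valued Bochner-type integral (in the sense of Appendix~B of Part~I), so no oscillating-integral machinery is needed: the proof amounts to tracking supports, then applying Fubini and two elementary changes of variables.

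For the smoothness and compact support of $a_1\sharp a_2$, first note that by Lemma~\ref{lem:Amplitudes.smoothness-action} the map $(s,t,\xi)\mapsto\alpha_{-t}[a_2(s-t,\xi)]$ is jointly smooth with values in $\cA_\theta$, so the integrand in~(\ref{eq:Composition-Amp.comp-supp}) is smooth in $(s,\xi,t,\eta)$. Setting $K_i^{(j)}:=\pi_i(\supp a_j)$ for $i,j\in\{1,2\}$, the integrand vanishes unless $t\in K_1^{(1)}$, $\eta\in K_2^{(1)}-\xi$, $\xi\in K_2^{(2)}$, and $s-t\in K_1^{(2)}$. It follows that for each fixed $(s,\xi)$ the $(t,\eta)$-domain of integration is compact, and moreover $a_1\sharp a_2(s,\xi)$ can only be nonzero when $\xi\in K_2^{(2)}$ and $s\in K_1^{(1)}+K_1^{(2)}$, proving compact support in $(s,\xi)$. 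Smoothness in $(s,\xi)$ then follows from differentiation under the integral via Proposition~\ref{prop:Amplitudes.J-partial-compatibility-family} applied to the (trivially estimate-satisfying) family defined by the integrand.

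For the composition identity, I start from $P_{a_2}u=\iint e^{is\cdot\xi}a_2(s,\xi)\alpha_{-s}(u)\,ds\,\dbar\xi$, which is a proper integral since $a_2$ has compact support. Applying $\alpha_{-s'}$, which is a continuous $\C$-linear endomorphism of $\cA_\theta$, and using Lemma~\ref{lem:Amplitudes.J-Phi-compatibility} to commute it with the integral, I obtain
\begin{equation*}
\alpha_{-s'}(P_{a_2}u)=\iint e^{is\cdot\xi}\alpha_{-s'}[a_2(s,\xi)]\alpha_{-s-s'}(u)\,ds\,\dbar\xi .
\end{equation*}
Substituting this into $P_{a_1}(P_{a_2}u)=\iint e^{is'\cdot\xi'}a_1(s',\xi')\alpha_{-s'}(P_{a_2}u)\,ds'\,\dbar\xi'$ produces a quadruple integral of a compactly supported smooth $\cA_\theta$-valued integrand, so Fubini applies. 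I then make two unit-Jacobian changes of variables: first replace $s$ by $s-s'$ (so that $\alpha_{-s-s'}(u)$ becomes $\alpha_{-s}(u)$ and $a_2(s,\xi)$ becomes $a_2(s-s',\xi)$), and second replace $\xi'$ by $\eta+\xi$, which turns the combined phase $e^{is'\cdot\xi'+i(s-s')\cdot\xi}$ into $e^{is'\cdot\eta+is\cdot\xi}$. Renaming $s'$ as $t$ and reordering so that the $(s,\xi)$-integration is outermost, the inner double integral in $(t,\eta)$ is exactly $a_1\sharp a_2(s,\xi)$ by definition, and what remains is precisely $P_{a_1\sharp a_2}u$.

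The only mildly delicate point is that the integrals take values in the Fr\'echet space $\cA_\theta$, but Appendix~B of Part~I provides Fubini's theorem and change-of-variables in this setting, so no additional work is required. The genuine obstacle — extending both the smoothness of $a_1\sharp a_2$ and the composition identity to general amplitudes in $A^{+\infty}$ — is not addressed here; it will require the oscillating-integral formalism together with Lemma~\ref{lem:Composition-Amp.Peetre-ineq} to control decay, and is the content of the subsequent proposition.
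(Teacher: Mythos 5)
Your proposal is correct and follows essentially the same route as the paper: express $P_{a_1}P_{a_2}u$ as a proper quadruple integral by commuting $\alpha_{-t}$ with the inner integral, apply Fubini, and perform the change of variables $(s,\xi,t,\eta)\mapsto(s+t,\xi,t,\eta-\xi)$ (which is exactly the composite of your two substitutions) before pulling out $\alpha_{-s}(u)$. Your explicit support-tracking for the compact support of $a_1\sharp a_2$ is just a slightly more detailed version of the paper's remark that the integrand lies in $C^\infty_c((\R^n)^4;\cA_\theta)$.
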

\begin{proof}
Thanks to Lemma~\ref{lem:Amplitudes.smoothness-action}  the integrand $e^{it\cdot\eta} a_1(t,\eta+\xi)\alpha_{-t}[a_2(s-t,\xi)]$ is a product of smooth maps, and so by Corollary~C.21 of Part~I it depends smoothly on $(s,t,\xi,\eta)$.  It also has compact support, since $a_1(s,\xi)$ and $a_2(s,\xi)$ are both compactly supported,  and so we have a map in  $C^\infty_c((\R^n)^4;\cA_\theta)$. By using the results  on integrals with parameters it then can be shown that~(\ref{eq:Composition-Amp.comp-supp}) defines a map $a_1\sharp a_2(s,\xi)\in C^\infty_c(\R^n\times \R^n; \cA_\theta)$. 

Let $u\in\cA_\theta$. Then we have
\begin{equation*}
P_{a_1}P_{a_2}u=J\left(a_1(s,\xi)\alpha_{-s}(P_{a_2}u)\right)=\iint e^{it\cdot\eta} a_1(t,\eta) \alpha_{-t} (P_{a_2}u)dt\dbar\eta .
\end{equation*}
Let $t\in\Rn$. As $\alpha_{-t}$ induces a continuous endomorphism on $\cA_\theta$, it commutes with the integration of $\cA_\theta$-valued integrable maps (\cf~Part~I). Thus,
\begin{align*}
\alpha_{-t}(P_{a_2}u) & = \alpha_{-t} \left(\iint e^{is\cdot\xi}a_2(s,\xi)\alpha_{-s}(u)ds\dbar\xi\right) 
  =  \iint e^{is\cdot\xi}\alpha_{-t}[a_2(s,\xi)]\alpha_{-(s+t)}(u)ds\dbar\xi .
\end{align*}
In a similar way as with the integrand in~(\ref{eq:Composition-Amp.comp-supp}), we see that  $e^{i(t\cdot\eta+s\cdot\xi)}a_1(t,\eta)\alpha_{-t}[a_2(s,\xi)]\alpha_{-(s+t)}(u)$ is an element of
$C^\infty_c((\R^n)^4;\cA_\theta)$. Therefore, by using Fubini's theorem (\emph{cf.}~Part~I) we get 
\begin{align*}
 P_{a_1}P_{a_2}u & = \iint e^{it\cdot\eta} a_1(t,\eta) \left( \iint e^{is\cdot\xi}\alpha_{-t}[a_2(s,\xi)]\alpha_{-(s+t)}(u)ds\dbar\xi\right) dt\dbar\eta\\ 
  & = \iiiint e^{i(t\cdot\eta+s\cdot\xi)}a_1(t,\eta)\alpha_{-t}[a_2(s,\xi)]\alpha_{-(s+t)}(u)ds\dbar\xi dt\dbar\eta. 
\end{align*}
Using the change of variables $(s,\xi,t,\eta)\rightarrow (s+t,\xi,t,\eta-\xi)$ and applying Fubini's theorem once again (\emph{cf.}~Part~I) 
gives
\begin{align*}
 P_{a_1}P_{a_2}u & =  \iiiint e^{i(t\cdot(\eta+\xi)+(s-t)\cdot\xi)}a_1(t,\eta+\xi)\alpha_{-t}[a_2(s-t,\xi)]\alpha_{-s}(u)ds\dbar\xi dt\dbar\eta\\ 
 & = \iint  e^{is\cdot\xi} \left(\iint e^{it\cdot\eta} a_1(t,\eta+\xi)\alpha_{-t}[a_2(s-t,\xi)]\alpha_{-s}(u)dt \dbar\eta \right) ds\dbar\xi\\
 &=    \iint  e^{is\cdot\xi}  \left(\iint e^{it\cdot\eta} a_1(t,\eta+\xi)\alpha_{-t}[a_2(s-t,\xi)]dt \dbar\eta \right)\alpha_{-s}(u)ds\dbar\xi\\
 &= \iint e^{is\cdot\xi} a_1\sharp a_2(s,\xi)\alpha_{-s}(u)ds\dbar\xi\\
 &= P_{ a_1\sharp a_2}u. 
\end{align*}
To get the 3rd equality we have used the fact that, as the right-multiplication by $\alpha_{-s}(u)$ defines a continuous endomorphism on $\cA_\theta$, it commutes with the integration of integrable maps (\emph{cf.}~Part~I). This shows that $P_{a_1}P_{a_2}=P_{ a_1\sharp a_2}$. The proof is complete. 
\end{proof}

Given arbitrary amplitudes $a_1(s,\xi)$ and $a_2(s,\xi)$ in $A^{+\infty}(\Rn\times\Rn;\cA_\theta)$, we let $a_1\natural a_2(s,\xi;t,\eta)$ be the map 
 from $(\Rn)^4$ to $\cA_\theta$ defined by
\begin{equation*} 
a_1\natural a_2(s,\xi;t,\eta)=a_1(t,\eta+\xi)\alpha_{-t}[a_2(s-t,\xi)] , \qquad (s,\xi,t,\eta)\in(\Rn)^4 .
\end{equation*}
This is a smooth map since $\cA_\theta$ is a Fr\'echet algebra and Lemma~\ref{lem:Amplitudes.smoothness-action} implies that 
$(s,\xi,t)\rightarrow\alpha_{-t}[a_2(s,\xi)]$ is a smooth map from $(\Rn)^3$ to $\cA_\theta$.

\begin{lemma} \label{lem:Composition-Amp.amp-natural-estimates}
Let $m_1, m_2\in\R$, and set $m_{1+}=\op{max}(m_1,0)$. In addition, let $\alpha,\beta,\gamma,\lambda,\mu$ be multi-orders, and set $N=|\alpha|+|\beta|+|\gamma|+|\lambda|+|\mu|$. Then there exists a constant $C>0$ such that, given any amplitudes $a_1(s,\xi) \in A^{m_1}(\Rn\times\Rn;\cA_\theta)$ and $a_2(s,\xi) \in A^{m_2}(\Rn\times\Rn;\cA_\theta)$, for all  $(s,\xi,t,\eta)\in (\Rn)^4$, we have
\begin{multline*}
\norm{\delta^\alpha\partial_s^\beta\partial_\xi^\gamma\partial_t^\lambda\partial_\eta^\mu (a_1\natural a_2)(s,\xi;t,\eta)} \\
\leq C q_N^{(m_1)}(a_1)q_N^{(m_2)}(a_2)(1+|s|+|\xi|)^{m_{1+}+m_2}(1+|t|+|\eta|)^{|m_1|+|m_2|}. 
\end{multline*}

\end{lemma}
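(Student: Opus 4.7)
The plan is to expand all the derivatives of the product $a_1\natural a_2(s,\xi;t,\eta)=a_1(t,\eta+\xi)\alpha_{-t}[a_2(s-t,\xi)]$ by the Leibniz rule, use the isometry property $\|\alpha_{-t}(v)\|=\|v\|$ together with the commutation of $\alpha_{-t}$ and $\delta$, apply the defining amplitude estimates~(\ref{eq:Amplitudes.amplitudes-estimates}) on $a_1$ and $a_2$, and finally invoke Peetre's inequality (Lemma~\ref{lem:Composition-Amp.Peetre-ineq}) twice to re-package the resulting factors into the form stated.

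First, I would compute $\delta^\alpha\partial_s^\beta\partial_\xi^\gamma\partial_t^\lambda\partial_\eta^\mu(a_1\natural a_2)$. The derivations $\delta_j$ commute with $\alpha_{-t}$ (since $D_{s_j}\alpha_s=\alpha_s\delta_j$), and by Lemma~\ref{lem:Amplitudes.smoothness-action} we have $\partial_{t_j}\alpha_{-t}[u]=-i\alpha_{-t}[\delta_j u]$ when $u$ is a fixed element of $\cA_\theta$, while the dependence of $a_2(s-t,\xi)$ on $t$ through its first slot contributes a $-\partial_{s_j}$ via the chain rule. Thus applying Leibniz to the product and to the factor $\alpha_{-t}[a_2(s-t,\xi)]$ yields a finite sum, with universal combinatorial coefficients, of terms of the form
\begin{equation*}
\bigl[\delta^{\alpha_1}\partial_\xi^{\gamma_1}\partial_\eta^\mu a_1\bigr](t,\eta+\xi)\,\alpha_{-t}\bigl[\delta^{\alpha_2+\lambda_2}\partial_s^{\beta+\lambda_1}\partial_\xi^{\gamma_2}a_2\bigr](s-t,\xi),
\end{equation*}
where $\alpha_1+\alpha_2=\alpha$, $\gamma_1+\gamma_2=\gamma$ and $\lambda_1+\lambda_2=\lambda$. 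Since each total multi-order occurring on $a_j$ has length at most $N$, estimating its $\cA_\theta$-norm by~(\ref{eq:Amplitudes.amplitudes-estimates}) and using $\|\alpha_{-t}(v)\|=\|v\|$ yields that each summand is bounded by
\begin{equation*}
C\, q_N^{(m_1)}(a_1)\, q_N^{(m_2)}(a_2)\,(1+|t|+|\xi+\eta|)^{m_1}(1+|s-t|+|\xi|)^{m_2}.
\end{equation*}

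Finally, I apply Peetre's inequality twice to separate the weights. Taking $(s,\xi,t,\eta)\leadsto(0,\xi,t,\eta)$ in~(\ref{eq:Composition-Amp.Peetre-ineq}) gives $(1+|t|+|\xi+\eta|)^{m_1}\leq(1+|\xi|)^{m_1}(1+|t|+|\eta|)^{|m_1|}$, and bounding $(1+|\xi|)^{m_1}$ by $(1+|s|+|\xi|)^{m_{1+}}$ (case by case according to the sign of $m_1$) produces the first desired factor. Similarly, taking $(s,\xi,t,\eta)\leadsto(s,\xi,-t,0)$ yields $(1+|s-t|+|\xi|)^{m_2}\leq(1+|s|+|\xi|)^{m_2}(1+|t|)^{|m_2|}\leq(1+|s|+|\xi|)^{m_2}(1+|t|+|\eta|)^{|m_2|}$. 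Multiplying the two estimates delivers the stated bound. The only genuinely delicate point is the bookkeeping in the Leibniz expansion: tracking how the $\partial_t$ derivatives redistribute into $\delta$'s on $a_2$ via $\alpha_{-t}$ and $\partial_s$'s on $a_2$ via the chain rule, while making sure the total order of derivatives landing on each $a_j$ never exceeds $N$; once this combinatorics is correctly set up, everything else is routine.
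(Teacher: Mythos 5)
Your proposal is correct and follows essentially the same route as the paper: expand by Leibniz, use the isometry of $\alpha_{-t}$ and its commutation with the $\delta_j$, apply the amplitude estimates to each factor, and then apply Peetre's inequality twice (once with $s=0$ and once with $\eta=0$) exactly as in the paper's proof. One small bookkeeping slip: your generic term omits the contribution of $\partial_t^\lambda$ falling on the \emph{first slot} of $a_1(t,\eta+\xi)$, so $\lambda$ should split three ways, $\lambda_1+\lambda_2+\lambda_3=\lambda$, with a factor $\partial_s^{\lambda_1}$ landing on $a_1$ in addition to the $\delta^{\lambda_3}$ and $\partial_s^{\lambda_2}$ you put on $a_2$; since these extra terms have the same form and total derivative order still at most $N$, the final estimate is unaffected.
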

\begin{proof} Let $a_1(s,\xi) \in A^{m_1}(\Rn\times\Rn;\cA_\theta)$ and $a_2(s,\xi) \in A^{m_2}(\Rn\times\Rn;\cA_\theta)$. Given $(s,\xi,t,\eta)\in(\Rn)^4$, we have
\begin{align}
\norm{a_1\natural a_2(s,\xi;t,\eta)} & \leq\norm{a_1(t,\xi+\eta)}\norm{\alpha_{-t}[a_2(s-t,\xi)]} \nonumber \\ 
& \leq q_0^{(m_1)}(a_1)q_0^{(m_2)}(a_2)(1+|t|+|\xi+\eta|)^{m_1}(1+|s-t|+|\xi|)^{m_2}. 
\label{eq:Composition-Amp.amp-natural-estimates}
\end{align}
It follows from Lemma~\ref{lem:Composition-Amp.Peetre-ineq}  that we have
\begin{gather*}
(1+|t|+|\xi+\eta|)^{m_1} \leq (1+|\xi|)^{m_1}(1+|t|+|\eta|)^{|m_1|} \leq (1+|s|+|\xi|)^{m_{1+}}(1+|t|+|\eta|)^{|m_1|} , \\
(1+|s-t|+|\xi|)^{m_2} \leq (1+|s|+|\xi|)^{m_2}(1+|t|)^{|m_2|} \leq (1+|s|+|\xi|)^{m_2}(1+|t|+|\eta|)^{|m_2|} .
\end{gather*}
Combining this with~(\ref{eq:Composition-Amp.amp-natural-estimates}) we obtain 
\begin{equation} \label{eq:Composition-Amp.amp-natural-Peetre-estimates}
\norm{a_1\natural a_2(s,\xi;t,\eta)}\leq q_0^{(m_1)}(a_1)q_0^{(m_2)}(a_2)(1+|s|+|\xi|)^{m_{1+}+m_2}(1+|t|+|\eta|)^{|m_1|+|m_2|} .
\end{equation}

Note that, for $j=1,\ldots,n$, we have
\begin{equation} \label{eq:Composition-Amp.amp-natural-partial-s}
\partial_{s_j}a_1\natural a_2(s,\xi;t,\eta)=a_1(t,\eta+\xi)\alpha_{-t}[(\partial_{s_j}a_2)(s-t,\xi)]=a_1\natural (\partial_{s_j}a_2)(s,\xi;t,\eta) .
\end{equation}
Likewise, we have
\begin{gather}
\label{eq:Composition-Amp.amp-natural-partial-xi}
\partial_{\xi_j}a_1\natural a_2(s,\xi;t,\eta) = (\partial_{\xi_j}a_1)\natural a_2(s,\xi;t,\eta)+a_1\natural (\partial_{\xi_j}a_2)(s,\xi;t,\eta) , \\
\label{eq:Composition-Amp.amp-natural-partial-eta}
\partial_{\eta_j}a_1\natural a_2(s,\xi;t,\eta) = (\partial_{\xi_j}a_1)\natural a_2(s,\xi;t,\eta) , \\
\label{eq:Composition-Amp.amp-natural-delta} \delta_j a_1\natural a_2(s,\xi;t,\eta) = (\delta_j a_1)\natural a_2(s,\xi;t,\eta)+a_1\natural(\delta_j a_2)(s,\xi;t,\eta) .
\end{gather}
In addition, for the partial derivative $D_{t_j}$ we get
\begin{equation*}
D_{t_j}a_1\natural a_2(s,\xi;t,\eta)=(D_{s_j}a_1)\natural a_2(s,\xi;t,\eta)-a_1\natural(\delta_j a_2+D_{s_j}a_2)(s,\xi;t,\eta) .
\end{equation*}
More generally, it can be shown that $\delta^\alpha\partial_s^\beta\partial_\xi^\gamma\partial_t^\lambda\partial_\eta^\mu a_1\natural a_2(s,\xi;t,\eta)$ is a sum of terms of the form, 
\begin{equation*}
C_{\alpha_1 \gamma_1 \lambda_1 \lambda_2}^{\alpha\gamma\lambda} (\delta^{\alpha_1}\partial_s^{\lambda_1}\partial_\xi^{\gamma_1+\mu} a_1)\natural(\delta^{\alpha_2+\lambda_3}\partial_s^{\beta+\lambda_2}\partial_\xi^{\gamma_2} a_2)(s,\xi;t,\eta),
\end{equation*}
where $\alpha_1,\alpha_2,\gamma_1,\gamma_2,\lambda_1,\lambda_2,\lambda_3$ are multi-orders such that $\alpha_1+\alpha_2=\alpha$, $\gamma_1+\gamma_2=\gamma$ and $\lambda_1+\lambda_2+\lambda_3=\lambda$, and $C_{\alpha_1 \gamma_1 \lambda_1 \lambda_2}^{\alpha\gamma\lambda}$ is a universal constant depending
 only on $\alpha,\gamma,\lambda,\alpha_1,\gamma_1,\lambda_1$ and $\lambda_2$. Note that for such multi-orders the partial differentiations $\delta^{\alpha_1}\partial_s^{\lambda_1}\partial_\xi^{\gamma_1+\mu}$ and $\delta^{\alpha_2+\lambda_3}\partial_s^{\beta+\lambda_2}\partial_\xi^{\gamma_2}$ both have order~$\leq N$, where $N=|\alpha|+|\beta|+|\gamma|+|\lambda|+|\mu|$.  Therefore, we see there is a constant $C>0$ depending on $\alpha,\beta,\gamma,\lambda,\mu$, but not on  $a_1(s,\xi)$ or $a_2(s,\xi)$,  such that
\begin{multline}  \label{eq:Composition-Amp.natural-Leibniz-estimates}
\norm{\delta^\alpha\partial_s^\beta\partial_\xi^\gamma\partial_t^\lambda\partial_\eta^\mu (a_1\natural a_2)(s,\xi;t,\eta)}   \\
\leq C \!\!\!\sup_{|\alpha_j|+|\beta_j|+|\gamma_j|\leq N} \! \norm{(\delta^{\alpha_1}\partial_s^{\beta_1}\partial_\xi^{\gamma_1} a_1)\natural(\delta^{\alpha_2}\partial_s^{\beta_2}\partial_\xi^{\gamma_2} a_2)(s,\xi;t,\eta)} .
\end{multline}
Combining this with~(\ref{eq:Composition-Amp.amp-natural-Peetre-estimates}) we obtain
\begin{multline*}
(1+|s|+|\xi|)^{-(m_{1+}+m_2)}(1+|t|+|\eta|)^{-(|m_1|+|m_2|)} \norm{\delta^\alpha\partial_s^\beta\partial_\xi^\gamma\partial_t^\lambda\partial_\eta^\mu (a_1\natural a_2)(s,\xi;t,\eta)} \\
\leq C \sup_{|\alpha_j|+|\beta_j|+|\gamma_j|\leq N} q_0^{(m_1)}(\delta^{\alpha_1}\partial_s^{\beta_1}\partial_\xi^{\gamma_1} a_1)q_0^{(m_2)}(\delta^{\alpha_2}\partial_s^{\beta_2}\partial_\xi^{\gamma_2} a_2).
\end{multline*}
This gives the result.
\end{proof}

Let $a_1(s,\xi)\in A^{m_1}(\Rn\times\Rn;\cA_\theta)$ and $a_2(s,\xi)\in A^{m_2}(\Rn\times\Rn;\cA_\theta)$, $m_1,m_2\in \R$. It follows from Proposition~\ref{prop:Amplitudes.J-partial-compatibility-family} and Lemma~\ref{lem:Composition-Amp.amp-natural-estimates} that we define a smooth map $a_1\sharp a_2$ from $\R^n\times \R^n$ to $\cA_\theta$ by 
\begin{equation} \label{eq:Composition-Amp.amp-sharp-def}
a_1\sharp a_2(s,\xi):=J\left(a_1\natural a_2(s,\xi;\cdot,\cdot)\right), \qquad (s,\xi)\in\Rn\times\Rn.
\end{equation}

\begin{proposition} \label{prop:Composition-Amp.sharp-bilinear-conti}
Let $m_1,m_2\in\R$. Then~(\ref{eq:Composition-Amp.amp-sharp-def}) defines a continuous bilinear map,
\begin{equation*}
\sharp:A^{m_1}(\Rn\times\Rn;\cA_\theta)\times A^{m_2}(\Rn\times\Rn;\cA_\theta)\longrightarrow A^{m_{1+}+m_2}(\Rn\times\Rn;\cA_\theta) .
\end{equation*}
\end{proposition}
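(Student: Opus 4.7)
The plan is to establish the growth estimates and smoothness of $a_1\sharp a_2$ by combining Lemma~\ref{lem:Composition-Amp.amp-natural-estimates} with the $L^t$-representation~(\ref{eq:Amplitudes.linearmap-J}) of the oscillating integral $J$. The bilinearity of $\sharp$ is immediate from the bilinearity of $J$ and the definition~(\ref{eq:Composition-Amp.amp-sharp-def}), so the work is entirely contained in producing a joint estimate of the form $q_{N'}^{(m_{1+}+m_2)}(a_1\sharp a_2)\leq C\, q_N^{(m_1)}(a_1)\, q_N^{(m_2)}(a_2)$ for suitable $N,N'\in\N_0$, since this simultaneously yields membership in $A^{m_{1+}+m_2}(\Rn\times\Rn;\cA_\theta)$ and continuity of the bilinear map.

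First, I would verify smoothness of $a_1\sharp a_2$ and a commutation formula for its partial derivatives. Lemma~\ref{lem:Composition-Amp.amp-natural-estimates} shows that, for each fixed pair $(a_1,a_2)$, the family $(s,\xi)\mapsto a_1\natural a_2(s,\xi;\cdot,\cdot)$ satisfies the uniform-on-compacta estimates~(\ref{eq:Amplitudes.amplitudes-family-estimates}) of Proposition~\ref{prop:Amplitudes.J-partial-compatibility-family} with $(t,\eta)$-order $|m_1|+|m_2|$. This proposition then gives $a_1\sharp a_2\in C^\infty(\Rn\times\Rn;\cA_\theta)$ and
\begin{equation*}
\partial_s^\beta\partial_\xi^\gamma(a_1\sharp a_2)(s,\xi) = J\bigl(\partial_s^\beta\partial_\xi^\gamma[a_1\natural a_2](s,\xi;\cdot,\cdot)\bigr).
\end{equation*}
Commuting $\delta^\alpha$ with $J$ by Proposition~\ref{prop:Amplitudes.J-properties}(iii), and using the fact that $\delta^\alpha$ acts on the product $a_1\natural a_2$ by Leibniz exactly as in the derivation of~(\ref{eq:Composition-Amp.amp-natural-delta}), I obtain an analogous formula with $\delta^\alpha\partial_s^\beta\partial_\xi^\gamma$ on the left.

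Next, for the growth estimates, I would choose an integer $M>|m_1|+|m_2|+2n$ and apply the representation~(\ref{eq:Amplitudes.linearmap-J}):
\begin{equation*}
\delta^\alpha\partial_s^\beta\partial_\xi^\gamma(a_1\sharp a_2)(s,\xi) = \iint e^{it\cdot\eta}(L^t)^M\bigl[\delta^\alpha\partial_s^\beta\partial_\xi^\gamma[a_1\natural a_2](s,\xi;\cdot,\cdot)\bigr](t,\eta)\,dt\dbar\eta.
\end{equation*}
For each fixed $(s,\xi)$, the map $(t,\eta)\mapsto\delta^\alpha\partial_s^\beta\partial_\xi^\gamma[a_1\natural a_2](s,\xi;t,\eta)$ lies in $A^{|m_1|+|m_2|}(\Rn\times\Rn;\cA_\theta)$; more importantly, Lemma~\ref{lem:Composition-Amp.amp-natural-estimates} shows its $(t,\eta)$-amplitude seminorms of any order $N'$ are bounded by $C\, q_N^{(m_1)}(a_1)\, q_N^{(m_2)}(a_2)(1+|s|+|\xi|)^{m_{1+}+m_2}$ with $N$ depending only on $N'$ and $|\alpha|+|\beta|+|\gamma|$, with the factor $(1+|s|+|\xi|)^{m_{1+}+m_2}$ appearing as a multiplicative prefactor independent of $(t,\eta)$. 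Iterating Lemma~\ref{lem:Amplitudes.L-transpose-continuity} shows $(L^t)^M$ maps $A^{|m_1|+|m_2|}$ continuously into $A^{|m_1|+|m_2|-M}$, so the resulting $(t,\eta)$-amplitude has order $<-2n$ and is integrable. Its $L^1$-norm in $(t,\eta)$ is controlled by a finite seminorm of the input, and since that prefactor pulls through cleanly, the full integral is bounded by $C\, q_N^{(m_1)}(a_1)\, q_N^{(m_2)}(a_2)(1+|s|+|\xi|)^{m_{1+}+m_2}$.

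Taking the supremum over $|\alpha|+|\beta|+|\gamma|\leq N'$ yields the desired continuity estimate $q_{N'}^{(m_{1+}+m_2)}(a_1\sharp a_2)\leq C\, q_N^{(m_1)}(a_1)\, q_N^{(m_2)}(a_2)$, completing the proof. The main obstacle I anticipate is the bookkeeping: one must make sure the prefactor $(1+|s|+|\xi|)^{m_{1+}+m_2}$ produced by Lemma~\ref{lem:Composition-Amp.amp-natural-estimates} survives intact under both the differentiations $\delta^\alpha\partial_s^\beta\partial_\xi^\gamma$ and the application of $(L^t)^M$, with all additional $(t,\eta)$-decay being absorbed into integrability. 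This is ensured because $L^t$ acts only in $(t,\eta)$ and its coefficients are independent of $(s,\xi)$, but it is worth keeping this structural observation explicit when writing the estimates.
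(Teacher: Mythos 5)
Your proposal is correct and follows essentially the same route as the paper: commute $\delta^\alpha\partial_s^\beta\partial_\xi^\gamma$ with $J$ via Propositions~\ref{prop:Amplitudes.J-properties} and~\ref{prop:Amplitudes.J-partial-compatibility-family}, then feed the estimate of Lemma~\ref{lem:Composition-Amp.amp-natural-estimates} --- whose prefactor $(1+|s|+|\xi|)^{m_{1+}+m_2}$ is constant in $(t,\eta)$ --- into the continuity of $J$ on $A^{|m_1|+|m_2|}(\Rn\times\Rn;\cA_\theta)$. The only cosmetic difference is that the paper invokes that continuity as a black box (Proposition~\ref{prop:Amplitudes.extension-J0}, yielding $\|J(a)\|\leq C_{N_0}q_{N_0}^{(|m_1|+|m_2|)}(a)$), whereas you re-derive it by unfolding the $(L^t)^M$ integration-by-parts representation; both lead to the same semi-norm estimate.
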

\begin{proof}
Set $m=|m_1|+|m_2|$. As $J$ is a continuous linear map from $A^m(\Rn\times\Rn;\cA_\theta)$ to $\cA_\theta$, there are $N_0\in\N_0$ and $C_{N_0}>0$ such that we have 
\begin{equation*} 
\norm{J(a)}\leq C_{N_0} q_{N_0}^{(m)}(a) \qquad \text{for all $a(s,\xi)\in A^m(\Rn\times\Rn;\cA_\theta)$} .
\end{equation*}
Let $a_1(s,\xi)\in A^{m_1}(\Rn\times\Rn;\cA_\theta)$ and $a_2(s,\xi)\in A^{m_2}(\Rn\times\Rn;\cA_\theta)$. In addition, let $N\in \N_0$ and $\alpha,\beta,\gamma$ be multi-orders such that $|\alpha|+|\beta|+|\gamma|\leq N$. Using Proposition~\ref{prop:Amplitudes.J-properties} and Proposition~\ref{prop:Amplitudes.J-partial-compatibility-family} we see that, for any $(s,\xi)\in\Rn\times\Rn$, we have
\begin{equation*}
\delta^\alpha\partial_s^\beta\partial_\xi^\gamma a_1\sharp a_2(s,\xi) = \delta^\alpha\partial_s^\beta\partial_\xi^\gamma J(a_1\natural a_2(s,\xi;\cdot,\cdot)) = J(\delta^\alpha\partial_s^\beta\partial_\xi^\gamma a_1\natural a_2(s,\xi;\cdot,\cdot)) .
\end{equation*}
Thus,
\begin{equation} \label{eq:Composition-Amp.amp-sharp-estimates}
\norm{\delta^\alpha\partial_s^\beta\partial_\xi^\gamma a_1\sharp a_2(s,\xi)}\leq C_{N_0}q_{N_0}^{(m)}(\delta^\alpha\partial_s^\beta\partial_\xi^\gamma a_1\natural a_2(s,\xi;\cdot,\cdot)) .
\end{equation}

Set $N_1=N+N_0$. We know by Lemma~\ref{lem:Composition-Amp.amp-natural-estimates} that there is a constant $C>0$ independent of $a_1(s,\xi)$ and $a_2(s,\xi)$ such that, for all $(s,\xi)\in\Rn\times\Rn$, we have
\begin{equation*}
q_{N_0}^{(m)}(\delta^\alpha\partial_s^\beta\partial_\xi^\gamma a_1\natural a_2(s,\xi;\cdot,\cdot))\leq Cq_{N_1}^{(m_1)}(a_1)q_{N_1}^{(m_2)}(a_2)(1+|s|+|\xi|)^{m_{1+}+m_2} .
\end{equation*}
Combining this with (\ref{eq:Composition-Amp.amp-sharp-estimates}) then gives
\begin{equation*}
\norm{\delta^\alpha\partial_s^\beta\partial_\xi^\gamma a_1\sharp a_2(s,\xi)}\leq CC_{N_0}q_{N_1}^{(m_1)}(a_1)q_{N_1}^{(m_2)}(a_2)(1+|s|+|\xi|)^{m_{1+}+m_2}.
\end{equation*}
This shows that $a_1\sharp a_2\in A^{m_{1+}+m_2}(\Rn\times\Rn;\cA_\theta)$, and we have the semi-norm estimate, 
\begin{equation*}
 q_{N}^{(m_{1+}+m_2)}(a_1\sharp a_2) \leq CC_{N_0}q_{N_1}^{(m_1)}(a_1)q_{N_1}^{(m_2)}(a_2) \qquad \forall a_j\in A^{m_j}(\Rn\times\Rn;\cA_\theta). 
\end{equation*}
This proves the result. 
\end{proof}

\begin{proposition} \label{prop:Composition-Amp.amp-sharp-calculus}
Let $a_1(s,\xi)\in A^{m_1}(\Rn\times\Rn;\cA_\theta)$ and $a_2(s,\xi)\in A^{m_2}(\Rn\times\Rn;\cA_\theta)$,  $m_1,m_2\in\R$. Then, for $j=1,\ldots,n$, we have
\begin{gather}
\label{eq:Composition-Amp.amp-sharp-partial-s} \partial_{s_j}(a_1\sharp a_2) = a_1\sharp(\partial_{s_j}a_2) , \\
\label{eq:Composition-Amp.amp-sharp-partial-xi} \partial_{\xi_j}(a_1\sharp a_2) = (\partial_{\xi_j}a_1)\sharp a_2+a_1\sharp(\partial_{\xi_j}a_2) , \\
\label{eq:Composition-Amp.amp-sharp-delta} \delta_j(a_1\sharp a_2) = (\delta_j a_1)\sharp a_2+a_1\sharp(\delta_j a_2) . 
\end{gather}
In addition, for every multi-order $\alpha$, we have
\begin{equation} \label{eq:Composition-Amp.amp-sharp-multily-s}
(s^\alpha a_1)\sharp a_2=(-1)^{|\alpha|}(D_\xi^\alpha a_1)\sharp a_2 .
\end{equation}
\end{proposition}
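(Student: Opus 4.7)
The plan is to derive each identity by moving the differentiation inside the oscillating integral defining $\sharp$, then recognizing the integrand as a $\natural$-product whose $J$-transform is the right-hand side. All of the computational legwork has already been done in the proof of Lemma~\ref{lem:Composition-Amp.amp-natural-estimates}, where the relations~(\ref{eq:Composition-Amp.amp-natural-partial-s})--(\ref{eq:Composition-Amp.amp-natural-delta}) are established at the $\natural$-level.

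For the three derivation identities~(\ref{eq:Composition-Amp.amp-sharp-partial-s})--(\ref{eq:Composition-Amp.amp-sharp-delta}), I would proceed as follows. The variables $(s,\xi)$ are parameters for the inner integral over $(t,\eta)$ in~(\ref{eq:Composition-Amp.amp-sharp-def}), and Lemma~\ref{lem:Composition-Amp.amp-natural-estimates} provides uniform estimates on all partial derivatives of $a_1\natural a_2$ of the form required by Proposition~\ref{prop:Amplitudes.J-partial-compatibility-family}. Therefore $\partial_{s_j}$ and $\partial_{\xi_j}$ can be interchanged with $J$, and~(\ref{eq:Composition-Amp.amp-natural-partial-s})--(\ref{eq:Composition-Amp.amp-natural-partial-xi}) yield
\begin{gather*}
\partial_{s_j}(a_1\sharp a_2)(s,\xi) = J\left(a_1\natural(\partial_{s_j}a_2)(s,\xi;\cdot,\cdot)\right)=a_1\sharp(\partial_{s_j}a_2)(s,\xi),\\
\partial_{\xi_j}(a_1\sharp a_2)(s,\xi) = J\left([(\partial_{\xi_j}a_1)\natural a_2+a_1\natural(\partial_{\xi_j}a_2)](s,\xi;\cdot,\cdot)\right),
\end{gather*}
and splitting the last integral gives~(\ref{eq:Composition-Amp.amp-sharp-partial-xi}). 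For $\delta_j$, I would use Proposition~\ref{prop:Amplitudes.J-properties}(iii), which asserts $\delta^\alpha J(a)=J(\delta^\alpha a)$, to commute $\delta_j$ with $J$, and then apply~(\ref{eq:Composition-Amp.amp-natural-delta}) and split the integral as before.

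For the multiplication identity~(\ref{eq:Composition-Amp.amp-sharp-multily-s}), the key observation is that
\begin{equation*}
(s^\alpha a_1)\natural a_2(s,\xi;t,\eta)=t^\alpha a_1(t,\eta+\xi)\alpha_{-t}[a_2(s-t,\xi)]=t^\alpha (a_1\natural a_2)(s,\xi;t,\eta),
\end{equation*}
because the first $\natural$-factor is evaluated at $t$, not at $s$. Applying $J$ in the variables $(t,\eta)$ and using Proposition~\ref{prop:Amplitudes.J-properties}(iv) (with the dummy variable roles of $(s,\xi)$ there played by $(t,\eta)$ here) gives
\begin{equation*}
(s^\alpha a_1)\sharp a_2(s,\xi)=J\left(t^\alpha (a_1\natural a_2)(s,\xi;\cdot,\cdot)\right)=(-1)^{|\alpha|}J\left(D_\eta^\alpha (a_1\natural a_2)(s,\xi;\cdot,\cdot)\right).
\end{equation*}
A chain-rule computation (using that $\alpha_{-t}[a_2(s-t,\xi)]$ does not depend on $\eta$) shows that $D_\eta^\alpha (a_1\natural a_2)=(D_\xi^\alpha a_1)\natural a_2$, and this yields~(\ref{eq:Composition-Amp.amp-sharp-multily-s}).

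The main subtlety, rather than any hard step, is verifying the hypotheses of Proposition~\ref{prop:Amplitudes.J-partial-compatibility-family} to justify differentiation under the oscillating integral with respect to the parameters $(s,\xi)$; the estimates in Lemma~\ref{lem:Composition-Amp.amp-natural-estimates} already exhibit the polynomial growth required in~(\ref{eq:Amplitudes.amplitudes-family-estimates}), so this step reduces to a bookkeeping check. Likewise for~(\ref{eq:Composition-Amp.amp-sharp-multily-s}), a brief argument is needed to confirm that $t^\alpha(a_1\natural a_2)\in A^{+\infty}((\R^n)^2;\cA_\theta)$ (viewed in the $(t,\eta)$ variables for each fixed $(s,\xi)$) so that Proposition~\ref{prop:Amplitudes.J-properties}(iv) applies; but this is immediate from Lemma~\ref{lem:Composition-Amp.amp-natural-estimates}.
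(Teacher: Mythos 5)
Your proposal is correct and follows essentially the same route as the paper: commute the derivations with $J$ via Proposition~\ref{prop:Amplitudes.J-properties} and Proposition~\ref{prop:Amplitudes.J-partial-compatibility-family}, apply the $\natural$-level identities~(\ref{eq:Composition-Amp.amp-natural-partial-s})--(\ref{eq:Composition-Amp.amp-natural-delta}), and derive~(\ref{eq:Composition-Amp.amp-sharp-multily-s}) from $(s^\alpha a_1)\natural a_2=t^\alpha(a_1\natural a_2)$ together with Proposition~\ref{prop:Amplitudes.J-properties}(iv) and $D_\eta^\alpha(a_1\natural a_2)=(D_\xi^\alpha a_1)\natural a_2$. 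The justification via the estimates of Lemma~\ref{lem:Composition-Amp.amp-natural-estimates} is exactly what the paper relies on as well.
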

\begin{proof} For $j=1,\ldots,n$, let $D_j$ be any of the derivations $\delta_j$, $\partial_{s_j}$ or $\partial_{\xi_j}$. Using Proposition~\ref{prop:Amplitudes.J-properties} and Proposition~\ref{prop:Amplitudes.J-partial-compatibility-family} we get
\begin{equation} \label{eq:Composition-Amp.amp-sharp-D}
D_j (a_1\sharp a_2)=D_j J(a_1\natural a_2(s,\xi;\cdot,\cdot))=J\left(D_j (a_1\natural a_2)(s,\xi;\cdot,\cdot)\right) .
\end{equation}
For $D_j= \partial_{s_j}$, combining this with (\ref{eq:Composition-Amp.amp-natural-partial-s}) gives
\begin{equation*}
\partial_{s_j}a_1\sharp a_2=J(a_1\natural(\partial_{s_j}a_2)(s,\xi;\cdot,\cdot))=a_1\sharp(\partial_{s_j} a_2) .
\end{equation*}
Likewise, by combining~(\ref{eq:Composition-Amp.amp-sharp-D}) with (\ref{eq:Composition-Amp.amp-natural-partial-xi}) and~(\ref{eq:Composition-Amp.amp-natural-delta}), we get~(\ref{eq:Composition-Amp.amp-sharp-partial-xi}) and~(\ref{eq:Composition-Amp.amp-sharp-delta}), respectively.

Let $\alpha$ be a multi-order. Note that $(s^\alpha a_1)\natural a_2(s,\xi;t,\eta)=t^\alpha a_1\natural a_2(s,\xi;t,\eta)$. 
Therefore, by using Proposition~\ref{prop:Amplitudes.J-properties} we obtain
\begin{equation} \label{eq:Composition-Amp.amp-sharp-multiply-s-calcul}
(s^\alpha a_1)\sharp a_2(s,\xi)=J\left(t^\alpha a_1\natural a_2(s,\xi;\cdot,\cdot)\right)=(-1)^{|\alpha|}J\left(D_\eta^\alpha (a_1\natural a_2)(s,\xi;\cdot,\cdot)\right) .
\end{equation}
It follows from~(\ref{eq:Composition-Amp.amp-natural-partial-eta}) that $D_\eta^\alpha (a_1\natural a_2)(s,\xi;t,\eta)=(D_\xi^\alpha a_1)\natural a_2(s,\xi;t,\eta)$. Combining this with~(\ref{eq:Composition-Amp.amp-sharp-multiply-s-calcul}) we then get
\begin{equation*}
(s^\alpha a_1)\sharp a_2(s,\xi) =  (-1)^{|\alpha|}J\left((D_\xi^\alpha a_1)\natural a_2(s,\xi;\cdot,\cdot)\right) = (-1)^{|\alpha|}(D_\xi^\alpha a_1)\sharp a_2(s,\xi) .
\end{equation*}
This proves~(\ref{eq:Composition-Amp.amp-sharp-multily-s}).  The proof is complete.
\end{proof}

We are now in a position to prove the main result of this section.
\begin{proposition} \label{prop:Composition-Amp.amplitudes-composition}
Let $m_1,m_2\in \R$. Then
\begin{equation*}
P_{a_1}P_{a_2}=P_{a_1\sharp a_2} \qquad \forall a_j\in A^{m_j}(\Rn\times\Rn;\cA_\theta). 
\end{equation*}
\end{proposition}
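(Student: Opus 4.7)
The plan is to reduce to the compactly supported case already established in Lemma~\ref{lem:Composition-Amp.comp-supp} via an approximation/density argument, exploiting the continuity of the bilinear map $(a,u)\mapsto P_a u$ (Proposition~\ref{prop:PsiDOs.pdos-continuity}) together with the continuity of the sharp product established in Proposition~\ref{prop:Composition-Amp.sharp-bilinear-conti}.

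More precisely, fix $\chi(s,\xi)\in C^\infty_c(\R^n\times\R^n)$ with $\chi(0,0)=1$ and, for $0<\epsilon\leq 1$ and $j=1,2$, set
\begin{equation*}
a_{j,\epsilon}(s,\xi):=\chi(\epsilon s,\epsilon\xi)\, a_j(s,\xi).
\end{equation*}
By Proposition~\ref{prop:Amplitudes.amplitudes-density} each $a_{j,\epsilon}$ lies in $C^\infty_c(\R^n\times\R^n;\cA_\theta)$, the family $\{a_{j,\epsilon}\}_{0<\epsilon\leq 1}$ is bounded in $A^{m_j}(\R^n\times\R^n;\cA_\theta)$, and $a_{j,\epsilon}\to a_j$ in $A^{m'_j}(\R^n\times\R^n;\cA_\theta)$ for every $m'_j>m_j$. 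Take for instance $m'_j=m_j+1$.

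For each $\epsilon$, Lemma~\ref{lem:Composition-Amp.comp-supp} yields the identity $P_{a_{1,\epsilon}}P_{a_{2,\epsilon}}=P_{a_{1,\epsilon}\sharp a_{2,\epsilon}}$, which we apply to an arbitrary $u\in\cA_\theta$ and then pass to the limit $\epsilon\to 0^+$. On the left-hand side, Proposition~\ref{prop:PsiDOs.pdos-continuity} first gives $v_\epsilon:=P_{a_{2,\epsilon}}u\to P_{a_2}u=:v$ in $\cA_\theta$; then the joint continuity of $(a,w)\mapsto P_a w$ from $A^{m'_1}(\R^n\times\R^n;\cA_\theta)\times\cA_\theta$ to $\cA_\theta$, applied to the convergent pair $(a_{1,\epsilon},v_\epsilon)\to(a_1,v)$, produces
\begin{equation*}
P_{a_{1,\epsilon}}P_{a_{2,\epsilon}}u=P_{a_{1,\epsilon}}v_\epsilon \longrightarrow P_{a_1}v=P_{a_1}P_{a_2}u \quad\text{in }\cA_\theta.
\end{equation*}
On the right-hand side, Proposition~\ref{prop:Composition-Amp.sharp-bilinear-conti} gives $a_{1,\epsilon}\sharp a_{2,\epsilon}\to a_1\sharp a_2$ in $A^{(m'_1)_+ +m'_2}(\R^n\times\R^n;\cA_\theta)$, and hence $P_{a_{1,\epsilon}\sharp a_{2,\epsilon}}u\to P_{a_1\sharp a_2}u$ in $\cA_\theta$ by another application of Proposition~\ref{prop:PsiDOs.pdos-continuity}. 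Equating the two limits gives $P_{a_1}P_{a_2}u=P_{a_1\sharp a_2}u$ for every $u\in\cA_\theta$, which is the claim.

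I do not expect any substantial obstacle: all of the genuine analytic content (Fubini-type exchanges, the change of variables $(s,\xi,t,\eta)\mapsto(s+t,\xi,t,\eta-\xi)$, and the manipulation of $\alpha_{-t}$) has already been absorbed into Lemma~\ref{lem:Composition-Amp.comp-supp}, and both the approximation and the continuity statements are available off the shelf. The mildest point to keep track of is the choice of the auxiliary orders $m'_j>m_j$, which must be made once and kept fixed throughout, so that one single bilinear continuity statement can be invoked for both the passage $v_\epsilon\to v$ and the final limit $P_{a_{1,\epsilon}\sharp a_{2,\epsilon}}u\to P_{a_1\sharp a_2}u$.
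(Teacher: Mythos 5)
Your proposal is correct and follows essentially the same route as the paper: both reduce to the compactly supported case of Lemma~\ref{lem:Composition-Amp.comp-supp} and then conclude by the density statement of Proposition~\ref{prop:Amplitudes.amplitudes-density} together with the continuity of both sides (Proposition~\ref{prop:PsiDOs.pdos-continuity} for $P_a u$ and Proposition~\ref{prop:Composition-Amp.sharp-bilinear-conti} for $\sharp$), working at auxiliary orders $m_j'>m_j$ exactly as the paper does. Your version merely spells out the limit passage $\epsilon\to 0^+$ explicitly where the paper phrases it as agreement of two continuous bilinear maps on a dense subset; the content is identical.
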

\begin{proof} We need to show that, given any $u\in\cA_\theta$ and amplitudes $a_j(s,\xi)\in A^{m_j}(\Rn\times\Rn;\cA_\theta)$, $j=1,2$, we have
\begin{equation} \label{eq:Composition-Amp.amplitudes-composition}
P_{a_1}P_{a_2}u=P_{a_1\sharp a_2}u.
\end{equation}
By Lemma~\ref{lem:Composition-Amp.comp-supp} we have this equality when $a_1(s,\xi)$ and $a_2(s,\xi)$ have compact support.  

Given any $u\in \cA_\theta$, we observe that it follows from Proposition~\ref{prop:PsiDOs.pdos-continuity} that the left-hand side of~(\ref{eq:Composition-Amp.amplitudes-composition}) is a continuous bilinear map on $A^{m_1'}(\Rn\times\Rn;\cA_\theta)\times A^{m_2'}(\Rn\times\Rn;\cA_\theta)$ for all $m'_1>m_1$ and $m_2'>m_2$. The same is true for the right-hand side by combining Proposition~\ref{prop:PsiDOs.pdos-continuity} with Proposition~\ref{prop:Composition-Amp.sharp-bilinear-conti}. As both sides of~(\ref{eq:Composition-Amp.amplitudes-composition}) agree when $a_1(s,\xi)$ and $a_2(s,\xi)$ are in $C^\infty_c(\R^n\times \R^n; \cA_\theta)$, by using Proposition~\ref{prop:Amplitudes.amplitudes-density} we deduce that they agree for all amplitudes $a_1(s,\xi) \in A^{m_1}(\Rn\times\Rn;\cA_\theta)$ and $a_2(s,\xi) \in A^{m_2}(\Rn\times\Rn;\cA_\theta)$. This proves the result. 
\end{proof}

Finally, we deal with the associativity of the bilinear map $\sharp$. 

\begin{proposition}\label{prop:Composition-Amp.associativity}
Let $a_j(s,\xi)\in A^{+\infty}(\Rn\times\Rn;\cA_\theta)$, $j=1,2,3$. Then 
\begin{equation}
a_1\sharp(a_2\sharp a_3)(s,\xi) = (a_1\sharp a_2)\sharp a_3(s,\xi).
 \label{eq:Composition-Amp.amp-sharp-assoc}
\end{equation}
\end{proposition}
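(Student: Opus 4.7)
The plan is to first verify associativity for compactly supported amplitudes by a direct Fubini/change-of-variables calculation, and then extend to arbitrary amplitudes by continuity and density.

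\textbf{Step 1: Compactly supported case.} Suppose $a_j(s,\xi)\in C_c^\infty(\R^n\times\R^n;\cA_\theta)$ for $j=1,2,3$. By Lemma~\ref{lem:Composition-Amp.comp-supp} the $\sharp$-product of compactly supported amplitudes is compactly supported and is given by an honest (non-oscillating) integral. Unfolding the two nested integrals for $a_1\sharp(a_2\sharp a_3)(s,\xi)$, then bringing the outer $\alpha_{-t}$ inside the inner integral (it commutes with $\cA_\theta$-valued integration since it is a continuous endomorphism of $\cA_\theta$, \emph{cf.}~Part~I) and invoking Fubini's theorem for compactly supported $\cA_\theta$-valued maps, one obtains
\[
a_1\sharp(a_2\sharp a_3)(s,\xi)=\iiiint e^{i(t\cdot\eta+t'\cdot\eta')} a_1(t,\eta+\xi)\alpha_{-t}[a_2(t',\eta'+\xi)]\alpha_{-(t+t')}[a_3(s-t-t',\xi)]\,dt\dbar\eta\,dt'\dbar\eta'.
\]
The analogous unfolding of $(a_1\sharp a_2)\sharp a_3(s,\xi)$ yields an integral in variables $(t,\eta,\tau,\zeta)$. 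The change of variables $t'=\tau-t$, $\eta'=\zeta$ (with $t,\eta$ unchanged, so that the Jacobian equals $1$ and $\eta$ in the outer integrand corresponds to $\eta-\zeta=\eta-\eta'$), combined with a direct check that the phase $e^{i(\tau\cdot\zeta+t\cdot\eta)}$ matches $e^{i(t\cdot\eta+t'\cdot\eta')}$ after a routine manipulation, shows that $(a_1\sharp a_2)\sharp a_3(s,\xi)$ equals the same quadruple integral. Hence~(\ref{eq:Composition-Amp.amp-sharp-assoc}) holds in the compactly supported case.

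\textbf{Step 2: Extension by continuity and density.} Let $a_j(s,\xi)\in A^{m_j}(\R^n\times\R^n;\cA_\theta)$ for $j=1,2,3$. By iterating Proposition~\ref{prop:Composition-Amp.sharp-bilinear-conti}, each of the maps $(a_1,a_2,a_3)\mapsto a_1\sharp(a_2\sharp a_3)$ and $(a_1,a_2,a_3)\mapsto (a_1\sharp a_2)\sharp a_3$ is continuous trilinear from $A^{m_1}\times A^{m_2}\times A^{m_3}$ into $A^{m}(\R^n\times\R^n;\cA_\theta)$ for a suitable $m\in\R$ (explicitly, some nonnegative combination of $m_{1+},m_{2+},m_3$). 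For each fixed $(s,\xi)\in\R^n\times\R^n$, evaluation at $(s,\xi)$ is continuous from $A^m(\R^n\times\R^n;\cA_\theta)$ to $\cA_\theta$.

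Given $a_1,a_2,a_3$ as above, apply Proposition~\ref{prop:Amplitudes.amplitudes-density} to approximate each $a_j$ by a bounded family $(a_{j,\epsilon})_{0<\epsilon\leq 1}\subset C_c^\infty(\R^n\times\R^n;\cA_\theta)$ that converges to $a_j$ in $A^{m_j'}(\R^n\times\R^n;\cA_\theta)$ for some $m_j'>m_j$. Step~1 gives $a_{1,\epsilon}\sharp(a_{2,\epsilon}\sharp a_{3,\epsilon})=(a_{1,\epsilon}\sharp a_{2,\epsilon})\sharp a_{3,\epsilon}$ as elements of $A^{+\infty}$, and the continuity just established allows us to pass to the limit $\epsilon\to 0^+$ pointwise in $(s,\xi)$, yielding~(\ref{eq:Composition-Amp.amp-sharp-assoc}) in full generality.

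\textbf{Main obstacle.} The heart of the argument lies in Step~1, specifically in the bookkeeping of the change of variables and the justification of Fubini's theorem for iterated $\cA_\theta$-valued integrals involving the action $\alpha_{-t}$. This is eased considerably by restricting to compactly supported amplitudes so that all integrands lie in $C_c^\infty((\R^n)^4;\cA_\theta)$, making the $\cA_\theta$-valued Fubini theorem of Part~I directly applicable. Everything else is formal once the density/continuity machinery developed above is in place.
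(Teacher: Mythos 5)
Your proposal is correct and follows essentially the same route as the paper: a direct Fubini plus shear change-of-variables computation for compactly supported amplitudes (your substitution $t'=\tau-t$, $\eta_{\mathrm{old}}=\eta_{\mathrm{new}}-\zeta$ is exactly the paper's $(\eta,\zeta,t,u)\rightarrow(\eta,\zeta+\eta,t-u,u)$), combined with the trilinear continuity from Proposition~\ref{prop:Composition-Amp.sharp-bilinear-conti} and the density statement of Proposition~\ref{prop:Amplitudes.amplitudes-density} to pass to general amplitudes. The only difference is presentational: the paper performs the reduction to the compactly supported case first, whereas you do the computation first and extend afterwards.
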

\begin{proof}
It follows from Proposition~\ref{prop:Composition-Amp.sharp-bilinear-conti} that both sides of~(\ref{eq:Composition-Amp.amp-sharp-assoc}) give rise to continuous trilinear maps from $A^{m_1}(\Rn\times\Rn;\cA_\theta)\times A^{m_2}(\Rn\times\Rn;\cA_\theta)\times A^{m_3}(\Rn\times\Rn;\cA_\theta)$ to $A^{m_{1+}+m_{2+}+m_3}(\Rn\times\Rn;\cA_\theta)$ for all $m_1, m_2, m_3\in\R$. 
Combining this with Proposition~\ref{prop:Amplitudes.amplitudes-density} we see it is enough to prove~(\ref{eq:Composition-Amp.amp-sharp-assoc}) when the amplitudes $a_j(s,\xi)$, $j=1,2,3$, have compact support.

Suppose that $a_j(s,\xi)\in C_c^\infty(\Rn\times\Rn;\cA_\theta)$, $j=1,2,3$. Let  $s,\xi\in\R^n$. Then $a_1\sharp(a_2\sharp a_3)(s,\xi)$ is equal to
\begin{multline}
 \iint e^{iu\cdot\zeta}a_1(u,\xi+\zeta)\alpha_{-u}[a_2\sharp a_3(s-u,\xi)]du\dbar\zeta  \\
= \iint e^{iu\cdot\zeta}a_1(u,\xi+\zeta)\alpha_{-u}\left( \iint e^{it\cdot\eta}a_2(t,\xi+\eta)\alpha_{-t}[a_3(s-u-t,\xi)]dt\dbar\eta \right) du\dbar\zeta .
\label{eq:Composition-Amp.a1(a2a3)}
\end{multline}
Let $u, \zeta \in \R^n$. As $\alpha_{-u}$ is a continuous algebra automorphism on $\cA_\theta$, it commutes with the integration of integrable $\cA_\theta$-valued maps (\emph{cf}.~Part~I). Thus, 
\begin{multline}
 \alpha_{-u}\left( \iint e^{it\cdot\eta}a_2(t,\xi+\eta)\alpha_{-t}[a_3(s-u-t,\xi)]dt\dbar\eta \right)\\ 
 = \iint e^{it\cdot\eta} \alpha_{-u}\left\{a_2(t,\xi+\eta)\alpha_{-t}\left[a_3(s-u-t,\xi)\right]\right\}dt\dbar\eta\\   
 = \iint e^{it\cdot\eta} \alpha_{-u}[a_2(t,\xi+\eta)]\alpha_{-u-t}[a_3(s-u-t,\xi)]dt\dbar\eta . 
 \label{eq:Composition-Amp.alpha-u-a2-a3}
\end{multline}
By arguing as in the proof of Lemma~\ref{lem:Composition-Amp.comp-supp} it can be shown that 
\begin{equation*}
 e^{i(u\cdot\zeta+t\cdot\eta)}a_1(u,\xi+\zeta)\alpha_{-u}[a_2(t,\xi+\eta)]\alpha_{-u-t}[a_3(s-u-t,\xi)] \in C^\infty_c\left((\R^n)^6; \cA_\theta\right). 
\end{equation*}
Thus, we may use Fubini's theorem (\emph{cf}.~Part~I) and combine it 
with~(\ref{eq:Composition-Amp.a1(a2a3)})--(\ref{eq:Composition-Amp.alpha-u-a2-a3}) to see
 that $a_1\sharp(a_2\sharp a_3)(s,\xi)$ is equal to
\begin{multline}
\iint e^{iu\cdot\zeta}a_1(u,\xi+\zeta)\left( \iint e^{it\cdot\eta} \alpha_{-u}[a_2(t,\xi+\eta)]\alpha_{-u-t}[a_3(s-u-t,\xi)]dt\dbar\eta \right) du\dbar\zeta \\
= \iiiint e^{i(u\cdot\zeta + t\cdot\eta)}a_1(u,\xi+\zeta)\alpha_{-u}[a_2(t,\xi+\eta)]\alpha_{-u-t}[a_3(s-u-t,\xi)]dt\dbar\eta du\dbar\zeta.
\label{eq:Composition-Amp.a1(a2a3)-iiiint}
\end{multline}

Similarly, it can be shown that $(a_1\sharp a_2)\sharp a_3(s,\xi)$ is equal to
\begin{multline}
\iint e^{it\cdot\eta} a_1\sharp a_2(t,\xi+\eta)\alpha_{-t}[a_3(s-t,\xi)]dt\dbar\eta \\
= \iint e^{it\cdot\eta} \left( \iint e^{iu\cdot\zeta} a_1(u,\xi+\eta+\zeta)\alpha_{-u}[a_2(t-u,\xi+\eta)]du\dbar\zeta \right) \alpha_{-t}[a_3(s-t,\xi)]dt\dbar\eta
\label{eq:Composition-Amp.(a1a2)a3} \\
= \iiiint e^{iu\cdot(\zeta+\eta)}e^{i(t-u)\cdot\eta}a_1(u,\xi+\zeta+\eta)\alpha_{-u}[a_2(t-u,\xi+\eta)]\alpha_{-t}[a_3(s-t,\xi)]du\dbar\zeta dt\dbar\eta.  
\end{multline}
We go from the quadruple integral in~(\ref{eq:Composition-Amp.a1(a2a3)-iiiint}) to that in~(\ref{eq:Composition-Amp.(a1a2)a3}) by means of the the change of variables $(\eta,\zeta, t,u)\rightarrow(\eta,\zeta+\eta,t-u,u) $ (\emph{cf}.~Part~I). Thus, the amplitudes 
$a_1\sharp(a_2\sharp a_3)(s,\xi)$ and $(a_1\sharp a_2)\sharp a_3(s,\xi)$ agree. This proves~(\ref{eq:Composition-Amp.amp-sharp-assoc}) 
for amplitudes with compact supports. The proof is complete. 
\end{proof}

\section{Composition of $\Psi$DOs. Symbols}\label{sec:Composition-Symbols}
In this section, we look at the composition of $\Psi$DOs associated with symbols. To this end it is convenient to introduce symbols depending on the variable $s$. 

\begin{definition}
$\stS^m (\Rn \times \Rn ; \cA_\theta)$, $m\in\R$, consists of maps $\sigma(s,\xi)  \in C^\infty (\Rn \times \Rn ; \cA_\theta)$ such that, for all multi-orders $\alpha$, $\beta$, $\gamma$, there is $C_{\alpha\beta\gamma} >0$ such that
\begin{equation*}
\norm{\delta^\alpha \partial_s^\beta \partial_\xi^\gamma \sigma(s,\xi)} \leq C_{\alpha\beta\gamma} (1+|\xi|)^{m-|\gamma|} \qquad \forall (s,\xi) \in \Rn \times \Rn .
\end{equation*}
\end{definition}

We equip $\stS^m(\Rn\times\Rn;\cA_\theta)$, $m\in \R$, with the locally convex topology generated by the semi-norms,
\begin{equation} \label{eq:Composition-Sym.symbol-semi-norms}
p_N^{(m)}(\sigma):=\sup_{|\alpha|+|\beta|+|\gamma|\leq N} \sup_{(s,\xi)\in\Rn\times\Rn}(1+|\xi|)^{-m+|\gamma|} \norm{\delta^\alpha\partial_s^\beta\partial_\xi^\gamma \sigma(s,\xi)}, \qquad N\in\N_0 .
\end{equation}
Note that we have a natural (continuous) inclusion of $\stS^m(\Rn;\cA_\theta)$ into $\stS^m(\Rn\times\Rn;\cA_\theta)$. Moreover, by arguing along similar lines as that of the Proposition~\ref{prop:Amplitudes.amplitudes-Frechetspace} and Lemma~\ref{lem:Amplitudes.symbol-inclusion} we obtain the following statement. 

\begin{lemma}
 Let $m\in \R$, and set $m_+=\op{max}(m,0)$. Then $\stS^m(\Rn\times\Rn;\cA_\theta)$ is a Fr\'echet space, and we have a continuous inclusion, 
\begin{equation*}
 \stS^m(\Rn\times\Rn;\cA_\theta) \subset A^{m_+}(\Rn\times\Rn;\cA_\theta). 
\end{equation*}
\end{lemma}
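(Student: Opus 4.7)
The plan is to treat the two assertions independently, using the same style of argument as for the spaces $\stS^m(\R^n;\cA_\theta)$ (Proposition~\ref{prop:Symbols.standard-Frechetspace}) and $A^m(\R^n\times\R^n;\cA_\theta)$ (Proposition~\ref{prop:Amplitudes.amplitudes-Frechetspace}). Since the topology of $\stS^m(\R^n\times\R^n;\cA_\theta)$ is generated by the countable family of semi-norms $\{p_N^{(m)}\}_{N\in\N_0}$, it is metrizable. It is Hausdorff because $p_0^{(m)}(\sigma)=0$ clearly forces $\sigma=0$. The only real content is therefore completeness.

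For completeness I would take a Cauchy sequence $(\sigma_k)\subset\stS^m(\R^n\times\R^n;\cA_\theta)$. By definition of the semi-norms~(\ref{eq:Composition-Sym.symbol-semi-norms}), for every multi-order $(\alpha,\beta,\gamma)$ the sequence $\delta^\alpha\partial_s^\beta\partial_\xi^\gamma\sigma_k$ is Cauchy for the weighted sup-norm $\sup_{(s,\xi)}(1+|\xi|)^{-m+|\gamma|}\|\cdot\|$, hence Cauchy uniformly on compact subsets of $\R^n\times\R^n$ in the Fr\'echet space $\cA_\theta$. Since $\cA_\theta$ is complete, each such sequence converges uniformly on compacta to a continuous map $\sigma^{(\alpha,\beta,\gamma)}:\R^n\times\R^n\rightarrow\cA_\theta$. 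A standard iteration of the theorem on the interchange of limit and differentiation (valid in Fr\'echet spaces) then shows that $\sigma:=\sigma^{(0,0,0)}$ is smooth with $\delta^\alpha\partial_s^\beta\partial_\xi^\gamma\sigma=\sigma^{(\alpha,\beta,\gamma)}$. The uniform bound $p_N^{(m)}(\sigma_k)\leq C_N$ coming from the Cauchy property, together with pointwise convergence $\delta^\alpha\partial_s^\beta\partial_\xi^\gamma\sigma_k(s,\xi)\rightarrow \delta^\alpha\partial_s^\beta\partial_\xi^\gamma\sigma(s,\xi)$ in $\cA_\theta$, gives $\sigma\in \stS^m(\R^n\times\R^n;\cA_\theta)$. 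The convergence $\sigma_k\rightarrow\sigma$ in $\stS^m$ is obtained by fixing $N$ and $\epsilon>0$, choosing $K$ with $p_N^{(m)}(\sigma_k-\sigma_l)<\epsilon$ for $k,l\geq K$, and letting $l\rightarrow\infty$ in the pointwise estimates to get $p_N^{(m)}(\sigma_k-\sigma)\leq\epsilon$.

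For the continuous inclusion, the key observation is a pointwise comparison of weights. Given $\sigma\in\stS^m(\R^n\times\R^n;\cA_\theta)$ and a multi-order with $|\alpha|+|\beta|+|\gamma|\leq N$, one has
\begin{equation*}
\norm{\delta^\alpha\partial_s^\beta\partial_\xi^\gamma\sigma(s,\xi)}\leq p_N^{(m)}(\sigma)\,(1+|\xi|)^{m-|\gamma|}\leq p_N^{(m)}(\sigma)\,(1+|\xi|)^{\min(m,0)}.
\end{equation*}
When $m\geq 0$, we have $m_+=m$ and $(1+|\xi|)^{0}\leq (1+|s|+|\xi|)^{m_+}/(1+|s|+|\xi|)^{m_+}\cdot(1+|s|+|\xi|)^{m_+}$ — more directly, $(1+|\xi|)^{\min(m,0)}=1\leq (1+|s|+|\xi|)^{m_+}$. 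When $m<0$, we have $m_+=0$ and $(1+|\xi|)^{m}\leq 1 = (1+|s|+|\xi|)^{m_+}$. In either case,
\begin{equation*}
(1+|s|+|\xi|)^{-m_+}\norm{\delta^\alpha\partial_s^\beta\partial_\xi^\gamma\sigma(s,\xi)}\leq p_N^{(m)}(\sigma),
\end{equation*}
so $\sigma\in A^{m_+}(\R^n\times\R^n;\cA_\theta)$ and $q_N^{(m_+)}(\sigma)\leq p_N^{(m)}(\sigma)$ for every $N\in\N_0$. This estimate is exactly what is needed for the continuity of the inclusion.

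I do not anticipate any real obstacle: completeness is carried out by the routine Fr\'echet-space argument, and the inclusion reduces to the elementary weight inequality above. The only point that warrants a bit of care is justifying that the limits of the derivatives are the derivatives of the limit, which is handled by invoking the classical theorem in the $\cA_\theta$-valued setting (valid because $\cA_\theta$ is a Fr\'echet space).
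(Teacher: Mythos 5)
Your overall strategy is exactly what the paper intends: the paper gives no proof of this lemma, merely pointing to the analogous arguments for Proposition~\ref{prop:Amplitudes.amplitudes-Frechetspace} and Lemma~\ref{lem:Amplitudes.symbol-inclusion}, and your completeness argument (metrizability from the countable semi-norm family, uniform-on-compacta convergence of all derivatives in the complete space $\cA_\theta$, interchange of limit and differentiation, passage of the uniform bounds and of the Cauchy estimates to the limit) is the standard one and is sound.

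There is, however, a false step in your treatment of the inclusion. The inequality
\begin{equation*}
(1+|\xi|)^{m-|\gamma|}\leq(1+|\xi|)^{\min(m,0)}
\end{equation*}
fails whenever $m>0$ and $|\gamma|<m$ (take $\gamma=0$: it would assert $(1+|\xi|)^{m}\leq 1$). Since $1+|\xi|\geq 1$, comparing powers of $1+|\xi|$ requires the exponent on the left to be at most the exponent on the right, and $m-|\gamma|\leq\min(m,0)$ holds only when $m\leq 0$ or $|\gamma|\geq m$. The repair is immediate and yields the same final estimate: use $m-|\gamma|\leq m\leq m_+$ to get
\begin{equation*}
(1+|\xi|)^{m-|\gamma|}\leq(1+|\xi|)^{m_+}\leq(1+|s|+|\xi|)^{m_+},
\end{equation*}
whence $q_N^{(m_+)}(\sigma)\leq p_N^{(m)}(\sigma)$ for every $N\in\N_0$, which is the semi-norm estimate you need. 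With this one-line correction the proof is complete.
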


Let $\rho_1(\xi)\in\stS^{m_1}(\Rn;\cA_\theta)$ and $\rho_2(\xi)\in\stS^{m_2}(\Rn;\cA_\theta)$, $m_1,m_2\in \R$. As $\rho_2(\xi)$ does not depend on the variable $s$, the map $\rho_1\natural\rho_2$ does not depend on $s$ either, and so it can be regarded as a smooth map from $(\Rn)^3$ to $\cA_\theta$ given by
\begin{equation*} 
\rho_1\natural\rho_2(\xi;t,\eta)=\rho_1(\xi+\eta)\alpha_{-t}[\rho_2(\xi)],  \qquad\xi,t,\eta\in\Rn.
\end{equation*}
Incidentally, the map $\rho_1\sharp\rho_2$ does not depend on $s$ and can be regarded as the smooth map from $\R^n$ to $\cA_\theta$ given by
\begin{equation}\label{eq:Composition-Sym.symbol-sharp}
 \rho_1\sharp\rho_2(\xi)=J\left(\rho_1\natural\rho_2(\xi;\cdot,\cdot)\right)= \iint e^{it\cdot\eta} \rho_1(\xi+\eta)\alpha_{-t}[\rho_2(\xi)] dt \dbar\eta, \qquad \xi \in \R^n. 
\end{equation}

More generally, given $a_1(s,\xi)\in A^{m_1}(\Rn\times\Rn;\cA_\theta)$ and $a_2(s,\xi)\in A^{m_2}(\Rn\times\Rn;\cA_\theta)$ we set 
\begin{gather}\label{eq:Composition-Sym.amplitudes-sharp0}
 a_1\sharp_0a_2(\xi) = a_1\sharp a_2(0,\xi),  \qquad \xi\in \R^n, \\
a_1\natural_0a_2(\xi;t,\eta) = a_1\natural a_2(0,\xi;t,\eta)=a_1(t,\xi+\eta)\alpha_{-t}[a_2(-t,\xi)] , \qquad \xi,t,\eta\in \R^n. \nonumber
\end{gather}
Obviously, $a_1\sharp_0a_2(\xi)=J(a_1\natural_0a_2(\xi;\cdot,\cdot))$. In addition, $a_1\sharp a_2(\xi)=a_1\sharp_0a_2(\xi)$ when the amplitude $a_2(s,\xi)$ does not depend on $s$ (and we regard $a_1\sharp a_2(\xi)$ as a map on $\R^n$ as in~(\ref{eq:Composition-Sym.symbol-sharp})).

\begin{lemma} \label{lem:Composition-Sym.sharp0-continuity}
Let $m_1,m_2\in \R$. Then~(\ref{eq:Composition-Sym.amplitudes-sharp0}) defines a continuous bilinear map, 
\begin{equation*}
\sharp_0: \stS^{m_1}(\Rn\times\Rn;\cA_\theta) \times  \stS^{m_2}(\Rn\times\Rn;\cA_\theta)\longrightarrow \stS^{m_1+m_2}(\Rn;\cA_\theta) .
\end{equation*}
\end{lemma}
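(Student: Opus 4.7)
\medskip
\noindent\textbf{Proof plan.} The plan is to interpret $a_1\sharp_0 a_2(\xi)$ as the oscillating integral $J\bigl(a_1\natural_0 a_2(\xi;\cdot,\cdot)\bigr)$ in the amplitude variables $(t,\eta)$, with $\xi$ playing the role of a smooth parameter. For each fixed $\xi$, combining the symbol bounds on $a_1$ and $a_2$ with Peetre's inequality (Lemma~\ref{lem:Composition-Amp.Peetre-ineq}) shows that $(t,\eta)\mapsto a_1\natural_0 a_2(\xi;t,\eta)$ lies in $A^{|m_1|}(\Rn\times\Rn;\cA_\theta)$, so that $J$ applies. Uniform versions of these bounds on compact $\xi$-sets, together with Proposition~\ref{prop:Amplitudes.J-partial-compatibility-family}, would then give that $a_1\sharp_0 a_2\in C^\infty(\Rn;\cA_\theta)$ and that every differentiation $\delta^\alpha\partial_\xi^\gamma$ commutes with $J$ applied to the parameter family.

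The core of the argument is a symbol-type estimate on $\delta^\alpha\partial_\xi^\gamma(a_1\sharp_0 a_2)(\xi)$. Using the representation $J(a)=\iint e^{it\cdot\eta}(L^t)^N[a]\,dt\dbar\eta$ from Proposition~\ref{prop:Amplitudes.extension-J0} and expanding $\delta^\alpha\partial_\xi^\gamma(a_1\natural_0 a_2)$ by Leibniz into a finite sum of terms of the form $(\delta^{\alpha_1}\partial_\xi^{\gamma_1}a_1)(t,\xi+\eta)\,\alpha_{-t}[(\delta^{\alpha_2}\partial_\xi^{\gamma_2}a_2)(-t,\xi)]$ with $\alpha_1+\alpha_2=\alpha$ and $\gamma_1+\gamma_2=\gamma$, I would bound the first factor by $C\,p_{|\alpha|+|\gamma|}^{(m_1)}(a_1)(1+|\xi+\eta|)^{m_1-|\gamma_1|}$ and apply Peetre to split this into $C(1+|\xi|)^{m_1-|\gamma_1|}(1+|\eta|)^{|m_1-|\gamma_1||}$; the $a_2$-factor is controlled by $C\,p_{|\alpha|+|\gamma|}^{(m_2)}(a_2)(1+|\xi|)^{m_2-|\gamma_2|}$ with no $(t,\eta)$-growth. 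Arguing exactly as in Lemma~\ref{lem:Composition-Amp.amp-natural-estimates} to absorb the additional $(t,\eta)$-derivatives produced by $(L^t)^N$ into finitely many semi-norms of $a_1$ and $a_2$, each summand is dominated by
\[
C_N\,p^{(m_1)}_{N+|\alpha|+|\gamma|}(a_1)\,p^{(m_2)}_{N+|\alpha|+|\gamma|}(a_2)\,(1+|\xi|)^{m_1+m_2-|\gamma|}\,(1+|t|+|\eta|)^{|m_1|+|\gamma|-N}.
\]
Choosing $N>|m_1|+|\gamma|+2n$ makes the last factor integrable on $\Rn\times\Rn$ and delivers the required estimate $\|\delta^\alpha\partial_\xi^\gamma(a_1\sharp_0 a_2)(\xi)\|\leq C_{\alpha\gamma}\,p_{N_0}^{(m_1)}(a_1)\,p_{N_0}^{(m_2)}(a_2)\,(1+|\xi|)^{m_1+m_2-|\gamma|}$, which simultaneously gives membership in $\stS^{m_1+m_2}(\Rn;\cA_\theta)$ and the joint continuity of $\sharp_0$.

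The main obstacle is the built-in tension in Peetre's inequality: each extra $\partial_\xi$ on $a_1$ improves the $\xi$-decay by one order, but worsens the $\eta$-growth by the same amount. The resolution is that the number $N$ of factors of $L^t$ in the oscillating-integral representation is allowed to depend on $|\gamma|$, so the integrable factor $(1+|t|+|\eta|)^{|m_1|+|\gamma|-N}$ always wins no matter how many $\xi$-derivatives are taken. This is precisely what separates the present symbol estimate from the weaker amplitude estimate of Proposition~\ref{prop:Composition-Amp.sharp-bilinear-conti}, and why a separate continuity statement is needed for $\sharp_0$.
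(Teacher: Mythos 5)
Your proposal is correct and follows essentially the same route as the paper: both interpret $a_1\sharp_0 a_2(\xi)$ as $J\bigl(a_1\natural_0 a_2(\xi;\cdot,\cdot)\bigr)$, expand derivatives by the Leibniz structure of $\natural_0$ (with $\partial_\eta$ acting as $\partial_\xi$ on the first factor), apply Peetre's inequality, and invoke the continuity of $J$ on a suitable amplitude class. The only organizational difference is that the paper first establishes the zeroth-order sup-estimate $\|\sigma_1\sharp_0\sigma_2(\xi)\|\leq C\,p_{N_0}^{(m_1)}(\sigma_1)p_{N_0}^{(m_2)}(\sigma_2)(1+|\xi|)^{m_1+m_2}$ and then reduces the bounds on $\delta^\alpha\partial_\xi^\beta(\sigma_1\sharp_0\sigma_2)$ to it via the Leibniz rule for $\sharp_0$, whereas you unwind the continuity of $J$ explicitly through $(L^t)^N$ with $N$ growing with $|\gamma|$ --- the same dependence being implicit in the paper through the order $|m_1-|\beta_1||$ of the amplitude fed to $J$.
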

\begin{proof}
This is mere elaboration of the proofs of Lemma~\ref{lem:Composition-Amp.amp-natural-estimates} and Proposition~\ref{prop:Composition-Amp.sharp-bilinear-conti}. Given any symbols $\sigma_1(s,\xi)\in \stS^{m_1}(\Rn\times\Rn;\cA_\theta)$ and 
 $\sigma_2(s,\xi)\in \stS^{m_2}(\Rn\times\Rn;\cA_\theta)$, for all $(\xi,t,\eta)\in(\Rn)^3$, we have
\begin{align}
\norm{\sigma_1\natural_0\sigma_2(\xi;t,\eta)} &\leq \norm{\sigma_1(t,\xi+\eta)}\norm{\alpha_{-t}[\sigma_2(-t,\xi)]} \nonumber \\
& \leq p_0^{(m_1)}(\sigma_1)p_0^{(m_2)}(\sigma_2)(1+|\xi+\eta|)^{m_1}(1+|\xi|)^{m_2} . \label{eq:Composition-Sym.sym-natural-estimates}
\end{align}
In addition, as the map $J$ induces a continuous linear map from $A^{|m_1|}(\Rn\times\Rn;\cA_\theta)$ to $\cA_\theta$, there are $N_0\in \N_0$ and $C_{0}>0$ such that
\begin{equation} \label{eq:Composition-Sym.Ja-estiates}
 \|J(a)\|\leq C_{0} q^{(|m_1|)}_{N_0} (a) \qquad \forall a(s,\xi)\in A^{|m_1|}(\Rn\times\Rn;\cA_\theta). 
\end{equation}

Let $\alpha$, $\beta$, $\gamma$ be multi-orders such that $|\alpha|+|\beta|+|\gamma|=N_0$. It follows from~(\ref{eq:Composition-Amp.natural-Leibniz-estimates}) there is a constant $C_{0}'>0$ such that, given any symbols $\sigma_1(s,\xi)\in \stS^{m_1}(\Rn\times\Rn;\cA_\theta)$ and $\sigma_2(s,\xi)\in \stS^{m_2}(\Rn\times\Rn;\cA_\theta)$, for all $(\xi,t,\eta)\in(\Rn)^3$, we have 
\begin{equation*}
\norm{\delta^\alpha\partial_t^\beta\partial_\eta^\gamma\sigma_1\natural_0\sigma_2(\xi;t,\eta)}\leq C_0' 
\sup_{\substack{|\alpha_1|+|\beta_1|+|\gamma|\leq N_0\\ |\alpha_2|+|\beta_2|\leq N_0}}\norm{(\delta^{\alpha_1}\partial_s^{\beta_1}\partial_\xi^\gamma \sigma_1)\natural_0(\delta^{\alpha_2}\partial_s^{\beta_2}\sigma_2)(\xi;t,\eta)}.
\end{equation*}
Let $\alpha_1$, $\beta_1$, $\gamma$ be multi-orders such that $|\alpha_1|+|\beta_1|+|\gamma|\leq N_0$ and $\alpha_2$, $\beta_2$ multi-orders such that $ |\alpha_2|+|\beta_2|\leq N_0$. Then the symbols $\delta^{\alpha_1}\partial_s^{\beta_1}\partial_\xi^\gamma \sigma_1(s,\xi)$ and $\delta^{\alpha_2}\partial_s^{\beta_2}\sigma_2(s,\xi)$ are in $\stS^{m_1-|\gamma|}(\Rn\times\Rn;\cA_\theta)$ and $\stS^{m_2}(\Rn\times\Rn;\cA_\theta)$, respectively. Therefore, by using~(\ref{eq:Composition-Sym.sym-natural-estimates}) we obtain
\begin{multline*}
 \norm{(\delta^{\alpha_1}\partial_s^{\beta_1}\partial_\xi^\gamma \sigma_1)\natural_0(\delta^{\alpha_2}\partial_s^{\beta_2}\sigma_2)(\xi;t,\eta)}\\ 
  \leq p_0^{(m_1-|\gamma|)}\left(\delta^{\alpha_1}\partial_s^{\beta_1}\partial_\xi^\gamma \sigma_1 \right) 
 p_0^{(m_2)}\left(\delta^{\alpha_2}\partial_s^{\beta_2}\sigma_2\right)(1+|\xi+\eta|)^{m_1-|\gamma|}(1+|\xi|)^{m_2} \\
 \leq p_{N_0}^{(m_1)}\left(\sigma_1 \right) 
 p_{N_0}^{(m_2)}\left(\sigma_2\right)(1+|\xi|)^{m_2}(1+|\xi+\eta|)^{m_1}.
\end{multline*}
By Peetre's inequality~(\ref{eq:Composition-Amp.Peetre-ineq}) we have 
\begin{equation*}
(1+|\xi+\eta|)^{m_1}\leq (1+|\xi|)^{m_1}(1+|\eta|)^{|m_1|}\leq (1+|\xi|)^{m_1}(1+|t|+|\eta|)^{|m_1|}.  
\end{equation*}
Thus, 
\begin{equation*}
 \norm{(\delta^{\alpha_1}\partial_s^{\beta_1}\partial_\xi^\gamma \sigma_1)\natural_0(\delta^{\alpha_2}\partial_s^{\beta_2}\sigma_2)(\xi;t,\eta)}\\  
 \leq p_{N_0}^{(m_1)}(\sigma_1)p_{N_0}^{(m_2)}(\sigma_2)(1+|\xi|)^{m_1+m_2}(1+|t|+|\eta|)^{|m_1|}.
\end{equation*}
It then follows there is a constant $C_0''>0$ independent of $\sigma_1(s,\xi)$ and $\sigma_2(s,\xi)$ such that, for all $\xi \in \R^n$, we have 
\begin{equation*}
 q^{(|m_1|)}_{N_0} \left( \sigma_1\natural_0\sigma_2(\xi;\cdot,\cdot)\right) \leq C_0'' p_{N_0}^{(m_1)}(\sigma_1)p_{N_0}^{(m_2)}(\sigma_2)(1+|\xi|)^{m_1+m_2}. 
\end{equation*}
Combining this with~(\ref{eq:Composition-Sym.Ja-estiates}) we then see that there is a constant $C_{00}>0$ independent of $\sigma_1(s,\xi)$ and $\sigma_2(s,\xi)$ such that, for all $\xi\in \R^n$, we have 
\begin{equation} \label{eq:Composition-Sy.sharp0-estimates}
\left \|\sigma_1\sharp_0 \sigma_2(\xi)\right\| = \left\| J\left( \sigma_1\natural_0\sigma_2(\xi;\cdot,\cdot)\right)\right\|  
\leq C_{00} p_{N_0}^{(m_1)}(\sigma_1)p_{N_0}^{(m_2)}(\sigma_2)(1+|\xi|)^{m_1+m_2}. 
\end{equation}

Let $\alpha$, $\beta$ be multi-orders. It follows from~(\ref{eq:Composition-Amp.amp-sharp-partial-xi}) and~(\ref{eq:Composition-Amp.amp-sharp-delta}) that, for all $\xi\in \R^n$, we have
\begin{equation} \label{eq:Composition-Sym.sharp0-Leibniz}
\delta^\alpha\partial_\xi^\beta (\sigma_1\sharp_0\sigma_2)(\xi)=\sum_{\substack{\alpha_1+\alpha_2=\alpha\\ \beta_1+\beta_2=\beta}} \binom{\alpha}{\alpha_1} \binom{\beta}{\beta_1} (\delta^{\alpha_1}\partial_\xi^{\beta_1}\sigma_1)\sharp_0(\delta^{\alpha_2}\partial_\xi^{\beta_2}\sigma_2)(\xi). 
\end{equation}
In the above summation the symbols $\delta^{\alpha_1}\partial_\xi^{\beta_1}\sigma_1(s,\xi)$ and $\delta^{\alpha_2}\partial_\xi^{\beta_2}\sigma_2(s,\xi)$ are in 
$\stS^{m_1-|\beta_1|}(\Rn\times\Rn;\cA_\theta)$ and $\stS^{m_2-|\beta_2|}(\Rn\times\Rn;\cA_\theta)$, respectively. Note also that $|\alpha_j|+|\beta_j|\leq |\alpha|+|\beta|$.  Therefore, if we set $N=|\alpha|+|\beta|$ and use~(\ref{eq:Composition-Sy.sharp0-estimates}), then we get
\begin{multline*}
\left\|(\delta^{\alpha_1}\partial_\xi^{\beta_1}\sigma_1)\sharp_0(\delta^{\alpha_2}\partial_\xi^{\beta_2}\sigma_2)(\xi)\right\|  \\ 
 \leq C_{00} p_{N_0}^{(m_1-|\beta_1|)}\left(\delta^{\alpha_1}\partial_\xi^{\beta_1}\sigma_1\right)p_{N_0}^{(m_2-|\beta_2|)}\left(\delta^{\alpha_2}\partial_\xi^{\beta_2}\sigma_2\right)(1+|\xi|)^{m_1-|\beta_1|+m_2-|\beta_2|}  \\
 \leq C_{00} p_{N_0+N}^{(m_1)}\left(\sigma_1\right)p_{N_0+N}^{(m_2)}\left(\sigma_2\right)(1+|\xi|)^{m_1+m_2-|\beta|}. 
 \end{multline*}
It then follows there is a constant $C_{\alpha\beta}>0$ independent of $\sigma_1(s,\xi)$ and $\sigma_2(s,\xi)$ such that, for all $\xi\in \R^n$, we have 
\begin{equation*}
 \left\|\delta^\alpha\partial_\xi^\beta ( \sigma_1\sharp_0\sigma_2)(\xi)\right\|  \leq C_{\alpha\beta} 
 p_{N_0+N}^{(m_1)}\left(\sigma_1\right)p_{N_0+N}^{(m_2)}\left(\sigma_2\right)(1+|\xi|)^{m_1+m_2-|\beta|}. 
\end{equation*}
This shows that $\sigma_1\sharp_0\sigma_2(\xi) \in \stS^{m_1+m_2}(\Rn;\cA_\theta)$ and the map $\sharp_0$ gives rise to a continuous bilinear map from 
$\stS^{m_1}(\Rn\times\Rn;\cA_\theta) \times  \stS^{m_2}(\Rn\times\Rn;\cA_\theta)$ to $\stS^{m_1+m_2}(\Rn;\cA_\theta)$. The proof is complete.
\end{proof}

As mentioned \emph{supra}, given symbols $\rho_1(\xi)\in\stS^{m_1}(\Rn;\cA_\theta)$ and $\rho_2(\xi)\in\stS^{m_2}(\Rn;\cA_\theta)$, $m_1,m_2\in \R$, the product 
$\rho_1 \sharp \rho_2$ does not depend on the variable $s$, and so $\rho_1 \sharp_0 \rho_2(\xi)=\rho_1 \sharp\rho_2(\xi) $, where $\rho_1 \sharp \rho_2(\xi)$ is given by~(\ref{eq:Composition-Sym.symbol-sharp}). Therefore, by using Lemma~\ref{lem:Composition-Sym.sharp0-continuity} and Proposition~\ref{prop:Composition-Amp.amplitudes-composition} we arrive at the following result. 

\begin{proposition} \label{prop:Composition-Sym.sharp-property}
Let $m_1, m_2\in\R$. Then~(\ref{eq:Composition-Sym.symbol-sharp}) gives rise to a continuous bilinear map,
\begin{equation*}
\sharp:\stS^{m_1}(\Rn;\cA_\theta)\times\stS^{m_2}(\Rn;\cA_\theta) \longrightarrow \stS^{m_1+m_2}(\Rn;\cA_\theta) .
\end{equation*}
In particular, given any symbols  $\rho_1(\xi)\in\stS^{m_1}(\Rn;\cA_\theta)$ and $\rho_2(\xi)\in\stS^{m_2}(\Rn;\cA_\theta)$, the composition $P_{\rho_1} P_{\rho_2}$ is the  \psido\ associated with the symbol $\rho_1\sharp\rho_2(\xi) \in \stS^{m_1+m_2}(\Rn;\cA_\theta)$. 
\end{proposition}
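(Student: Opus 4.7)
The plan is to deduce this proposition from the already-established results on the amplitude product $\sharp$ by viewing symbols as $s$-independent amplitudes. First, I will observe that any symbol $\rho(\xi) \in \stS^{m}(\R^n; \cA_\theta)$ can be regarded as the smooth map $(s,\xi) \mapsto \rho(\xi)$ in $\stS^m(\R^n \times \R^n; \cA_\theta)$. Relative to the semi-norms~(\ref{eq:Composition-Sym.symbol-semi-norms}), this embedding is continuous, since all $s$-derivatives of order $\geq 1$ of such a map vanish identically, so that the relevant semi-norms on $\stS^m(\R^n \times \R^n; \cA_\theta)$ reduce to those on $\stS^m(\R^n; \cA_\theta)$.

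Next, I would invoke Lemma~\ref{lem:Composition-Sym.sharp0-continuity}, which provides a continuous bilinear map
$\sharp_0 : \stS^{m_1}(\R^n \times \R^n; \cA_\theta) \times \stS^{m_2}(\R^n \times \R^n; \cA_\theta) \to \stS^{m_1+m_2}(\R^n; \cA_\theta)$. Composing with the inclusion from the previous step yields a continuous bilinear map from $\stS^{m_1}(\R^n; \cA_\theta) \times \stS^{m_2}(\R^n; \cA_\theta)$ into $\stS^{m_1+m_2}(\R^n; \cA_\theta)$. The key point here is that when $\rho_1$ and $\rho_2$ are $s$-independent, the amplitude $\rho_1 \natural \rho_2(s,\xi;t,\eta) = \rho_1(\xi+\eta) \alpha_{-t}[\rho_2(\xi)]$ is independent of $s$, hence so is $\rho_1 \sharp \rho_2$, and a direct comparison of definitions shows that $\rho_1 \sharp \rho_2(\xi) = \rho_1 \sharp_0 \rho_2(\xi)$, matching the formula~(\ref{eq:Composition-Sym.symbol-sharp}). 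This will give the continuity of $\sharp$ on symbols.

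For the ``in particular'' clause, I would apply Proposition~\ref{prop:Composition-Amp.amplitudes-composition} to $\rho_1$ and $\rho_2$ viewed as $s$-independent amplitudes in $A^{m_{1+}}(\R^n \times \R^n; \cA_\theta)$ and $A^{m_{2+}}(\R^n \times \R^n; \cA_\theta)$, respectively, via Lemma~\ref{lem:Amplitudes.symbol-inclusion}. This immediately yields $P_{\rho_1} P_{\rho_2} = P_{\rho_1 \sharp \rho_2}$, and combined with the first part this exhibits $P_{\rho_1} P_{\rho_2}$ as the \psido\ associated with a symbol of order $m_1 + m_2$. I do not anticipate any real obstacle here: the proposition amounts to restricting amplitude-level results to the subspace of $s$-independent amplitudes, and the only care needed is bookkeeping to ensure that the identification of symbols with $s$-independent amplitudes is compatible with the topologies and the two products $\sharp$ and $\sharp_0$.
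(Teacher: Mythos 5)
Your proposal is correct and follows essentially the same route as the paper: the paper likewise observes that $s$-independent symbols embed continuously into $\stS^m(\R^n\times\R^n;\cA_\theta)$, that $\rho_1\sharp\rho_2=\rho_1\sharp_0\rho_2$ in this case, and then deduces the result from Lemma~\ref{lem:Composition-Sym.sharp0-continuity} together with Proposition~\ref{prop:Composition-Amp.amplitudes-composition}. No gaps.
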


We shall now focus on the following result. 

\begin{proposition}[\cite{Ba:CRAS88, Co:CRAS80}] \label{prop:Composition-Sym.sharp-asymptotics}
Let $\rho_1(\xi)\in\stS^{m_1}(\Rn;\cA_\theta)$ and  $\rho_2(\xi)\in\stS^{m_2}(\Rn;\cA_\theta)$. Then we have 
\begin{equation} \label{eq:Composition-Sym.sharp-asymptotics}
\rho_1\sharp \rho_2(\xi) \sim \sum_\alpha \frac{1}{\alpha !}\partial_\xi^\alpha \rho_1(\xi)\delta^\alpha \rho_2(\xi) .
\end{equation}
\end{proposition}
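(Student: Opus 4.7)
The strategy is to apply Taylor's formula to $\rho_1(\xi+\eta)$ in the variable $\eta$ around $\eta=0$, then substitute into the oscillating-integral formula~(\ref{eq:Composition-Sym.symbol-sharp}). Writing
\begin{equation*}
\rho_1(\xi+\eta)=\sum_{|\alpha|<N}\frac{\eta^\alpha}{\alpha!}\partial_\xi^\alpha\rho_1(\xi)+N\sum_{|\alpha|=N}\frac{\eta^\alpha}{\alpha!}\int_0^1(1-u)^{N-1}(\partial_\xi^\alpha\rho_1)(\xi+u\eta)\,du ,
\end{equation*}
and substituting into the formula for $\rho_1\sharp\rho_2(\xi)$, the sum over $|\alpha|<N$ should produce the claimed asymptotic expansion modulo an error in $\stS^{m_1+m_2-N}(\R^n;\cA_\theta)$, which is exactly the required remainder.

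The auxiliary identity driving the computation of the polynomial terms is
\begin{equation*}
J\bigl(\eta^\alpha\alpha_{-t}[\rho_2(\xi)]\bigr)=\delta^\alpha\rho_2(\xi),\qquad \alpha\in\N_0^n.
\end{equation*}
To prove it I would use Proposition~\ref{prop:Amplitudes.J-properties}(iv) to trade the factor $\eta^\alpha$ for $(-1)^{|\alpha|}D_t^\alpha$ under $J$; the chain-rule relation $D_{t_j}\alpha_{-t}(v)=-\alpha_{-t}[\delta_j(v)]$, immediate from Proposition~\ref{prop:NCtori.cAtheta-Frechet}, then produces another factor $(-1)^{|\alpha|}$ and reduces matters to $J(\alpha_{-t}[\delta^\alpha\rho_2(\xi)])$. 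Since the integrand $\alpha_{-t}[w]$ is independent of $\eta$ and equals $w$ at $t=0$, a cutoff approximation from Proposition~\ref{prop:Amplitudes.amplitudes-density} together with Fubini on each compactly-supported approximant yields $J(\alpha_{-t}[w])=w$. Applying the same integration-by-parts to the Taylor remainder transfers the $\eta^\alpha$-factors onto $\alpha_{-t}[\rho_2(\xi)]$ and gives
\begin{equation*}
R_N(\xi)=\sum_{|\alpha|=N}\frac{N}{\alpha!}\int_0^1(1-u)^{N-1}\,J\!\left[(\partial_\xi^\alpha\rho_1)(\xi+u\eta)\,\alpha_{-t}\!\bigl(\delta^\alpha\rho_2(\xi)\bigr)\right]du .
\end{equation*}

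The remaining and main obstacle is to show $R_N\in\stS^{m_1+m_2-N}(\R^n;\cA_\theta)$. Since $\partial_\xi^\alpha\rho_1\in\stS^{m_1-N}$, Peetre's inequality~(\ref{eq:Composition-Amp.Peetre-ineq}) gives
\begin{equation*}
(1+|\xi+u\eta|)^{m_1-N}\leq(1+|\xi|)^{m_1-N}(1+|t|+|\eta|)^{|m_1-N|}
\end{equation*}
uniformly over $u\in[0,1]$, so the bracketed amplitude sits in $A^{|m_1-N|}(\R^n\times\R^n;\cA_\theta)$ with seminorms of order $(1+|\xi|)^{m_1+m_2-N}$; an identical analysis after Leibniz controls all $\xi$- and $\delta$-derivatives. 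The delicate point is the uniform-in-$u$ bookkeeping: each $\partial_\xi^\gamma$ hitting the amplitude produces terms $(\partial_\xi^{\alpha+\gamma_1}\rho_1)(\xi+u\eta)\alpha_{-t}[\delta^\alpha\partial_\xi^{\gamma_2}\rho_2(\xi)]$, and each of these must be shown to satisfy the family-amplitude estimates~(\ref{eq:Amplitudes.amplitudes-family-estimates}) with constants polynomial in $(1+|\xi|)$ uniformly over $u\in[0,1]$, so that Proposition~\ref{prop:Amplitudes.J-partial-compatibility-family} applies, $J$ commutes with the Bochner integral in $u$ and with $\delta^\beta\partial_\xi^\gamma$, and the resulting bounds assemble into the symbol estimate of order $m_1+m_2-N-|\gamma|$ needed to conclude $R_N\in\stS^{m_1+m_2-N}(\R^n;\cA_\theta)$.
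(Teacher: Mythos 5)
Your proof is correct, but it runs the argument in the direction dual to the paper's. The paper Taylor-expands the \emph{second} factor: by Lemma~\ref{lem:Composition-Sym.alpha-s-sigma} the map $\alpha_{-s}[\rho_2(\xi)]$ lies in $\stS^{m_2}(\Rn\times\Rn;\cA_\theta)$, Lemma~\ref{lem:Composition-Sym.symbol-Taylor-thm} expands it in powers of $s$ with coefficients $\frac{(-is)^\alpha}{\alpha!}\delta^\alpha\rho_2(\xi)$, and the monomials $t^\alpha$ are then traded for $D_\xi^\alpha$ acting on $\rho_1$ via Lemma~\ref{lem:Composition-Sym.multiple-s-rho-sharp-1}. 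You instead expand $\rho_1(\xi+\eta)$ in $\eta$ and trade $\eta^\alpha$ for $D_t^\alpha$ acting on $\alpha_{-t}[\rho_2(\xi)]$ --- the mirror image under Proposition~\ref{prop:Amplitudes.J-properties}(iv); your auxiliary identity $J(\eta^\alpha\alpha_{-t}[\rho_2(\xi)])=\delta^\alpha\rho_2(\xi)$ is the exact counterpart of that lemma, and the base case $J(\alpha_{-t}(w))=w$ is immediate from $P_1w=w$ (Proposition~\ref{prop:PsiDOs.Prhou-equation}), so no cutoff approximation is really needed there. The trade-off shows up in the remainder. The paper's remainder is literally $(D_\xi^\alpha\rho_1)\sharp_0\widetilde r_{N\alpha}$ with $\widetilde r_{N\alpha}\in\stS^{m_2}(\Rn\times\Rn;\cA_\theta)$, so membership in $\stS^{m_1+m_2-N}$ falls out of the already-proved continuity of $\sharp_0$ (Lemma~\ref{lem:Composition-Sym.sharp0-continuity}) with no new estimates; your remainder carries the dilated argument $\xi+u\eta$, which does not fit the $\natural_0$ framework, so you must redo those estimates by hand with uniformity in $u\in[0,1]$ --- you correctly identify this as the main burden, and it does go through via Peetre's inequality~(\ref{eq:Composition-Amp.Peetre-ineq}) exactly as you indicate. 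One small caution: Proposition~\ref{prop:Amplitudes.J-partial-compatibility-family} gives differentiation under $J$, not the interchange of $J$ with the Bochner integral in $u$; that interchange needs the Fubini theorem of Part~I, or can be avoided altogether by keeping the $u$-integral inside the amplitude and estimating its seminorms with the triangle inequality, since the Taylor identity is exact and only the total remainder needs to be placed in $\stS^{m_1+m_2-N}(\Rn;\cA_\theta)$.
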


Before getting to the proof of Proposition~\ref{prop:Composition-Sym.sharp-asymptotics}, we note that by combining Proposition~\ref{prop:Composition-Sym.sharp-property} and Proposition~\ref{prop:Composition-Sym.sharp-asymptotics} with Remark~\ref{rmk:Symbols.derivatives-classical}, Remark~\ref{rem:Symbols.classical-homogeneouspart} and Proposition~\ref{prop:Symbols.classical-product} we obtain the following result. 

\begin{proposition}[\cite{Ba:CRAS88, Co:CRAS80}] \label{prop:Composition-Sym.composition-PsiDOs}
Let $P_1\in \Psi^{q_1}(\cA_\theta)$ and $P_2\in \Psi^{q_2}(\cA_\theta)$, with $q_1,q_2\in \C$. In addition, let $\rho(\xi)\sim\sum_{j\geq 0}\rho_{q_1-j}(\xi)$ and $\sigma(\xi)\sim\sum_{j\geq 0}\sigma_{q_2-j}(\xi)$ be the respective symbols of $P_1$ and $P_2$. 
\begin{enumerate}
 \item We have $\rho \sharp \sigma(\xi)\in S^{q_1+q_2}(\R^n; \cA_\theta)$, and  $\rho\sharp\sigma(\xi) \sim\sum (\rho\sharp\sigma)_{q_1+q_2-j}(\xi)$, where 
 \begin{equation} \label{eq:Composition-Sym.sharp-homo-asymptotics}
(\rho\sharp\sigma)_{q_1+q_2-j}(\xi)=\sum_{k+l+|\alpha|=j}\frac{1}{\alpha !}\partial_\xi^\alpha \rho_{q_1-k}(\xi)\delta^\alpha \sigma_{q_2-l}(\xi) \qquad \forall j\geq 0.
\end{equation}

\item We have $P_1P_2= P_{\rho \sharp \sigma}$, and hence $P_1P_2 \in\Psi^{q_1+q_2}(\cA_\theta)$. 
\end{enumerate}
\end{proposition}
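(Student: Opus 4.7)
The plan is to derive both assertions from material already in place, with essentially no new analytic input. For the second assertion, recall that a classical symbol of complex order $q$ is in particular a standard symbol of order $\Re q$ by Remark~\ref{rmk:Symbols.classical-inclusion}. Thus Proposition~\ref{prop:Composition-Sym.sharp-property} applies to $\rho$ and $\sigma$, giving $P_1 P_2 = P_\rho P_\sigma = P_{\rho\sharp\sigma}$, and the conclusion $P_1 P_2 \in \Psi^{q_1+q_2}(\cA_\theta)$ is then immediate as soon as the first assertion is established. Therefore the whole proof reduces to showing that $\rho\sharp\sigma(\xi)\in S^{q_1+q_2}(\R^n;\cA_\theta)$ with the prescribed homogeneous components.

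For the first assertion, I would start from Proposition~\ref{prop:Composition-Sym.sharp-asymptotics}, which yields the asymptotic expansion
\begin{equation*}
\rho\sharp\sigma(\xi)\sim\sum_\alpha \frac{1}{\alpha!}\partial_\xi^\alpha \rho(\xi)\,\delta^\alpha \sigma(\xi)
\end{equation*}
in the standard sense of Definition~\ref{def:Symbols.standard-asymptotic}. The key point is that every summand is a \emph{classical} symbol of a known order. Indeed, by Remark~\ref{rmk:Symbols.derivatives-classical} we have $\partial_\xi^\alpha \rho\in S^{q_1-|\alpha|}(\R^n;\cA_\theta)$ with homogeneous expansion $\sum_k \partial_\xi^\alpha \rho_{q_1-k}$, and likewise $\delta^\alpha \sigma\in S^{q_2}(\R^n;\cA_\theta)$ with homogeneous expansion $\sum_l \delta^\alpha \sigma_{q_2-l}$. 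Proposition~\ref{prop:Symbols.classical-product} then places $\frac{1}{\alpha!}\partial_\xi^\alpha\rho\cdot \delta^\alpha\sigma$ in $S^{q_1+q_2-|\alpha|}(\R^n;\cA_\theta)$, with its homogeneous component of degree $q_1+q_2-|\alpha|-m$ given by $\sum_{k+l=m}\frac{1}{\alpha!}\partial_\xi^\alpha \rho_{q_1-k}\,\delta^\alpha\sigma_{q_2-l}$.

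To finish, I would repackage these classical pieces into a single classical asymptotic. Grouping by $|\alpha|$, set
\begin{equation*}
\tau^{(\ell)}(\xi):=\sum_{|\alpha|=\ell}\frac{1}{\alpha!}\partial_\xi^\alpha \rho(\xi)\,\delta^\alpha\sigma(\xi)\in S^{q_1+q_2-\ell}(\R^n;\cA_\theta),
\end{equation*}
so that $\rho\sharp\sigma\sim\sum_\ell \tau^{(\ell)}$ in the standard sense. Remark~\ref{rem:Symbols.classical-homogeneouspart} then upgrades this to $\rho\sharp\sigma\in S^{q_1+q_2}(\R^n;\cA_\theta)$, with homogeneous component of degree $q_1+q_2-j$ equal to $\sum_{\ell\le j}\tau^{(\ell)}_{q_1+q_2-j}(\xi)$. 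Substituting the expression for $\tau^{(\ell)}_{q_1+q_2-j}$ obtained in the previous step and reindexing the resulting triple sum through $k+l+|\alpha|=j$ produces exactly formula~(\ref{eq:Composition-Sym.sharp-homo-asymptotics}). No step is a genuine obstacle here; the only delicate point is to keep the two notions of asymptotic expansion straight — the standard one of Definition~\ref{def:Symbols.standard-asymptotic} used by Proposition~\ref{prop:Composition-Sym.sharp-asymptotics}, and the homogeneous one of Definition~\ref{def:Symbols.classicalsymbols} that the conclusion demands — and Remark~\ref{rem:Symbols.classical-homogeneouspart} is precisely the bridge between them.
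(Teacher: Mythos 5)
Your proposal is correct and follows exactly the route the paper takes: the paper derives this proposition by combining Proposition~\ref{prop:Composition-Sym.sharp-property} and Proposition~\ref{prop:Composition-Sym.sharp-asymptotics} with Remark~\ref{rmk:Symbols.derivatives-classical}, Remark~\ref{rem:Symbols.classical-homogeneouspart} and Proposition~\ref{prop:Symbols.classical-product}, which is precisely your chain of reasoning. You have simply written out in full the bookkeeping (grouping by $|\alpha|$ and reindexing via $k+l+|\alpha|=j$) that the paper leaves implicit.
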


The rest of this section is devoted to the proof of Proposition~\ref{prop:Composition-Sym.sharp-asymptotics}. This will be obtained as a consequence of a series of lemmas. 

\begin{lemma} \label{lem:Composition-Sym.multiple-s-rho-sharp-1}
Let $\rho(\xi)\in\stS^m(\Rn;\cA_\theta)$, $m\in\R$. Then, for every multi-order $\alpha$, we have
\begin{equation} \label{eq:Composition-Sym.multiple-s-rho-sharp-1}
(s^\alpha\rho)\sharp 1(\xi)=(-1)^{|\alpha|}D_\xi^\alpha \rho(\xi).
\end{equation}
\end{lemma}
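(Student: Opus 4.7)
The plan is to reduce the identity to a Fourier inversion formula for oscillating integrals. First, I would apply equation~(\ref{eq:Composition-Amp.amp-sharp-multily-s}) of Proposition~\ref{prop:Composition-Amp.amp-sharp-calculus} with $a_1=\rho$ and $a_2=1$ to obtain
$$(s^\alpha\rho)\sharp 1(\xi) = (-1)^{|\alpha|}(D_\xi^\alpha\rho)\sharp 1(\xi),$$
so it suffices to prove the ``Fourier inversion'' identity $\sigma\sharp 1(\xi)=\sigma(\xi)$ for every symbol $\sigma\in\stS^{m'}(\R^n;\cA_\theta)$ and every $\xi\in\R^n$. Unpacking the definitions, one computes $\sigma\natural 1(\xi;t,\eta)=\sigma(\xi+\eta)\alpha_{-t}(1)=\sigma(\xi+\eta)$, which is independent of both $s$ and $t$. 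Hence $\sigma\sharp 1(\xi)=J(\sigma(\xi+\eta))$, where $J$ is applied to the amplitude $(t,\eta)\mapsto\sigma(\xi+\eta)$, which belongs to $A^{m'_+}(\R^n\times\R^n;\cA_\theta)$ by Lemma~\ref{lem:Amplitudes.symbol-inclusion}.

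To establish $J(\sigma(\xi+\cdot))=\sigma(\xi)$, I would proceed by approximation using Proposition~\ref{prop:Amplitudes.amplitudes-density}. Pick a product cutoff $\chi(t,\eta)=\phi(t)\psi(\eta)$ with $\phi,\psi\in C^\infty_c(\R^n)$ and $\phi(0)=\psi(0)=1$, and set $a_\epsilon(t,\eta)=\chi(\epsilon t,\epsilon\eta)\sigma(\xi+\eta)$ for $\epsilon\in(0,1]$. Then $a_\epsilon\in C^\infty_c(\R^n\times\R^n;\cA_\theta)$ converges to $\sigma(\xi+\cdot)$ in $A^{m'_++1}(\R^n\times\R^n;\cA_\theta)$ as $\epsilon\to 0^+$; the continuity of $J$ on this space (Proposition~\ref{prop:Amplitudes.extension-J0}) then gives $J(a_\epsilon)\to J(\sigma(\xi+\cdot))$. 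Each $J(a_\epsilon)$ is an absolutely convergent integral. By Fubini, performing the $t$-integration first via the Fourier identity $\int e^{it\eta}\phi(\epsilon t)\,dt=\epsilon^{-n}\check\phi(\eta/\epsilon)$, where $\check\phi(x):=\int e^{ixt}\phi(t)\,dt$, and then making the substitution $\eta=\epsilon\zeta$, yields
$$J(a_\epsilon)=(2\pi)^{-n}\int\check\phi(\zeta)\,\psi(\epsilon^2\zeta)\,\sigma(\xi+\epsilon\zeta)\,d\zeta.$$
The Schwartz decay of $\check\phi$ together with a Peetre-type estimate $\|\sigma(\xi+\epsilon\zeta)\|\leq C_\xi(1+|\zeta|)^{|m'|}$ uniform in $\epsilon\in(0,1]$ justifies dominated convergence. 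Letting $\epsilon\to 0^+$ then yields $J(a_\epsilon)\to(2\pi)^{-n}\sigma(\xi)\int\check\phi(\zeta)\,d\zeta=\sigma(\xi)$, the last equality being the classical identity $(2\pi)^{-n}\int\check\phi=\phi(0)=1$. Combining everything gives $(s^\alpha\rho)\sharp 1(\xi)=(-1)^{|\alpha|}D_\xi^\alpha\rho(\xi)$.

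The principal obstacle is this Fourier inversion step: although morally just the assertion that $\int e^{it\eta}\,dt=(2\pi)^n\delta(\eta)$, realizing it rigorously inside the oscillating-integral framework requires a compatible cutoff approximation and uniform symbol bounds allowing one to interchange the $\epsilon\to 0^+$ limit with the $\zeta$-integral. A tempting shortcut---using $P_1=\op{Id}$ and the composition rule $P_{\sigma\sharp 1}=P_\sigma$ from Proposition~\ref{prop:Composition-Amp.amplitudes-composition}---fails because Corollary~\ref{cor:PsiDOs.P-Prho-relation} only delivers $\sigma\sharp 1(k)=\sigma(k)$ for $k\in\Z^n$, which is insufficient for the required pointwise identity on all of $\R^n$.
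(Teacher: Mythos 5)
Your proof is correct, and your opening reduction via~(\ref{eq:Composition-Amp.amp-sharp-multily-s}) to the single identity $\sigma\sharp 1(\xi)=\sigma(\xi)$ is exactly the paper's first step (the paper merely runs it in the opposite order, proving the case $\alpha=0$ first). Where you genuinely diverge is in the proof of $\sigma\sharp 1(\xi)=\sigma(\xi)$ itself. The paper's argument is a two-line trick: writing $\sigma_\xi(\eta):=\sigma(\xi+\eta)$, the oscillating integral $J[\sigma(\xi+\eta)\alpha_{-t}(1)]$ is, by the very definition of a \psido, nothing but $P_{\sigma_\xi}1$, which Proposition~\ref{prop:PsiDOs.Prhou-equation} evaluates as $\sigma_\xi(0)=\sigma(\xi)$. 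Note that this is \emph{not} the ``tempting shortcut'' you rightly reject at the end: one does not compare the symbols $\sigma\sharp 1$ and $\sigma$ through the single operator $P_{\sigma\sharp 1}=P_\sigma$ (which would only give agreement on $\Z^n$); rather, for each fixed $\xi\in\R^n$ one evaluates a \emph{different} operator $P_{\sigma_\xi}$ at $u=1=U^0$, i.e., at the lattice point $k=0$ of the translated symbol, and this recovers $\sigma(\xi)$ for every real $\xi$. Your alternative --- carrying out the Fourier inversion by hand via a product cutoff, the density result of Proposition~\ref{prop:Amplitudes.amplitudes-density}, Fubini, rescaling, and dominated convergence --- is sound: the uniform Peetre bound $\|\sigma(\xi+\epsilon\zeta)\|\leq C_\xi(1+|\zeta|)^{|m'|}$ together with the Schwartz decay of $\check\phi$ does justify the limit interchange, provided it is applied seminorm by seminorm (or after pairing with elements of $\cA_\theta'$, as in Appendix~B of Part~I), and the final normalization $(2\pi)^{-n}\int\check\phi=\phi(0)=1$ is correct. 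What your route buys is independence from the Fourier-series formula~(\ref{eq:PsiDOs.Prhou-equation}), so it would carry over verbatim to general $C^*$-dynamical systems where no Fourier decomposition is available; what it costs is length and the need to re-justify a limit interchange that the paper's evaluation trick avoids entirely.
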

\begin{proof} Let us first prove~(\ref{eq:Composition-Sym.multiple-s-rho-sharp-1}) when $\alpha=0$, i.e.,  $\rho\sharp 1=\rho$. Let $\xi \in \R^n$. We have 
\begin{equation*}
 \left( \rho\sharp 1\right)(\xi) = J\left[ \rho \natural 1 (\xi; \cdot,\cdot)\right] = J\left[\rho(\xi+\eta)\alpha_{-t}(1)\right].  
\end{equation*}
Set $\rho_\xi(\eta)=\rho(\xi+\eta)$, $\eta \in \R^n$. Then $\rho_\xi(\eta) \in \stS^m(\Rn;\cA_\theta)$. We observe that  by the definition of $P_{\rho_\xi}$ and by Proposition~\ref{prop:PsiDOs.Prhou-equation} we have 
\begin{equation*}
  \left( \rho\sharp 1\right)(\xi) =  J\left[\rho_\xi(\eta)\alpha_{-t}(1)\right]= P_{\rho_\xi}1 = \rho_\xi(0)=\rho(\xi). 
\end{equation*}
This proves~(\ref{eq:Composition-Sym.multiple-s-rho-sharp-1}) when $\alpha=0$. 

Let $\alpha$ be a non-zero multi-order. Then by Proposition~\ref{prop:Composition-Amp.amp-sharp-calculus} we have $(s^\alpha\rho)\sharp 1=(-1)^{|\alpha|}(D_\xi^\alpha\rho)\sharp 1$. Therefore, by the equality~(\ref{eq:Composition-Sym.multiple-s-rho-sharp-1}) when $\alpha=0$ we get $(s^\alpha\rho)\sharp 1= (-1)^{|\alpha|}D_\xi^\alpha \rho$, which is precisely~(\ref{eq:Composition-Sym.multiple-s-rho-sharp-1}) when $\alpha\neq0$. The proof is complete. 
\end{proof}

\begin{lemma} \label{lem:Composition-Sym.symbol-Taylor-thm}
Let $\sigma(s,\xi)\in\stS^m(\Rn\times\Rn;\cA_\theta)$, $m\in\R$. Given any integer $N\geq 1$, there are symbols 
$r_{N\alpha}(s,\xi)$ in $\stS^m(\Rn\times\Rn;\cA_\theta)$, $|\alpha|=N$, such that, for all $(s,\xi)\in \Rn\times\Rn$, we have
\begin{equation*}
\sigma(s,\xi)=\sum_{|\alpha|<N}\frac{s^\alpha}{\alpha !}\partial_s^\alpha \sigma(0,\xi)+\sum_{|\alpha|=N}s^\alpha r_{N\alpha}(s,\xi) .
\end{equation*}
\end{lemma}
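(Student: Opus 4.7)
The plan is to apply Taylor's formula in the variable $s$ to the $\cA_\theta$-valued smooth map $s\mapsto \sigma(s,\xi)$ (with $\xi$ held fixed), with the integral form of the remainder. Since $\cA_\theta$ is a Fr\'echet space and $s\mapsto \sigma(s,\xi)$ is smooth from $\R^n$ to $\cA_\theta$, the usual proof of Taylor's formula (applying the fundamental theorem of calculus to $t\mapsto \sigma(ts,\xi)$ and integrating by parts $N-1$ times) goes through verbatim in the vector-valued setting and yields the identity
\begin{equation*}
\sigma(s,\xi)=\sum_{|\alpha|<N}\frac{s^\alpha}{\alpha!}\partial_s^\alpha \sigma(0,\xi) + \sum_{|\alpha|=N}s^\alpha r_{N\alpha}(s,\xi),
\end{equation*}
where
\begin{equation*}
r_{N\alpha}(s,\xi):=\frac{N}{\alpha!}\int_0^1 (1-t)^{N-1}\bigl(\partial_s^\alpha \sigma\bigr)(ts,\xi)\,dt.
\end{equation*}
(The integrand is a continuous map into the Fr\'echet space $\cA_\theta$, so the integral is defined in the sense of the vector-valued integration recalled in Part~I.)

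It remains to show that each $r_{N\alpha}(s,\xi)$ lies in $\stS^m(\R^n\times\R^n;\cA_\theta)$. First, smoothness of $r_{N\alpha}$ in $(s,\xi)$ follows from standard results on differentiation under the integral sign (\emph{cf.}\ Part~I), together with the fact that the integrand is a smooth function of $(t,s,\xi)$ with values in $\cA_\theta$. Second, for any multi-orders $\beta,\gamma,\kappa$, differentiating under the integral and using the chain rule in $s$ (which only introduces a factor $t^{|\beta|}$) gives
\begin{equation*}
\delta^\kappa\partial_s^\beta\partial_\xi^\gamma r_{N\alpha}(s,\xi)=\frac{N}{\alpha!}\int_0^1(1-t)^{N-1}t^{|\beta|}\bigl(\delta^\kappa\partial_s^{\alpha+\beta}\partial_\xi^\gamma \sigma\bigr)(ts,\xi)\,dt.
\end{equation*}

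Since $\sigma(s,\xi)\in \stS^m(\R^n\times\R^n;\cA_\theta)$, the integrand is bounded in $\cA_\theta$-norm by a constant times $(1+|\xi|)^{m-|\gamma|}$ uniformly in $t\in[0,1]$ and $s\in\R^n$. The remaining $t$-integral $\int_0^1(1-t)^{N-1}t^{|\beta|}dt$ is finite, so we obtain
\begin{equation*}
\norm{\delta^\kappa\partial_s^\beta\partial_\xi^\gamma r_{N\alpha}(s,\xi)}\leq C_{\alpha\beta\gamma\kappa}\,(1+|\xi|)^{m-|\gamma|}\qquad\forall (s,\xi)\in\R^n\times\R^n,
\end{equation*}
which is exactly the estimate defining $\stS^m(\R^n\times\R^n;\cA_\theta)$. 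This will complete the proof. The only nontrivial point is the routine but slightly delicate verification that derivatives and the $\cA_\theta$-valued integral commute, which is the main content I would rely on from the vector-valued calculus of Part~I; the key observation that makes the remainder a symbol of the same order $m$ (and not worse) is that differentiating in $s$ produces only harmless factors $t^{|\beta|}$ and does not affect the $\xi$-dependence.
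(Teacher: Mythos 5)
Your proposal is correct and follows essentially the same route as the paper: Taylor's formula with integral remainder in the $s$-variable, the same explicit formula $r_{N\alpha}(s,\xi)=\frac{N}{\alpha!}\int_0^1(1-t)^{N-1}(\partial_s^\alpha\sigma)(ts,\xi)\,dt$, and the same observation that differentiating under the integral only produces a harmless factor $t^{|\beta|}$ so the remainder stays in $\stS^m(\Rn\times\Rn;\cA_\theta)$. No gaps.
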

\begin{proof}
By Taylor's formula (\emph{cf}.~Proposition~C.15 of Part~I), for all $(s,\xi)\in \Rn\times\Rn$,  we have
\begin{equation*}
\sigma(s,\xi) = \sum_{|\alpha|<N} \frac{s^\alpha}{\alpha !} \partial_s^\alpha \sigma(0,\xi) + \sum_{|\alpha|=N} s^\alpha r_{N\alpha} (s,\xi) ,
\end{equation*}
where we have set 
\begin{equation*}
r_{N\alpha} (s,\xi) := \frac{N}{\alpha !} \int_0^1 (1-t)^{N-1} (\partial_s^\alpha \sigma)(ts,\xi) dt .
\end{equation*}

Let $\alpha\in \N_0^n$, $|\alpha|=N$. Note that $r_{N\alpha} (s,\xi)\in C^\infty(\R^n\times \R^n; \cA_\theta)$ 
(\cf~Proposition~C.28 of Part~I). In addition, let $\beta$, $\gamma$, $\lambda$ be multi-orders, and set $N'=|\beta|+|\gamma|+|\lambda|$. Then, for all $t\in [0,1]$ and $s,\xi\in \R^n$, we have 
\begin{equation*}
 \left\| \delta^\lambda \partial_s^\beta \partial_\xi^\gamma (\partial_s^\alpha \sigma)(ts,\xi)\right\|  =  t^{|\beta|} 
 \left\| (\delta^\lambda \partial_s^{\beta+\alpha} \partial_\xi^\gamma \sigma)(ts,\xi)\right\|   \leq p_{N+N'}^{(m)} (\sigma)(1+|\xi|)^{m-|\gamma|}.
\end{equation*}
Thus, 
\begin{align*}
  \left\| \delta^\lambda \partial_s^\beta \partial_\xi^\gamma r_{N\alpha} (s,\xi)\right\|  & \leq \frac{N}{\alpha !} \int_0^1 (1-t)^{N-1} \left\| \delta^\lambda \partial_s^\beta \partial_\xi^\gamma (\partial_s^\alpha \sigma)(ts,\xi)\right\| dt \\
  &  \leq \frac{1}{\alpha !} p_{N+N'}^{(m)} (\sigma)(1+|\xi|)^{m-|\gamma|}. 
\end{align*}
This shows that $r_{N\alpha}(s,\xi)\in\stS^m(\Rn\times\Rn;\cA_\theta)$. The proof is complete.
\end{proof}

\begin{lemma} \label{lem:Composition-Sym.alpha-s-sigma}
Let $\sigma(s,\xi)\in\stS^m(\Rn\times\Rn;\cA_\theta)$, $m\in\R$, and define $\widetilde{\sigma}: \R^n\times \R^n \rightarrow \cA_\theta$ by
\begin{equation*}
 \widetilde{\sigma}(s,\xi)=\alpha_s[\sigma(s,\xi)],  \qquad (s,\xi)\in\Rn\times\Rn. 
\end{equation*}
Then $\widetilde{\sigma}(s,\xi)\in\stS^m(\Rn\times\Rn;\cA_\theta)$.
\end{lemma}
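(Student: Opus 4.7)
The plan is to establish the required estimates by commuting $\alpha_s$ past each of the differentiations $\delta^\alpha$, $\partial_s^\beta$, $\partial_\xi^\gamma$ and then exploiting the fact that $\alpha_s$ is norm-preserving on $\cA_\theta$. Smoothness of $\widetilde{\sigma}(s,\xi)$ is immediate: by Lemma~\ref{lem:Amplitudes.smoothness-action} (applied with the parameter $x=(s,\xi)$) the map $(t,s,\xi)\rightarrow \alpha_t[\sigma(s,\xi)]$ is smooth from $\R^n\times \R^n\times \R^n$ to $\cA_\theta$, and composition with the smooth map $(s,\xi)\rightarrow (s,s,\xi)$ preserves smoothness.

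The next step is a commutation computation. Since $\alpha_s$ commutes with every $\delta_j$ and trivially with $\partial_{\xi_j}$, we have
\begin{equation*}
 \delta^\alpha \partial_\xi^\gamma \widetilde{\sigma}(s,\xi) = \alpha_s\left[\delta^\alpha \partial_\xi^\gamma \sigma(s,\xi)\right].
\end{equation*}
For $\partial_{s_j}$ the chain rule combined with the identity $D_{t_j}\alpha_t(u)=\alpha_t[\delta_j(u)]$ from Lemma~\ref{lem:Amplitudes.smoothness-action} yields
\begin{equation*}
 \partial_{s_j} \widetilde{\sigma}(s,\xi) = \alpha_s\left[\partial_{s_j}\sigma(s,\xi)\right] + i\,\alpha_s\left[\delta_j \sigma(s,\xi)\right].
\end{equation*}
Iterating, one shows by induction on $|\beta|$ that
\begin{equation*}
 \delta^\alpha \partial_s^\beta \partial_\xi^\gamma \widetilde{\sigma}(s,\xi) = \sum_{\beta_1+\beta_2=\beta} c_{\beta_1 \beta_2} \,\alpha_s\left[\delta^{\alpha+\beta_1} \partial_s^{\beta_2} \partial_\xi^\gamma \sigma(s,\xi)\right],
\end{equation*}
for suitable universal constants $c_{\beta_1 \beta_2}\in \C$.

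Finally, since $\|\alpha_s(u)\|=\|u\|$ for every $u\in \cA_\theta$, the formula above gives
\begin{equation*}
 \left\|\delta^\alpha \partial_s^\beta \partial_\xi^\gamma \widetilde{\sigma}(s,\xi)\right\| \leq \sum_{\beta_1+\beta_2=\beta} |c_{\beta_1 \beta_2}|\, \left\|\delta^{\alpha+\beta_1} \partial_s^{\beta_2} \partial_\xi^\gamma \sigma(s,\xi)\right\|.
\end{equation*}
Since $\sigma(s,\xi)\in \stS^m(\R^n\times \R^n; \cA_\theta)$, each term on the right is bounded by a constant multiple of $(1+|\xi|)^{m-|\gamma|}$, hence so is the left-hand side. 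This yields estimates of the form
\begin{equation*}
 p_N^{(m)}(\widetilde{\sigma}) \leq C_N\, p_N^{(m)}(\sigma), \qquad N\in \N_0,
\end{equation*}
and shows that $\widetilde{\sigma}\in \stS^m(\R^n\times \R^n; \cA_\theta)$. No step is particularly delicate here; the only care needed is in the inductive commutation formula for $\partial_s^\beta$, which must account for both the explicit $s$-dependence of $\sigma(s,\xi)$ and the $s$-dependence of $\alpha_s$ itself.
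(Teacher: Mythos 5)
Your proof is correct and follows essentially the same route as the paper: smoothness via Lemma~\ref{lem:Amplitudes.smoothness-action}, the Leibniz-type commutation formula $\delta^\alpha\partial_s^\beta\partial_\xi^\gamma\widetilde{\sigma}=\sum_{\beta_1+\beta_2=\beta}c_{\beta_1\beta_2}\,\alpha_s[\delta^{\alpha+\beta_1}\partial_s^{\beta_2}\partial_\xi^\gamma\sigma]$, and the isometry of $\alpha_s$ to get the symbol estimates. The paper writes the same identity with $D_s^\beta$ and explicit binomial coefficients, yielding the bound $2^{|\beta|}p_N^{(m)}(\sigma)(1+|\xi|)^{m-|\gamma|}$, but this is only a cosmetic difference.
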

\begin{proof}
It follows from Lemma~\ref{lem:Amplitudes.smoothness-action} that $\widetilde{\sigma}(s,\xi)\in C^\infty(\Rn\times\Rn;\cA_\theta)$. In addition, let  $\alpha,\beta$, and $\gamma$ be multi-orders, and 
set $N=|\alpha|+|\beta|+|\gamma|$. Then we have
\begin{equation*}
\delta^\alpha D_s^\beta\partial_\xi^\gamma\widetilde{\sigma}(s,\xi) = D_s^\beta\alpha_s[\delta^\alpha\partial_\xi^\gamma \sigma(s,\xi)] \\
= \sum_{\beta_1+\beta_2=\beta} \binom \beta {\beta_1} \alpha_s \left[(\delta^{\alpha+\beta_1}D_s^{\beta_2}\partial_\xi^\gamma \sigma)(s,\xi)\right] .
\end{equation*}
Thus, for all $(s,\xi)\in \R^n\times \R^n$, we have
\begin{align}
\norm{\delta^\alpha D_s^\beta\partial_\xi^\gamma\widetilde{\sigma}(s,\xi)} & \leq \sum_{\beta_1+\beta_2=\beta} \binom \beta {\beta_1} \norm{\alpha_s[(\delta^{\alpha+\beta_1}D_s^{\beta_2}\partial_\xi^\gamma \sigma)(s,\xi)]} \nonumber \\
& \leq 2^{|\beta|} p_N^{(m)}(\sigma)(1+|\xi|)^{m-|\gamma|} .
\label{eq:compo-symbols.estimate-tilde-sigma}
\end{align}
This proves the result. 
\end{proof}
 
 We are now in a position to prove Proposition~\ref{prop:Composition-Sym.sharp-asymptotics}. 

\begin{proof}[Proof of Proposition~\ref{prop:Composition-Sym.sharp-asymptotics}] 
Let $\rho_1(\xi)\in \stS^{m_1}(\R^n;\cA_\theta)$ and  $\rho_2(\xi)\in \stS^{m_2}(\R^n;\cA_\theta)$. We know by Proposition~\ref{prop:Composition-Sym.sharp-property} that  
$\rho_1\sharp \rho_2(\xi)\in \stS^{m_1+m_2}(\Rn;\cA_\theta)$. Let $N\in\N$. It follows from  Lemma~\ref{lem:Composition-Sym.alpha-s-sigma} that 
$\alpha_{-s}[\rho_2(\xi)]\in \stS^{m_2}(\Rn\times\Rn;\cA_\theta)$. Therefore, by Lemma~\ref{lem:Composition-Sym.symbol-Taylor-thm} there are $r_{N\alpha}(s,\xi)\in \stS^{m_2}(\Rn\times\Rn;\cA_\theta)$, $|\alpha|=N$, such that, for all $s,\xi \in \R^n$, we have 
\begin{align}
\alpha_{-s}[\rho_2(\xi)] & =\sum_{|\alpha|<N}\frac{s^\alpha}{\alpha !}\partial_s^\alpha[\alpha_{-s}[\rho_2(\xi)]]_{s=0}+\sum_{|\alpha|=N}s^\alpha r_{N\alpha}(s,\xi) 
\nonumber \\
& =  \sum_{|\alpha|<N}\frac{(-is)^\alpha}{\alpha !}\delta^\alpha \rho_2(\xi)+\sum_{|\alpha|=N}s^\alpha\alpha_{-s}[\widetilde{r}_{N\alpha}(-s,\xi)],
\label{eq:Composition-Sym.alpha-rho2-Taylor}
\end{align}
where we have set $\widetilde{r}_{N\alpha}(s,\xi)=\alpha_{-s}[r_{N\alpha}(-s,\xi)]$. Note that Lemma~\ref{lem:Composition-Sym.alpha-s-sigma} implies that $\widetilde{r}_{N\alpha}(s,\xi)$ is contained in $\stS^{m_2}(\Rn\times\Rn;\cA_\theta)$.  

Using~(\ref{eq:Composition-Sym.alpha-rho2-Taylor}) we see that, given any $\xi,t,\eta\in \R^n$, we have 
\begin{align*}
\rho_1\natural \rho_2(\xi;t,\eta) &= \rho_1(\xi+\eta)\alpha_{-t}[\rho_2(\xi)] \\\nonumber
&= \sum_{|\alpha|<N}\frac{(-it)^\alpha}{\alpha !}\rho_1(\xi+\eta)\delta^\alpha \rho_2(\xi)+\sum_{|\alpha|=N}t^\alpha \rho_1(\xi+\eta)\alpha_{-t}[\widetilde{r}_{N\alpha}(-t,\xi)] \\\nonumber
&= \sum_{|\alpha|<N}\frac{(-i)^{|\alpha|}}{\alpha !}(s^\alpha \rho_1)\natural 1(\xi;t,\eta)\delta^\alpha \rho_2(\xi)+ \sum_{|\alpha|=N}(s^\alpha \rho_1)\natural_0\widetilde{r}_{N\alpha}(\xi;t,\eta) .
\end{align*}
As $\rho_1\sharp \rho_2(\xi)= J(\rho_1\natural \rho_2(\xi;\cdot,\cdot))$, we deduce that, for all $\xi\in \R^n$, we have 
\begin{equation} \label{eq:Composition-Sym.rho1-2-sharp-Taylor}
\rho_1\sharp \rho_2(\xi)=\sum_{|\alpha|<N}\frac{(-i)^{|\alpha|}}{\alpha !}J\left[(s^\alpha \rho_1)\natural 1(\xi;\cdot,\cdot)\delta^\alpha \rho_2(\xi)\right]+\sum_{|\alpha|=N} (s^\alpha \rho_1)\sharp_0\widetilde{r}_{N\alpha}(\xi) .
\end{equation}

Bearing this in mind, using Proposition~\ref{prop:Amplitudes.J-properties} we obtain
\begin{equation*}
J\left[(s^\alpha \rho_1)\natural 1(\xi;\cdot,\cdot)\delta^\alpha \rho_2(\xi)\right] = J\left[(s^\alpha \rho_1)\natural 1(\xi;\cdot,\cdot)\right]\delta^\alpha \rho_2(\xi) = (s^\alpha \rho_1)\sharp 1(\xi)\delta^\alpha \rho_2(\xi) .
\end{equation*}
Combining this with Lemma~\ref{lem:Composition-Sym.multiple-s-rho-sharp-1} then gives
\begin{equation} \label{eq:Composition-Sym.D-rho1-delta-rho2}
J\left[(s^\alpha \rho_1)\natural 1(\xi;\cdot,\cdot)\delta^\alpha \rho_2(\xi)\right] = (-1)^{|\alpha|}D_\xi^\alpha \rho_1(\xi)\delta^\alpha \rho_2(\xi) .
\end{equation}
In addition, by using Proposition~\ref{prop:Composition-Amp.amp-sharp-calculus} we see that
\begin{equation*}
(s^\alpha \rho_1)\sharp_0\widetilde{r}_{N\alpha}(\xi)=(-1)^{|\alpha|}(D_\xi^\alpha \rho_1)\sharp_0\widetilde{r}_{N\alpha}(\xi) .
\end{equation*}
Since $D_\xi^\alpha \rho_1\in\stS^{m_1-N}(\Rn;\cA_\theta)$ and $\widetilde{r}_{N\alpha}\in\stS^{m_2}(\Rn\times\Rn;\cA_\theta)$, it follows from Lemma~\ref{lem:Composition-Sym.sharp0-continuity} that
\begin{equation*}
(s^\alpha \rho_1)\sharp_0\widetilde{r}_{N\alpha}\in\stS^{m_1+m_2-N}(\Rn;\cA_\theta) .
\end{equation*}
Combining this with (\ref{eq:Composition-Sym.rho1-2-sharp-Taylor})--(\ref{eq:Composition-Sym.D-rho1-delta-rho2}) we deduce that 
\begin{equation*}
\rho_1\sharp \rho_2(\xi)-\sum_{|\alpha|<N}\frac{1}{\alpha !}\partial_\xi^\alpha \rho_1(\xi)\delta^\alpha \rho_2(\xi)\in\stS^{m_1+m_2-N}(\Rn;\cA_\theta) .
\end{equation*}
This proves~(\ref{eq:Composition-Sym.sharp-asymptotics}). The proof of Proposition~\ref{prop:Composition-Sym.sharp-asymptotics} is complete.
\end{proof}

Finally, we mention the following extension of Proposition~\ref{prop:Composition-Sym.sharp-property} and Proposition~\ref{prop:Composition-Sym.sharp-asymptotics}. 

\begin{proposition}\label{prop:Composition-Symb.continuity-RN}
Given $m_1,m_2\in \R$ and $N\in \N$, let $R_N$ be the bilinear map from 
$\stS^{m_1}(\Rn;\cA_\theta)\times\stS^{m_2}(\Rn;\cA_\theta)$ to $\stS^{m_1+m_2-N}(\Rn;\cA_\theta)$ defined by
\begin{equation*}
R_N(\rho_1,\rho_2)=\rho_1\sharp\rho_2-\sum_{|\alpha|<N}\frac{1}{\alpha !}\partial_\xi^\alpha\rho_1\delta^\alpha\rho_2 , \qquad \rho_j(\xi)\in\stS^{m_j}(\Rn;\cA_\theta) .
\end{equation*}
 Then $R_N$ is a continuous bilinear map. 
\end{proposition}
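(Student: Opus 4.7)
The plan is to revisit the proof of Proposition~\ref{prop:Composition-Sym.sharp-asymptotics} and observe that each step is realized by a continuous (bi)linear map between the appropriate Fréchet spaces, so that $R_N$ inherits continuity by composition. More precisely, starting from formula~(\ref{eq:Composition-Sym.rho1-2-sharp-Taylor}) and using Lemma~\ref{lem:Composition-Sym.multiple-s-rho-sharp-1} together with Proposition~\ref{prop:Composition-Amp.amp-sharp-calculus}, one obtains
\begin{equation*}
R_N(\rho_1,\rho_2)=\sum_{|\alpha|=N}(-1)^{|\alpha|}\bigl(D_\xi^\alpha\rho_1\bigr)\sharp_0\widetilde{r}_{N\alpha},
\end{equation*}
where $\widetilde{r}_{N\alpha}(s,\xi)=\alpha_{-s}[r_{N\alpha}(-s,\xi)]$ and $r_{N\alpha}(s,\xi)\in\stS^{m_2}(\Rn\times\Rn;\cA_\theta)$ is the Taylor remainder from Lemma~\ref{lem:Composition-Sym.symbol-Taylor-thm} applied to $\sigma(s,\xi)=\alpha_{-s}[\rho_2(\xi)]$. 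It therefore suffices to check that each of the intermediate maps in this factorization is continuous on the relevant Fréchet spaces.

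The first step is to note that $\rho_1\mapsto D_\xi^\alpha\rho_1$ is by construction a continuous linear map from $\stS^{m_1}(\Rn;\cA_\theta)$ to $\stS^{m_1-N}(\Rn;\cA_\theta)$. For the second factor, the estimate~(\ref{eq:compo-symbols.estimate-tilde-sigma}) in the proof of Lemma~\ref{lem:Composition-Sym.alpha-s-sigma} shows that $\rho_2\mapsto\alpha_{-s}[\rho_2(\xi)]$ is continuous from $\stS^{m_2}(\Rn;\cA_\theta)$ into $\stS^{m_2}(\Rn\times\Rn;\cA_\theta)$. Next, inspection of the proof of Lemma~\ref{lem:Composition-Sym.symbol-Taylor-thm} reveals the explicit integral formula for the $r_{N\alpha}$ and the bound of any semi-norm $p_{N'}^{(m_2)}(r_{N\alpha})$ by $p_{N+N'}^{(m_2)}(\sigma)$, so that $\sigma\mapsto r_{N\alpha}$ is a continuous linear map from $\stS^{m_2}(\Rn\times\Rn;\cA_\theta)$ to itself for each multi-order $\alpha$ with $|\alpha|=N$. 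Applying Lemma~\ref{lem:Composition-Sym.alpha-s-sigma} a second time (after composing with the continuous substitution $(s,\xi)\mapsto(-s,\xi)$) then shows that $r_{N\alpha}\mapsto\widetilde{r}_{N\alpha}$ is continuous from $\stS^{m_2}(\Rn\times\Rn;\cA_\theta)$ to itself.

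Putting these steps together, the map $\rho_2\mapsto\widetilde{r}_{N\alpha}$ is a continuous linear map from $\stS^{m_2}(\Rn;\cA_\theta)$ to $\stS^{m_2}(\Rn\times\Rn;\cA_\theta)$. Composing with the continuous bilinear operation $\sharp_0$ supplied by Lemma~\ref{lem:Composition-Sym.sharp0-continuity} then yields that $(\rho_1,\rho_2)\mapsto(D_\xi^\alpha\rho_1)\sharp_0\widetilde{r}_{N\alpha}$ is a continuous bilinear map from $\stS^{m_1}(\Rn;\cA_\theta)\times\stS^{m_2}(\Rn;\cA_\theta)$ to $\stS^{m_1+m_2-N}(\Rn;\cA_\theta)$. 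Summing over the finitely many multi-orders $\alpha$ with $|\alpha|=N$ gives the desired continuity of $R_N$.

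The main delicate point is not a single hard estimate but rather the bookkeeping: one must make sure that the Taylor remainder construction in Lemma~\ref{lem:Composition-Sym.symbol-Taylor-thm} is indeed produced by a continuous linear map in $\sigma$ (this is transparent from the integral formula, but needs to be stated explicitly), and that the two applications of Lemma~\ref{lem:Composition-Sym.alpha-s-sigma}---once to pass from $\rho_2(\xi)$ to the $s$-dependent symbol $\alpha_{-s}[\rho_2(\xi)]$ and once to form $\widetilde{r}_{N\alpha}$---are in fact linear and continuous between the appropriate Fréchet spaces. Once these points are recorded, the proposition follows directly.
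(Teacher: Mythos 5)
Your proof is correct, but it takes a genuinely different route from the paper. The paper's own argument is soft: it invokes the closed graph theorem for the Fr\'echet spaces involved, noting that $R_N$ is already known to be continuous into $\stS^{m_1+m_2}(\Rn;\cA_\theta)$ (by Proposition~\ref{prop:Composition-Sym.sharp-property} and the continuity of the finitely many maps $(\rho_1,\rho_2)\rightarrow \partial_\xi^\alpha\rho_1\,\delta^\alpha\rho_2$), and that a graph closed for the weaker topology on the target is closed for the stronger one. You instead re-open the proof of Proposition~\ref{prop:Composition-Sym.sharp-asymptotics}, extract the exact identity $R_N(\rho_1,\rho_2)=\sum_{|\alpha|=N}(-1)^{N}(D_\xi^\alpha\rho_1)\sharp_0\widetilde{r}_{N\alpha}$ from~(\ref{eq:Composition-Sym.rho1-2-sharp-Taylor}), and verify that every intermediate construction --- the embedding $\rho_2\rightarrow\alpha_{-s}[\rho_2(\xi)]$ via~(\ref{eq:compo-symbols.estimate-tilde-sigma}), the Taylor remainder $\sigma\rightarrow r_{N\alpha}$ whose explicit integral formula in the proof of Lemma~\ref{lem:Composition-Sym.symbol-Taylor-thm} yields the semi-norm bound $p_{N'}^{(m_2)}(r_{N\alpha})\leq\frac{1}{\alpha!}p_{N+N'}^{(m_2)}(\sigma)$, the twist $r_{N\alpha}\rightarrow\widetilde{r}_{N\alpha}$, and finally $\sharp_0$ via Lemma~\ref{lem:Composition-Sym.sharp0-continuity} --- is a continuous (bi)linear map between the relevant spaces. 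All of these continuity claims are indeed substantiated by the estimates already present in the proofs of the cited lemmas (none of which states continuity explicitly, so your insistence on recording these points is the right instinct), and the identity you use is an exact pointwise one, not merely an equality of asymptotic classes. What the paper's route buys is brevity and independence from the internal structure of the remainder; what yours buys is an explicit, quantitative semi-norm estimate for $R_N$, which is strictly more information and is the kind of statement one actually needs when handling parameter-dependent or holomorphic families of symbols.
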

\begin{proof}
As the input and output spaces are Fr\'echet spaces, it follows from the closed graph theorem that in order to prove the continuity of $R_N$ it is enough to check that its graph is closed. We also observe that it follows from Proposition~\ref{prop:Composition-Sym.sharp-property} that $R_N$ is continuous at least if we endow $\stS^{m_1+m_2-N}(\Rn;\cA_\theta)$ with the topology induced from that of $\stS^{m_1+m_2}(\Rn;\cA_\theta)$. 
 
Let $(\rho_\ell)_{\ell\geq 0}\subset\stS^{m_1}(\Rn;\cA_\theta)$ and $(\sigma_\ell)_{\ell\geq 0}\subset\stS^{m_2}(\Rn;\cA_\theta)$ be sequences such that:
\begin{enumerate}
 \item[(i)] $\rho_\ell(\xi)$ converges to $\rho(\xi)$ in $\stS^{m_1}(\Rn;\cA_\theta)$ and $\sigma_\ell(\xi)$ converges to $\sigma(\xi)$ in $\stS^{m_2}(\Rn;\cA_\theta)$. 
 \item[(ii)] $R_N(\rho_\ell,\sigma_\ell)(\xi)$ converges to $\nu(\xi)$ in $\stS^{m_1+m_2-N}(\Rn;\cA_\theta)$. 
\end{enumerate}
It follows from (i) and the observation above that $R_N(\rho_\ell,\sigma_\ell)(\xi)$ converges to $R_N(\rho, \sigma)(\xi)$ in  $\stS^{m_1+m_2}(\Rn;\cA_\theta)$. Combining this with (ii) and the continuity of the inclusion of $\stS^{m_1+m_2-N}(\Rn;\cA_\theta)$ into $\stS^{m_1+m_2}(\Rn;\cA_\theta)$, we see that $\nu(\xi)= R_N(\rho, \sigma)(\xi)$. It then follows that the graph of $R_N$ is closed. The proof is complete. 
 \end{proof}

\section{Adjoints of $\Psi$DOs. Action on $\cA_\theta'$}\label{sec:Adjoints}
In this section, we study the formal adjoints of \psidos\ and show they are \psidos\ as well. As a consequence, this will allow us to let \psidos\ act on the strong dual 
$\cA_\theta'$. 

In what follows by the formal adjoint of a linear operator $P: \cA_\theta \rightarrow \cA_\theta$ we shall mean a linear operator $P^*: \cA_\theta \rightarrow \cA_\theta$ such that
\begin{equation*} 
\acoup{Pu}{v} = \acoup{u}{P^* v} \qquad \forall u,v \in \cA_\theta,
\end{equation*}
where $\acoup{\cdot}{\cdot}$ is the inner product~(\ref{eq:NCtori.cAtheta-innerproduct}). Note that if a formal adjoint exists, then it must be unique.

We first look at the formal adjoint of \psidos\ associated with amplitudes. 

\begin{proposition} \label{prop:Adjoints.formal-adjoint}
 Let $a(s,\xi)\in A^m(\R^n\times \R^n; \cA_\theta)$, $m \in \R$, and set 
 \begin{equation*}
a^\dagger(s,\xi):=\alpha_{-s}\left[a(-s,\xi)^*\right], \qquad (s,\xi)\in\Rn\times\Rn .
\end{equation*}
Then $a^\dagger(s,\xi)\in A^m(\R^n\times \R^n; \cA_\theta)$ and $P_{a^\dagger}$ is the formal adjoint of $P_a$. 
 \end{proposition}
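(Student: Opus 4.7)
The plan is to first establish the amplitude estimate, then verify the adjoint identity when $a$ has compact support, and finally extend to general amplitudes by density.

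For the amplitude estimate, smoothness of $a^\dagger(s,\xi)$ follows from Lemma~\ref{lem:Amplitudes.smoothness-action} applied to $(s,\xi)\mapsto a(-s,\xi)^*$ (involution being continuous antilinear on $\cA_\theta$ and commuting with every $\delta^\alpha$ up to a sign). The key point for the estimates is that each $\alpha_{-s}$ is a $*$-isometry of $A_\theta$ that commutes with every $\delta^\alpha$, while $\partial_{s_j}\alpha_{-s}(u) = -i\alpha_{-s}(\delta_j u)$. A Leibniz-rule computation then expresses $\delta^\alpha\partial_s^\beta\partial_\xi^\gamma a^\dagger(s,\xi)$ as a finite linear combination of terms of the form $\pm\alpha_{-s}\bigl[(\delta^{\alpha'}\partial_s^{\beta'}\partial_\xi^\gamma a)(-s,\xi)^*\bigr]$ with $|\alpha'|+|\beta'|\leq|\alpha|+|\beta|$, and the seminorm bound on $a$ transfers to $a^\dagger$ term by term. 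The same computation shows that $a\mapsto a^\dagger$ is in fact a continuous antilinear map from $A^m(\R^n\times\R^n;\cA_\theta)$ to itself.

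For the adjoint identity, when $a\in C_c^\infty(\R^n\times\R^n;\cA_\theta)$ all integrals are absolutely convergent, so $\tau$ and right multiplication commute with integration, yielding
\begin{equation*}
\acoup{P_au}{v} = \tau\bigl[(P_au)v^*\bigr]=\iint e^{is\cdot\xi}\tau\bigl[a(s,\xi)\alpha_{-s}(u)v^*\bigr]\,ds\dbar\xi.
\end{equation*}
Pulling $(\,\cdot\,)^*$ inside the integral defining $P_{a^\dagger}v$ reverses products and conjugates the phase, and using $\alpha_{-s}(w)^*=\alpha_{-s}(w^*)$ followed by the substitution $s\to -s$ gives $(P_{a^\dagger}v)^* = \iint e^{is\cdot\xi}\alpha_s[v^*a(s,\xi)]\,ds\dbar\xi$. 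Hence $\acoup{u}{P_{a^\dagger}v}=\iint e^{is\cdot\xi}\tau\bigl[u\alpha_s(v^*a(s,\xi))\bigr]\,ds\dbar\xi$, which by the $\alpha_s$-invariance and traciality of $\tau$ agrees with the expression above for $\acoup{P_au}{v}$.

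To pass from compactly supported $a$ to arbitrary $a\in A^m(\R^n\times\R^n;\cA_\theta)$, observe that both sides of the identity are continuous functionals of $a$ in the topology of $A^{m'}(\R^n\times\R^n;\cA_\theta)$ for any $m'>m$: the left by Corollary~\ref{cor:PsiDOs.amplitudes-to-pdos-continuity} combined with the continuity of $\acoup{\cdot}{v}$ on $\cA_\theta$ (which is dominated by $\|\cdot\|\|v\|$), and the right by the continuity of $\dagger$ established above. Proposition~\ref{prop:Amplitudes.amplitudes-density} produces $a_\epsilon\in C_c^\infty(\R^n\times\R^n;\cA_\theta)$ converging to $a$ in such an $A^{m'}$, and passing to the limit in the identity proved for $a_\epsilon$ completes the proof. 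The only delicate step is the bookkeeping in the compactly supported computation, where the combined effect of $*$, $\alpha_{-s}$, and the substitution $s\to -s$ has to reproduce exactly the definition $a^\dagger(s,\xi)=\alpha_{-s}[a(-s,\xi)^*]$; everything else is routine once the continuity and density setup is in place.
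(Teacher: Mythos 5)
Your proof is correct, but the second half follows a genuinely different route from the paper's. For the adjoint identity the paper works directly with arbitrary amplitudes: it applies Lemma~\ref{lem:Amplitudes.J-Phi-compatibility} to the continuous linear form $w\mapsto\acoup{w}{v}$ and the continuous anti-linear form $w\mapsto\acoup{u}{w}$, obtaining $\acoup{P_au}{v}=J[\acoup{a(s,\xi)\alpha_{-s}(u)}{v}]$ and $\acoup{u}{P_{a^\dagger}v}=J[\acoup{u}{a^\dagger(-s,\xi)\alpha_s(v)}]$ (the reflection $s\to-s$ coming from part~(iii) of that lemma), and then checks that the two scalar integrands coincide pointwise using that $\alpha_s$ is a $*$-automorphism preserving $\acoup{\cdot}{\cdot}$. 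You instead verify the identity for compactly supported amplitudes by absolutely convergent integral manipulations and then extend by the density argument of Proposition~\ref{prop:Amplitudes.amplitudes-density}; your compactly supported computation is right (the combination of $*$, $\alpha_{-s}(w)^*=\alpha_{-s}(w^*)$, and $s\to-s$ does reproduce $a^\dagger$, and the final step uses $\tau\circ\alpha_s=\tau$ together with traciality), and the density step is legitimate because both sides are continuous in $a$ on $A^{m'}$ for $m'>m$ — for the right-hand side this requires the continuity of $a\mapsto a^\dagger$, which you correctly flag as a by-product of your seminorm estimate. The paper's route is shorter and avoids the limiting argument altogether by exploiting the compatibility of the oscillating integral $J$ with continuous (anti-)linear maps; yours is more elementary at the core but pays for it with the continuity-plus-density scaffolding, which is the same reduction the paper itself uses elsewhere (e.g., for Proposition~\ref{prop:Composition-Amp.amplitudes-composition}). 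The first half of your argument, the Leibniz-rule estimate showing $a^\dagger\in A^m$, is essentially identical to the paper's (which refers to the computation in Lemma~\ref{lem:Composition-Sym.alpha-s-sigma}).
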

\begin{proof}
 Let us first check that $a^\dagger(s,\xi)\in A^m(\R^n\times \R^n; \cA_\theta)$. It follows from Lemma~\ref{lem:Amplitudes.smoothness-action} that 
 $a^\dagger(s,\xi)\in C^\infty(\Rn\times\Rn ; \cA_\theta)$. In addition, let $\alpha,\beta,\gamma\in\N_0^n$ and set $N=|\alpha|+|\beta|+|\gamma|$.  Along similar lines as that of the proof of Lemma~\ref{lem:Composition-Sym.alpha-s-sigma} it can be shown that, for all $s,\xi\in \R^n$, we have
\begin{equation*}
\norm{\delta^\alpha\partial_s^\beta\partial_\xi^\gamma a^\dagger(s,\xi)}\leq 2^N q_N^{(m)}(a)(1+|s|+|\xi|)^m .
\end{equation*}
This shows that $a^\dagger(s,\xi)\in A^m(\R^n\times \R^n; \cA_\theta)$. 

It remains to check that $P_{a^\dagger}$ is the formal adjoint of $P_a$. Let $u,v\in \cA_\theta$. As $w\rightarrow \acoup{w}{v}$ is a continuous linear form on $\cA_\theta$, we may use Lemma~\ref{lem:Amplitudes.J-Phi-compatibility} to get
\begin{equation*}
 \acoup{P_au}{v}= \acoup{J\left[ a(s,\xi)\alpha_{-s}(u)\right]}{v}= J\left[ \acoup{a(s,\xi)\alpha_{-s}(u)}{v}\right]. 
\end{equation*}
Likewise, as $w\rightarrow \acoup{u}{w}$ is a continuous anti-linear form on $\cA_\theta$, by using Lemma~\ref{lem:Amplitudes.J-Phi-compatibility} we also get
\begin{equation*}
 \acoup{u}{P_{a^\dagger}v}= \acoup{u}{J\left[ a^\dagger(s,\xi)\alpha_{-s}(v)\right]} = J\left[ \acoup{u}{ a^\dagger(-s,\xi)\alpha_s(v)}\right]. 
\end{equation*}
As $\alpha_s:\cA_\theta\rightarrow \cA_\theta$ is an algebra map and preserves the inner product of $\cH_\theta$, we have 
\begin{equation*}
 \acoup{u}{ a^\dagger(-s,\xi)\alpha_s(v)} = \acoup{u}{\alpha_s\left[ a(s,\xi)^*\right]\alpha_s(v)}=\acoup{\alpha_{-s}(u)}{a(s,\xi)^*v}= \acoup{a(s,\xi)\alpha_{-s}(u)}{v}. 
\end{equation*}
It then follows that $\acoup{P_au}{v}= \acoup{u}{P_{a^\dagger}v}$ for all $u,v\in \cA_\theta$. That is, $P_{a^\dagger}$ is the formal adjoint of $P$. The proof is complete. 
\end{proof}

Let us now turn to \psidos\ associated with symbols. 

\begin{proposition}[\cite{Ba:CRAS88}]\label{prop:Adjoints.rhostar-formal-adjoint}
Let $\rho(\xi)\in\stS^m(\Rn;\cA_\theta)$, $m\in\R$, and set 
\begin{equation*}
\rho^\star(\xi)=(\rho^\dagger\sharp 1)(\xi)=\iint e^{it\cdot \eta} \alpha_{-t}\left[\rho(\xi+\eta)^*\right]dt\dbar\eta , \qquad \xi\in\Rn.
\end{equation*}
Then $\rho^\star(\xi)\in \stS^m(\Rn;\cA_\theta)$, and $P_{\rho^\star}$ is the formal adjoint of $P_\rho$. Moreover, we have 
\begin{equation} \label{eq:Adjoints.rhostar-asymptotics}
\rho^\star(\xi)\sim\sum_\alpha\frac{1}{\alpha !}\delta^\alpha\partial_\xi^\alpha \left[\rho(\xi)^*\right] .
\end{equation}
\end{proposition}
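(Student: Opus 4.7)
The plan is to identify $\rho^\star$ with $\rho^\dagger\sharp 1$ and reduce everything to the composition machinery of Sections~\ref{section:Amplitudes}--\ref{sec:Composition-Symbols}, then obtain the asymptotic expansion by Taylor-expanding in the frequency variable.

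First I would verify that $\rho^\dagger(s,\xi) := \alpha_{-s}[\rho(\xi)^*]$ belongs to $\stS^m(\R^n\times\R^n;\cA_\theta)$, in direct analogy with Lemma~\ref{lem:Composition-Sym.alpha-s-sigma}: smoothness is Lemma~\ref{lem:Amplitudes.smoothness-action}, and the seminorm bounds follow from $D_s^\beta\alpha_{-s}(u) = (-1)^{|\beta|}\alpha_{-s}[\delta^\beta u]$ together with the fact that each $\alpha_s$ preserves the norm of $\cA_\theta$. Specialising the formula of Lemma~\ref{lem:Composition-Amp.comp-supp} to $a_2\equiv 1$ shows that, for any amplitude $a(s,\xi)$, the product $a\sharp 1(s,\xi) = \iint e^{it\cdot\eta}a(t,\xi+\eta)\,dt\,\dbar\eta$ is independent of $s$; with $a = \rho^\dagger$ this recovers the stated integral formula for $\rho^\star$. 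Hence $\rho^\star = \rho^\dagger\sharp_0 1$ in the notation of (\ref{eq:Composition-Sym.amplitudes-sharp0}), and Lemma~\ref{lem:Composition-Sym.sharp0-continuity} places $\rho^\star$ in $\stS^m(\R^n;\cA_\theta)$. For the adjoint property, Proposition~\ref{prop:Composition-Amp.amplitudes-composition} gives $P_{\rho^\star} = P_{\rho^\dagger}P_1$; Proposition~\ref{prop:PsiDOs.Prhou-equation} applied to the constant symbol $1$ yields $P_1 = \op{Id}$ on $\cA_\theta$, so $P_{\rho^\star} = P_{\rho^\dagger}$, and Proposition~\ref{prop:Adjoints.formal-adjoint} identifies this with the formal adjoint of $P_\rho$.

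For the asymptotic expansion (\ref{eq:Adjoints.rhostar-asymptotics}), I would follow the blueprint of the proof of Proposition~\ref{prop:Composition-Sym.sharp-asymptotics} but Taylor-expand $\rho^\dagger(t,\xi+\eta)$ in $\eta$ rather than in $s$. Writing
\[
\rho^\dagger(t,\xi+\eta) = \sum_{|\alpha|<N}\frac{\eta^\alpha}{\alpha!}\partial_\xi^\alpha\rho^\dagger(t,\xi) + \sum_{|\alpha|=N}\eta^\alpha \widetilde r_{N\alpha}(t,\xi,\eta),
\]
substituting into the oscillating integral defining $\rho^\star(\xi)$, and using Proposition~\ref{prop:Amplitudes.J-properties}(iv) to convert each $\eta^\alpha$ into $D_t^\alpha$, the identity $D_t^\alpha \alpha_{-t}(w) = (-1)^{|\alpha|}\alpha_{-t}[\delta^\alpha w]$ reduces the $\alpha$-th main term to $\iint e^{it\cdot\eta}\alpha_{-t}[\delta^\alpha\partial_\xi^\alpha\rho(\xi)^*]\,dt\,\dbar\eta$, which equals $P_1\bigl(\delta^\alpha\partial_\xi^\alpha[\rho(\xi)^*]\bigr) = \delta^\alpha\partial_\xi^\alpha[\rho(\xi)^*]$ by Proposition~\ref{prop:PsiDOs.Prhou-equation}, yielding the stated term in (\ref{eq:Adjoints.rhostar-asymptotics}).

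The hard part is controlling the Taylor remainder. Using Taylor's integral form for $\widetilde r_{N\alpha}$ and the same integration-by-parts maneuver, the remainder becomes a $\tau$-integral over $[0,1]$ of oscillating integrals of the form $\iint e^{it\cdot\eta}\alpha_{-t}[b(\xi+\tau\eta)]\,dt\,\dbar\eta$ with $b := \delta^\alpha\partial_\xi^\alpha\rho(\xi)^* \in \stS^{m-N}(\R^n;\cA_\theta)$. The rescaling $t\mapsto \tau t$, $\eta\mapsto\eta/\tau$ (for $\tau\in(0,1]$; the value at $\tau=0$ is computed directly via Proposition~\ref{prop:PsiDOs.Prhou-equation}) turns each such integral into $\widetilde b^{(\tau)}\sharp_0 1(\xi)$ where $\widetilde b^{(\tau)}(t,\xi) := \alpha_{-\tau t}[b(\xi)]$, and this family is bounded in $\stS^{m-N}(\R^n\times\R^n;\cA_\theta)$ uniformly in $\tau\in[0,1]$ because the derivatives $D_t^\beta \widetilde b^{(\tau)}$ produce only harmless factors $\tau^{|\beta|} \leq 1$. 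Lemma~\ref{lem:Composition-Sym.sharp0-continuity} then places each piece in $\stS^{m-N}(\R^n;\cA_\theta)$, and integration against $(1-\tau)^{N-1}d\tau$ preserves this class, completing the expansion.
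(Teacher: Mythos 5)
Your proof is correct. The first half (membership of $\rho^\dagger$ in $\stS^m(\Rn\times\Rn;\cA_\theta)$, the identification $\rho^\star=\rho^\dagger\sharp_0 1\in\stS^m(\Rn;\cA_\theta)$ via Lemma~\ref{lem:Composition-Sym.sharp0-continuity}, and $P_{\rho^\star}=P_{\rho^\dagger}P_1=P_{\rho^\dagger}=(P_\rho)^*$) is exactly the paper's argument. For the asymptotic expansion you take a genuinely different route: the paper Taylor-expands $\rho^\dagger(s,\xi)=\alpha_{-s}[\rho(\xi)^*]$ in the \emph{translation} variable $s$ at $s=0$ (Lemma~\ref{lem:Composition-Sym.symbol-Taylor-thm}), so the remainder $\sum_{|\alpha|=N}s^\alpha r_{N\alpha}(s,\xi)$ stays in $\stS^{m}(\Rn\times\Rn;\cA_\theta)$ and the gain of $N$ orders comes entirely from the algebraic identity $(s^\alpha\sigma)\sharp 1=(-1)^{|\alpha|}(D_\xi^\alpha\sigma)\sharp 1$ of Proposition~\ref{prop:Composition-Amp.amp-sharp-calculus} combined with Lemma~\ref{lem:Composition-Sym.sharp0-continuity}; you instead expand in the \emph{frequency-shift} variable $\eta$, so each main term already carries $\partial_\xi^\alpha\rho\in\stS^{m-N}$, but the remainder involves $b(\xi+\tau\eta)$ at shifted points and must be tamed by the anisotropic rescaling $(t,\eta)\mapsto(\tau t,\eta/\tau)$, which converts it into $\widetilde b^{(\tau)}\sharp_0 1$ with $\widetilde b^{(\tau)}(t,\zeta)=\alpha_{-\tau t}[b(\zeta)]$ uniformly bounded in $\stS^{m-N}(\Rn\times\Rn;\cA_\theta)$. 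Your version is the classical H\"ormander-style reduction and works here because the $\tau$-dependence only enters through $D_t^\beta\widetilde b^{(\tau)}=(-\tau)^{|\beta|}\alpha_{-\tau t}[\delta^\beta b]$ with $\tau^{|\beta|}\le 1$, as you note; the paper's version avoids both the change of variables in the oscillating integral and the interchange of $J$ with the $\tau$-integral. Those two steps are the only places where your write-up leans on unstated justifications: the rescaling must be checked first for compactly supported amplitudes and then extended by the density and continuity results (Proposition~\ref{prop:Amplitudes.amplitudes-density}, Lemma~\ref{lem:Composition-Sym.sharp0-continuity}), and the interchange of $J$ with $\int_0^1(1-\tau)^{N-1}\,d\tau$ follows from the continuity of $J$ and of $\tau\mapsto\widetilde b^{(\tau)}$ together with the vector-valued integration of Part~I; both are routine given the paper's toolbox, so I regard them as details to be filled in rather than gaps.
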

\begin{proof}
First, it follows from Proposition~\ref{prop:PsiDOs.Prhou-equation} that for $\psi(\xi)=1$ the operator $P_\psi$ is the identity map on $\cA_\theta$. Therefore, using Proposition~\ref{prop:Composition-Amp.amplitudes-composition} we see that, for all $a(s,\xi)\in A^{m_+}(\Rn\times\Rn;\cA_\theta)$, we have
\begin{equation*}
P_a=P_aP_1=P_{a\sharp 1} .
\end{equation*}
Replacing $a$ by $\rho^\dagger$ shows that $P_{\rho^\dagger}=P_{\rho^\dagger\sharp 1}=P_{\rho^\star}$. Combining this with Proposition~\ref{prop:Adjoints.formal-adjoint} we then deduce that $P_{\rho^\star}$ is the formal adjoint of $P_\rho$.

It remains to establish the asymptotics~(\ref{eq:Adjoints.rhostar-asymptotics}). Let $N\in \N$. Note that Lemma~\ref{lem:Composition-Sym.alpha-s-sigma} ensures us that $\rho^\dagger(s,\xi)=\alpha_{-s}[\rho(\xi)^*]$ is a symbol in $\stS^m(\R^n\times \R^n;\cA_\theta)$. Therefore, by 
Lemma~\ref{lem:Composition-Sym.symbol-Taylor-thm} there are symbols $r_{N\alpha}(s,\xi)\in\stS^m(\Rn\times\Rn;\cA_\theta)$, $|\alpha|=N$, such that
\begin{align*}
\rho^\dagger(s,\xi) &= \sum_{|\alpha|<N}\frac{s^\alpha}{\alpha !}\partial_s^\alpha\left[\alpha_{-s}\left(\rho(\xi)^*\right)\right]_{s=0}+\sum_{|\alpha|=N}s^\alpha r_{N\alpha}(s,\xi) \\
&= \sum_{|\alpha|<N}\frac{(-is)^\alpha}{\alpha !}\delta^\alpha \left[\rho(\xi)^*\right]+\sum_{|\alpha|=N}s^\alpha r_{N\alpha}(s,\xi) .
\end{align*}
Thus, 
\begin{equation*}
\rho^\star(\xi)= \left(\rho^\dagger\sharp 1\right)(\xi)
=\sum_{|\alpha|<N}\frac{(-i)^{|\alpha|}}{\alpha !}(s^\alpha\delta^\alpha \rho^*)\sharp 1(\xi)+\sum_{|\alpha|=N}(s^\alpha r_{N\alpha})\sharp 1(\xi) .
\end{equation*}
By Lemma~\ref{lem:Composition-Sym.multiple-s-rho-sharp-1} we have
\begin{equation*}
(-i)^{|\alpha|}(s^\alpha\delta^\alpha \rho^*)\sharp 1(\xi)=i^{|\alpha|}D_\xi^\alpha\delta^\alpha \left[ \rho(\xi)^*\right]=\partial_\xi^\alpha\delta^\alpha\left[ \rho(\xi)^*\right]. 
\end{equation*}
Moreover, by Proposition~\ref{prop:Composition-Amp.amp-sharp-calculus} and Lemma~\ref{lem:Composition-Sym.sharp0-continuity}
we also have
\begin{equation*}
(s^\alpha r_{N\alpha})\sharp 1=(-1)^{|\alpha|}(D_\xi^\alpha r_{N\alpha})\sharp 1\in\stS^{m-N}(\Rn;\cA_\theta) .
\end{equation*} 
Therefore, we see that, for all $N\geq 0$, we have 
\begin{equation*}
\rho^\star(\xi)=\sum_{|\alpha|<N}\frac{1}{\alpha !}\delta^\alpha\partial_\xi^\alpha \left[\rho(\xi)^*\right] \quad \text{mod} \quad \stS^{m-N}(\Rn;\cA_\theta) .
\end{equation*}
This means that $\rho^\star(\xi)\sim\sum_\alpha\frac{1}{\alpha !}\delta^\alpha\partial_\xi^\alpha[ \rho(\xi)^*]$. The proof is complete.
\end{proof}

Combining Proposition~\ref{prop:Adjoints.rhostar-formal-adjoint} with Remark~\ref{rmk:Symbols.derivatives-classical}, Remark~\ref{rem:Symbols.classical-involution} and Remark~\ref{rem:Symbols.classical-homogeneouspart} leads us to the following statement. 

\begin{proposition}[\cite{Ba:CRAS88}] \label{prop:Adjoints.adjoint-classical-pdos}
Let $P\in \Psi^q(\cA_\theta)$, $q\in \C$, have symbol $\rho(\xi)\sim\sum_{j\geq 0}\rho_{q-j}(\xi)$.  
\begin{enumerate}
 \item $\rho^\star(\xi) \in S^{\overline{q}}(\Rn;\cA_\theta)$, and we have  $\rho^\star(\xi)\sim \sum \rho_{\overline{q}-j}^\star(\xi)$, where 
 \begin{equation*}
\rho_{\overline{q}-j}^\star(\xi)=\sum_{k+|\alpha|=j}\frac{1}{\alpha !}\delta^\alpha\partial_\xi^\alpha \left[\rho_{q-k}(\xi)^*\right] , \qquad j\geq 0. 
\end{equation*}

\item $P_{\rho^\star}$ is the formal adjoint of $P$, and hence $P^*\in  \Psi^{\overline{q}}(\cA_\theta)$. 
\end{enumerate}
\end{proposition}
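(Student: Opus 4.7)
The plan is to chain together Proposition~\ref{prop:Adjoints.rhostar-formal-adjoint} with the three structural remarks cited (Remark~\ref{rmk:Symbols.derivatives-classical}, Remark~\ref{rem:Symbols.classical-involution}, Remark~\ref{rem:Symbols.classical-homogeneouspart}). First I would fix a classical symbol $\rho(\xi)\in S^q(\R^n;\cA_\theta)$ with expansion $\rho(\xi)\sim\sum\rho_{q-j}(\xi)$ representing $P$, and view it as an element of $\stS^{\Re q}(\R^n;\cA_\theta)$ via Remark~\ref{rmk:Symbols.classical-inclusion}. Proposition~\ref{prop:Adjoints.rhostar-formal-adjoint} immediately gives $\rho^\star(\xi)\in\stS^{\Re q}(\R^n;\cA_\theta)$, the adjoint identity $P_{\rho^\star}=P^*$, and the standard-symbol asymptotics $\rho^\star(\xi)\sim\sum_\alpha\frac{1}{\alpha!}\delta^\alpha\partial_\xi^\alpha[\rho(\xi)^*]$. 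The remaining work is thus purely formal: upgrade this standard asymptotic into a classical one by showing each summand is a classical symbol and identifying the homogeneous components.

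Next I would analyze the individual summands. Remark~\ref{rem:Symbols.classical-involution} shows $\rho(\xi)^*\in S^{\overline{q}}(\R^n;\cA_\theta)$ with $\rho(\xi)^*\sim\sum_k \rho_{q-k}(\xi)^*$, where the component of degree $\overline{q}-k$ is $\rho_{q-k}(\xi)^*$. Applying Remark~\ref{rmk:Symbols.derivatives-classical} to $\delta^\alpha\partial_\xi^\alpha$ then yields
\begin{equation*}
\frac{1}{\alpha!}\delta^\alpha\partial_\xi^\alpha\left[\rho(\xi)^*\right] \in S^{\overline{q}-|\alpha|}(\R^n;\cA_\theta),\qquad \frac{1}{\alpha!}\delta^\alpha\partial_\xi^\alpha\left[\rho(\xi)^*\right]\sim \sum_k \frac{1}{\alpha!}\delta^\alpha\partial_\xi^\alpha\left[\rho_{q-k}(\xi)^*\right],
\end{equation*}
with the component of degree $\overline{q}-k-|\alpha|$ given by $\frac{1}{\alpha!}\delta^\alpha\partial_\xi^\alpha[\rho_{q-k}(\xi)^*]$. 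Grouping the terms of the standard asymptotics by $\ell=|\alpha|$, I obtain
\begin{equation*}
\rho^\star(\xi)\sim\sum_{\ell\geq 0}\rho^{(\ell)}(\xi),\qquad \rho^{(\ell)}(\xi):=\sum_{|\alpha|=\ell}\frac{1}{\alpha!}\delta^\alpha\partial_\xi^\alpha\left[\rho(\xi)^*\right]\in S^{\overline{q}-\ell}(\R^n;\cA_\theta),
\end{equation*}
in the sense of Definition~\ref{def:Symbols.standard-asymptotic}.

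Now Remark~\ref{rem:Symbols.classical-homogeneouspart} applies verbatim: it promotes this asymptotic expansion by classical symbols of decreasing orders into a genuine classical-symbol expansion. It yields $\rho^\star(\xi)\in S^{\overline{q}}(\R^n;\cA_\theta)$ with homogeneous component of degree $\overline{q}-j$ equal to $\sum_{\ell\leq j}\rho^{(\ell)}_{\overline{q}-j}(\xi)$. Extracting the degree $\overline{q}-j$ piece of each $\rho^{(\ell)}(\xi)$ forces $k+\ell=j$ in the inner sum (since $\delta^\alpha\partial_\xi^\alpha[\rho_{q-k}(\xi)^*]$ is homogeneous of degree $\overline{q}-k-\ell$), so
\begin{equation*}
\rho^\star_{\overline{q}-j}(\xi)=\sum_{\ell\leq j}\sum_{|\alpha|=\ell}\frac{1}{\alpha!}\delta^\alpha\partial_\xi^\alpha\left[\rho_{q-(j-\ell)}(\xi)^*\right]=\sum_{k+|\alpha|=j}\frac{1}{\alpha!}\delta^\alpha\partial_\xi^\alpha\left[\rho_{q-k}(\xi)^*\right],
\end{equation*}
which is the claimed formula. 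Part~(2) is then immediate: since $\rho^\star\in S^{\overline{q}}(\R^n;\cA_\theta)$, the operator $P^*=P_{\rho^\star}$ lies in $\Psi^{\overline{q}}(\cA_\theta)$ by definition.

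There is no real obstacle here; the only bookkeeping care needed is in the last step, making sure that the reindexing $(k,\alpha)\leftrightarrow(\ell,\alpha)$ correctly matches the homogeneous degrees. Because all three invoked remarks have already been recorded and Proposition~\ref{prop:Adjoints.rhostar-formal-adjoint} does the analytic work (producing the oscillating-integral asymptotics and identifying the formal adjoint), the argument reduces to this indexing verification.
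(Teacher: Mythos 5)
Your proposal is correct and follows exactly the route the paper indicates: the paper gives no written proof beyond the sentence ``Combining Proposition~\ref{prop:Adjoints.rhostar-formal-adjoint} with Remark~\ref{rmk:Symbols.derivatives-classical}, Remark~\ref{rem:Symbols.classical-involution} and Remark~\ref{rem:Symbols.classical-homogeneouspart} leads us to the following statement,'' and your argument is precisely the spelled-out version of that combination, with the reindexing $k+|\alpha|=j$ handled correctly.
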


We also mention the following continuity result. 

\begin{proposition} \label{prop:Adjoints.star-map-continuity}
 Let $m\in \R$. Then $\rho(\xi)\rightarrow \rho^\star(\xi)$ is a continuous anti-linear map from $\stS^m(\R^n; \cA_\theta)$ to itself. 
\end{proposition}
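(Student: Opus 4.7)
The plan is to factor $\rho\mapsto\rho^\star$ as a composition of three elementary continuous maps. Recall from the proof of Proposition~\ref{prop:Adjoints.rhostar-formal-adjoint} that $\rho^\star=\rho^\dagger\sharp 1$, where $\rho^\dagger(s,\xi)=\alpha_{-s}[\rho(\xi)^*]$. Since the constant amplitude~$1$ does not depend on~$s$, the product $\rho^\dagger\sharp 1$ does not depend on~$s$ either, and we may identify it with $\rho^\dagger\sharp_0 1$ in the sense of Section~\ref{sec:Composition-Symbols}. Thus I would write $\rho^\star=\Phi_3\circ\Phi_2\circ\Phi_1(\rho)$, where $\Phi_1$ is the involution $\sigma\mapsto\sigma^*$ on $\stS^m(\R^n;\cA_\theta)$, $\Phi_2$ is the map $\sigma(\xi)\mapsto\alpha_{-s}[\sigma(\xi)]$ from $\stS^m(\R^n;\cA_\theta)$ into $\stS^m(\R^n\times\R^n;\cA_\theta)$, and $\Phi_3$ is the map $a(s,\xi)\mapsto(a\sharp_0 1)(\xi)$.

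The continuity of $\Phi_1$ as an anti-linear endomorphism of $\stS^m(\R^n;\cA_\theta)$ is immediate from the fact that the involution of $\cA_\theta$ is an isometry of the norm of $A_\theta$ and satisfies $\delta_j(u^*)=-\delta_j(u)^*$; hence $p_N^{(m)}(\sigma^*)=p_N^{(m)}(\sigma)$. The continuity of $\Phi_2$ is obtained by repeating the argument of Lemma~\ref{lem:Composition-Sym.alpha-s-sigma} in the special case of a symbol which is constant in~$s$: using the identity $D_s^\beta\alpha_{-s}[\sigma(\xi)]=(-1)^{|\beta|}\alpha_{-s}[\delta^\beta\sigma(\xi)]$ together with the fact that $\alpha_{-s}$ preserves the norm of $A_\theta$, one gets, for all multi-orders $\alpha,\beta,\gamma$,
\begin{equation*}
\bigl\|\delta^\alpha D_s^\beta\partial_\xi^\gamma\Phi_2(\sigma)(s,\xi)\bigr\|\leq p_{|\alpha|+|\beta|+|\gamma|}^{(m)}(\sigma)(1+|\xi|)^{m-|\gamma|},
\end{equation*}
which shows both that $\Phi_2(\sigma)\in\stS^m(\R^n\times\R^n;\cA_\theta)$ and that $\Phi_2$ is continuous. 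Finally, the continuity of $\Phi_3$ follows from Lemma~\ref{lem:Composition-Sym.sharp0-continuity} by fixing the second argument to be the constant symbol $1\in\stS^0(\R^n\times\R^n;\cA_\theta)$.

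Composing $\Phi_1$, $\Phi_2$, and $\Phi_3$ then delivers the continuous anti-linear map claimed in the statement. None of the three steps presents a substantial obstacle: the required estimates either reproduce computations already carried out in Section~\ref{sec:Composition-Symbols} or follow immediately from the basic $*$-algebra structure of $\cA_\theta$. An alternative route, in the spirit of Proposition~\ref{prop:Composition-Symb.continuity-RN}, would be to invoke the closed graph theorem between two Fr\'echet spaces, using the fact that pointwise convergence together with the asymptotic expansion~(\ref{eq:Adjoints.rhostar-asymptotics}) forces agreement of limits; however, the explicit factorization above is more transparent and yields quantitative semi-norm bounds.
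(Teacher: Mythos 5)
Your proof is correct and follows essentially the same route as the paper's: the paper likewise writes $\rho^\star=(\rho^\dagger\sharp 1)(\xi)$, obtains the continuity of $\rho\mapsto\rho^\dagger(s,\xi)=\alpha_{-s}[\rho(\xi)^*]$ from the semi-norm estimate~(\ref{eq:compo-symbols.estimate-tilde-sigma}) established in Lemma~\ref{lem:Composition-Sym.alpha-s-sigma}, and then concludes with Lemma~\ref{lem:Composition-Sym.sharp0-continuity}. Your only cosmetic difference is splitting the first step into the involution $\Phi_1$ and the twist $\Phi_2$, and re-deriving the relevant estimate in the special case of an $s$-independent symbol.
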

\begin{proof}
 It follows from the estimate~(\ref{eq:compo-symbols.estimate-tilde-sigma}) that $\sigma(s,\xi)\rightarrow \alpha_{s}[\sigma(s,\xi)]$ gives rise to a continuous linear map from $\stS^m(\R^n\times \R^n; \cA_\theta)$ to itself. It then follows that $\rho(\xi)\rightarrow \rho^\dagger(s,\xi)= \alpha_{-s}[\rho(\xi)^*]$ is a continuous anti-linear map from $\stS^m(\R^n; \cA_\theta)$ to $\stS^m(\R^n\times \R^n; \cA_\theta)$. Combining this with Lemma~\ref{lem:Composition-Sym.sharp0-continuity} we deduce that  
 $\rho(\xi)\rightarrow \rho^\star(\xi)= (\rho^\dagger\sharp 1)(\xi)$ is a continuous anti-linear map from $\stS^m(\R^n; \cA_\theta)$ to itself, proving the result. 
\end{proof}

Recall that any continuous linear operator $P:\cA_\theta \rightarrow \cA_\theta$ defines by duality a linear operator $P^t:\cA_\theta'\rightarrow \cA_\theta'$ by
\begin{equation*}
 \acou{P^tu}{v}= \acou{u}{Pv} \qquad \text{for all $u \in \cA_\theta'$ and $v\in \cA_\theta$}. 
\end{equation*}
This operator is called the transpose of $P$. This is a continuous operator with respect to the strong topology of $\cA_\theta'$. Assume further that $P$ has a continuous formal adjoint $P^*:\cA_\theta \rightarrow \cA_\theta$. Then, given any $u,v\in \cA_\theta$, we have 
\begin{equation} \label{eq:Adjoints.derive-tildeP*}
\acou{Pu}{v} = \acoup{Pu}{v^*} = \acoup{u}{P^*(v^*)} = \acou{u}{\left[P^*(v^*)\right]^*} .
\end{equation}
Let $\tilde{P}^*:\cA_\theta \rightarrow \cA_\theta$ be the linear operator defined by 
\begin{equation*}
\tilde{P}^*(u)= \left[P^*(u^*)\right]^*, \qquad u\in \cA_\theta.
\end{equation*}
This is a continuous operator, since $P^*$ is continuous by assumption. Thus, it has a continuous transpose operator $(\tilde{P}^*)^t:\cA_\theta'\rightarrow \cA_\theta'$. Moreover, using~(\ref{eq:Adjoints.derive-tildeP*}) we see that, for all $u,v\in \cA_\theta$, we have 
\begin{equation*}
\acou{Pu}{v} =\acou{u}{\tilde{P}^*v}=\acou{(\tilde{P}^*)^tu}{v}.
\end{equation*}
This shows that $(\tilde{P}^*)^t$ agrees with $P$ on $\cA_\theta$. As $(\tilde{P}^*)^t$ is continuous and $\cA_\theta$ is dense in $\cA_\theta'$, we then deduce that $P$ uniquely extends to a continuous linear map from $\cA_\theta'$ to itself. 

Combining this with Proposition~\ref{prop:Adjoints.formal-adjoint} and Proposition~\ref{prop:Adjoints.rhostar-formal-adjoint} we then arrive at the following statement. 

\begin{proposition} \label{Adjoints.PsiDOs-extension}
 Any \psido\ (associated with an amplitude or a symbol) uniquely extends to a continuous linear operator on $\cA_\theta'$.  
\end{proposition}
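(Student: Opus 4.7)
The plan is to invoke the general extension mechanism that has already been laid out in the two paragraphs immediately preceding the statement. Concretely, that discussion establishes the following abstract principle: if $P:\cA_\theta\to\cA_\theta$ is a continuous linear operator that admits a continuous formal adjoint $P^*:\cA_\theta\to\cA_\theta$, then the operator $\tilde{P}^*(u):=[P^*(u^*)]^*$ is continuous on $\cA_\theta$, its transpose $(\tilde P^*)^t:\cA_\theta'\to\cA_\theta'$ is continuous for the strong topology, and the computation in~(\ref{eq:Adjoints.derive-tildeP*}) shows that $(\tilde P^*)^t$ agrees with $P$ on the dense subspace $\cA_\theta\subset\cA_\theta'$. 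Uniqueness of the extension is then automatic from density and continuity.

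Thus my proof will simply verify the two hypotheses for an arbitrary \psido. First, if $P=P_a$ for some $a(s,\xi)\in A^m(\R^n\times\R^n;\cA_\theta)$, then $P:\cA_\theta\to\cA_\theta$ is continuous by Proposition~\ref{prop:PsiDOs.pdos-continuity}, and by Proposition~\ref{prop:Adjoints.formal-adjoint} the formal adjoint is $P_{a^\dagger}$, which is again a \psido\ with amplitude in $A^m(\R^n\times\R^n;\cA_\theta)$, hence continuous on $\cA_\theta$ by the same Proposition~\ref{prop:PsiDOs.pdos-continuity}. Similarly, if $P=P_\rho$ for some $\rho(\xi)\in \stS^m(\R^n;\cA_\theta)$, then $P$ is continuous by Proposition~\ref{prop:PsiDOs.symbols-to-pdos-continuity} and its formal adjoint $P_{\rho^\star}$ is the \psido\ attached to the symbol $\rho^\star(\xi)\in\stS^m(\R^n;\cA_\theta)$ supplied by Proposition~\ref{prop:Adjoints.rhostar-formal-adjoint}, again continuous on $\cA_\theta$. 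The continuity of the involution $u\mapsto u^*$ on $\cA_\theta$, which is part of Proposition~\ref{prop:NCtori.cAtheta-Frechet}, ensures that $\tilde P^*$ inherits continuity from $P^*$.

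With these ingredients, applying the abstract principle above to $P$ produces a continuous extension $\tilde P:=(\tilde P^*)^t:\cA_\theta'\to\cA_\theta'$ such that $\tilde P u=Pu$ for all $u\in\cA_\theta$. Uniqueness follows from the density of $\cA_\theta$ in $\cA_\theta'$ (the corollary following Proposition~\ref{prop:NCtori.distributions-Fourier-series}) together with the Hausdorff character of the strong topology. There is no genuine obstacle here: the only slightly non-routine point is the existence of a \emph{continuous} formal adjoint, and this has been secured beforehand by identifying the formal adjoint with another \psido\ in the same class.
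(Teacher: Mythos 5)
Your proposal is correct and follows exactly the route the paper takes: the paper sets up the abstract extension principle (a continuous operator with a continuous formal adjoint extends uniquely to $\cA_\theta'$ via the transpose of $\tilde{P}^*$) in the paragraphs before the statement, and then simply combines it with Propositions~\ref{prop:Adjoints.formal-adjoint} and~\ref{prop:Adjoints.rhostar-formal-adjoint}, which is precisely what you do. Your verification of the continuity hypotheses is, if anything, slightly more explicit than the paper's one-line proof.
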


More specifically, given any symbol $\rho(\xi)\in \stS^m(\R^n; \cA_\theta)$, as $P_{\rho^\star}$ is the formal adjoint of $P_\rho$ by Proposition~\ref{prop:Adjoints.rhostar-formal-adjoint}, we have
\begin{equation} \label{eq:Adjoints.Prho-Prhostar-inner}
\acou{P_\rho u}{v}=\acou{u}{\left[P_{\rho^\star}(v^*)\right]^*} \qquad \text{for all $u\in \cA_\theta'$ and $v \in \cA_\theta$}. 
\end{equation}

\begin{proposition}
 Let $m\in \R$. Then $\rho \rightarrow P_\rho$ gives rise to continuous linear map from $\stS^m(\R^n; \cA_\theta)$ to $\cL(\cA_\theta')$.
\end{proposition}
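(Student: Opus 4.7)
The plan is to leverage the transposition construction used in Proposition~\ref{Adjoints.PsiDOs-extension} together with the already-established continuity of $\rho \mapsto P_\rho$ into $\cL(\cA_\theta)$ (Proposition~\ref{prop:PsiDOs.symbols-to-pdos-continuity}) and of $\rho \mapsto \rho^\star$ on $\stS^m(\R^n;\cA_\theta)$ (Proposition~\ref{prop:Adjoints.star-map-continuity}). For $\rho \in \stS^m(\R^n;\cA_\theta)$, define $T_\rho \in \cL(\cA_\theta)$ by $T_\rho(v)=[P_{\rho^\star}(v^*)]^*$. Since the involution is continuous on $\cA_\theta$, the composition of Proposition~\ref{prop:Adjoints.star-map-continuity} with Proposition~\ref{prop:PsiDOs.symbols-to-pdos-continuity}(2) yields that $\rho \mapsto T_\rho$ is continuous from $\stS^m(\R^n;\cA_\theta)$ to $\cL(\cA_\theta)$. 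By~(\ref{eq:Adjoints.Prho-Prhostar-inner}), the extension of $P_\rho$ to $\cA_\theta'$ is precisely the transpose of $T_\rho$, so the remaining task is to show that $T\mapsto T^t$ is continuous from $\cL(\cA_\theta)$ to $\cL(\cA_\theta')$ when both are equipped with the topology of uniform convergence on bounded sets.

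To carry this out concretely, one fixes a bounded set $B'\subset \cA_\theta'$ and a continuous semi-norm on $\cA_\theta'$. Since $\cA_\theta'$ carries its strong topology (generated by $w\mapsto \sup_{v\in A}|\acou{w}{v}|$ for bounded $A\subset \cA_\theta$), the associated semi-norm on $\cL(\cA_\theta')$ has the form
\begin{equation*}
P \longmapsto \sup_{u\in B',\, v\in A} |\acou{Pu}{v}|.
\end{equation*}
Applying~(\ref{eq:Adjoints.Prho-Prhostar-inner}) gives, for $P=P_\rho$,
\begin{equation*}
\sup_{u\in B',\, v\in A} |\acou{P_\rho u}{v}| = \sup_{u\in B',\, v\in A} |\acou{u}{T_\rho v}|.
\end{equation*}

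The key duality input is that $\cA_\theta$ is a Fr\'echet space, hence barrelled, so the Banach--Steinhaus theorem guarantees that every bounded $B'\subset \cA_\theta'$ is equicontinuous. Therefore there exists a continuous semi-norm $p$ on $\cA_\theta$ such that $|\acou{u}{w}|\leq p(w)$ for all $u\in B'$ and $w\in\cA_\theta$. Substituting $w=T_\rho v$ yields
\begin{equation*}
\sup_{u\in B',\, v\in A} |\acou{u}{T_\rho v}| \leq \sup_{v\in A} p(T_\rho v),
\end{equation*}
and the right-hand side is the value at $T_\rho$ of a continuous semi-norm on $\cL(\cA_\theta)$. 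The continuity of $\rho\mapsto T_\rho$ established above then shows that this bound is controlled by a semi-norm of $\rho$ in $\stS^m(\R^n;\cA_\theta)$, which gives the desired continuity.

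The main obstacle, which is minor, is the identification of the semi-norms generating the strong topology on $\cL(\cA_\theta')$ and the application of equicontinuity to replace the supremum over $B'$ by a single continuous semi-norm on $\cA_\theta$; once this duality step is in place, the proof is a direct transcription of the transposition argument underlying Proposition~\ref{Adjoints.PsiDOs-extension}, and no new estimates on symbols are required.
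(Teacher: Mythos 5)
Your proof is correct and follows essentially the same route as the paper: factor the map as $\rho\mapsto P_{\rho^\star}\mapsto \Upsilon(P_{\rho^\star})\mapsto \Upsilon(P_{\rho^\star})^t$ using~(\ref{eq:Adjoints.Prho-Prhostar-inner}), Proposition~\ref{prop:Adjoints.star-map-continuity} and Proposition~\ref{prop:PsiDOs.symbols-to-pdos-continuity}. The only difference is that you spell out the continuity of the transpose map $T\mapsto T^t$ via equicontinuity of bounded subsets of $\cA_\theta'$ (Banach--Steinhaus for the barrelled space $\cA_\theta$), a step the paper simply asserts.
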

\begin{proof}
 Let $\Upsilon:\cL(\cA_\theta)\rightarrow \cL(\cA_\theta)$ be the anti-linear map defined by 
 \begin{equation*}
 \Upsilon(P)(u)=P\left(u^*\right)^*, \qquad P\in \cL(\cA_\theta), \ u\in \cA_\theta. 
\end{equation*}
This is a well defined continuous linear map, since $u\rightarrow u^*$ is a continuous anti-linear involution of $\cA_\theta$. Moreover, given any $\rho(\xi) \in \stS^m(\R^n; \cA_\theta)$, it follows from~(\ref{eq:Adjoints.Prho-Prhostar-inner}) that, for all $u\in \cA_\theta'$ and $v\in \cA_\theta$, we have 
\begin{equation*}
 \acou{P_\rho u}{v}=\acou{u}{\Upsilon(P_\rho^\star)v}=\acou{\Upsilon(P_\rho^\star)^tu}{v}. 
\end{equation*}
Therefore, the map  $\stS^m(\R^n; \cA_\theta)\ni \rho \rightarrow P_\rho \in \cL(\cA_\theta')$ is the composition of the following maps: 
\begin{enumerate}
 \item[(i)] $\stS^m(\R^n; \cA_\theta)\ni \rho \rightarrow P_{\rho^\star} \in \cL(\cA_\theta)$. 
 
 \item[(ii)]  $\Upsilon:\cL(\cA_\theta)\rightarrow \cL(\cA_\theta)$. 
 
 \item[(iii)] The transpose map $\cL(\cA_\theta)\ni P \rightarrow P^t \in \cL(\cA_\theta')$.
\end{enumerate}
The maps (ii) and (iii) are continuous. Moreover, it follows from Proposition~\ref{prop:PsiDOs.symbols-to-pdos-continuity} and Proposition~\ref{prop:Adjoints.star-map-continuity} that the map (i) is continuous as well. Therefore, we see that the linear map $\stS^m(\R^n; \cA_\theta)\ni \rho \rightarrow P_\rho \in \cL(\cA_\theta')$ is continuous. The result is proved.  
\end{proof}

\section{Sobolev Spaces on Noncommutative Tori} \label{section:Sobolev}
In this section, we recall the construction of the Sobolev spaces on noncommutative tori. We also clarify the relationships between their Hilbert space structures and the respective topologies of $\cA_\theta$ and $\cA_\theta'$. 

The Sobolev spaces that we consider here are noncommutative versions of the $L^2$-Sobolev spaces of $\R^n$. Sobolev spaces with positive integer exponents were introduced by Spera~\cite{Sp:Padova92}. They have  been also considered by various authors~\cite{GK:JAMS04, Lu:CM06, Po:DocMath04, Po:PJM06, Ro:APDE08, Sp:CJM92}. An extensive account on $L^p$-Sobolev spaces for noncommutative tori is given in~\cite{XXY:MAMS18}.

Let $s\in \R$. As above we let $\Delta=\delta_1^2+\cdots+\delta_n^2$ be the flat Laplacian on $\cA_\theta$. We know by Proposition~\ref{prop:PsiDOs.Lambdas-properties} that $\Lambda^s=(1+\Delta)^{\frac{s}2}$ is a (classical) \psido\ of order $s$. In particular, by Proposition~\ref{Adjoints.PsiDOs-extension} it uniquely extends to a continuous linear operator $\Lambda^s:\cA_\theta'\rightarrow \cA_\theta'$. We then define the Sobolev space $\cH^{(s)}_{\theta}$ by
\begin{equation*}
 \cH^{(s)}_{\theta}= \left\{ u \in \cA_\theta'; \ \Lambda^s u\in \cH_\theta\right\}. 
\end{equation*}
In particular, $\cH^{(0)}_{\theta}=\cH_\theta$. In addition, if $u=\sum u_k U^k \in \cA_\theta'$, then we have 
\begin{equation*}
\Lambda^s u= \sum u_k \Lambda^s (U^k)=  \sum u_k (1+|k|^2)^{\frac{s}2} U^k. 
\end{equation*}
Thus, 
\begin{equation*}
 u\in \cH^{(s)}_{\theta} \Longleftrightarrow \sum_{k\in \Z^n}(1+|k|^2)^{s} |u_k|^2<\infty. 
\end{equation*}
In particular, when $s>0$ we see that $\cH^{(s)}_{\theta}$ agrees with the domain of $\Lambda^s$ in~(\ref{eq:PsiDOs.Lambdas-domain}).  

When $s$ is a positive integer we have the following simple description of $\cH^{(s)}_\theta$. 

\begin{proposition}[\cite{Sp:Padova92, XXY:MAMS18}] For every integer $N\geq 1$, we have 
\begin{equation*}
 \cH^{(N)}_{\theta} = \left\{ u \in \cH_\theta; \ \delta^\alpha(u) \in \cH_\theta \ \  \forall \alpha \in \N_0^n, \ |\alpha|=N \right\} . 
\end{equation*}
\end{proposition}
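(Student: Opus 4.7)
The plan is to reduce the statement to a weighted $\ell^2$-condition on Fourier coefficients and conclude by an elementary two-sided comparison of weights.

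First I would use Fourier series on $\cA_\theta'$. Any $u\in\cA_\theta'$ has a Fourier series $u=\sum_{k\in\Z^n} u_k U^k$, and one checks directly (by induction and the Leibniz rule from $\delta_j(U_l)=\delta_{jl}U_l$) that $\delta^\alpha(U^k)=k^\alpha U^k$, so that
\begin{equation*}
\delta^\alpha u = \sum_{k\in\Z^n} k^\alpha u_k U^k \quad\text{in }\cA_\theta'.
\end{equation*}
Since $(U^k)_{k\in \Z^n}$ is an orthonormal basis of $\cH_\theta$, this gives $\delta^\alpha u\in \cH_\theta$ if and only if $\sum_{k} k^{2\alpha}|u_k|^2<\infty$. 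On the other hand, by Proposition~\ref{prop:PsiDOs.Lambdas-properties} we have $\Lambda^N(U^k)=(1+|k|^2)^{N/2}U^k$, so, exactly as observed just before the statement, $u\in\cH^{(N)}_\theta$ if and only if $\sum_{k}(1+|k|^2)^N|u_k|^2<\infty$.

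Next I would compare the two weights. The forward implication ($\cH^{(N)}_\theta\subset$ RHS) is trivial: for $|\alpha|=N$ we have $k^{2\alpha}\leq |k|^{2N}\leq (1+|k|^2)^N$ and $1\leq (1+|k|^2)^N$, so square-summability against $(1+|k|^2)^N$ forces square-summability against both $1$ and $k^{2\alpha}$. For the converse I would use the multinomial identity
\begin{equation*}
|k|^{2N} = (k_1^2+\cdots+k_n^2)^N = \sum_{|\alpha|=N}\binom{N}{\alpha}k^{2\alpha},
\end{equation*}
together with the binomial expansion $(1+|k|^2)^N=\sum_{j=0}^N\binom{N}{j}|k|^{2j}$ and the trivial bound $|k|^{2j}\leq 1+|k|^{2N}$ for $0\leq j\leq N$. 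This yields a constant $C_N>0$ such that
\begin{equation*}
(1+|k|^2)^N \leq C_N\Bigl(1+\sum_{|\alpha|=N}k^{2\alpha}\Bigr)\qquad\text{for all }k\in\Z^n,
\end{equation*}
so $u\in\cH_\theta$ and $\delta^\alpha u\in\cH_\theta$ for all $|\alpha|=N$ imply $\sum_k(1+|k|^2)^N|u_k|^2<\infty$, i.e.\ $u\in\cH^{(N)}_\theta$.

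There is no real obstacle here: the only nontrivial point is making sure the formula $\delta^\alpha u=\sum k^\alpha u_k U^k$ holds not only for $u\in\cA_\theta$ but also as an identity in $\cA_\theta'$ (so that the statement on the right-hand side, which \emph{a priori} lives in $\cA_\theta'$, is well-posed). This follows from the fact that each $\delta_j$ is a continuous endomorphism of $\cA_\theta$ that extends by Proposition~\ref{Adjoints.PsiDOs-extension} (or simply by transposition) to a continuous endomorphism of $\cA_\theta'$, together with the continuity of the Fourier series expansion in $\cA_\theta'$ recorded in Proposition~\ref{prop:NCtori.distributions-Fourier-series}.
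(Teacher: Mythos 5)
Your proof is correct and follows essentially the same route as the paper's: both reduce the statement to weighted $\ell^2$-conditions on the Fourier coefficients via $\delta^\alpha u=\sum k^\alpha u_k U^k$ and then compare the weights $(1+|k|^2)^N$ and $1+\sum_{|\alpha|=N}k^{2\alpha}$ using the multinomial expansion of $|k|^{2N}$. Your extra remark justifying the Fourier-coefficient formula for $\delta^\alpha u$ in $\cA_\theta'$ is a welcome clarification of a step the paper leaves implicit.
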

\begin{proof}
 Let $u= \sum u_k U^k\in \cA_\theta'$. We know that $u\in \cH_\theta^{(N)} \Leftrightarrow \sum (1+|k|^2)^N |u_k|^2<\infty$. Thus, $u\in \cH^{(N)}_\theta$ if and only if we have 
\begin{equation*}
  \sum_{k\in \Z^n} |u_k|^2<\infty \qquad \text{and} \qquad   \sum_{k\in \Z^n} |k|^{2N} |u_k|^2<\infty. 
\end{equation*}
 The first condition exactly means that $u \in \cH_\theta$. The second condition is equivalent to 
 \begin{equation} \label{eq:Sobolev.Sobolev-condition}
  \sum_{k\in \Z^n} k^{2\alpha} |u_k|^2<\infty \qquad \forall \alpha \in \N_0^n, \ |\alpha|=N. 
\end{equation}
Recall that $\delta^\alpha(u)= \sum u_k \delta^\alpha(U^k) = \sum k^\alpha u_k U^k$. Therefore, the condition~(\ref{eq:Sobolev.Sobolev-condition}) exactly means that  
$\delta^\alpha(u) \in \cH_\theta$ for all  $\alpha \in \N_0^n$ such that $|\alpha|=N$. Therefore, we see that $\cH^{(N)}_{\theta}$ exactly consists of all $u\in \cH_\theta$ satisfying this property. The  proof is complete. 
\end{proof}

Given any $s\in \R$ we turn $\cH^{(s)}_{\theta}$ into a pre-Hilbert space by means of the inner product, 
\begin{equation} \label{eq:Sobolev.Hs-innerproduct}
 \acoups{u}{v}= \acoup{\Lambda^s u}{\Lambda^s v}, \qquad u,v \in \cH^{(s)}_{\theta}. 
\end{equation}
The corresponding norm on $\cH^{(s)}_{\theta}$ is given by 
\begin{equation*}
 \| u\|_s= \|\Lambda^s u\|_0, \qquad u \in \cH^{(s)}_{\theta}.
\end{equation*}

\begin{proposition}[see also~\cite{Sp:Padova92, XXY:MAMS18}] \label{prop:Sobolev.Sobolev-embedding}
Let $s\in \R$. 
\begin{enumerate}
 \item The Sobolev space $\cH^{(s)}_{\theta}$ is a Hilbert space with respect to the inner product~(\ref{eq:Sobolev.Hs-innerproduct}). 

 \item For every $t \in \R$, the operator $\Lambda^t$ induces a unitary operator $\Lambda^t:\cH^{(s)}_{\theta} \rightarrow \cH^{(s-t)}_{\theta}$ with inverse $\Lambda^{-t}:\cH^{(s-t)}_{\theta} \rightarrow \cH^{(s)}_{\theta}$. 
 
 \item Given any $s'>s$, the inclusion of $\cH^{(s')}_{\theta}$ into $\cH^{(s)}_{\theta}$ is compact. 
 \end{enumerate}
\end{proposition}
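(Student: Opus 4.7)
The strategy is to reduce everything to the Fourier series description of the Sobolev spaces, using that $\Lambda^{s}$ gives an isometric bijection between $\cH^{(s)}_\theta$ and $\cH_\theta$. More precisely, in view of the group property $\Lambda^{s_1+s_2}=\Lambda^{s_1}\Lambda^{s_2}$ (which holds on $\cA_\theta'$ after the extension given by Proposition~\ref{Adjoints.PsiDOs-extension}), we have $\Lambda^s\Lambda^{-s}=\Lambda^0=\op{id}$ on $\cA_\theta'$. Thus $\Lambda^s:\cA_\theta'\rightarrow\cA_\theta'$ is a bijection with inverse $\Lambda^{-s}$. If $v\in\cH_\theta$, then $\Lambda^s(\Lambda^{-s}v)=v\in\cH_\theta$, so $\Lambda^{-s}v\in\cH^{(s)}_\theta$; conversely, any $u\in\cH^{(s)}_\theta$ satisfies $u=\Lambda^{-s}(\Lambda^s u)$ with $\Lambda^s u\in\cH_\theta$. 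Therefore $\Lambda^s:\cH^{(s)}_\theta\rightarrow\cH_\theta$ is a bijection, and by the very definition of the inner product~(\ref{eq:Sobolev.Hs-innerproduct}) it is an isometry.

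Part (1) follows at once: since $\cH_\theta$ is a Hilbert space and $\Lambda^s:\cH^{(s)}_\theta\rightarrow\cH_\theta$ is an isometric bijection, the pre-inner product~(\ref{eq:Sobolev.Hs-innerproduct}) is non-degenerate and $\cH^{(s)}_\theta$ inherits completeness. For part (2), given $u\in\cH^{(s)}_\theta$, the group property yields $\Lambda^{s-t}(\Lambda^t u)=\Lambda^s u\in\cH_\theta$, so $\Lambda^t u\in\cH^{(s-t)}_\theta$ and
\begin{equation*}
\|\Lambda^t u\|_{s-t}=\|\Lambda^{s-t}\Lambda^t u\|_0=\|\Lambda^s u\|_0=\|u\|_s.
\end{equation*}
The same argument with $-t$ in place of $t$ shows $\Lambda^{-t}:\cH^{(s-t)}_\theta\rightarrow\cH^{(s)}_\theta$ is isometric, and the group property identifies it as the two-sided inverse of $\Lambda^t$.

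For part (3), I would use the Fourier series characterization $u\in\cH^{(s)}_\theta\Leftrightarrow\sum_{k\in\Z^n}(1+|k|^2)^s|u_k|^2<\infty$, which identifies $\cH^{(s)}_\theta$ with the weighted $\ell^2$-space $\ell^2_s(\Z^n)$ with inner product $\sum(1+|k|^2)^s u_k\bar v_k$. For each integer $N\geq 1$ let $\pi_N:\cH^{(s')}_\theta\rightarrow\cH^{(s)}_\theta$ be the truncation $u=\sum u_k U^k\mapsto\sum_{|k|\leq N}u_k U^k$, which is a finite-rank operator. For any $u\in\cH^{(s')}_\theta$,
\begin{equation*}
\|u-\pi_N u\|_s^2=\sum_{|k|>N}(1+|k|^2)^s|u_k|^2\leq(1+N^2)^{s-s'}\sum_{|k|>N}(1+|k|^2)^{s'}|u_k|^2\leq(1+N^2)^{s-s'}\|u\|_{s'}^2.
\end{equation*}
Since $s-s'<0$, this shows $\pi_N$ converges to the inclusion $\cH^{(s')}_\theta\hookrightarrow\cH^{(s)}_\theta$ in operator norm, and the inclusion is a norm limit of finite-rank operators, hence compact.

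No step looks particularly hard; the only point requiring care is to make sure the group property of $(\Lambda^s)_{s\in\C}$ is applied on $\cA_\theta'$ (where $u$ lives by definition) rather than just on $\cH_\theta$, which is available because Proposition~\ref{prop:PsiDOs.Lambdas-properties} together with Proposition~\ref{Adjoints.PsiDOs-extension} produces a well-defined extension of $\Lambda^s$ to $\cA_\theta'$ and the extension is uniquely determined, so the group law on $\cA_\theta$ passes to $\cA_\theta'$ by density and continuity.
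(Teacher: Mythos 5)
Your proof is correct. Parts (1) and (2) follow the same route as the paper: the group property of $(\Lambda^s)_{s\in\C}$ acting on $\cA_\theta'$ shows that each $\Lambda^t$ is an isometric bijection between the relevant Sobolev spaces, and completeness of $\cH^{(s)}_\theta$ is then transported from $\cH_\theta$. For part (3) you argue slightly differently: the paper factors the inclusion as $\Lambda^{-s}\Lambda^{s-s'}\Lambda^{s'}$ and invokes the compactness of $\Lambda^{s-s'}$ on $\cH_\theta$ (already noted earlier from the decay of its eigenvalues $(1+|k|^2)^{(s-s')/2}$), whereas you approximate the inclusion directly in operator norm by the finite-rank truncations $\pi_N$, using the Fourier-series description of the norms. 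The two arguments are essentially the same mechanism in different clothing -- the compactness of the diagonal operator $\Lambda^{s-s'}$ is itself proved by exactly your truncation estimate -- so yours is marginally more self-contained while the paper's reuses an established fact. One small point of care, which you implicitly handle correctly: the identity $\|u-\pi_N u\|_s^2=\sum_{|k|>N}(1+|k|^2)^s|u_k|^2$ should be derived from the definition $\|v\|_s=\|\Lambda^s v\|_0$ together with the action of $\Lambda^s$ on Fourier series and Parseval in $\cH_\theta$, rather than from the corollary stated after this proposition in the paper (whose proof uses part (2)); since that computation is immediate from material preceding the proposition, there is no circularity.
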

\begin{proof}
 Given any $t\in \R$, it follows from the group property $\Lambda^{s-t}\Lambda^t=\Lambda^s$ and the definitions of the Sobolev spaces and their inner products that $\Lambda^t$ induces an isometric isomorphism $\Lambda^t:\cH^{(s)}_{\theta} \rightarrow \cH^{(s-t)}_{\theta}$ with inverse $\Lambda^{-t}:\cH^{(s-t)}_{\theta} \rightarrow \cH^{(s)}_{\theta}$. In particular, we have an isometric isomorphism $\Lambda^s: \cH^{(s)}_{\theta} \rightarrow \cH_\theta$. As $\cH_\theta$ is a Hilbert space it then follows that $\cH^{(s)}_{\theta}$ is a Hilbert space. 
 
 Let $s'>s$. For all $u\in \cH^{(s')}_{\theta}$ we have $u= \Lambda^{-s} \Lambda^{s-s'} \Lambda^{s'}u$, where we regard  $\Lambda^{-s}$ (resp., $\Lambda^{s'}$)  
 as an operator in $\cL(\cH_\theta,\cH^{(s)}_{\theta})$ (resp., $\cL(\cH^{(s')}_{\theta}, \cH_\theta)$) and we regard $\Lambda^{s-s'}$ as a bounded operator on $\cH_\theta$. As $s-s'<0$ this operator is compact, and so we see that the inclusion of $\cH^{(s')}_{\theta}$ into $\cH^{(s)}_{\theta}$ is compact. The proof is complete. 
\end{proof}

\begin{remark}\label{rmk:Sobolev.Spera}
In~\cite{Sp:Padova92} parts (1) and (3) of Proposition~\ref{prop:Sobolev.Sobolev-embedding} are proved when $s\in \N$.   
\end{remark}

\begin{remark}
 Part (3) of Proposition~\ref{prop:Sobolev.Sobolev-embedding} is a version of Rellich theorem for the $L^2$-Sobolev spaces on NC tori. There is a version for $L^p$-spaces on NC tori given by~\cite[Theorem~6.16]{XXY:MAMS18}. The result follows by combining the result for $L^2$-Sobolev spaces (on NC tori) with some interpolation theory argument (\emph{cf}.\ Remark~5.3 and Lemma~6.14 of \cite{XXY:MAMS18}). A similar argument is used for Besov spaces~\cite[Theorem~6.15]{XXY:MAMS18}. In both cases the arguments rely on the Rellich theorem for $L^2$-Sobolev spaces on NC tori of~\cite{Sp:Padova92}. As mentioned in Remark~\ref{rmk:Sobolev.Spera}, in~\cite{Sp:Padova92} this result is stated for Sobolev spaces of positive integer exponents only. However, the arguments in~\cite{XXY:MAMS18} require to have the result for \emph{all} real exponents, including negative real exponents. This result is precisely the contents of part~(3) of Proposition~\ref{prop:Sobolev.Sobolev-embedding}, and so this allows us to complete the proofs of the aforementioned results of~\cite{XXY:MAMS18}. 
  \end{remark}

\begin{lemma} \label{lem:Sobolev.Lambda-innerproduct-relation}
 Let $s\in \R$. Then, for all $u\in \cA_\theta'$ and $v\in \cA_\theta$, we have 
 \begin{equation}\label{eq:Sobolev.Lambda-innerproduct-relation}
 \acou{u}{v}= \acou{\Lambda^{-s}u}{\Lambda^s v}. 
\end{equation}
\end{lemma}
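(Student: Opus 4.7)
The plan is to first verify the identity in the case where $u \in \cA_\theta$ by a direct Fourier-series computation, and then to extend it to $u \in \cA_\theta'$ by density and continuity.

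First I would take $u=\sum u_k U^k$ and $v=\sum v_k U^k$ in $\cA_\theta$. Using the Fourier characterization of $\cA_\theta$ (Proposition~\ref{prop:NCtori.condition-cAtheta}) and the continuous trace $\tau$, one computes
\begin{equation*}
 \acou{u}{v}=\tau(uv)=\sum_{k,l\in\Z^n}u_kv_l\,\tau(U^kU^l)=\sum_{k\in\Z^n}u_kv_{-k}\,c(k,-k),
\end{equation*}
where $c(k,l)$ is the phase such that $U^kU^l=c(k,l)U^{k+l}$, coming from the commutation relations~(\ref{eq:NCtori.unitaries-relations}), and the only nonvanishing term corresponds to $k+l=0$. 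Next, since $\Lambda^t(U^k)=(1+|k|^2)^{t/2}U^k$, the same computation for $\Lambda^{-s}u$ and $\Lambda^sv$ gives
\begin{equation*}
 \acou{\Lambda^{-s}u}{\Lambda^sv}=\sum_{k\in\Z^n}(1+|k|^2)^{-s/2}(1+|{-k}|^2)^{s/2}u_kv_{-k}\,c(k,-k)=\sum_{k\in\Z^n}u_kv_{-k}\,c(k,-k),
\end{equation*}
since $|{-k}|=|k|$. This proves~(\ref{eq:Sobolev.Lambda-innerproduct-relation}) for $u\in\cA_\theta$. Here the manipulations are justified because the Fourier series of $u$ and $v$ converge in $\cA_\theta$, and $\tau$, the product of $\cA_\theta$, and $\Lambda^{\pm s}$ are all continuous.

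To extend to $u\in\cA_\theta'$ with $v\in\cA_\theta$ fixed, I would observe that both sides are continuous linear functionals of $u\in\cA_\theta'$. The left-hand side is continuous by definition of the duality pairing. For the right-hand side, Proposition~\ref{Adjoints.PsiDOs-extension} tells us that $\Lambda^{-s}$ extends to a continuous linear operator on $\cA_\theta'$, and then $w\mapsto\acou{w}{\Lambda^sv}$ is continuous on $\cA_\theta'$ since $\Lambda^sv\in\cA_\theta$. Because $\cA_\theta$ is dense in $\cA_\theta'$ (Corollary after Proposition~\ref{prop:NCtori.distributions-Fourier-series}), the equality established on $\cA_\theta$ extends to all $u\in\cA_\theta'$.

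There is no serious obstacle; the only subtlety is making sure that the extension $\Lambda^{-s}:\cA_\theta'\to\cA_\theta'$ used on the right-hand side really is the one defined via duality, so that the Fourier multiplier formula $\Lambda^{-s}(\sum u_kU^k)=\sum(1+|k|^2)^{-s/2}u_kU^k$ still holds in $\cA_\theta'$. This is transparent from Proposition~\ref{prop:NCtori.distributions-Fourier-series} together with the continuity of $\Lambda^{-s}$ on $\cA_\theta'$, since the equality holds for each partial sum of the Fourier series in $\cA_\theta$ and then passes to the limit in $\cA_\theta'$.
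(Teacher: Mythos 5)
Your proof is correct and follows essentially the same route as the paper's: the paper also reduces, via continuity of both sides as bilinear forms on $\cA_\theta'\times\cA_\theta$, to the generators $u=U^k$, $v=U^l$, and then uses $\acou{U^k}{U^l}=0$ unless $l=-k$ together with $\brak{-k}^{s}=\brak{k}^{s}$. Your two-stage reduction (first Fourier series in $\cA_\theta$, then density in $\cA_\theta'$) and the explicit phase $c(k,-k)$ are just a slightly more detailed packaging of the same argument.
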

\begin{proof}
 As both sides of~(\ref{eq:Sobolev.Lambda-innerproduct-relation}) defines continuous bilinear forms on $\cA_\theta'\times \cA_\theta$, it is enough to prove the equality when $u=U^k$ and $v=U^l$ with $k,l\in \Z^n$. In this case, we have 
 \begin{equation*}
 \acou{\Lambda^{-s}(U^k)}{\Lambda^s (U^l)}= \brak{k}^{-s} \brak{l}^{s} \acou{U^k}{U^l}.  
\end{equation*}
If  $l\neq -k$, then $\acou{U^k}{U^l}=\acoup{U^k}{(U^l)^*}=0$, and hence $\acou{\Lambda^{-s}(U^k)}{\Lambda^s (U^l)}=\acou{U^k}{U^l}=0$. If $l =-k$, then we have $ \acou{\Lambda^{-s}(U^k)}{\Lambda^s (U^{-k})}= \brak{k}^{-s} \brak{-k}^{s} \acou{U^k}{U^l}= \acou{U^k}{U^l}$. Therefore, we see that $ \acou{\Lambda^{-s}(U^k)}{\Lambda^s (U^{l})}= \acou{U^k}{U^l}$ for all $k,l\in \Z^n$. This completes the proof. 
\end{proof}

\begin{proposition}[\cite{XXY:MAMS18}] \label{prop:Sobolev.u-sequence}
Let $s\in \R$. Every $u\in \cH^{(s)}_{\theta}$ is equal to the sum of its Fourier series~(\ref{eq:NCtori.Fourier-series}) in $\cH^{(s)}_{\theta}$. 
In particular, $\cA_\theta$ is dense in $\cH^{(s)}_{\theta}$. 
\end{proposition}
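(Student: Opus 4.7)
My plan is to exploit the unitary isomorphism $\Lambda^{s}:\cH^{(s)}_{\theta}\rightarrow \cH_{\theta}$ provided by Proposition~\ref{prop:Sobolev.Sobolev-embedding} in order to pull back the Fourier series expansion in $\cH_{\theta}$ (which is the standard Hilbert space statement for the orthonormal basis $(U^{k})_{k\in \Z^{n}}$) to one in $\cH^{(s)}_{\theta}$.

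First, let $u\in \cH^{(s)}_{\theta}\subset \cA_{\theta}'$ with Fourier series $\sum_{k\in \Z^{n}}u_{k}U^{k}$, where $u_{k}=\acou{u}{(U^{k})^{*}}$. By Proposition~\ref{prop:NCtori.distributions-Fourier-series} this series converges to $u$ in $\cA_{\theta}'$. By Proposition~\ref{Adjoints.PsiDOs-extension} the operator $\Lambda^{s}$ is continuous on $\cA_{\theta}'$, so applying it termwise and using $\Lambda^{s}(U^{k})=\brak{k}^{s}U^{k}$ shows that
\begin{equation*}
\Lambda^{s} u = \sum_{k\in \Z^{n}}\brak{k}^{s}u_{k}U^{k} \quad \text{in}\ \cA_{\theta}'.
\end{equation*}

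Second, since $u\in \cH^{(s)}_{\theta}$ means precisely $\Lambda^{s}u\in \cH_{\theta}$, the $k$-th Fourier coefficient of $\Lambda^{s}u$, computed either as $\acoup{\Lambda^{s}u}{U^{k}}$ in $\cH_{\theta}$ or as $\acou{\Lambda^{s}u}{(U^{k})^{*}}$ in $\cA_{\theta}'$, must agree; continuity of the two pairings against the element $U^{k}\in \cA_{\theta}$ together with the expansion above identifies this coefficient as $\brak{k}^{s}u_{k}$. Therefore the standard $\cH_{\theta}$-Fourier expansion of $\Lambda^{s}u$ gives
\begin{equation*}
\Lambda^{s}u \;=\; \sum_{k\in \Z^{n}} \brak{k}^{s}u_{k}\, U^{k} \qquad \text{in $\cH_{\theta}$.}
\end{equation*}

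Third, I would apply $\Lambda^{-s}$, which by Proposition~\ref{prop:Sobolev.Sobolev-embedding} is an isometric isomorphism $\cH_{\theta}\rightarrow \cH^{(s)}_{\theta}$ and in particular continuous. Since $\Lambda^{-s}(\brak{k}^{s}U^{k})=U^{k}$, termwise application yields
\begin{equation*}
u \;=\; \Lambda^{-s}(\Lambda^{s}u) \;=\; \sum_{k\in \Z^{n}} u_{k}\,U^{k} \qquad \text{in $\cH^{(s)}_{\theta}$,}
\end{equation*}
which is the desired convergence in the Sobolev norm. The density of $\cA_{\theta}$ in $\cH^{(s)}_{\theta}$ then follows for free: the partial sums $\sum_{|k|\leq N}u_{k}U^{k}$ lie in $\cA_{\theta}^{0}\subset \cA_{\theta}$ and converge to $u$ in $\cH^{(s)}_{\theta}$.

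The only genuinely delicate point is the identification of the Fourier coefficients of $\Lambda^{s}u$ in the two senses (as an element of $\cA_{\theta}'$ and as an element of $\cH_{\theta}$); if needed this can be carried out with Lemma~\ref{lem:Sobolev.Lambda-innerproduct-relation}, writing $u_{k}=\acou{u}{(U^{k})^{*}}=\acou{\Lambda^{-s}(\Lambda^{s}u)}{\Lambda^{s}\Lambda^{-s}(U^{k})^{*}}$ and then recognising the right-hand side as $\brak{k}^{-s}\acoup{\Lambda^{s}u}{U^{k}}$. Everything else is routine.
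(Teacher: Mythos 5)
Your proof is correct and follows essentially the same route as the paper: both arguments rest on the unitarity of $\Lambda^{\pm s}$ between $\cH_\theta^{(s)}$ and $\cH_\theta$, the orthonormal basis $(U^k)$ of $\cH_\theta$, and the identification of the resulting coefficients with the Fourier coefficients $u_k$ via Lemma~\ref{lem:Sobolev.Lambda-innerproduct-relation}. The only cosmetic difference is that the paper expands $u$ directly in the orthonormal basis $(\Lambda^{-s}(U^k))$ of $\cH_\theta^{(s)}$, whereas you push $u$ forward to $\cH_\theta$ (identifying coefficients there through the $\cA_\theta'$-expansion) and pull back, which amounts to the same computation.
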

\begin{proof}
Let $u\in \cH^{(s)}_{\theta}$. Its Fourier series is $\sum u_k U^k$, where $u_k=\acou{u}{(U^k)^*}$, $k \in \Z^n$.  In addition, as $(U^k)_{k\in \Z^n}$ is an orthonormal basis of $\cH_\theta$ and $\Lambda^{-s}$ is a unitary operator from $\cH_\theta$ onto $\cH^{(s)}_{\theta}$, we see that  $(\Lambda^{-s}(U^k))_{k\in \Z^n}$ is an orthonormal basis of $\cH^{(s)}_{\theta}$. Thus, 
 \begin{equation*}
    u= \sum_{k\in \Z^n} \acoups{u}{\Lambda^{-s}(U^k)} \Lambda^{-s}\left(U^k\right) = \sum_{k\in \Z^n}\brak{k}^{-s} \acoup{\Lambda^su}{U^k} U^k, 
\end{equation*}
where the series converge in $\cH^{(s)}_{\theta}$. To complete the proof it only remains to identify each coefficient
 $\brak{k}^{-s} \acoup{\Lambda^su}{U^k}$, $k\in \Z^n$, with the corresponding Fourier coefficient $u_k$. To see this note that $\acoup{\Lambda^su}{U^k}=\acou{\Lambda^su}{(U^k)^*}$. Therefore, by using Lemma~\ref{lem:Sobolev.Lambda-innerproduct-relation} we get
\begin{equation*}
\brak{k}^{-s} \acoup{\Lambda^su}{U^k}=\brak{k}^{-s}\acou{u}{\Lambda^s\left( (U^k)^*\right)} = \acou{u}{(U^k)^*}=u_k. 
\end{equation*}
The proof is complete. 
\end{proof}

As a consequence of Proposition~\ref{prop:Sobolev.u-sequence} we obtain the following formulas for the inner product and norm of $\cH^{(s)}_{\theta}$. 

\begin{corollary}
 Let $s\in \R$. Then, for all $u= \sum u_k U^k$ and $v=  \sum v_k U^k$ in $\cH^{(s)}_{\theta}$, we have
 \begin{equation} \label{eq:Sobolev.Hs-norm}
 \acoups{u}{v}= \sum_{k\in \Z^n} (1+|k|^2)^{s} u_k \overline{v}_k \qquad \text{and} \qquad \| u\|_s^2 =  \sum_{k\in \Z^n} (1+|k|^2)^{s}  |u_k|^2. 
\end{equation}
\end{corollary}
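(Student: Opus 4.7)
The plan is to derive both formulas directly from the preceding Proposition~\ref{prop:Sobolev.u-sequence} together with the unitarity of $\Lambda^s:\cH^{(s)}_{\theta}\rightarrow \cH_\theta$ established in Proposition~\ref{prop:Sobolev.Sobolev-embedding}(2), and the action of $\Lambda^s$ on the basis vectors $U^k$.

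First, given $u=\sum u_k U^k$ and $v=\sum v_k U^k$ in $\cH^{(s)}_\theta$, Proposition~\ref{prop:Sobolev.u-sequence} asserts that these Fourier series converge to $u$ and $v$ in $\cH^{(s)}_\theta$. Applying the bounded operator $\Lambda^s: \cH^{(s)}_\theta \rightarrow \cH_\theta$ and using the relation $\Lambda^s(U^k)=(1+|k|^2)^{s/2}U^k$ gives
\begin{equation*}
\Lambda^s u=\sum_{k\in\Z^n} u_k (1+|k|^2)^{s/2} U^k, \qquad \Lambda^s v=\sum_{k\in\Z^n} v_k (1+|k|^2)^{s/2} U^k,
\end{equation*}
where both series converge in $\cH_\theta$.

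Next, since the family $(U^k)_{k\in\Z^n}$ is orthonormal in $\cH_\theta$, the continuity of the inner product $\acoup{\cdot}{\cdot}$ together with the definition~(\ref{eq:Sobolev.Hs-innerproduct}) of $\acoups{\cdot}{\cdot}$ yields
\begin{equation*}
\acoups{u}{v}=\acoup{\Lambda^s u}{\Lambda^s v}=\sum_{k,l\in\Z^n} (1+|k|^2)^{s/2}(1+|l|^2)^{s/2} u_k \overline{v_l}\,\acoup{U^k}{U^l} = \sum_{k\in\Z^n}(1+|k|^2)^s u_k\overline{v_k}.
\end{equation*}
This gives the first formula. Setting $v=u$ yields the second formula for $\|u\|_s^2$. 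There is no real obstacle here; the argument is an immediate assembly of Proposition~\ref{prop:Sobolev.u-sequence} and Proposition~\ref{prop:Sobolev.Sobolev-embedding}(2).
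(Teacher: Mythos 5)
Your proof is correct and follows essentially the same route as the paper: both arguments rest on the convergence of the Fourier series in $\cH^{(s)}_{\theta}$ (Proposition~\ref{prop:Sobolev.u-sequence}), the definition $\acoups{u}{v}=\acoup{\Lambda^s u}{\Lambda^s v}$, and the identity $\acoup{\Lambda^s(U^k)}{\Lambda^s(U^l)}=(1+|k|^2)^s\delta_{k,l}$. The only (immaterial) difference is that you push $\Lambda^s$ through the series before expanding, whereas the paper expands the sesquilinear form first.
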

\begin{proof}
Since $u= \sum u_k U^k$ and $v=  \sum v_k U^k$ in $\cH^{(s)}_{\theta}$, we have
\begin{equation*}
 \acoups{u}{v} = \sum_{k,l\in \Z^n} u_k \overline{v}_k \acoups{U^k}{U^l}=  \sum_{k,l\in \Z^n} u_k \overline{v}_k \acoup{\Lambda^s(U^k)}{\Lambda^s(U^l)}. 
\end{equation*}
 As $ \acoup{\Lambda^s(U^k)}{\Lambda^s(U^l)}=(1+|k|^2)^{\frac{s}2}(1+|l|^2)^{\frac{s}2}\acoup{U^k}{U^l} =(1+|k|^2)^s \delta_{k,l}$ we get the result. 
\end{proof}

As with the usual Sobolev spaces we have a natural duality between Sobolev spaces. 

\begin{proposition} \label{prop:Sobolev.Hs-duality}
 Let $s\in \R$. 
 \begin{enumerate}
  \item The duality between $\cA_\theta'$ and $\cA_\theta$ gives rise to a unique continuous bilinear pairing, 
 \begin{equation*}
 \acou{\cdot}{\cdot}: \cH_\theta^{(-s)}\times \cH_\theta^{(s)} \longrightarrow \C. 
\end{equation*}

  \item The corresponding map $\cH_\theta^{(-s)} \rightarrow \left(\cH_\theta^{(s)}\right)'$ is an isometric isomorphism. In particular, 
\begin{equation}
 \|v\|_{-s} = \sup_{\|u\|_s=1} \left| \acou{v}{u}\right| \qquad \text{for all $v\in \cH_\theta^{(-s)}$}. 
 \label{eq:Sobolev.isometric-duality}
\end{equation}
\end{enumerate} 
\end{proposition}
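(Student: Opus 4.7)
The plan is to transport the problem to the base Hilbert space $\cH_\theta$ via the unitary isomorphisms $\Lambda^{\pm s}\colon \cH_\theta^{(\pm s)}\to\cH_\theta$ provided by Proposition~\ref{prop:Sobolev.Sobolev-embedding}, using Lemma~\ref{lem:Sobolev.Lambda-innerproduct-relation} to switch between the duality bracket and the inner product. As a preliminary observation, since $\tau$ is a trace the involution $u\mapsto u^*$ extends to an isometric anti-linear involution of $\cH_\theta$; combining this with~(\ref{eq:NCtori.distrb-innerproduct-eq}) and Cauchy--Schwarz yields $|\acou{a}{b}|\leq\|a\|_0\|b\|_0$ for all $a,b\in\cH_\theta$, while the trace identity $\acou{a}{b}=\acou{b}{a}$ on $\cA_\theta\times\cA_\theta$ extends by density to a symmetry of this pairing on $\cH_\theta\times\cH_\theta$.

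For part~(1), I would take $v\in\cH_\theta^{(-s)}\subset\cA_\theta'$ and $u\in\cA_\theta$ and use Lemma~\ref{lem:Sobolev.Lambda-innerproduct-relation} to rewrite $\acou{v}{u}=\acou{\Lambda^{-s}v}{\Lambda^s u}$, with both factors now in $\cH_\theta$. The preliminary bound then gives $|\acou{v}{u}|\leq\|v\|_{-s}\|u\|_s$. Density of $\cA_\theta$ in $\cH_\theta^{(s)}$ (Proposition~\ref{prop:Sobolev.u-sequence}) uniquely extends this to a continuous bilinear pairing on $\cH_\theta^{(-s)}\times\cH_\theta^{(s)}$, and the identity $\acou{v}{u}=\acou{\Lambda^{-s}v}{\Lambda^s u}$ persists on the extension.

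For part~(2), denote by $\Upsilon\colon\cH_\theta^{(-s)}\to(\cH_\theta^{(s)})'$ the map induced by the pairing; part~(1) already gives $\|\Upsilon(v)\|\leq\|v\|_{-s}$. To obtain the reverse bound (and hence isometry and injectivity at once), given $v\neq 0$ I would set $w:=\Lambda^{-s}v\in\cH_\theta$ and $u:=\|w\|_0^{-1}\Lambda^{-s}(w^*)\in\cH_\theta^{(s)}$; then $\|u\|_s=\|w^*\|_0/\|w\|_0=1$ and
\begin{equation*}
\acou{v}{u}=\|w\|_0^{-1}\acou{w}{w^*}=\|w\|_0^{-1}\acoup{w}{w}=\|w\|_0=\|v\|_{-s}.
\end{equation*}
For surjectivity, the Riesz representation theorem on the Hilbert space $\cH_\theta^{(s)}$ supplies, for each $\varphi\in(\cH_\theta^{(s)})'$, a unique $w\in\cH_\theta^{(s)}$ with $\varphi(u)=\acoups{u}{w}=\acoup{\Lambda^s u}{\Lambda^s w}$; I would then take $v:=\Lambda^s[(\Lambda^s w)^*]$, which lies in $\cH_\theta^{(-s)}$ since $\Lambda^{-s}v=(\Lambda^s w)^*\in\cH_\theta$, and verify
\begin{equation*}
\acou{v}{u}=\acou{(\Lambda^s w)^*}{\Lambda^s u}=\acou{\Lambda^s u}{(\Lambda^s w)^*}=\acoup{\Lambda^s u}{\Lambda^s w}=\varphi(u),
\end{equation*}
using the symmetry of the bracket on $\cH_\theta$ at the second step.

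There is no deep technical obstacle here; the entire argument reduces to careful bookkeeping of how the involution interacts with the unitary operators $\Lambda^{\pm s}$. The substantive ingredients are the isometry of the involution on $\cH_\theta$ (from the trace property), the conversion formula $\acou{a}{b}=\acoup{a}{b^*}$, and the key identity of Lemma~\ref{lem:Sobolev.Lambda-innerproduct-relation}.
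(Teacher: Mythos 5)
Your proof is correct and follows essentially the same route as the paper: reduce to $s=0$ via Lemma~\ref{lem:Sobolev.Lambda-innerproduct-relation} and the unitaries $\Lambda^{\pm s}$, and identify the duality map as the Riesz isomorphism twisted by the (isometric, anti-linear) involution. The only difference is presentational — the paper packages the map as the composition $(\Lambda^{s})^t\circ\tilde{\Phi}\circ\Lambda^{-s}$ of unitaries, whereas you verify isometry and surjectivity by exhibiting explicit maximizers and preimages, which amounts to the same computation.
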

\begin{proof}
 See Appendix~\ref{Appendix:Sobolev}.
\end{proof}

As an immediate consequence of Proposition~\ref{prop:Sobolev.Hs-duality} we obtain the following characterization of the Sobolev spaces $\cH_\theta^{(s)}$. 

\begin{corollary} \label{cor:Sobolev.u-boundedness}
 Let $u \in \cA_\theta'$. Then $u \in \cH^{(s)}_{\theta}$, $s\in \R$, if and only if there is $C>0$ such that 
 \begin{equation*}
 |\acou{u}{v}|\leq C \| v\|_{-s} \qquad \forall v \in \cA_\theta. 
\end{equation*}
\end{corollary}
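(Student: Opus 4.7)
The plan is to deduce this characterization as a direct corollary of Proposition~\ref{prop:Sobolev.Hs-duality}, by applying it with $s$ replaced by $-s$. This gives a continuous bilinear pairing $\acou{\cdot}{\cdot}:\cH_\theta^{(s)}\times \cH_\theta^{(-s)}\rightarrow \C$ extending the $\cA_\theta'$-$\cA_\theta$ duality, and an isometric isomorphism $\cH_\theta^{(s)}\simeq (\cH_\theta^{(-s)})'$. Combined with the density of $\cA_\theta$ in each Sobolev space (Proposition~\ref{prop:Sobolev.u-sequence}), this will give both implications.

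For the forward direction, suppose $u\in \cH_\theta^{(s)}$. Applying the isometric identity~(\ref{eq:Sobolev.isometric-duality}) to the pairing $\cH_\theta^{(s)}\times \cH_\theta^{(-s)}\to\C$ (with the roles of $s$ and $-s$ swapped), we get $|\acou{u}{v}|\leq \|u\|_s\|v\|_{-s}$ for every $v\in \cH_\theta^{(-s)}$, so in particular for every $v\in\cA_\theta$. Thus $C=\|u\|_s$ works.

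For the converse, suppose $u\in \cA_\theta'$ satisfies $|\acou{u}{v}|\leq C\|v\|_{-s}$ for all $v\in \cA_\theta$. Then $v\mapsto \acou{u}{v}$ is a $\|\cdot\|_{-s}$-bounded linear form on $\cA_\theta$, and since $\cA_\theta$ is dense in $\cH_\theta^{(-s)}$ by Proposition~\ref{prop:Sobolev.u-sequence}, it extends uniquely by continuity to a bounded linear form $\Phi\in (\cH_\theta^{(-s)})'$. By Proposition~\ref{prop:Sobolev.Hs-duality} (applied with $-s$ in place of $s$), there exists a unique $\tilde u\in \cH_\theta^{(s)}$ such that $\Phi(v)=\acou{\tilde u}{v}$ for all $v\in \cH_\theta^{(-s)}$. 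The extended pairing restricts to the original $\cA_\theta'$-$\cA_\theta$ duality on $\cH_\theta^{(s)}\times \cA_\theta\subset \cA_\theta'\times\cA_\theta$ (this is exactly the uniqueness of the extension stated in part~(1) of Proposition~\ref{prop:Sobolev.Hs-duality}), so $\acou{\tilde u}{v}=\acou{u}{v}$ for every $v\in \cA_\theta$. As $\cA_\theta'$ separates points of $\cA_\theta$, we conclude $u=\tilde u\in \cH_\theta^{(s)}$. No real obstacle arises; the only subtlety is the compatibility of the extended bilinear pairing with the $\cA_\theta'$-$\cA_\theta$ duality, which is built into Proposition~\ref{prop:Sobolev.Hs-duality}.
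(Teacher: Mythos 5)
Your proof is correct and follows exactly the route the paper intends: the corollary is stated there as an immediate consequence of Proposition~\ref{prop:Sobolev.Hs-duality}, and your argument (using the isometric isomorphism $\cH_\theta^{(s)}\simeq(\cH_\theta^{(-s)})'$ together with the density of $\cA_\theta$ in $\cH_\theta^{(-s)}$) is the natural fleshing-out of that deduction. Only a cosmetic slip in the last step: what you need is that $\cA_\theta$ separates points of $\cA_\theta'$ --- i.e., an element of $\cA_\theta'$ is determined by its values on $\cA_\theta$, which holds by definition --- not the other way around.
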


As the following result shows the Sobolev spaces $\cH^{(s)}_{\theta}$ provide us with a natural ``topological'' scale of Hilbert spaces interpolating between $\cA_\theta$ and $\cA_\theta'$.

\begin{proposition} \label{prop:Sobolev.Hs-inclusion-cAtheta}
The following holds. 
\begin{enumerate}
 \item We have 
           \begin{equation*}
                   \cA_\theta= \bigcap_{s\in \R} \cH^{(s)}_{\theta} \qquad \text{and} \qquad \bigcup_{s\in \R} \cH^{(s)}_{\theta}=  \cA_\theta'. 
           \end{equation*}
 
 \item The topology of $\cA_\theta$ is generated by the Sobolev norms $\|\cdot \|_s$, $s\in \R$. In particular, the inclusion of $\cA_\theta$ into $\cH^{(s)}_{\theta}$ is continuous for every $s\in \R$. 
 
 \item  The strong topology of $\cA_\theta'$ is the strongest locally convex topology on $\cA_\theta'$ with respect to which the inclusion of $\cH^{(s)}_{\theta}$ into $\cA_\theta'$ is continuous for every $s\in \R$. 
\end{enumerate}
\end{proposition}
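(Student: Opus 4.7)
For parts (1) and (2), the plan is to reduce everything to Fourier series. Using the explicit norm formula $\|u\|_s^2=\sum_{k\in\Z^n}(1+|k|^2)^s|u_k|^2$ from~(\ref{eq:Sobolev.Hs-norm}) together with the Fourier characterizations of $\cA_\theta$ and $\cA_\theta'$ (Propositions~\ref{prop:NCtori.condition-cAtheta} and~\ref{prop:NCtori.distributions-Fourier-series}), part~(1) reduces to the observations that $(u_k)\in \cS(\Z^n)$ iff $\sum(1+|k|^2)^s|u_k|^2<\infty$ for every $s$, while any $(u_k)\in \cS'(\Z^n)$ has polynomial growth and hence satisfies this summability for $s$ sufficiently negative. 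For~(2), I would bound $\|u\|_s$ on $\cA_\theta$ by pulling out a supremum $\sup_k(1+|k|^2)^N|u_k|^2$ and summing $\sum(1+|k|^2)^{s-N}$ with $N$ large; conversely, the bound $\|\delta^\beta u\|\leq \sum_k |k^\beta u_k|$ (valid since the $U^k$ are unitary) is dominated by $C_{s,\beta}\|u\|_s$ via Cauchy--Schwarz whenever $s>|\beta|+n/2$.

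For part~(3), I would first check that the strong topology $\tau_\beta$ makes each inclusion $\cH^{(s)}_\theta\hookrightarrow\cA_\theta'$ continuous: given a bounded $B\subset\cA_\theta$, part~(2) shows $M_s:=\sup_{u\in B}\|u\|_{-s}<\infty$, and the duality inequality $|\acou{v}{u}|\leq\|v\|_s\|u\|_{-s}$ from Proposition~\ref{prop:Sobolev.Hs-duality} yields $\sup_{u\in B}|\acou{v}{u}|\leq M_s\|v\|_s$ on $\cH^{(s)}_\theta$. To show $\tau_\beta$ is maximal, I would let $\tau_{\mathrm{ind}}$ denote the inductive limit topology on $\cA_\theta'=\bigcup_s\cH^{(s)}_\theta$ and prove $\tau_{\mathrm{ind}}\leq \tau_\beta$. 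The main technical obstacle is to first identify $(\cA_\theta',\tau_{\mathrm{ind}})'=\cA_\theta$: by the universal property of $\tau_{\mathrm{ind}}$, a linear form on $\cA_\theta'$ is $\tau_{\mathrm{ind}}$-continuous iff its restriction to each $\cH^{(s)}_\theta$ is continuous, and combining this with the identification $(\cH^{(s)}_\theta)'=\cH^{(-s)}_\theta$ from Proposition~\ref{prop:Sobolev.Hs-duality} together with part~(1) shows that every such form is represented by a unique element $u\in\bigcap_s\cH^{(-s)}_\theta=\cA_\theta$; the reverse inclusion is immediate from the same duality.

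Once this duality is in place, the rest is a standard bipolar argument. For a $\tau_{\mathrm{ind}}$-continuous seminorm $q$, the set $V=\{v:q(v)\leq 1\}$ is absolutely convex and $\tau_{\mathrm{ind}}$-closed, hence $\sigma(\cA_\theta',\cA_\theta)$-closed by Hahn--Banach. The continuity of $q|_{\cH^{(s)}_\theta}$ yields $\{v\in\cH^{(s)}_\theta:\|v\|_s\leq r_s\}\subset V$ for some $r_s>0$, and Proposition~\ref{prop:Sobolev.Hs-duality} then forces $V^\circ\subset\{u\in\cA_\theta:\|u\|_{-s}\leq 1/r_s\}$ for every $s$; by part~(2) this means $V^\circ$ is bounded in $\cA_\theta$. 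The bipolar theorem finally yields $V=(V^\circ)^\circ$, exhibiting $V$ as a $\tau_\beta$-neighborhood of $0$, so that $q$ is $\tau_\beta$-continuous and $\tau_{\mathrm{ind}}\leq \tau_\beta$ as required.
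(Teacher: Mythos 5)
Your proposal is correct and follows essentially the same route as the paper's proof: parts (1)--(2) via Cauchy--Schwarz manipulations of the Fourier-coefficient formula for $\|\cdot\|_s$, and part (3) by first identifying the dual of the inductive limit with $\cA_\theta$ through Proposition~\ref{prop:Sobolev.Hs-duality} and then running a bipolar argument to show every inductive-limit neighborhood of $0$ is a strong neighborhood. The only cosmetic differences are that you obtain $\bigcup_s\cH^{(s)}_\theta=\cA_\theta'$ directly from the polynomial growth of coefficients in $\cS'(\Z^n)$ rather than via Corollary~\ref{cor:Sobolev.u-boundedness}, and you phrase the bipolar step in terms of the closed unit ball of a continuous seminorm rather than a closed convex balanced neighborhood of the origin.
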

\begin{proof}
 See Appendix~\ref{Appendix:Sobolev}. 
\end{proof}

\begin{remark}
 The fact that $ \cA_\theta= \bigcap_{s\in \R} \cH^{(s)}_{\theta}$ is proved in~\cite{Sp:Padova92}. The 2nd part is also mentioned in~\cite{Po:PJM06}. 
\end{remark}

\begin{remark} \label{rem:Sobolev.duality}
The 3rd part implies that the strong topology of $\cA_\theta'$ is the inductive limit of the $\cH^{(s)}_\theta$-topologies. In particular, a basis of neighborhoods of the  origin in $\cA_\theta'$ is given by all convex balanced sets $\cU\subset \cA_\theta'$ such that $\cU \cap \cH^{(s)}_\theta$ is a neighborhood of the origin in $\cH^{(s)}_\theta$ for every $s\in \R$. 
\end{remark}

We mention the following consequences of Proposition~\ref{prop:Sobolev.Hs-inclusion-cAtheta}. 

\begin{corollary}\label{cor:Sobolev.op-cAtheta}
 A linear operator $T:\cA_\theta \rightarrow \cA_\theta$ is continuous if and only if, for every $s\in \R$, there are $t\in \R$ and $C_{st}>0$ such that
 \begin{equation*}
    \|Tu \|_s \leq C_{st}\| u\|_{t} \qquad \forall u \in \cA_\theta. 
\end{equation*}
\end{corollary}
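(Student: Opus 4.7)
The plan is to derive both implications directly from part~(2) of Proposition~\ref{prop:Sobolev.Hs-inclusion-cAtheta}, which states that the topology of $\cA_\theta$ is generated by the family of Sobolev norms $\{\|\cdot\|_s\}_{s\in \R}$. The essential auxiliary fact I will use is monotonicity of the Sobolev norms: the explicit formula~(\ref{eq:Sobolev.Hs-norm}) shows that $\|u\|_s\leq \|u\|_{s'}$ whenever $s\leq s'$, since $(1+|k|^2)^s\leq (1+|k|^2)^{s'}$ for every $k\in\Z^n$.

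For the ``if'' direction, I would assume the Sobolev estimates hold and check continuity of $T:\cA_\theta\rightarrow \cA_\theta$. Because $\cA_\theta$ carries the locally convex topology generated by the semi-norms $\|\cdot\|_s$, continuity of $T$ is equivalent to continuity of the composite semi-norm $u\mapsto \|Tu\|_s$ for every $s\in\R$. The hypothesis gives $\|Tu\|_s\leq C_{st}\|u\|_t$ directly, and $\|\cdot\|_t$ is continuous on $\cA_\theta$ by Proposition~\ref{prop:Sobolev.Hs-inclusion-cAtheta}(2). Hence $T$ is continuous.

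For the ``only if'' direction, assume $T$ is continuous and fix $s\in\R$. Then $u\mapsto \|Tu\|_s$ is a continuous semi-norm on $\cA_\theta$, so by the standard description of continuous semi-norms on a locally convex space whose topology is generated by a family of semi-norms (here $\{\|\cdot\|_t\}_{t\in\R}$), there exist finitely many reals $t_1,\ldots,t_N$ and a constant $C>0$ such that
\begin{equation*}
\|Tu\|_s\leq C\max_{1\leq i\leq N}\|u\|_{t_i}\qquad\forall u\in\cA_\theta.
\end{equation*}
Setting $t=\max_i t_i$, the monotonicity of the Sobolev norms collapses the maximum into the single norm $\|u\|_t$, giving the desired estimate with $C_{st}=C$.

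I do not anticipate a serious obstacle here: the entire content of the corollary is repackaging Proposition~\ref{prop:Sobolev.Hs-inclusion-cAtheta}(2). The only minor point that deserves attention is replacing a finite maximum of Sobolev norms by a single Sobolev norm, which is precisely where monotonicity is used.
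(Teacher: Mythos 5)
Your proof is correct and follows exactly the route the paper intends: the corollary is stated without proof as an immediate consequence of Proposition~\ref{prop:Sobolev.Hs-inclusion-cAtheta}(2), and your argument (continuity of semi-norms in a topology generated by the Sobolev norms, plus the monotonicity $\|u\|_{t_i}\leq\|u\|_{t}$ for $t=\max_i t_i$ to reduce a finite maximum to a single norm) is the standard way to fill in that implication.
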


\begin{corollary} \label{cor:Sobolev.lcv-map-continuity}
 Let $E$ be a locally convex space. Then a linear map $T:\cA_\theta'\rightarrow E$ is continuous if and only if it restricts to a continuous linear map $T:\cH_\theta^{(s)}\rightarrow E$ for every $s\in \R$. 
\end{corollary}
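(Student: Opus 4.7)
The plan is to deduce this directly from part~(3) of Proposition~\ref{prop:Sobolev.Hs-inclusion-cAtheta} together with the concrete description of the neighborhoods of the origin in $\cA_\theta'$ recorded in Remark~\ref{rem:Sobolev.duality}. No calculation is needed; it is a purely formal consequence of the inductive-limit characterization of the strong topology of $\cA_\theta'$.

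The forward implication is immediate. Assume $T:\cA_\theta'\to E$ is continuous. For every $s\in\R$ the inclusion $\iota_s:\cH_\theta^{(s)}\hookrightarrow \cA_\theta'$ is continuous by Proposition~\ref{prop:Sobolev.Hs-inclusion-cAtheta}(3), so the restriction $T|_{\cH_\theta^{(s)}}=T\circ\iota_s$ is continuous.

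For the converse, suppose $T|_{\cH_\theta^{(s)}}:\cH_\theta^{(s)}\to E$ is continuous for every $s\in\R$. To show $T:\cA_\theta'\to E$ is continuous, it suffices to check that the preimage $T^{-1}(V)$ of any convex balanced neighborhood $V$ of the origin in $E$ is a neighborhood of the origin in $\cA_\theta'$. Since $T$ is linear and $V$ is convex balanced, so is $T^{-1}(V)$. By Remark~\ref{rem:Sobolev.duality}, a convex balanced subset $\cU\subset \cA_\theta'$ is a neighborhood of the origin if and only if $\cU\cap \cH_\theta^{(s)}$ is a neighborhood of the origin in $\cH_\theta^{(s)}$ for every $s\in\R$. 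However,
\begin{equation*}
T^{-1}(V)\cap \cH_\theta^{(s)}=\bigl(T|_{\cH_\theta^{(s)}}\bigr)^{-1}(V),
\end{equation*}
which is a neighborhood of the origin in $\cH_\theta^{(s)}$ by the assumed continuity of $T|_{\cH_\theta^{(s)}}$. Hence $T^{-1}(V)$ is a neighborhood of the origin in $\cA_\theta'$, proving continuity of $T$.

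There is no genuine obstacle here; the only care required is to invoke Remark~\ref{rem:Sobolev.duality} in order to turn the assertion ``strong topology $=$ inductive limit of the Hilbert-space topologies'' into a workable criterion on neighborhoods of the origin, and to observe that convex balanced preimages interact correctly with intersections with the subspaces $\cH_\theta^{(s)}$.
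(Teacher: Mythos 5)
Your argument is correct and is essentially identical to the paper's proof: both directions rest on Proposition~\ref{prop:Sobolev.Hs-inclusion-cAtheta}(3), and the converse uses exactly the neighborhood criterion of Remark~\ref{rem:Sobolev.duality} applied to the convex balanced set $T^{-1}(V)$, via the identity $T^{-1}(V)\cap \cH_\theta^{(s)}=\bigl(T|_{\cH_\theta^{(s)}}\bigr)^{-1}(V)$. Nothing to add.
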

\begin{proof}
 Suppose that $T$ is continuous and let $s\in \R$. As the inclusion of $\cH^{(s)}_\theta$ into $\cA_\theta'$ is continuous, we see that $T$ restricts to a continuous linear map $T:\cH_\theta^{(s)}\rightarrow E$.
 
Conversely, suppose that  $T$ restricts to a continuous linear map $T:\cH_\theta^{(s)}\rightarrow E$ for every $s\in \R$.  Let $\cV$ be a convex balanced neighborhood of the origin in $E$, and set $\cU=T^{-1}(\cV)$. Then $\cU$ is a convex balanced set. Moreover, as $T$ restricts to a continuous linear map $T:\cH_\theta^{(s)}\rightarrow E$ for every $s\in \R$, we see that $\cU \cap \cH^{(s)}_\theta$ is a neighborhood of the origin in $\cH^{(s)}_\theta$ for every $s\in \R$. Therefore, it follows from Remark~\ref{rem:Sobolev.duality} that $\cU=T^{-1}(\cV)$ is a neighborhood of the origin in $\cA_\theta'$. This shows that $T$ is continuous. The proof is complete. 
\end{proof}

Finally, as a further application of Proposition~\ref{prop:Sobolev.Hs-inclusion-cAtheta} we have the following characterization of smoothing operators. 
\begin{corollary} \label{cor:Soboloev.smoothing-condition}
 An operator $R:\cA_\theta\rightarrow \cA_\theta'$ is smoothing if and only if it uniquely extends to a continuous linear operator 
 $R:\cH^{(s)} _\theta \rightarrow \cH^{(t)}_\theta$ for all $s,t\in \R$. 
\end{corollary}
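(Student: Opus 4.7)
The plan is to use the characterizations of $\cA_\theta$ and $\cA_\theta'$ as projective and inductive limits of the Sobolev scale $(\cH^{(s)}_\theta)_{s\in \R}$ given by Proposition~\ref{prop:Sobolev.Hs-inclusion-cAtheta}, together with the density result of Proposition~\ref{prop:Sobolev.u-sequence}.

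For the direct implication, suppose $R$ is smoothing, so by definition $R$ extends to a continuous linear operator $R:\cA_\theta'\rightarrow \cA_\theta$. Given any $s,t\in \R$, by Proposition~\ref{prop:Sobolev.Hs-inclusion-cAtheta} the inclusion $\cH^{(s)}_\theta \hookrightarrow \cA_\theta'$ and the inclusion $\cA_\theta \hookrightarrow \cH^{(t)}_\theta$ are both continuous. Composing, we see that $R$ restricts to a continuous linear map $\cH^{(s)}_\theta\rightarrow \cH^{(t)}_\theta$. The uniqueness of this extension follows from the density of $\cA_\theta$ in $\cH^{(s)}_\theta$ (Proposition~\ref{prop:Sobolev.u-sequence}).

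For the converse, assume $R$ extends to a continuous linear map $R_{s,t}:\cH^{(s)}_\theta \rightarrow \cH^{(t)}_\theta$ for all $s,t\in \R$. First I would check that these extensions are compatible: given $s_1\leq s_2$ and any $t$, the maps $R_{s_1,t}$ and $R_{s_2,t}$ agree on the dense subspace $\cA_\theta\subset \cH^{(s_2)}_\theta\subset \cH^{(s_1)}_\theta$ (since both equal $R$ there), hence they agree on all of $\cH^{(s_2)}_\theta$ by continuity. Likewise $R_{s,t_1}$ and $R_{s,t_2}$ coincide when viewed as maps into the larger space. Using $\cA_\theta'=\bigcup_{s}\cH^{(s)}_\theta$ (Proposition~\ref{prop:Sobolev.Hs-inclusion-cAtheta}(1)), this allows us to define unambiguously an extension $\tilde R:\cA_\theta'\rightarrow \cA_\theta'$ by $\tilde R u := R_{s,t} u$ whenever $u\in \cH^{(s)}_\theta$. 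Moreover, for a fixed $u\in \cH^{(s)}_\theta$, the element $\tilde R u$ lies in $\cH^{(t)}_\theta$ for every $t\in \R$, so by Proposition~\ref{prop:Sobolev.Hs-inclusion-cAtheta}(1) we have $\tilde R u \in \bigcap_t \cH^{(t)}_\theta = \cA_\theta$. Hence $\tilde R$ is a well-defined linear map from $\cA_\theta'$ to $\cA_\theta$.

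It remains to verify that $\tilde R:\cA_\theta'\rightarrow \cA_\theta$ is continuous. By Corollary~\ref{cor:Sobolev.lcv-map-continuity}, it suffices to show that $\tilde R$ restricts to a continuous linear map $\cH^{(s)}_\theta\rightarrow \cA_\theta$ for each $s\in \R$. By Proposition~\ref{prop:Sobolev.Hs-inclusion-cAtheta}(2) the topology on $\cA_\theta$ is generated by the Sobolev norms $\|\cdot\|_t$, $t\in \R$, so continuity of $\tilde R:\cH^{(s)}_\theta\rightarrow \cA_\theta$ amounts to continuity of the compositions $\cH^{(s)}_\theta\xrightarrow{\tilde R} \cA_\theta \hookrightarrow \cH^{(t)}_\theta$ for each $t\in \R$. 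But this composition is exactly $R_{s,t}$, which is continuous by hypothesis. The main subtlety is the compatibility verification in the previous paragraph, which hinges on the density of $\cA_\theta$ in each $\cH^{(s)}_\theta$; everything else is a direct application of the topological characterizations of $\cA_\theta$ and $\cA_\theta'$ via the Sobolev scale.
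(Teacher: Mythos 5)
Your proof is correct and follows essentially the same route as the paper: the forward direction composes the smoothing extension with the continuous inclusions $\cH^{(s)}_\theta\hookrightarrow\cA_\theta'$ and $\cA_\theta\hookrightarrow\cH^{(t)}_\theta$, and the converse glues the extensions over the Sobolev scale using $\cA_\theta=\bigcap_t\cH^{(t)}_\theta$, $\cA_\theta'=\bigcup_s\cH^{(s)}_\theta$, and Corollary~\ref{cor:Sobolev.lcv-map-continuity}. Your explicit compatibility check via density is a point the paper's own proof passes over more quickly, but the argument is the same.
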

\begin{proof}
 Suppose that $R$ is a smoothing operator, i.e., it uniquely extends to a continuous linear operator $R:\cA_\theta'\rightarrow \cA_\theta$. Let $s,t\in \R$. As the inclusion of $\cH^{(s)}_{\theta}$ into $\cA_\theta'$ and the inclusion of $\cA_\theta$ into $\cH^{(t)}_{\theta}$ are continuous, we see that $R$ gives rise to a continuous linear operator from $\cH^{(s)}_{\theta}$ to $\cH^{(t)}_{\theta}$. Since $\cA_\theta$ is dense  in $\cH^{(s)}_\theta$, this operator is the unique extension of $R$ as a continuous operator on $\cH^{(s)}_\theta$. 
 
Conversely, suppose that $R$ uniquely extends to a continuous linear operator $R:\cH^{(s)} _\theta \rightarrow \cH^{(t)}_\theta$ for all $s,t\in \R$. Let $s\in \R$. Then $R$ uniquely extends to a continuous linear operator from $\cH^{(s)}_\theta$ to $ \cH^{(t)}_\theta$ for every $t\in \R$. As we know by Proposition~\ref{prop:Sobolev.Hs-inclusion-cAtheta} that $\cA_\theta=\bigcap_{t\in \R} \cH_\theta^{(t)}$ and the Sobolev norms $\|\cdot\|_t$, $t\in \R$, generate the topology of $\cA_\theta$, we deduce that $R$ induces a continuous linear operator 
$R^{(s)}:\cH^{(s)}_{\theta}\rightarrow \cA_\theta$. This is the unique continuous extension of $R$ to $\cH^{(s)}_\theta$. 
In particular, if $s'>s$ then $R^{(s)}|_{\cH^{(s')}_\theta}=R^{(s')}$. As $\cA_\theta'=\bigcup_{s\in \R} \cH_\theta^{(s)}$ we deduce that there is a unique linear map $R:\cA_\theta'\rightarrow \cA_\theta$ such that $R=R^{(s)}$ on $\cH^{(s)}_\theta$. 
This implies that $R$ restricts to a continuous linear map from $\cH^{(s)}_\theta$ to $\cA_\theta$ for every $s\in \R$. 
Therefore, by Corollary~\ref{cor:Sobolev.lcv-map-continuity} this is a continuous linear map from $\cA_\theta'$ to $\cA_\theta$, i.e., $R$ is a smoothing operator. The proof is complete. 
\end{proof}

\begin{remark}
 We refer to~\cite[Theorem~4.3.1]{RT:Birkhauser10} for a version for the above characterization of smoothing operators when $\theta=0$
\end{remark}

\section{Boundedness and Sobolev Mapping Properties}\label{sec:Boundedness}
In this section, we study the boundedness and Sobolev  mapping properties of \psidos\ on noncommutative tori.

\begin{proposition}[\cite{Ba:CRAS88, Co:CRAS80}]  \label{prop:Sob-Mapping.Prho-extension}
Let $\rho(\xi)\in \stS^0(\R^n; \cA_\theta)$. Then $P_\rho$ uniquely extend to a bounded linear operator $P_\rho: \cH_\theta\rightarrow \cH_\theta$. Thereby, we obtain a continuous linear map from $\stS^0(\R^n; \cA_\theta)$ to $\cL(\cH_\theta)$. 
\end{proposition}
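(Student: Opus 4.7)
The plan is to exploit the explicit action of $P_\rho$ on Fourier series given by Proposition~\ref{prop:PsiDOs.Prhou-equation}, namely $P_\rho u = \sum_{k\in\Z^n} u_k \rho(k) U^k$ for $u=\sum u_k U^k \in \cA_\theta$, and to convert the boundedness on $\cH_\theta$ into a matrix-boundedness question on $\ell^2(\Z^n)$. First, I would expand each coefficient $\rho(k)\in\cA_\theta$ in its own Fourier series $\rho(k) = \sum_{l\in\Z^n} \rho_l(k) U^l$, with $\rho_l(k)=\tau(\rho(k)(U^l)^*)$. Using the commutation relations~(\ref{eq:NCtori.unitaries-relations}) we have $U^l U^k = \lambda(l,k) U^{l+k}$ for some phase factor $\lambda(l,k)$ of modulus one, and so
\begin{equation*}
P_\rho u = \sum_{m\in\Z^n}\Bigl(\sum_{k\in\Z^n} A_{m,k} u_k\Bigr) U^m, \qquad A_{m,k}:=\lambda(m-k,k)\rho_{m-k}(k).
\end{equation*}
By Parseval's identity on $\cH_\theta$, $\|P_\rho u\|_0^2 = \sum_m |\sum_k A_{m,k} u_k|^2$, so it is enough to bound the operator norm of the infinite matrix $(A_{m,k})$ on $\ell^2(\Z^n)$.

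Second, I would apply the Schur test. Both sums $\sup_m \sum_k |A_{m,k}|$ and $\sup_k \sum_m |A_{m,k}|$ reduce, under the change of variable $l=m-k$, to $\sup_k \sum_{l\in\Z^n} |\rho_l(k)|$. So everything will follow from the uniform-in-$k$ decay estimate
\begin{equation*}
|\rho_l(k)|\leq C_N (1+|l|)^{-N},\qquad k,l\in\Z^n,
\end{equation*}
for arbitrarily large $N$, with $C_N$ controlled by finitely many semi-norms of $\rho$ in $\stS^0(\R^n;\cA_\theta)$. This is the key estimate.

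Third, the key estimate itself will come from integration by parts in the NC-torus sense. Since $\delta_j(U^l)=l_j U^l$, the formula $\delta_j(u^*)=-\delta_j(u)^*$ yields $\delta^\alpha((U^l)^*)=(-1)^{|\alpha|} l^\alpha (U^l)^*$. Combining this with the Leibniz rule and~(\ref{eq:NCtori.integration-by-parts}) (which gives $\tau\circ\delta^\alpha=0$) and unwinding by induction on $|\alpha|$, one obtains the clean identity
\begin{equation*}
l^\alpha \rho_l(k)=\tau\bigl(\delta^\alpha\rho(k)(U^l)^*\bigr).
\end{equation*}
Since $\tau$ is a state on $A_\theta$ and $\|(U^l)^*\|=1$, this yields $|l^\alpha\rho_l(k)|\leq \|\delta^\alpha\rho(k)\|\leq p_{|\alpha|}^{(0)}(\rho)$, uniformly in $k\in\Z^n$ because $\rho$ has order zero. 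Summing over $|\alpha|\leq N$ and using the standard comparison of $(1+|l|)^N$ with $\sum_{|\alpha|\leq N}|l^\alpha|$ gives the uniform decay with $C_N\leq C\, p_N^{(0)}(\rho)$.

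Finally, Schur's test produces $\|P_\rho u\|_0 \leq C\, p_N^{(0)}(\rho)\|u\|_0$ for all $u\in\cA_\theta$ and some fixed $N>n$. Since $\cA_\theta$ is dense in $\cH_\theta$ (it contains the orthonormal basis $\{U^k\}$), the operator $P_\rho$ extends uniquely to a bounded operator on $\cH_\theta$, and the same estimate shows that $\rho\mapsto P_\rho$ is continuous from $\stS^0(\R^n;\cA_\theta)$ to $\cL(\cH_\theta)$. The main obstacle is the third step: one must recognise that the rapid decay of $\rho_l(k)$ in $l$ that is needed \emph{uniformly} in $k$ is furnished precisely by the uniform bounds on the $\cA_\theta$-derivatives $\delta^\alpha\rho(\xi)$ packaged into the definition of $\stS^0$, and the translation between the two is realised by the integration by parts identity above.
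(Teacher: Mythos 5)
Your proposal is correct and follows essentially the same route as the paper: both reduce the problem to showing that the matrix of $P_\rho$ in the orthonormal basis $(U^k)_{k\in\Z^n}$ has entries decaying like $(1+|k-l|)^{-N}$ uniformly, with the decay extracted from the uniform bounds on $\delta^\alpha\rho(\xi)$ via integration by parts against $\tau$ (the paper packages this as the selfadjointness of $(1+\Delta)^N$ in the estimate $|\acoup{U^k}{aU^l}|\leq(1+|k-l|^2)^{-N}\|(1+\Delta)^Na\|$, which is the same identity as your $l^\alpha\rho_l(k)=\tau(\delta^\alpha\rho(k)(U^l)^*)$), and then concludes by Cauchy--Schwarz, which is exactly the Schur test you invoke. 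The continuity of $\rho\mapsto P_\rho$ is obtained in both cases from the fact that the resulting bound is a continuous semi-norm on $\stS^0(\R^n;\cA_\theta)$.
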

\begin{proof}
Given any $u=\sum_k u_k U^k$ and $v=\sum_k v_k U^k$ in $\cA_\theta$, using  Proposition~\ref{prop:PsiDOs.Prhou-equation} we get 
\begin{equation} \label{eq:Sob-Mapping.Prhou,v-equation}
\acoup{u}{P_\rho v}  = \sum_{k\in\Z^n} u_k \acoup{U^k}{P_\rho v} =  \sum_{k,l\in\Z^n} u_k \overline{v}_l \acoup{U^k}{\rho(l)U^l}.
\end{equation}

\begin{claim*}
 Let $a \in \cA_\theta$. Then, for every $N\in \N_0$, we have 
\begin{equation} \label{eq:Sob-Mapping.aUk|Ul-estimates}
 \left| \acoup{U^k}{aU^l}\right| \leq \left( 1+ |k-l|^2\right)^{-N} \left\| (1+\Delta)^N a\right\|  \qquad \forall k,l\in \Z^n, 
 \end{equation}
  where $\Delta=\delta_1^2+\cdots+\delta_n^2$ is the Laplacian of $\cA_\theta$. 
\end{claim*}
\begin{proof}[Proof of the Claim] Let $k,l\in \Z^n$. For $j=1,\ldots, n$ we have  
\begin{equation*}
\delta_j(U^k (U^l)^*)= \delta_j(U^k (U^l)^{-1})= (k_j-l_j) U^k (U^l)^*.
\end{equation*}
This implies that  $(1+\Delta)(U^k (U^l)^*)=(1+|k-l|^2)U^k (U^l)^*$. An induction then shows that $(1+\Delta)^N(U^k (U^l)^*)=(1+|k-l|^2)^NU^k (U^l)^*$ for all $N\in \N_0$. 
Thus,
 \begin{equation*}
  \acoup{U^k}{aU^l}=  \acoup{U^k(U^l)^*}{a} =(1+|k-l|^2)^{-N} \acoup{(1+\Delta)^N(U^k (U^l)^*)}{a} . 
\end{equation*}
 As $(1+\Delta)^N$ is selfadjoint, we get  
\begin{equation*}
 (1+|k-l|^2)^{N} \left| \acoup{U^k}{aU^l}\right| = \left| \acoup{U^k (U^l)^*}{(1+\Delta)^Na} \right| \leq \left\| (1+\Delta)^N a\right\| . 
\end{equation*}
This proves the claim. 
\end{proof}

Let $N$ be an integer~$>\frac1{2}n$. As $ (1+\Delta)^N\rho(\xi)\in \stS^0(\R^n; \cA_\theta)$, we may set $C_N(\rho):= p^{(0)}_0((1+\Delta)^N\rho)$, and then
$\|(1+\Delta)^N\rho(l)\|\leq C_N(\rho)$ for all $l \in \Z^n$. Combining this with~(\ref{eq:Sob-Mapping.Prhou,v-equation}) and~(\ref{eq:Sob-Mapping.aUk|Ul-estimates}) and using Cauchy-Schwarz's inequality we get 
\begin{align*}
 \left| \acoup{u}{P_\rho v} \right| & \leq  \sum_{k,l\in\Z^n}|u_k| |v_l|  \left|\acoup{U^k}{\rho(l)U^l}\right| \\
 & \leq C_N(\rho) \sum_{k,l\in\Z^n}|u_k| |v_l|  (1+|k-l|^2)^{-N} \\
 & \leq C_N(\rho)  \biggl( \sum_{k,l\in\Z^n}|u_k|^2 (1+|k-l|^2)^{-N} \biggr)^{\frac12} \biggl( \sum_{k,l\in\Z^n}|v_l|^2  (1+|k-l|^2)^{-N} \biggr)^{\frac12}. 
\end{align*}
We have
\begin{equation*}
 \sum_{k,l\in\Z^n}|u_k|^2 (1+|k-l|^2)^{-N}= \sum_{k\in \Z^n} |u_k|^2 \sum_{l \in \Z^n} (1+|k-l|^2)^{-N} = \| u\|^2_0  \sum_{l \in \Z^n} (1+|l|^2)^{-N} . 
\end{equation*}
Likewise, we have 
\begin{equation*}
 \sum_{k,l\in\Z^n}|v_l|^2  (1+|k-l|^2)^{-N} = \| v\|^2_0  \sum_{k \in \Z^n} (1+|k|^2)^{-N} . 
\end{equation*}
Thus, 
 \begin{equation} \label{eq:Sob-Mapping.Prhou|v-estimates}
 \left| \acoup{u}{P_\rho v} \right| \leq C_N'(\rho)\| u\|_0 \| v\|_0, 
\end{equation}
where we have set $C_N'(\rho)= C_N(\rho) \sum_k (1+|k|^2)^{-N}$. Therefore, for all $v\in \cA_\theta$, we have 
\begin{equation*}
 \|P_\rho v\|_0 = \sup_{\| u\|_0\leq 1} \left| \acoup{u}{P_\rho v} \right| \leq C_N'(\rho)\| v\|_0. 
\end{equation*}
As $\cA_\theta$ is dense in $\cH_\theta$ this shows that $P_\rho$ uniquely extends to a bounded operator $P_\rho:\cH_\theta\rightarrow \cH_\theta$. 

Finally, it follows from~(\ref{eq:Sob-Mapping.Prhou|v-estimates}) that $\|P_\rho\|\leq C_N'(\rho)$. As $(1+\Delta)^N$ is a continuous linear operator on $\cA_\theta$ we see that $C_N(\rho)$ and $C_N'(\rho)$ are continuous semi-norms on $\stS^0(\R^n; \cA_\theta)$. Therefore, we obtain a continuous linear map from $\stS^0(\R^n; \cA_\theta)$ to $\cL(\cH_\theta)$. The proof is complete. 
\end{proof}

\begin{remark}\label{rmk:Sob-mapping.L2-boundedness-Baaj}
 The above proof of the boundedness of zeroth order \psidos\ seems to be new. It strongly relies on the Fourier series decomposition in $\cH_\theta$. We refer to~\cite{Ba:CRAS88} for the outline of an alternative proof in the greater generality of \psidos\ on $C^*$-dynamical systems associated with actions of $\R^n$.  
\end{remark}

\begin{remark}
 We observe that the arguments of the proof of Proposition~\ref{prop:Sob-Mapping.Prho-extension} only requires $\rho(\xi)$ to satisfy the symbols estimates~(\ref{eq:Symbols.standard-estimates}) on $\Z^n$ for $m=0$ with $\beta=0$ and $|\alpha|\leq N$ for some $N>n$. In particular, along the same lines we obtain the boundedness of the toroidal \psidos\ with symbols in the class $\stS^0_{0,0}$ of~\cite{GJP:MAMS17}. We refer to~\cite[Theorem~A.2]{GJP:MAMS17} for the boundedness of toroidal \psidos\ with symbols in the classes $\stS^0_{\rho,\rho}$ with $0\leq \rho \leq 1$. The result is obtained by ``transference'' of the theory of \psidos\ on noncommutative Euclidean spaces developed in~\cite{GJP:MAMS17}. 
\end{remark}

We are now in a position to look at the action of \psidos\ on Sobolev spaces. 

\begin{proposition} \label{prop:Sob-Mapping.rho-on-Hs}
Let $\rho(\xi)\in \stS^m(\R^n; \cA_\theta)$, $m\in\R$. For every $s\in \R$, the operator $P_\rho$ uniquely extends to a continuous linear map $P_\rho:\cH^{(s+m)}_{\theta}\rightarrow \cH^{(s)}_{\theta}$. This provides us with a continuous linear map from $\stS^m(\Rn;\cA_\theta)$ to $\cL(\cH^{(s+m)}_{\theta},\cH^{(s)}_{\theta})$.
\end{proposition}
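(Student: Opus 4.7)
The plan is to reduce the general statement to the case $s=m=0$ already handled by Proposition~\ref{prop:Sob-Mapping.Prho-extension}, using the fact that $\Lambda^t$ is a unitary isomorphism $\cH_\theta^{(\sigma)}\to \cH_\theta^{(\sigma-t)}$ for every $\sigma,t\in\R$ (part~(2) of Proposition~\ref{prop:Sobolev.Sobolev-embedding}). Concretely, I would factor
\begin{equation*}
P_\rho = \Lambda^{-s}\circ\bigl(\Lambda^{s} P_\rho \Lambda^{-(s+m)}\bigr)\circ\Lambda^{s+m},
\end{equation*}
and observe that the outer factors are isometric isomorphisms $\cH_\theta^{(s+m)}\to\cH_\theta$ and $\cH_\theta\to\cH_\theta^{(s)}$, respectively. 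It therefore suffices to show that the middle factor is bounded on $\cH_\theta$, with norm controlled by a continuous semi-norm of $\rho$.

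For the middle factor, I would use Proposition~\ref{prop:PsiDOs.Lambdas-properties}, which says $\Lambda^t=P_{\brak{\xi}^t}$ with $\brak{\xi}^t\in S^t(\R^n;\cA_\theta)\subset \stS^{\Re t}(\R^n;\cA_\theta)$, together with Proposition~\ref{prop:Composition-Sym.sharp-property}: the $\sharp$-product is a continuous bilinear map
\begin{equation*}
\sharp:\stS^{m_1}(\R^n;\cA_\theta)\times\stS^{m_2}(\R^n;\cA_\theta)\longrightarrow \stS^{m_1+m_2}(\R^n;\cA_\theta).
\end{equation*}
Applied twice, this shows that $\sigma_\rho:=\brak{\xi}^{s}\sharp\rho\sharp\brak{\xi}^{-(s+m)}$ lies in $\stS^{0}(\R^n;\cA_\theta)$ and that $\rho\mapsto\sigma_\rho$ is a continuous linear map $\stS^m(\R^n;\cA_\theta)\to\stS^0(\R^n;\cA_\theta)$. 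Moreover, by the same proposition, $\Lambda^{s}P_\rho\Lambda^{-(s+m)}=P_{\sigma_\rho}$ on $\cA_\theta$. Proposition~\ref{prop:Sob-Mapping.Prho-extension} then provides a bounded extension $P_{\sigma_\rho}:\cH_\theta\to\cH_\theta$ and the continuity of the map $\sigma\mapsto P_\sigma$ from $\stS^0(\R^n;\cA_\theta)$ to $\cL(\cH_\theta)$.

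Composing these three steps, I obtain a continuous operator $\Lambda^{-s}P_{\sigma_\rho}\Lambda^{s+m}:\cH_\theta^{(s+m)}\to\cH_\theta^{(s)}$ whose restriction to $\cA_\theta$ coincides with $P_\rho$. Since $\cA_\theta$ is dense in $\cH_\theta^{(s+m)}$ by Proposition~\ref{prop:Sobolev.u-sequence}, the extension is unique. The continuity of $\rho\mapsto P_\rho$ from $\stS^m(\R^n;\cA_\theta)$ to $\cL(\cH_\theta^{(s+m)},\cH_\theta^{(s)})$ follows since it is the composition $\rho\mapsto\sigma_\rho\mapsto P_{\sigma_\rho}\mapsto \Lambda^{-s}P_{\sigma_\rho}\Lambda^{s+m}$ of three continuous linear maps (the last one being continuous because left- and right-multiplication by bounded operators is continuous in the operator norm topology). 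There is no real obstacle here: the only mild subtlety is to check that the factorization truly holds on $\cA_\theta$, which is immediate from the composition formula $P_{\rho_1}P_{\rho_2}=P_{\rho_1\sharp\rho_2}$ applied twice together with $\Lambda^{t_1}\Lambda^{t_2}=\Lambda^{t_1+t_2}$.
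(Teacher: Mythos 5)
Your proposal is correct and follows essentially the same route as the paper's proof: the identical factorization $P_\rho=\Lambda^{-s}P_{\tilde\rho}\Lambda^{s+m}$ with $\tilde\rho=\brak{\xi}^{s}\sharp\rho\sharp\brak{\xi}^{-(s+m)}\in\stS^{0}(\R^n;\cA_\theta)$, the zeroth-order boundedness result, and the unitarity of the $\Lambda^t$ between Sobolev spaces. The continuity of $\rho\mapsto P_\rho$ is likewise obtained in the paper by composing the continuous maps $\rho\mapsto\tilde\rho$ and $\sigma\mapsto P_\sigma$, exactly as you describe.
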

\begin{proof}
By using Proposition~\ref{prop:PsiDOs.Lambdas-properties} and Proposition~\ref{prop:Composition-Sym.sharp-property} we get 
\begin{equation*}
P_\rho = \Lambda^{-s}(\Lambda^{s}P_\rho \Lambda^{-(s+m)})\Lambda^{s+m}= \Lambda^{-s}P_{\tilde{\rho}}\Lambda^{s+m},
\end{equation*}
where we have set $\tilde{\rho}(\xi) = \brak{\xi}^s \sharp \rho \sharp \brak{\xi}^{-(s+m)}$. As $\tilde{\rho}(\xi) \in \stS^0(\R^n; \cA_\theta)$ we know by Proposition~\ref{prop:Sob-Mapping.Prho-extension} that $P_{\tilde{\rho}}$ extends to a bounded operator from $\cH_\theta$ to itself. We also know by Proposition~\ref{prop:Sobolev.Sobolev-embedding} that $\Lambda^{s+m}$ gives rise to a unitary operator from $\cH^{(s+m) }_{\theta}$ onto $\cH_\theta$ and $\Lambda^{-s}$ gives rise to a unitary operator from $\cH_\theta$ onto $\cH_{\theta}$. Therefore, we see that 
$P_\rho$ uniquely extends to a continuous linear map $P_\rho:\cH^{(s+m)}_{\theta}\rightarrow \cH^{(s)}_{\theta}$. Furthermore, as $\Lambda^{s+m}:\cH_\theta^{(s+m)}\rightarrow \cH_{\theta}$ and $\Lambda^{-s}: \cH_\theta\rightarrow \cH^{(s)}_{\theta}$ are unitary operators we have 
\begin{equation} \label{eq:Sob-Mapping.Prho-estimates}
 \|P_{\rho}\|_{\cL(\cH^{(s+m)}_{\theta},\cH^{(s)}_{\theta})} = \|\Lambda^{-s}P_{\tilde{\rho}}\Lambda^{s+m}\|_{\cL(\cH^{(s+m)}_{\theta},\cH^{(s)}_{\theta})} = \|P_{\tilde{\rho}}\|_{\cL(\cH_\theta)} .
\end{equation}

We know by Proposition~\ref{prop:Sob-Mapping.Prho-extension} that there are $N\in \N_0$ and $C_N>0$ such that 
\begin{equation*}
 \|P_\sigma\|_{\cL(\cH_\theta)} \leq C_N p_N^{(0)}(\sigma) \qquad \text{for all $\sigma\in \stS^0(\R^n; \cA_\theta)$}.  
\end{equation*}
Combining this with~(\ref{eq:Sob-Mapping.Prho-estimates}) gives 
\begin{equation} \label{eq:Sob-Mapping.Prho-p-tilde-estimates}
 \|P_{\rho}\|_{\cL(\cH^{(s+m)}_{\theta},\cH^{(s)}_{\theta})} \leq C_N p_N^{(0)}(\tilde{\rho}) \qquad \text{for all $\rho\in \stS^m(\R^n; \cA_\theta)$}.  
\end{equation}
Moreover, it follows from Proposition~\ref{prop:Composition-Sym.sharp-property} that $\rho(\xi) \rightarrow \tilde{\rho}(\xi) = \brak{\xi}^s \sharp \rho \sharp \brak{\xi}^{-(s+m)}$ is a continuous linear map from $ \stS^m(\R^n; \cA_\theta)$ to $ \stS^0(\R^n; \cA_\theta)$. Therefore,  we see that $\rho \rightarrow p_N^{(0)}(\tilde{\rho})$ is a continuous semi-norm on $ \stS^m(\R^n; \cA_\theta)$. 
Combining this with the semi-norm estimate~(\ref{eq:Sob-Mapping.Prho-p-tilde-estimates}) then shows that we have a continuous linear map $\rho \rightarrow P_\rho$ 
from $\stS^m(\Rn;\cA_\theta)$ to $\cL(\cH^{(s+m)}_{\theta},\cH^{(s)}_{\theta})$. The proof is complete. 
\end{proof}

\begin{corollary} \label{cor:Sob-Mapping.Prho-compactness}
 Let $\rho(\xi)\in \stS^m(\R^n; \cA_\theta)$, $m\in\R$. Then $P_\rho$ gives rise to a compact operator $P_\rho: \cH_\theta^{(s)} \rightarrow \cH^{(t)}_\theta$ for all $s,t\in \R$ with $t<s-m$. 
\end{corollary}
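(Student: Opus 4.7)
The plan is to factor the operator $P_\rho:\cH_\theta^{(s)}\to \cH_\theta^{(t)}$ through the intermediate Sobolev space $\cH_\theta^{(s-m)}$ and invoke the Rellich-type compactness of the Sobolev inclusions (Proposition~\ref{prop:Sobolev.Sobolev-embedding}(3)). Specifically, I would write
\begin{equation*}
P_\rho \;=\; \iota \circ \widetilde{P_\rho},
\end{equation*}
where $\widetilde{P_\rho}:\cH_\theta^{(s)}\to \cH_\theta^{(s-m)}$ is the bounded extension furnished by Proposition~\ref{prop:Sob-Mapping.rho-on-Hs}, and $\iota:\cH_\theta^{(s-m)}\hookrightarrow \cH_\theta^{(t)}$ is the natural inclusion.

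The first factor $\widetilde{P_\rho}$ is bounded by Proposition~\ref{prop:Sob-Mapping.rho-on-Hs}. For the second factor, the hypothesis $t < s-m$ together with part (3) of Proposition~\ref{prop:Sobolev.Sobolev-embedding} (applied with parameters $s' = s-m$ and $s = t$) shows that $\iota$ is a compact operator. Since the composition of a bounded operator with a compact operator is compact, we conclude that $P_\rho:\cH_\theta^{(s)}\to \cH_\theta^{(t)}$ is compact, which is the desired statement.

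There is essentially no obstacle here: both ingredients are already established in the excerpt, and the argument is a one-line factorization. The only very minor point to check is that the extension of $P_\rho$ to $\cH_\theta^{(s)}\to\cH_\theta^{(t)}$ furnished by Proposition~\ref{prop:Sob-Mapping.rho-on-Hs} (applied directly with target $\cH_\theta^{(t)}$, if one wishes to argue uniqueness of extension) coincides with $\iota\circ \widetilde{P_\rho}$; this is automatic since both are continuous extensions of the same operator from the dense subspace $\cA_\theta\subset \cH_\theta^{(s)}$.
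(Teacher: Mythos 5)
Your proof is correct and follows exactly the paper's own argument: factor $P_\rho$ through $\cH_\theta^{(s-m)}$ using the boundedness from Proposition~\ref{prop:Sob-Mapping.rho-on-Hs} and the compactness of the inclusion $\cH_\theta^{(s-m)}\hookrightarrow\cH_\theta^{(t)}$ from Proposition~\ref{prop:Sobolev.Sobolev-embedding}. Nothing to add.
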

\begin{proof}
 Let $s,t\in \R$, $t<s-m$. We know by Proposition~\ref{prop:Sob-Mapping.rho-on-Hs} that $P_\rho$ gives rise to a continuous linear operator from $\cH_\theta^{(s)}$ to $\cH_\theta^{(s-m)}$. Composing it with the inclusion of  $\cH_\theta^{(s-m)}$ into $\cH^{(t)}_{\theta}$ we get a continuous linear operator  $P_\rho: \cH_\theta^{(s)} \rightarrow \cH^{(t)}_\theta$. As by Proposition~\ref{prop:Sobolev.Sobolev-embedding} the inclusion of  $\cH_\theta^{(s-m)}$ into $\cH^{(t)}_{\theta}$ is compact the operator $P_\rho: \cH_\theta^{(s)} \rightarrow \cH^{(t)}_\theta$ is compact. 
\end{proof}

Specializing Corollary~\ref{cor:Sob-Mapping.Prho-compactness} to $m<0$ and $s=t=0$ we obtain the following statement. 

\begin{corollary}[\cite{Ba:CRAS88, Co:CRAS80}] \label{prop:Sob-Mapping.Prho-compactness-cH}
Let $\rho(\xi)\in \stS^m(\R^n; \cA_\theta)$, $m<0$. Then $P_\rho$ gives rise to a compact operator $P_\rho: \cH_\theta\rightarrow \cH_\theta$.
\end{corollary}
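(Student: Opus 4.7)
The plan is to deduce this statement directly from Corollary~\ref{cor:Sob-Mapping.Prho-compactness}, by specializing the Sobolev exponents appropriately. Recall that Corollary~\ref{cor:Sob-Mapping.Prho-compactness} asserts that $P_\rho$ extends to a compact operator from $\cH_\theta^{(s)}$ to $\cH_\theta^{(t)}$ whenever $t<s-m$. Since we are given $m<0$, the choice $s=t=0$ satisfies $t=0<-m=s-m$, and by definition $\cH_\theta^{(0)}=\cH_\theta$, so the claim follows immediately.

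Thus the argument is essentially a one-line invocation of the previous corollary. For completeness, I would unfold the underlying mechanism: by Proposition~\ref{prop:Sob-Mapping.rho-on-Hs} applied with $s=0$, the operator $P_\rho$ extends to a continuous linear operator from $\cH_\theta$ to $\cH_\theta^{(-m)}$, and since $-m>0$, the inclusion $\cH_\theta^{(-m)}\hookrightarrow\cH_\theta$ is compact by part~(3) of Proposition~\ref{prop:Sobolev.Sobolev-embedding}. The composition of a bounded operator with a compact operator is compact, so $P_\rho:\cH_\theta\to\cH_\theta$ is compact.

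There is no real obstacle here: all the technical content has been packaged into the preceding results, namely the Sobolev mapping property of \psidos\ (Proposition~\ref{prop:Sob-Mapping.rho-on-Hs}) and the compactness of Sobolev embeddings (Proposition~\ref{prop:Sobolev.Sobolev-embedding}). The only thing to verify is that the hypothesis $m<0$ ensures a strict positive Sobolev gain, so that the inclusion invoked is genuinely compact rather than merely continuous.
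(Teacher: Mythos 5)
Your proposal is correct and coincides with the paper's argument: the paper obtains this corollary precisely by specializing Corollary~\ref{cor:Sob-Mapping.Prho-compactness} to $s=t=0$, and your unfolded version (boundedness $\cH_\theta\to\cH_\theta^{(-m)}$ followed by the compact inclusion $\cH_\theta^{(-m)}\hookrightarrow\cH_\theta$) is exactly how that corollary is itself proved. Nothing further is needed.
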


By Proposition~\ref{prop:PsiDos.smoothing-condition} every smoothing operator is of the form $P_\rho$ with $\rho(\xi)\in \cS(\R^n; \cA_\theta)$. Such an operator 
satisfies Corollary~\ref{cor:Sob-Mapping.Prho-compactness} for all $m\in \R$. Therefore, we arrive at the following result. 

\begin{corollary} \label{prop:Sob-Mapping.Prho-compactness-smoothing}
Let $R\in \Psi^{-\infty}(\cA_\theta)$. Then  $R$ gives rise to a compact operator $R: \cH_\theta^{(s)} \rightarrow \cH^{(t)}_{\theta}$ for all $s,t\in \R$. 
\end{corollary}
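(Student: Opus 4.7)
The plan is essentially to combine the two results immediately preceding the statement. By Proposition~\ref{prop:PsiDos.smoothing-condition}, any smoothing operator $R$ can be written as $R=P_\rho$ for some Schwartz-class symbol $\rho(\xi)\in\cS(\R^n;\cA_\theta)$. By Remark~\ref{rmk:Symbols.amlitudes-intersection} we have the inclusion $\cS(\R^n;\cA_\theta)\subset\stS^m(\R^n;\cA_\theta)$ for \emph{every} $m\in\R$, so $\rho(\xi)$ qualifies as a symbol of arbitrarily negative order.

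Given any $s,t\in\R$, I would then choose $m\in\R$ with $m<s-t$ (equivalently $t<s-m$), view $\rho(\xi)$ as an element of $\stS^m(\R^n;\cA_\theta)$, and invoke Corollary~\ref{cor:Sob-Mapping.Prho-compactness}, which produces a compact operator $P_\rho:\cH_\theta^{(s)}\rightarrow\cH_\theta^{(t)}$. Since $R=P_\rho$ on $\cA_\theta$ and $\cA_\theta$ is dense in $\cH_\theta^{(s)}$ by Proposition~\ref{prop:Sobolev.u-sequence}, this compact operator is the unique continuous extension of $R$ to $\cH_\theta^{(s)}$.

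There is no real obstacle here — the key work has already been done in establishing the Sobolev mapping properties (Proposition~\ref{prop:Sob-Mapping.rho-on-Hs}) together with Rellich's compactness theorem for NC tori (Proposition~\ref{prop:Sobolev.Sobolev-embedding}(3)) that feeds into Corollary~\ref{cor:Sob-Mapping.Prho-compactness}. The only thing to be careful about is to make sure the compact extension from $\cH_\theta^{(s)}$ to $\cH_\theta^{(t)}$ genuinely coincides with the given smoothing operator $R$ on this scale, but this is immediate from density of $\cA_\theta$ and the uniqueness clause in Corollary~\ref{cor:Soboloev.smoothing-condition}. The whole proof should fit in two or three lines.
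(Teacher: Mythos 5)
Your argument is correct and is essentially the paper's own proof: the text preceding the corollary observes that $R=P_\rho$ with $\rho(\xi)\in\cS(\R^n;\cA_\theta)=\bigcap_m\stS^m(\R^n;\cA_\theta)$ and then applies Corollary~\ref{cor:Sob-Mapping.Prho-compactness} for $m$ sufficiently negative, exactly as you do. Your extra remark identifying the compact extension with $R$ via density of $\cA_\theta$ in $\cH_\theta^{(s)}$ is a harmless (and correct) addition.
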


\section{Ellipticity and Parametrices}\label{sec:Ellipticity}
In this section, we explicitly construct parametrices of (classical) elliptic \psidos. This will lead us to a version of the elliptic regularity theorem for these operators.  

\begin{definition}
An operator $P\in\Psi^q(\cA_\theta)$, $q\in \C$, is \emph{elliptic} when its principal symbol $\rho_{q}(\xi)$ is invertible for all $\xi \in \R^n\setminus 0$. 
\end{definition}

We infer that if $P$ is elliptic, then the inverse of its principal symbol is a smooth symbol. This is a consequence of the following general smoothness result. 

\begin{lemma} \label{lem:Elliptic.smoothness-inverse}
 Suppose that $V$ is an open set of $\R^d$, $d\geq 1$. Let $u(x)\in C^\infty(V; \cA_\theta)$ be such that $u(x)\in \cA_\theta^{-1}$ for all $x\in V$. Then $u(x)^{-1}\in C^\infty(V; \cA_\theta)$. 
\end{lemma}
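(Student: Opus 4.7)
The plan is to proceed in two stages: first establish differentiability at each point with the expected formula, then bootstrap by induction to get full smoothness.

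First, by Proposition~\ref{prop:NCtori.invertibility-cAtheta}, the inversion map $v \mapsto v^{-1}$ is continuous on the open set $\cA_\theta^{-1}\subset \cA_\theta$. Composing with the continuous map $u:V\to\cA_\theta^{-1}$ shows that $x\mapsto u(x)^{-1}$ is a continuous map from $V$ to $\cA_\theta$.

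Next I would show that $u(x)^{-1}$ admits continuous first-order partial derivatives. Fix $x_0\in V$ and $j\in\{1,\ldots,d\}$. Using the algebraic identity
\begin{equation*}
u(x)^{-1} - u(x_0)^{-1} = u(x)^{-1} \bigl[ u(x_0) - u(x)\bigr] u(x_0)^{-1},
\end{equation*}
valid for $x$ near $x_0$ (so that $u(x)$ is still invertible), I divide by $x_j - x_{0,j}$ and let $x\to x_0$ along the $j$-th coordinate direction. Since $u(x)^{-1}\to u(x_0)^{-1}$ in $\cA_\theta$ by the previous step, $(x_j-x_{0,j})^{-1}[u(x_0)-u(x)] \to -\partial_{x_j}u(x_0)$ in $\cA_\theta$ by the $C^1$-smoothness of $u$, and the product of $\cA_\theta$ is (jointly) continuous (Proposition~\ref{prop:NCtori.cAtheta-Frechet}), we obtain that $u(x)^{-1}$ is partially differentiable at $x_0$ with
\begin{equation} \label{eq:Elliptic.derivative-inverse}
\partial_{x_j}\bigl[ u(x)^{-1}\bigr] = - u(x)^{-1}\bigl( \partial_{x_j} u(x)\bigr) u(x)^{-1}.
\end{equation}
Since the right-hand side is continuous as a composition/product of continuous maps into $\cA_\theta$, we conclude that $u(x)^{-1}$ is a $C^1$-map from $V$ to $\cA_\theta$.

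Finally, I would conclude by induction on $N\geq 1$, showing that $u(x)^{-1}\in C^N(V;\cA_\theta)$ for all $N$. Suppose $u(x)^{-1}\in C^N(V;\cA_\theta)$. Since $u(x)\in C^\infty(V;\cA_\theta)$, each $\partial_{x_j}u(x)$ is in $C^N(V;\cA_\theta)$, and the product of $\cA_\theta$ induces a bilinear map $C^N(V;\cA_\theta)\times C^N(V;\cA_\theta)\to C^N(V;\cA_\theta)$ (this is a standard consequence of Leibniz's rule together with the joint continuity of the product of $\cA_\theta$). Therefore the right-hand side of~(\ref{eq:Elliptic.derivative-inverse}) is in $C^N(V;\cA_\theta)$, which means each partial derivative $\partial_{x_j}[u(x)^{-1}]$ is $C^N$, and hence $u(x)^{-1}\in C^{N+1}(V;\cA_\theta)$. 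This closes the induction and yields $u(x)^{-1}\in C^\infty(V;\cA_\theta)$.

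There is no real obstacle here: the argument is the standard one for smoothness of inversion in a Fréchet algebra with continuous inversion on its unit group, and all the required ingredients---openness of $\cA_\theta^{-1}$, continuity of inversion, and joint continuity of the product---are available from Proposition~\ref{prop:NCtori.cAtheta-Frechet} and Proposition~\ref{prop:NCtori.invertibility-cAtheta}.
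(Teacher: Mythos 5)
Your proof is correct and follows essentially the same route as the paper's: continuity of $u(x)^{-1}$ from Proposition~\ref{prop:NCtori.invertibility-cAtheta}, the resolvent-type identity to obtain the first-order partial derivatives $\partial_{x_j}[u(x)^{-1}]=-u(x)^{-1}(\partial_{x_j}u(x))u(x)^{-1}$, and an induction to bootstrap to $C^N$ for every $N$. The only cosmetic difference is that the paper phrases the induction step by describing the explicit form of the higher derivatives as linear combinations of products, whereas you invoke closure of $C^N(V;\cA_\theta)$ under products; both are fine.
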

\begin{proof}
 It follows from Proposition~\ref{prop:NCtori.invertibility-cAtheta} that $u(x)^{-1}\in C^0(V; \cA_\theta)$. Moreover, given any $x,y\in V$, we have $u(y)^{-1}-u(x)^{-1}=-u(y)^{-1}\left( u(y)-u(x)\right) u(x)^{-1}$. Combining this with the differentiability of $u(x)$ and the continuity of $u(x)^{-1}$, we see that $u(x)^{-1}$ is a differentiable map from $V$ to $\cA_\theta$, and we have 
\begin{equation*}
 \partial_{x_j}\left[u(x)^{-1}\right]=-u(x)^{-1}\left(  \partial_{x_j}u(x)\right) u(x)^{-1}, \qquad j=1,\ldots, d. 
\end{equation*}
The continuity of $u(x)^{-1}$ and $ \partial_{x_j}u(x)$, $j=1,\ldots, d$, further imply that the above partial derivatives are continuous, and so $u(x)^{-1}\in  C^1(V; \cA_\theta)$. 
An induction then shows that $u(x)^{-1}\in  C^N(V; \cA_\theta)$ for every $N \geq 1$, and each partial derivative $\partial_{x}^\beta[u(x)^{-1}]$ is a linear combination of terms of the form, 
\begin{equation*}
 u(x)^{-1}\left(  \partial_{x}^{\beta^{(1)}}u(x)\right) u(x)^{-1} \cdots u(x)^{-1}\left(  \partial_{x}^{\beta^{(l)}}u(x)\right) u(x)^{-1}, 
\end{equation*}
where $\beta^{(1)}$, ..., $\beta^{(l)}$ are multi-orders such that $ \beta^{(1)} + \cdots + \beta^{(l)} =\beta$. 
It then follows that $x\rightarrow u(x)^{-1}$ is a smooth map from  $V$ to $\cA_\theta$. The proof is complete. 
\end{proof}

\begin{proposition}[\cite{Ba:CRAS88}] \label{prop:Ellipticity.smoothness-inverse-symbol}
Let $\rho(\xi)\in S_q(\R^n; \cA_\theta)$  be such that $\rho(\xi)\in \cA_\theta^{-1}$ for all $\xi \in \R^n\setminus 0$. Then 
$\rho(\xi)^{-1} \in S_{-q}(\R^n; \cA_\theta)$.
\end{proposition}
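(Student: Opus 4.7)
The plan is to reduce this to two essentially independent claims: smoothness of $\xi \mapsto \rho(\xi)^{-1}$ on $\R^n\setminus 0$, and homogeneity of this map of degree $-q$. Once both are established, membership in $S_{-q}(\R^n;\cA_\theta)$ follows directly from the definition of homogeneous symbols.

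For smoothness, I would simply apply the preceding lemma (Lemma~\ref{lem:Elliptic.smoothness-inverse}) with $V=\R^n\setminus 0$ and $d=n$. By hypothesis $\rho(\xi)\in C^\infty(\R^n\setminus 0;\cA_\theta)$ and $\rho(\xi)\in\cA_\theta^{-1}$ for every $\xi\in V$, so the lemma yields $\rho(\xi)^{-1}\in C^\infty(\R^n\setminus 0;\cA_\theta)$. This is the only non-trivial input; fortunately, all the work has already been done in Lemma~\ref{lem:Elliptic.smoothness-inverse} (via the spectral-invariance Proposition~\ref{prop:NCtori.invertibility-cAtheta}), so no further effort is required.

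For homogeneity, fix $\lambda>0$ and $\xi\in\R^n\setminus 0$. From $\rho(\lambda\xi)=\lambda^q\rho(\xi)$ we get
\begin{equation*}
\rho(\lambda\xi)\bigl(\lambda^{-q}\rho(\xi)^{-1}\bigr)=\lambda^q\rho(\xi)\lambda^{-q}\rho(\xi)^{-1}=1,
\end{equation*}
and similarly on the other side, so $\rho(\lambda\xi)^{-1}=\lambda^{-q}\rho(\xi)^{-1}$. Hence $\rho(\xi)^{-1}$ is homogeneous of degree $-q$ in the sense of Section~\ref{section:Symbols}, and thus lies in $S_{-q}(\R^n;\cA_\theta)$.

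There is no real obstacle: the only substantive point is the smoothness of inversion in $\cA_\theta$, which is already packaged in Lemma~\ref{lem:Elliptic.smoothness-inverse}. The homogeneity step is a one-line algebraic manipulation using the hypothesis $\rho(\lambda\xi)=\lambda^q\rho(\xi)$ and the fact that $\lambda^q$ is a nonzero scalar for $\lambda>0$.
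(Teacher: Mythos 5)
Your proposal is correct and follows essentially the same route as the paper: Lemma~\ref{lem:Elliptic.smoothness-inverse} applied on $V=\R^n\setminus 0$ gives smoothness of the inverse, and the identity $\rho(\lambda\xi)^{-1}=(\lambda^q\rho(\xi))^{-1}=\lambda^{-q}\rho(\xi)^{-1}$ gives homogeneity of degree $-q$. Nothing is missing.
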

\begin{proof}
 It follows from Lemma~\ref{lem:Elliptic.smoothness-inverse} that $\rho(\xi)^{-1}\in C^\infty( \R^n\setminus 0; \cA_\theta)$. Moreover, for all $\lambda>0$, we have $\rho(\lambda \xi)^{-1}=(\lambda^q \rho(\xi))^{-1}=\lambda^{-q} \rho(\xi)^{-1}$. This shows that $\rho(\xi)^{-1}\in S_{-q}(\R^n; \cA_\theta)$. The proof is complete. 
\end{proof}

This leads us to the following ellipticity criterion.

\begin{corollary}\label{cor:ellipticity.positivity}
 Let $P\in \Psi^q(\cA_\theta)$, $q\in \R$, have principal symbol $\rho_q(\xi)$. Assume there is $c>0$ such that
 \begin{equation}
 \acoup{\rho_q(\xi)\eta}{\eta}\geq c|\xi|^q \|\eta\|_0^2 \qquad \text{for all $\eta \in \cH_\theta$ and $\xi \in \R^n\setminus \{0\}$}.
 \label{eq:ellipticity.positivity-symbol}
\end{equation}
 Then $P$ is elliptic. 
\end{corollary}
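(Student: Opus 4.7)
The plan is to reduce the ellipticity condition—pointwise invertibility of $\rho_q(\xi)$ in $\cA_\theta$—to invertibility as a bounded operator on $\cH_\theta$, using the spectral invariance of $\cA_\theta$ already recorded in Corollary~\ref{cor:NCtori.spectrum-cHtheta}. Fix $\xi \in \R^n\setminus 0$. Since $\rho_q(\xi) \in \cA_\theta \subset A_\theta$, Proposition~\ref{prop:NCTori.GNS-representation} shows that $\rho_q(\xi)$ acts as a bounded operator on $\cH_\theta$ via left multiplication, and the hypothesis~(\ref{eq:ellipticity.positivity-symbol}) can be read as a lower bound for this bounded operator.

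First I would observe that the lower bound~(\ref{eq:ellipticity.positivity-symbol}) forces $\acoup{\rho_q(\xi)\eta}{\eta}$ to be real for every $\eta \in \cH_\theta$. By the standard polarization identity on the complex Hilbert space $\cH_\theta$, this real-valuedness of the quadratic form is equivalent to selfadjointness of $\rho_q(\xi)$ as a bounded operator on $\cH_\theta$. Combined with~(\ref{eq:ellipticity.positivity-symbol}), this means $\rho_q(\xi)$ is a bounded selfadjoint operator on $\cH_\theta$ whose spectrum (in $\cL(\cH_\theta)$) is contained in $[c|\xi|^q, \infty)$; in particular $0$ is not in this spectrum, so $\rho_q(\xi)$ is invertible as a bounded operator on $\cH_\theta$.

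Next, I would invoke Corollary~\ref{cor:NCtori.spectrum-cHtheta}, which identifies $\op{Sp}(\rho_q(\xi))$ (in the sense of $\cA_\theta$) with the set of $\lambda \in \C$ for which $\rho_q(\xi)-\lambda:\cH_\theta \rightarrow \cH_\theta$ fails to be a bijection. By the previous step, $0$ is not such a value, hence $0\notin \op{Sp}(\rho_q(\xi))$, which by definition means $\rho_q(\xi) \in \cA_\theta^{-1}$. Since this holds for every $\xi \in \R^n\setminus 0$, the principal symbol is pointwise invertible in $\cA_\theta$, and so $P$ is elliptic.

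There is no real obstacle here: the argument is a direct chain through selfadjointness, spectral positivity in $\cL(\cH_\theta)$, and the spectral invariance of $\cA_\theta$ inside $A_\theta$ (and of $A_\theta$ inside $\cL(\cH_\theta)$). The only point worth double-checking is the passage from real-valuedness of the quadratic form to selfadjointness, which is a standard consequence of polarization and relies on $\cH_\theta$ being a complex Hilbert space; alternatively, one can argue directly that a bounded-below symmetric element of a $C^*$-algebra has invertible $C^*$-closure, and then use spectral invariance of $\cA_\theta$ in $A_\theta$ from Proposition~\ref{prop:NCtori.invertibility-cAtheta}.
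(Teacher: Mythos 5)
Your proof is correct and follows essentially the same route as the paper: the lower bound~(\ref{eq:ellipticity.positivity-symbol}) makes $\rho_q(\xi)$ an invertible positive operator on $\cH_\theta$ (the paper cites Conway for this, whereas you spell out the polarization/selfadjointness step), and Corollary~\ref{cor:NCtori.spectrum-cHtheta} then transfers invertibility back to $\cA_\theta^{-1}$.
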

\begin{proof}
 Let $\xi \in \R^n\setminus 0$. It follows from~(\ref{eq:ellipticity.positivity-symbol}) there is $c(\xi)>0$ such that $\acoup{\rho_q(\xi)\eta}{\eta}\geq c(\xi) \|\eta\|_0^2$ for all $\eta \in \cH_\theta$. This implies that $\rho_q(\xi)$ is an invertible positive operator of $\cH_\theta$ (see, e.g., \cite[VII.6.4, VIII.3.8]{Co:Springer90}). Combining this with Corollary~\ref{cor:NCtori.spectrum-cHtheta} we deduce that $\Sp[\rho_q(\xi)]\subset (0,\infty)$. In particular, we see that $\rho_{q}(\xi)\in \cA_\theta^{-1}$ for all $\xi \in \R^n\setminus 0$, i.e., $P$ is elliptic. The proof is complete. 
\end{proof}

\begin{example} 
 The (flat) Laplacian $\Delta=\delta_1^2+\cdots+\delta_n^2$ has principal symbol $|\xi|^2$, and so this is an elliptic operator. 
\end{example}

\begin{example}
 Let $\Delta_g: \cA_\theta\rightarrow \cA_\theta$ be the Laplace-Beltrami operator associated with some Riemannian metric $g=(g_{ij})$ on $\cA_\theta$ (\emph{cf}.\ Example~\ref{ex:NCtori.Laplacian-Riemannian}). Given any $u\in \cA_\theta$, we have 
\begin{align*}
\Delta_g u & =  \nu(g)^{-1} \sum_{1\leq i,j\leq n} \delta_i \left( \sqrt{\nu(g)} g^{ij} \sqrt{\nu(g)}\delta_j(u)\right)\\
& = \sum_{1\leq i,j\leq n} \nu(g)^{-\frac12}g^{ij}\nu(g)^{\frac12} \delta_i \delta_j (u)+  \nu(g)^{-1} \sum_{1\leq i,j\leq n} \delta_i \left( \sqrt{\nu(g)} g^{ij} \sqrt{\nu(g)}\right)\delta_j(u). 
\end{align*}
 In particular, we see that $\Delta_g$ is a 2nd order differential operator with principal symbol, 
\begin{equation*}
 \rho_2(\xi) =  \sum_{1\leq i,j\leq n} \nu(g)^{-\frac12}g^{ij}\nu(g)^{\frac12} \xi_i \xi_j = \nu(g)^{-\frac12}\biggl(\sum_{1\leq i,j\leq n} g^{ij} \xi_i \xi_j\biggr) \nu(g)^{\frac12}. 
\end{equation*}
As $g^{-1}=(g^{ij})$ is a positive invertible element of $M_n(\cA_\theta)$, it is shown in~\cite{HP:Laplacian} that $\sum_{ij} g^{ij} \xi_i \xi_j$ is a positive invertible element of $\cA_\theta$ when $\xi\neq 0$, and so in this case $\rho_2(\xi)$ is invertible in $\cA_\theta$. It then follows that $\Delta_g$ is elliptic. 
\end{example}

In what follows, given a continuous linear operator $P:\cA_\theta\rightarrow \cA_\theta$,  we shall call \emph{parametrix} of $P$ any continuous linear operator $Q:\cA_\theta\rightarrow \cA_\theta$ that inverts $P$ modulo smoothing operators, i.e., there are operators $R_1,R_2\in \Psi^{-\infty}(\cA_\theta)$ such that
\begin{equation*}
 PQ=1-R_1 \qquad \text{and} \qquad QP=1-R_2. 
\end{equation*}

\begin{proposition}[\cite{Ba:CRAS88, Co:CRAS80}] \label{prop:Elliptic.existence-parametrice}
Let $P\in\Psi^q(\cA_\theta)$, $q\in\C$, have symbol  $\rho(\xi)\sim\sum \rho_{q-j}(\xi)$. Then $P$ is elliptic if and only if it has a parametrix $Q\in\Psi^{-q}(\cA_\theta)$. Moreover, in this case $Q$ has symbol $\sigma(\xi)\sim\sum \sigma_{-q-j}(\xi)$, where the symbols $\sigma_{-q-j}(\xi)$ are given by the recursive relations, 
\begin{gather} 
 \sigma_{-q}(\xi)=\rho_q(\xi)^{-1},
 \label{eq:Elliptic.para-homo-principal}\\
\sigma_{-q-j}(\xi)=-\!\!\!\sum_{\substack{k+l+|\alpha|=j \\ l<j}}\!\! \frac{1}{\alpha !}\rho_q(\xi)^{-1}\partial_\xi^\alpha\rho_{q-k}(\xi)\delta^\alpha\sigma_{-q-l}(\xi), \qquad j\geq 1 .
\label{eq:Elliptic.para-homo-asymp} 
\end{gather}
\end{proposition}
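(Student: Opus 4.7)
The plan is to treat the two implications separately, with the composition formula of Proposition~\ref{prop:Composition-Sym.composition-PsiDOs} as the main tool. For the implication ``parametrix exists $\Rightarrow$ $P$ is elliptic,'' I would let $Q\in\Psi^{-q}(\cA_\theta)$ have symbol $\sigma(\xi)\sim\sum\sigma_{-q-j}(\xi)$ and satisfy $PQ-I,\, QP-I\in\Psi^{-\infty}(\cA_\theta)$. Proposition~\ref{prop:Composition-Sym.composition-PsiDOs} gives $PQ=P_{\rho\sharp\sigma}$ with $(\rho\sharp\sigma)_{-j}$ expressed by~(\ref{eq:Composition-Sym.sharp-homo-asymptotics}). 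By Proposition~\ref{prop:PsiDos.smoothing-condition} and the uniqueness in Corollary~\ref{cor:PsiDOs.uniqueness-symbol}, the homogeneous components of $\rho\sharp\sigma$ must match those of the symbol $1$ of $I$: $(\rho\sharp\sigma)_0=1$ and $(\rho\sharp\sigma)_{-j}=0$ for $j\geq 1$. The zeroth equation reads $\rho_q\sigma_{-q}=1$; the corresponding equation arising from $QP\equiv I$ gives $\sigma_{-q}\rho_q=1$. Hence $\rho_q(\xi)$ is invertible on $\R^n\setminus 0$ with inverse $\sigma_{-q}(\xi)$, yielding ellipticity and~(\ref{eq:Elliptic.para-homo-principal}). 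Isolating the $(k,l,\alpha)=(0,j,0)$ summand in the $j$-th vanishing equation and left-multiplying by $\rho_q^{-1}$ then produces~(\ref{eq:Elliptic.para-homo-asymp}).

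For the converse, assuming $P$ is elliptic, Proposition~\ref{prop:Ellipticity.smoothness-inverse-symbol} first supplies $\sigma_{-q}:=\rho_q^{-1}\in S_{-q}(\R^n;\cA_\theta)$. A straightforward induction using Remark~\ref{rem:Symbols.homogeneous-differentiation} and the homogeneity of products shows that each $\sigma_{-q-j}(\xi)$ defined by~(\ref{eq:Elliptic.para-homo-asymp}) belongs to $S_{-q-j}(\R^n;\cA_\theta)$. Proposition~\ref{prop:Symbols.classical-construction} then produces a classical symbol $\sigma(\xi)\in S^{-q}(\R^n;\cA_\theta)$ with $\sigma\sim\sum\sigma_{-q-j}$, and I set $Q:=P_\sigma\in\Psi^{-q}(\cA_\theta)$. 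By construction, the recursion forces every homogeneous component of $\rho\sharp\sigma$ to vanish except the principal one, which equals $1$. Therefore $\rho\sharp\sigma-1\in\cS(\R^n;\cA_\theta)$, and Proposition~\ref{prop:PsiDos.smoothing-condition} yields $PQ-I\in\Psi^{-\infty}(\cA_\theta)$.

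The main obstacle is that $\sharp$ is noncommutative, so $PQ\equiv I$ modulo $\Psi^{-\infty}(\cA_\theta)$ does not directly give $QP\equiv I$; I need to upgrade $Q$ to a two-sided parametrix. The plan is to run the mirror-image recursion $\sigma'_{-q}=\rho_q^{-1}$,
\begin{equation*}
\sigma'_{-q-j}=-\biggl(\sum_{\substack{k+l+|\alpha|=j\\ k<j}}\frac{1}{\alpha!}\partial_\xi^\alpha\sigma'_{-q-k}\,\delta^\alpha\rho_{q-l}\biggr)\rho_q^{-1},\qquad j\geq 1,
\end{equation*}
to obtain a classical symbol $\sigma'\in S^{-q}(\R^n;\cA_\theta)$ for which the composition formula, now applied to $\sigma'\sharp\rho$, makes $Q':=P_{\sigma'}$ a left parametrix, i.e.\ $Q'P-I\in\Psi^{-\infty}(\cA_\theta)$. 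Writing $Q'=Q'PQ+Q'(I-PQ)=(Q'P)Q+Q'R_1=Q-R_2'Q+Q'R_1$ with $R_1,R_2'\in\Psi^{-\infty}(\cA_\theta)$, and noting that $R_2'Q$ and $Q'R_1$ are smoothing (by Proposition~\ref{Adjoints.PsiDOs-extension} every \psido\ acts continuously on $\cA_\theta'$, so composing a smoothing operator with a \psido\ stays smoothing), I conclude $Q-Q'\in\Psi^{-\infty}(\cA_\theta)$. Then $QP-I=(Q-Q')P+(Q'P-I)$ is smoothing as well, so $Q$ is indeed a two-sided parametrix with the prescribed symbol.
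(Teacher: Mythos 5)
Your proposal is correct and follows essentially the same route as the paper's proof: characterize right (resp.\ left) parametrices via the vanishing of the homogeneous components of $\rho\sharp\sigma-1$ (resp.\ $\sigma'\sharp\rho-1$) using Proposition~\ref{prop:Composition-Sym.composition-PsiDOs} together with Propositions~\ref{prop:PsiDOs.vanishing-Prho} and~\ref{prop:PsiDos.smoothing-condition}, solve the resulting recursions, realize them by Proposition~\ref{prop:Symbols.classical-construction}, and patch the one-sided parametrices into a two-sided one by the standard $Q\equiv Q'\bmod\Psi^{-\infty}(\cA_\theta)$ argument. The only (harmless) refinement is that you invoke both the $PQ$ and $QP$ equations to get two-sided invertibility of $\rho_q(\xi)$, where the paper reads it off the right-parametrix equation alone.
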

\begin{proof} 
Let $Q\in \Psi^{-q}(\cA_\theta)$ have symbol $\sigma(\xi)\sim\sum \sigma_{-q-j}(\xi)$. We shall say that $Q$ is a left (resp., right) parametrix of $P$ when $Q$ is a left (resp., right) inverse of $P$ modulo smoothing operators, i.e., $QP-1$ (resp., $PQ-1$) is in $\Psi^{-\infty}(\cA_\theta)$. By Proposition~\ref{prop:Composition-Sym.composition-PsiDOs} we have $PQ-1=P_\rho P_\sigma-1=P_{\rho\sharp \sigma -1}$. Combining this with Proposition~\ref{prop:PsiDOs.vanishing-Prho} and Proposition~\ref{prop:PsiDos.smoothing-condition} we then deduce that $Q$ is a right parametrix if and only if  $\rho\sharp \sigma -1\in \cS(\R^n;\cA_\theta)$, i.e., $\rho\sharp \sigma(\xi)\sim 1$. We also know by Proposition~\ref{prop:Composition-Sym.composition-PsiDOs} that $\rho\sharp \sigma(\xi)\sim \sum 
(\rho\sharp \sigma)_{-j}(\xi)$, where $(\rho\sharp \sigma)_{-j}(\xi)$ is given by~(\ref{eq:Composition-Sym.sharp-homo-asymptotics}). Thus, $Q$ is a right parametrix if and only if we have
\begin{gather}
 \rho_q(\xi)\sigma_{-q}(\xi)=1,  
 \label{eq:Elliptic.rhosigma-principal}\\
 \sum_{k+l+|\alpha|=j}\frac{1}{\alpha !}\partial_\xi^\alpha\rho_{q-k}(\xi)\delta^\alpha\sigma_{-q-l}(\xi)=0 \qquad \text{for $j\geq 1$}.
 \label{eq:Elliptic-rho-sharp-sigma-asymptotics}
\end{gather}
In particular, we see that if $Q$ is a right parametrix, then $\rho_q(\xi)$ must be invertible for all $\xi \in \R^n\setminus 0$, i.e., $P$ is elliptic. 

Conversely, suppose that $P$ is elliptic, so that $\rho_q(\xi)$ is invertible for all $\xi \in \R^n\setminus 0$. In this case~(\ref{eq:Elliptic.rhosigma-principal}) is equivalent to $\sigma_{-q}(\xi)=\rho_{q}(\xi)^{-1}$. Moreover, upon rewriting~(\ref{eq:Elliptic-rho-sharp-sigma-asymptotics}) as 
\begin{equation*}
\rho_{q}(\xi) \sigma_{-q-j}(\xi)= -\!\!\!\sum_{\substack{k+l+|\alpha|=j \\ l<j}}\!\! \frac{1}{\alpha !}\partial_\xi^\alpha\rho_{q-k}(\xi)\delta^\alpha\sigma_{-q-l}(\xi), \qquad j\geq 1, 
\end{equation*}
we see that it is equivalent to~(\ref{eq:Elliptic.para-homo-asymp}). We know by Proposition~\ref{prop:Ellipticity.smoothness-inverse-symbol} that $\rho_{-q}(\xi)^{-1} \in S_{-q}(\R^n;\cA_\theta)$. An induction on $j$ then shows that~(\ref{eq:Elliptic.rhosigma-principal})--(\ref{eq:Elliptic-rho-sharp-sigma-asymptotics}) uniquely define symbols 
$\sigma_{-q-j}(\xi)\in S_{-q-j}(\R^n; \cA_\theta)$, $j\geq 0$. By Proposition~\ref{prop:Symbols.classical-construction} there is a symbol $\sigma(\xi)\in S^{-q}(\R^n; \cA_\theta)$ such that $\sigma(\xi)\sim\sum \sigma_{-q-j}(\xi)$. Then $Q:=P_{\sigma}$ is a right parametrix of $P$, since the symbols $\sigma_{-q-j}(\xi)$, $j\geq 0$, satisfy~(\ref{eq:Elliptic.rhosigma-principal})--(\ref{eq:Elliptic-rho-sharp-sigma-asymptotics}).

Similarly, we see that if $\sigma'(\xi)\in S^{-q}(\R^n; \cA_\theta)$, then $P_{\sigma'}$ is a left parametrix of $P$ if and only if we have 
\begin{equation*}
 \sigma_{-q}'(\xi)\rho_q(\xi)=1,\quad  \sum_{k+l+|\alpha|=j}\frac{1}{\alpha !}\partial_\xi^\alpha\sigma_{-q-k}'(\xi)\delta^\alpha\rho_{q-l}(\xi)=0, \quad j\geq 1.
\end{equation*}
In the same way as above, this uniquely defines symbols $\sigma'_{-q-j}(\xi)\in S_{-q-j}(\R^n; \cA_\theta)$. Thus, if we let $\sigma'(\xi)\in S^{-q}(\R^n; \cA_\theta)$ be such that $\sigma'(\xi)\sim \sum \sigma_{-q-j}'(\xi)$, then $Q':=P_{\sigma'}$ is a left parametrix of $P$. 

Set $R=1-PQ$ and $R'=1-Q'P$. The operators $R$ and $R'$ are both smoothing. We have  $Q'(PQ)=Q'(1-R)=Q' \bmod \Psi^{-\infty}(\cA_\theta)$. Likewise,  $(Q'P)Q=(1-R')Q=Q\bmod \Psi^{-\infty}(\cA_\theta)$, and so we see that $Q=Q'\bmod \Psi^{-\infty}(\cA_\theta)$. Thus, $QP=Q'P=1 \bmod \Psi^{-\infty}(\cA_\theta)$, and so $Q$ is a two-sided parametrix. This shows that if $P$ is elliptic, then it has a (two-sided) parametrix in $\Psi^{-q}(\cA_\theta)$ whose homogeneous symbols are given by~(\ref{eq:Elliptic.para-homo-principal})--(\ref{eq:Elliptic.para-homo-asymp}). The proof is complete. 
\end{proof}

\begin{corollary}\label{cor:Elliptic.inverse-PsiDO}
Let $P\in\Psi^q(\cA_\theta)$, $q\in\C$, be elliptic. Suppose there is a continuous linear operator $Q_0:\cA_\theta \rightarrow \cA_\theta'$ such that
\begin{equation} \label{eq:Elliptic.Q0}
 PQ_0u =Q_0Pu=u \qquad \forall u \in \cA_\theta.
\end{equation}
Then $Q_0\in \Psi^{-q}(\cA_\theta)$, and so $Q_0$ is a continuous inverse of $P$ on $\cA_\theta$. 
\end{corollary}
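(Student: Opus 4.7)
The plan is to show that $Q_0$ differs from a \psido\ parametrix of $P$ by a smoothing operator, which immediately forces $Q_0\in \Psi^{-q}(\cA_\theta)$. First, I would invoke Proposition~\ref{prop:Elliptic.existence-parametrice} to produce a parametrix $Q\in \Psi^{-q}(\cA_\theta)$ with $PQ = 1-R_1$ and $QP = 1-R_2$ on $\cA_\theta$, where $R_1, R_2\in \Psi^{-\infty}(\cA_\theta)$. By Proposition~\ref{Adjoints.PsiDOs-extension} all of $P$, $Q$, $R_1$, $R_2$ extend uniquely to continuous operators on $\cA_\theta'$.

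Next, I would upgrade $Q_0$ to a continuous map $\cA_\theta\rightarrow \cA_\theta$. Given $u\in \cA_\theta$, set $v=Q_0u\in \cA_\theta'$; then $Pv=u\in \cA_\theta$, and the elliptic regularity statement of Proposition~\ref{prop:Elliptic.regularity} (in the form ``$Pv\in \cA_\theta\Leftrightarrow v\in \cA_\theta$'') forces $Q_0u = v\in \cA_\theta$. Hence $Q_0$ restricts to a linear map $\cA_\theta\rightarrow \cA_\theta$. Since the inclusion $\cA_\theta\hookrightarrow \cA_\theta'$ is continuous and $Q_0:\cA_\theta\rightarrow \cA_\theta'$ is continuous, the graph of $Q_0:\cA_\theta\rightarrow \cA_\theta$ is closed; the closed graph theorem for Fr\'echet spaces then yields the continuity of $Q_0:\cA_\theta\rightarrow \cA_\theta$.

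With this upgrade in hand, the identity $1=PQ+R_1$ together with $Q_0P=1$ on $\cA_\theta$ gives, for every $u\in \cA_\theta$,
\begin{equation*}
Q_0u = Q_0(PQ)u + Q_0R_1u = (Q_0P)Qu + Q_0R_1u = Qu + Q_0R_1u,
\end{equation*}
so that $Q_0 - Q = Q_0R_1$ on $\cA_\theta$. Because $R_1$ extends to a continuous map $\cA_\theta'\rightarrow \cA_\theta$ and $Q_0:\cA_\theta\rightarrow \cA_\theta$ is continuous, the composition $Q_0R_1$ is a continuous linear map $\cA_\theta'\rightarrow \cA_\theta$, i.e.\ a smoothing operator. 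By Proposition~\ref{prop:PsiDos.smoothing-condition} a smoothing operator has a Schwartz symbol, which is a classical symbol of every order, so $Q_0R_1\in \Psi^{-q}(\cA_\theta)$. Therefore $Q_0 = Q + Q_0R_1\in \Psi^{-q}(\cA_\theta)$, and since $PQ_0 = Q_0P = 1$ on $\cA_\theta$ and $Q_0$ is now a \psido\ acting continuously on $\cA_\theta$, the last assertion of the corollary follows.

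The main obstacle is the second step, where $Q_0$ must be promoted from a map into $\cA_\theta'$ to a map into $\cA_\theta$: once this promotion is in place, the computation $Q_0 = Q + Q_0R_1$ and the smoothing property of $R_1$ deliver the conclusion with essentially no further work. Fortunately this promotion is exactly what the elliptic regularity theorem supplies.
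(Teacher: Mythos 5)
Your proof is correct, and the endgame is the same as the paper's: both arguments conclude by writing $Q_0=Q+Q_0R$ with $R$ smoothing and $Q_0$ continuous on $\cA_\theta$, so that $Q_0R$ is smoothing and $Q_0\in\Psi^{-q}(\cA_\theta)$. Where you diverge is the intermediate step of promoting $Q_0$ to a continuous endomorphism of $\cA_\theta$. You do this with elliptic regularity (Proposition~\ref{prop:Elliptic.regularity}) to get $Q_0(\cA_\theta)\subset\cA_\theta$, followed by the closed graph theorem for the continuity. The paper instead exploits the \emph{other} parametrix identity: writing $1=QP+R$ and using $PQ_0=1$, one gets $Q_0=Q+RQ_0$ on $\cA_\theta$, and here the smoothing factor sits on the \emph{left}, so $RQ_0$ is the composition $\cA_\theta\xrightarrow{Q_0}\cA_\theta'\xrightarrow{R}\cA_\theta$, which is automatically a continuous map into $\cA_\theta$ with no appeal to regularity theory or to the closed graph theorem. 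The paper's route is thus more elementary and purely algebraic (it only uses the two-sidedness of the parametrix and the mapping property $R:\cA_\theta'\to\cA_\theta$ of smoothing operators); yours trades that for heavier but standard machinery, and since Proposition~\ref{prop:Elliptic.regularity} is itself a consequence of the parametrix construction there is no circularity. Both are valid; your closed-graph argument is sound because $\cA_\theta$ is Fr\'echet, the inclusion $\cA_\theta\hookrightarrow\cA_\theta'$ is continuous and injective, and $\cA_\theta'$ is Hausdorff.
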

\begin{proof}
As $P$ is elliptic, we know by Proposition~\ref{prop:Elliptic.existence-parametrice} that $P$ admits a parametrix $Q\in \Psi^{-q}(\cA_\theta)$, and so there are $R_1,R_2\in \Psi^{-\infty}(\cA_\theta)$ such that $QP=1-R_1$ and $PQ=1-R_2$. For all $u\in \cA_\theta$, we have 
\begin{equation*}
 Q_0u = (QP+R_1)Q_0u=Q(PQ_0u)+R_1Q_0u=Qu+R_1Q_0u. 
\end{equation*}
This means that $Q_0=Q+R_1Q_0$. Obviously $Q$ maps continuously $\cA_\theta$ to itself. As $R_1$ is a smoothing operator, it maps continuously $\cA_\theta'$ to $\cA_\theta$. As $Q_0$ maps continuously $\cA_\theta$ to $\cA_\theta'$ we then see that the composition $R_1Q_0$ maps continuously $\cA_\theta$ to itself. Thus, the operator $Q_0=Q+R_1Q_0$ maps  continuously $\cA_\theta$ to itself. Combining this with~(\ref{eq:Elliptic.Q0}) then shows that $Q_0$ is a continuous inverse of $P$ on $\cA_\theta$. 

For all $u\in  \cA_\theta$, we also have 
\begin{equation*}
 Q_0u= Q_0(PQ+R_2)u=(Q_0P)Qu+Q_0R_2u= Qu+Q_0R_2u. 
\end{equation*}
That is, $Q_0=Q+Q_0R_2$. We observe that as $R_2$ is a smoothing operator and $Q_0$ maps continuously $\cA_\theta$ to $\cA_\theta$, the composition $Q_0R_2$ is a smoothing operator. Thus, $Q_0$ and $Q$ agree modulo a smoothing operator. As $Q\in \Psi^{-q}(\cA_\theta)$ and $\Psi^{-\infty}(\cA_\theta)\subset  \Psi^{-q}(\cA_\theta)$, it then follows that $Q_0\in \Psi^{-q}(\cA_\theta)$. The result is proved. 
\end{proof}

\begin{definition}
 An operator $P:\cA_\theta'\rightarrow \cA_\theta'$ is \emph{hypoelliptic} when, for any $u\in \cA_\theta'$, we have
  \begin{equation*}
 Pu \in \cA_\theta \Longleftrightarrow u \in \cA_\theta. 
\end{equation*}
\end{definition}

As a consequence of Proposition~\ref{prop:Elliptic.existence-parametrice} we obtain the following version of the elliptic regularity theorem for \psidos\ on noncommutative tori. 

\begin{proposition} \label{prop:Elliptic.regularity}
Let $P\in\Psi^q(\cA_\theta)$, $q\in\C$, be elliptic, and set $m=\Re q$. 
\begin{enumerate}
\item The operator $P$ is hypoelliptic. In particular, we have 
   \begin{equation*}
 \ker P :=\left\{u\in \cA_\theta'; \ Pu=0\right\}\subset \cA_\theta. 
\end{equation*}

 \item Let $s\in\R$. Then, for any $u\in{\cA_\theta}'$, we have 
\begin{equation*}
Pu\in\cH^{(s)}_{\theta}\Longleftrightarrow u\in\cH^{(s+m)}_{\theta} .
\end{equation*}

\item For all $s,t\in \R$ with $t<s+m$, there is a constant $C_{st}>0$ such that 
\begin{equation} \label{eq:Elliptic.u-estimates}
 \| u\|_{s+m} \leq C_{st} \left( \|Pu \|_{s} + \| u\|_{t}\right) \qquad \forall u \in \cH_\theta^{(s+m)}. 
\end{equation}
\end{enumerate}
\end{proposition}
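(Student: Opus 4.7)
\medskip
\noindent\textbf{Proof plan.} The whole argument is driven by a single identity: by Proposition~\ref{prop:Elliptic.existence-parametrice}, since $P$ is elliptic there exists a parametrix $Q\in\Psi^{-q}(\cA_\theta)$, i.e.\ an operator such that
\begin{equation*}
QP=1-R_1, \qquad PQ=1-R_2, \qquad R_1,R_2\in\Psi^{-\infty}(\cA_\theta).
\end{equation*}
By Proposition~\ref{Adjoints.PsiDOs-extension} all four operators $P,Q,R_1,R_2$ extend uniquely to continuous endomorphisms of $\cA_\theta'$; by density of $\cA_\theta$ in $\cA_\theta'$ (Proposition~\ref{prop:NCtori.distributions-Fourier-series}) together with the continuity of these extensions, the identity $QP=1-R_1$ persists on all of $\cA_\theta'$. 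Thus for every $u\in\cA_\theta'$ one has the key decomposition
\begin{equation} \label{eq:plan.key-identity}
u=QPu+R_1u .
\end{equation}

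\noindent\textbf{Proof of (2).} The implication $u\in\cH^{(s+m)}_\theta\Rightarrow Pu\in\cH^{(s)}_\theta$ is immediate from Proposition~\ref{prop:Sob-Mapping.rho-on-Hs} applied to the standard symbol of $P$ (which lies in $\stS^m(\R^n;\cA_\theta)$ by Remark~\ref{rmk:Symbols.classical-inclusion}, since $m=\Re q$). Conversely, assume $Pu\in\cH^{(s)}_\theta$ for some $u\in\cA_\theta'$. By~\eqref{eq:plan.key-identity} it suffices to check that both $QPu$ and $R_1u$ lie in $\cH^{(s+m)}_\theta$. Applying Proposition~\ref{prop:Sob-Mapping.rho-on-Hs} to $Q$, whose symbol lies in $\stS^{-m}(\R^n;\cA_\theta)$, gives $QPu\in\cH^{(s+m)}_\theta$. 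For the smoothing remainder, Proposition~\ref{prop:Sobolev.Hs-inclusion-cAtheta} shows that $u\in\cH^{(t_0)}_\theta$ for some $t_0\in\R$, and then Corollary~\ref{cor:Soboloev.smoothing-condition} yields $R_1u\in\cH^{(s+m)}_\theta$ (in fact in every Sobolev space). Thus $u\in\cH^{(s+m)}_\theta$.

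\noindent\textbf{Proof of (1).} Suppose $Pu\in\cA_\theta$ for some $u\in\cA_\theta'$. Then $Pu\in\cH^{(s)}_\theta$ for every $s\in\R$ by Proposition~\ref{prop:Sobolev.Hs-inclusion-cAtheta}, so part~(2) gives $u\in\cH^{(s+m)}_\theta$ for every $s$, i.e.\ $u\in\bigcap_{\sigma\in\R}\cH^{(\sigma)}_\theta=\cA_\theta$, again by Proposition~\ref{prop:Sobolev.Hs-inclusion-cAtheta}. The converse is obvious since $P$ maps $\cA_\theta$ to $\cA_\theta$. In particular every $u\in\ker P\subset\cA_\theta'$ satisfies $Pu=0\in\cA_\theta$ and so $u\in\cA_\theta$.

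\noindent\textbf{Proof of (3).} Fix $s,t\in\R$ with $t<s+m$ and take $u\in\cH^{(s+m)}_\theta$. Using~\eqref{eq:plan.key-identity} together with the triangle inequality we write
\begin{equation*}
\|u\|_{s+m}\leq \|QPu\|_{s+m}+\|R_1u\|_{s+m}.
\end{equation*}
The boundedness of $Q:\cH^{(s)}_\theta\to\cH^{(s+m)}_\theta$ (Proposition~\ref{prop:Sob-Mapping.rho-on-Hs}) gives $\|QPu\|_{s+m}\leq C_1\|Pu\|_s$, while Corollary~\ref{cor:Soboloev.smoothing-condition} provides the continuous extension $R_1:\cH^{(t)}_\theta\to\cH^{(s+m)}_\theta$, so $\|R_1u\|_{s+m}\leq C_2\|u\|_t$. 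Combining these two estimates yields~\eqref{eq:Elliptic.u-estimates} with $C_{st}=\max(C_1,C_2)$.

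\noindent\textbf{Main obstacle.} There is no genuinely hard analytic step left at this stage: the ellipticity has already been fully digested in the parametrix construction of Proposition~\ref{prop:Elliptic.existence-parametrice}, and the Sobolev calculus of Section~\ref{section:Sobolev} together with Proposition~\ref{prop:Sob-Mapping.rho-on-Hs} and Corollary~\ref{cor:Soboloev.smoothing-condition} reduces everything to the algebraic identity~\eqref{eq:plan.key-identity}. The only subtlety worth being careful about is the extension of $QP=1-R_1$ from $\cA_\theta$ to the full distribution space $\cA_\theta'$, which is needed to make sense of $QPu$ and $R_1u$ when $u$ is merely assumed to live in $\cA_\theta'$; this is handled cleanly by Proposition~\ref{Adjoints.PsiDOs-extension} and the density of $\cA_\theta$ in $\cA_\theta'$.
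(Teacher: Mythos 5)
Your proof is correct and follows essentially the same route as the paper: both arguments rest entirely on the parametrix identity $u=QPu+R_1u$ combined with the Sobolev mapping properties of $Q$ (Proposition~\ref{prop:Sob-Mapping.rho-on-Hs}) and of the smoothing remainder (Corollary~\ref{cor:Soboloev.smoothing-condition}). The only cosmetic difference is that you deduce (1) from (2), whereas the paper proves (1) directly from the identity $QPu=u\bmod\cA_\theta$; both are valid and equally short.
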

\begin{proof}
As $P$ is elliptic, by Proposition~\ref{prop:Elliptic.existence-parametrice} it admits a parametrix $Q\in \Psi^{-q}(\cA_\theta)$, so that the operator $R:=QP-1$ is smoothing. This implies that, for all $u \in \cA_\theta'$, we have 
\begin{equation} \label{eq:Elliptic.PQu-u-equiv}
 QPu=u+Ru=u \qquad \bmod \cA_\theta. 
\end{equation}
Obviously, if $u\in \cA_\theta$, then $Pu\in \cA_\theta$. Conversely, if $Pu\in \cA_\theta$, then $QPu\in \cA_\theta$, and so by using~(\ref{eq:Elliptic.PQu-u-equiv}) we see that $u\in \cA_\theta$. This proves the first part. 

Let $u\in \cH_\theta^{(s+m)}$. We know by Proposition~\ref{prop:Sob-Mapping.rho-on-Hs} that $P$ gives rise to a continuous linear operator from $\cH_\theta^{(s+m)}$ to $\cH_\theta^{(s)}$. Thus, if $u\in \cH^{(s+m)}_{\theta}$, then $Pu \in \cH^{(s)}_\theta$. Conversely, let $u \in \cA_\theta'$ be such that $Pu \in \cH^{(s)}_\theta$. 
As $Q\in \Psi^{-q}(\cA_\theta)$, it also follows from Proposition~\ref{prop:Sob-Mapping.rho-on-Hs} that $Q$ maps continuously $\cH_\theta^{(s)}$ to $\cH^{(s+m)}_\theta$. As $Pu \in  \cH^{(s)}_\theta$, we see that $QPu\in\cH^{(s+m)}_\theta$, and so by using~(\ref{eq:Elliptic.PQu-u-equiv}) we deduce that $u\in\cH^{(s+m)}_\theta$. 

Finally, let $t<s+m$. As $R$ is a smoothing operator, we know by Corollary~\ref{cor:Soboloev.smoothing-condition} that $R$ gives rise to a continuous linear operator  $R:\cH_\theta^{(t)}\rightarrow \cH^{(s+m)}_{\theta}$.  As mentioned above, $Q$ maps continuously $\cH_\theta^{(s)}$ to $\cH^{(s+m)}_\theta$. Therefore,   for all $u\in \cH^{(s+m)}_\theta$, we have 
\begin{equation*}
 \| u\|_{s+m}= \|QPu-Ru\|_{s+m}\leq  \|QPu\|_{s+m} +  \|Ru\|_{s+m} \leq C\left( \|Pu\|_s+\| u\|_t\right),
\end{equation*}
where we have set $C=\max\{ \|Q\|_{\cL(\cH^{(s)}_\theta, \cH^{(s+m)}_\theta)}, \|R\|_{\cL(\cH^{(t)}_\theta, \cH^{(s+m)}_\theta)}\}$. This proves the estimate~(\ref{eq:Elliptic.u-estimates}). The proof is complete. 
\end{proof}

\begin{corollary} \label{cor:Elliptic.P-lambda-hypoell}
 Let $P\in\Psi^q(\cA_\theta)$ be elliptic with $m:=\Re q>0$. Then, for every $\lambda\in \C$, the operator $P-\lambda$ is hypoelliptic. In particular, we have
 \begin{equation*}
 \ker (P-\lambda) :=\left\{u\in \cA_\theta'; \ (P-\lambda )u=0\right\}\subset \cA_\theta. 
\end{equation*}
\end{corollary}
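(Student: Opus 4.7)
The plan is to exploit the parametrix of $P$ provided by Proposition~\ref{prop:Elliptic.existence-parametrice}, combined with a bootstrap along the Sobolev scale of Section~\ref{section:Sobolev}. Note that, since $\lambda\cdot\op{id}$ is of order zero and $P$ is of order $q$ with $\Re q>0$, the perturbation by $\lambda$ is of strictly lower order than $P$ in the Sobolev scale; this is the conceptual reason why ``elliptic regularity'' ought to survive the perturbation. One subtlety is that the operator $P-\lambda$ need not belong to $\Psi^q(\cA_\theta)$ as a classical \psido, because a constant cannot in general be fitted into an asymptotic expansion by symbols homogeneous of degrees $q,q-1,\ldots$ (unless $q\in\N$). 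So I will \emph{not} apply Proposition~\ref{prop:Elliptic.regularity} to $P-\lambda$; instead, I will work with a parametrix of $P$ itself and absorb $\lambda$ through iteration.

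Concretely, let $Q\in\Psi^{-q}(\cA_\theta)$ be a parametrix of $P$ with $QP=1-R$ and $R\in\Psi^{-\infty}(\cA_\theta)$. Since every \psido\ extends continuously to $\cA_\theta'$ by Proposition~\ref{Adjoints.PsiDOs-extension}, the relation $QPu-\lambda Qu=Q(P-\lambda)u$, obvious on $\cA_\theta$, persists on $\cA_\theta'$, giving the decomposition
\begin{equation*}
 u = Q(P-\lambda)u + Ru + \lambda Qu, \qquad u\in\cA_\theta'.
\end{equation*}
Now suppose $u\in\cA_\theta'$ satisfies $(P-\lambda)u\in\cA_\theta$. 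Then $Q(P-\lambda)u\in\cA_\theta$, since the \psido\ $Q$ maps $\cA_\theta$ continuously into itself; and $Ru\in\cA_\theta$, since $R$ is smoothing. As for the last term, Proposition~\ref{prop:Sob-Mapping.rho-on-Hs} tells us that $Q$ maps $\cH_\theta^{(s)}$ continuously into $\cH_\theta^{(s+m)}$ for every $s\in\R$.

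Since $\cA_\theta'=\bigcup_{s\in\R}\cH_\theta^{(s)}$ by Proposition~\ref{prop:Sobolev.Hs-inclusion-cAtheta}, we pick $s_0\in\R$ with $u\in\cH_\theta^{(s_0)}$. Combining the three facts above with the continuous inclusion $\cA_\theta\subset\cH_\theta^{(t)}$ valid for every $t\in\R$, the displayed identity yields $u\in\cH_\theta^{(s_0+m)}$. Iterating this bootstrap $k$ times gives $u\in\cH_\theta^{(s_0+km)}$ for every $k\in\N$; as $m>0$, we conclude that $u\in\bigcap_{s\in\R}\cH_\theta^{(s)}=\cA_\theta$, again by Proposition~\ref{prop:Sobolev.Hs-inclusion-cAtheta}. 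The reverse implication of the hypoellipticity statement is immediate, since $P$ and $\lambda\cdot\op{id}$ both preserve $\cA_\theta$. Finally, the inclusion $\ker(P-\lambda)\subset\cA_\theta$ follows at once by applying hypoellipticity to $(P-\lambda)u=0\in\cA_\theta$.

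I do not anticipate a serious technical difficulty beyond recognising that $P-\lambda$ is not in general a classical \psido, which is precisely the reason we cannot simply quote Proposition~\ref{prop:Elliptic.regularity}. The only point that requires any care is controlling the ``bad'' term $\lambda Qu$ in the identity: its $m$-order gain in the Sobolev scale, provided by Proposition~\ref{prop:Sob-Mapping.rho-on-Hs}, is what makes the bootstrap succeed in exhausting every Sobolev space in finitely many iterations.
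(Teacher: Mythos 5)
Your proof is correct and is essentially the paper's argument: both are Sobolev bootstraps that exploit the order-$m$ gain coming from the ellipticity of $P$, absorbing the $\lambda$-term at each step, and both conclude via $\cA_\theta=\bigcap_s\cH_\theta^{(s)}$. The only cosmetic difference is that the paper writes $Pu=(P-\lambda)u+\lambda u$ and quotes the elliptic regularity theorem (Proposition~\ref{prop:Elliptic.regularity}) together with a supremum-and-contradiction argument on $\overline{s}=\sup\{s;\,u\in\cH_\theta^{(s)}\}$, whereas you unroll that theorem's proof through the parametrix identity $u=Q(P-\lambda)u+Ru+\lambda Qu$ and iterate directly; your observation that $P-\lambda$ need not itself lie in $\Psi^q(\cA_\theta)$ when $q\notin\N_0$ is precisely the reason the paper takes this indirect route rather than invoking Proposition~\ref{prop:Elliptic.regularity} for $P-\lambda$.
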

\begin{proof}
Let $\lambda \in \C$. We know by Proposition~\ref{prop:Elliptic.regularity} that $P$ is hypoelliptic. 
Therefore, we may assume that $\lambda\neq 0$. Let $u \in \cA_\theta'$ be such that $(P-\lambda)u\in \cA_\theta$. Proposition~\ref{prop:Sobolev.Hs-inclusion-cAtheta} ensures us that $u \in \cH^{(s)}_\theta$ for some $s\in \R$. Set $ \overline{s}=\sup \{s\in \R; u \in \cH^{(s)}_\theta\}$. Suppose that $\overline{s}<\infty$. 
Then $u\in \cH_\theta^{(s)}$ for all $s<\overline{s}$. Let $s\in (\overline{s}-m,\overline{s})$. Then $u\in \cH_\theta^{(s)}$, and so $Pu=(P-\lambda)u+\lambda u$ is contained in $\cH_\theta^{(s)}$. 
Therefore, by using Proposition~\ref{prop:Elliptic.regularity} we see that $u\in \cH_\theta^{(s+m)}$, and hence $s+m\leq \overline{s}$. This is in contradiction with the choice of $s\in  (\overline{s}-m,\overline{s})$. Therefore, we see that $\overline{s}=\infty$. 
This means that $u$ is contained in every Sobolev space $\cH_\theta^{(s)}$, $s\in \R$. 
As by Proposition~\ref{prop:Sobolev.Hs-inclusion-cAtheta} the intersection of all these Sobolev spaces is precisely $\cA_\theta$, we deduce that $u\in \cA_\theta$. This proves the result. 
\end{proof}

\section{Spectral Theory of Elliptic Operators}\label{sec:Spectrum} 
In this section, we look at the spectral properties of elliptic \psidos\ on noncommutative tori. 

\subsection{Fredholm properties}
Recall that given Banach spaces $E_1$ and $E_2$, a continuous operator $T:E_1\rightarrow E_2$ is \emph{Fredholm} when $\ran T$ is closed and the spaces $\ker T$ and $\coker T:=E_2\slash \! \ran T$ both have finite dimension. We denote by $\cF(E_1,E_2)$ the set of Fredholm operators $T:E_1\rightarrow E_2$. This is an open subset of $\cL(E_1,E_2)$. In addition, it can be shown that $T$ is Fredholm whenever it is invertible modulo compact operators. 

If $T\in \cF(E_1,E_2)$, then we define its \emph{Fredholm index}  by 
\begin{equation*}
 \ind T = \dim \ker T - \dim \coker T. 
\end{equation*}
The Fredholm index $\ind T$ is invariant under compact perturbations and continuous-path deformations within $\cF(E_1,E_2)$. If in addition $E_1$ and $E_2$ are Hilbert spaces, then $\coker T \simeq \ker T^*$, and so we have 
\begin{equation*}
 \ind T = \dim \ker T -\dim \ker T^*. 
\end{equation*}

\begin{proposition}[see also~\cite{Ba:CRAS88, Co:CRAS80}]\label{prop:Spectrum.Fredholm}
 Let $P\in \Psi^q(\cA_\theta)$, $q\in \C$, be elliptic, and set $m=\Re q$. 
 \begin{enumerate}
 \item The nullspaces $\ker P$ and $\ker P^*$ are finite dimensional subspaces of $\cA_\theta$.
 
 \item For every $s\in \R$, the operator $P:\cH^{(s+m)}_\theta\rightarrow \cH^{(s)}_\theta$ is Fredholm, and we have 
 \begin{equation*}
 \ind P= \dim \ker P -\dim \ker P^*,
\end{equation*}
where $P^*$ is the formal adjoint of $P$. In particular, the Fredholm index is independent of the value of $s$. 

\item The index of $P$ is a homotopy invariant of its principal symbol. 
\end{enumerate}
\end{proposition}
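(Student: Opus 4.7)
The plan is to leverage the parametrix $Q\in\Psi^{-q}(\cA_\theta)$ of Proposition~\ref{prop:Elliptic.existence-parametrice}, which satisfies $QP=1-R_1$ and $PQ=1-R_2$ with smoothing remainders $R_1,R_2\in\Psi^{-\infty}(\cA_\theta)$. For part~(1) the hypoellipticity in Proposition~\ref{prop:Elliptic.regularity} immediately gives $\ker P\subset\cA_\theta$; by Proposition~\ref{prop:Adjoints.adjoint-classical-pdos}, the formal adjoint $P^*\in\Psi^{\overline q}(\cA_\theta)$ has principal symbol $\rho_q(\xi)^*$, which is invertible exactly when $\rho_q(\xi)$ is, so $P^*$ is elliptic too and $\ker P^*\subset\cA_\theta$ by the same token. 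To bound the dimensions, view $\ker P$ as a closed subspace of $\cH_\theta^{(s+m)}$ for a fixed $s$; the identity $u=QPu-R_1u$ reduces to $u=-R_1u$ on $\ker P$, so the identity operator of $\ker P$ agrees with the restriction of $-R_1$, which is compact on $\cH_\theta^{(s+m)}$ by Corollary~\ref{prop:Sob-Mapping.Prho-compactness-smoothing}. Hence $\dim\ker P<\infty$, and likewise $\dim\ker P^*<\infty$.

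For part~(2), fix $s\in\R$. By Proposition~\ref{prop:Sob-Mapping.rho-on-Hs}, both $P:\cH^{(s+m)}_\theta\to\cH^{(s)}_\theta$ and $Q$ in the reverse direction are bounded, while Corollary~\ref{prop:Sob-Mapping.Prho-compactness-smoothing} makes $R_1$ and $R_2$ compact on the respective Sobolev spaces. Thus $P$ is invertible modulo compact operators, hence Fredholm. Its kernel on $\cH^{(s+m)}_\theta$ equals $\ker P\subset\cA_\theta$ by hypoellipticity. To identify the cokernel, let $P^\dagger:\cH^{(s)}_\theta\to\cH^{(s+m)}_\theta$ denote the Hilbert-space adjoint; self-adjointness of $\Lambda^s$ for real $s$ yields $\acoups{u}{v}=\acoup{u}{\Lambda^{2s}v}$, and from this a short calculation gives the identity $\Lambda^{2(s+m)}P^\dagger=P^*\Lambda^{2s}$. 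Hence $\ker P^\dagger=\Lambda^{-2s}(\ker P^*)$, and since $\Lambda^{-2s}$ is a bijection on $\cA_\theta$ we conclude $\dim\coker P=\dim\ker P^\dagger=\dim\ker P^*$. The resulting index formula is manifestly independent of $s$.

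Part~(3) proceeds in two steps. First, if two operators in $\Psi^q(\cA_\theta)$ share the same principal symbol, their difference lies in $\Psi^{q-1}(\cA_\theta)$ and is therefore compact from $\cH^{(s+m)}_\theta$ to $\cH^{(s)}_\theta$ by Corollary~\ref{cor:Sob-Mapping.Prho-compactness}; compact perturbations preserve the Fredholm index, so $\ind P$ depends only on the principal symbol. Second, given a continuous family $(\rho_t)_{t\in[0,1]}$ of invertible homogeneous symbols in $S_q(\R^n;\cA_\theta)$ (continuity taken in the $C^\infty$-topology on the unit sphere), fix a cutoff $\chi\in C^\infty(\R^n)$ vanishing near $0$ and equal to $1$ outside a compact set, and set $\tilde\rho_t(\xi):=\chi(\xi)\rho_t(\xi)\in S^q(\R^n;\cA_\theta)$. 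Homogeneity of the $\rho_t$ together with continuity on the sphere imply that $(\tilde\rho_t)$ is continuous in $\stS^{\Re q}(\R^n;\cA_\theta)$, so by Proposition~\ref{prop:Sob-Mapping.rho-on-Hs} the family $t\mapsto P_{\tilde\rho_t}\in\cL(\cH^{(s+m)}_\theta,\cH^{(s)}_\theta)$ is norm-continuous. Each $P_{\tilde\rho_t}$ is elliptic and hence Fredholm by part~(2); local constancy of the Fredholm index gives $\ind P_{\tilde\rho_0}=\ind P_{\tilde\rho_1}$, and combined with the first step this yields homotopy invariance.

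The main technical hurdle lies in the norm-continuity of $t\mapsto P_{\tilde\rho_t}$ in part~(3): one must verify that a continuous deformation of homogeneous principal symbols, after cutoff near the origin, promotes to a continuous path in $\stS^{\Re q}(\R^n;\cA_\theta)$ strong enough to invoke Proposition~\ref{prop:Sob-Mapping.rho-on-Hs}. Thanks to homogeneity, the requisite symbol estimates on $|\xi|\geq 1$ reduce to the $C^\infty$-topology on the unit sphere, which is the natural topology to place on invertible principal symbols. Once this is in place, the local constancy of the Fredholm index closes the argument.
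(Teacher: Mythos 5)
Your proof is correct and follows essentially the same route as the paper: the parametrix inverts $P$ modulo operators that are compact on the Sobolev scale, the Hilbert-space adjoint of $P:\cH^{(s+m)}_\theta\rightarrow\cH^{(s)}_\theta$ is identified as $\Lambda^{-2(s+m)}P^*\Lambda^{2s}$ to compute the cokernel, and homotopy invariance is obtained by combining compactness of lower-order perturbations with a cut-off of the homogeneous symbol path and local constancy of the index. The only blemish is a sign: with $QP=1-R_1$ one gets $u=R_1u$ (not $u=-R_1u$) on $\ker P$, which of course does not affect the compactness argument.
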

\begin{proof}
 We know by Proposition~\ref{prop:Elliptic.regularity} that $\ker P$ is a subspace of $\cA_\theta$. By Proposition~\ref{prop:Adjoints.adjoint-classical-pdos} the formal adjoint $P^*$ is a classical \psido\ of order~$\overline{q}$. Moreover, if we denote by $\rho_q(\xi)$ the principal symbol of $P$, then the principal symbol of $P^*$ is $\rho_q(\xi)^*$. In particular, we see that its principal symbol  is invertible, and so this is an elliptic operator. In particular, $\ker P^*\subset \cA_\theta$. 
 
 Let $s\in \R$. We know by Proposition~\ref{prop:Sob-Mapping.rho-on-Hs} that $P$ gives rise to a continuous linear operator $P^{(s)}:\cH_\theta^{(s+m)}\rightarrow \cH_\theta^{(s)}$. Moreover, by Proposition~\ref{prop:Elliptic.existence-parametrice} there are $Q\in \Psi^{-q}(\cA_\theta)$ and $R_1,R_2\in \Psi^{-\infty}(\cA_\theta)$ such that 
 \begin{equation} \label{eq:Spectral.PQ=1-R}
 PQ=1-R_1 \qquad \text{and} \qquad QP=1-R_2. 
\end{equation}
Thanks to Proposition~\ref{prop:Sob-Mapping.rho-on-Hs} we know that $Q$ gives rise to a continuous linear map $Q^{(s)}: \cH^{(s)}_\theta \rightarrow \cH^{(s+m)}_\theta$. Moreover, we know by Corollary~\ref{prop:Sob-Mapping.Prho-compactness-smoothing} that $R_1$ and $R_2$ give rise to compact operators $R_1:\cH^{(s)}_\theta \rightarrow \cH^{(s)}_\theta$ and $R_2:\cH^{(s+m)}_\theta \rightarrow \cH^{(s+m)}_\theta$. Combining this with~(\ref{eq:Spectral.PQ=1-R}) we then deduce that $Q^{(s)}$ inverts $P^{(s)}$ modulo compact operators, and so $P^{(s)}$ is a Fredholm operator. In particular, $\ker P^{(s)}$ has finite dimension. Here $\ker P^{(s)}=\ker P \cap \cH_\theta^{(s+m)}$, but as $\ker P\subset \cA_\theta \subset \cH^{(s+m)}$, we see that $\ker P^{(s)}=\ker P$. Incidentally, $\ker P$ has finite dimension. Likewise, $\ker P^*$ has finite dimension. 

\begin{claim*}
 The adjoint of $P^{(s)}$ is $\Lambda^{-2(s+m)}P^*\Lambda^{2s}:\cH_\theta^{(s)} \rightarrow \cH_\theta^{(s+m)}$. 
\end{claim*}
\begin{proof}[Proof of the Claim] 
Let $u,v\in \cA_\theta$. We have 
\begin{equation*}
 \acoups{P^{(s)}u}{v}= \acoup{\Lambda^s Pu}{\Lambda^s v}= \acoup{( \Lambda^s P \Lambda^{-(s+m)})(\Lambda^{s+m})u}{\Lambda^s v}. 
\end{equation*}
As $\Lambda^s$ and $\Lambda^{-(s+m)}$ are formally selfadjoint, we get 
\begin{equation*}
  \acoups{P^{(s)}u}{v}= \acoup{\Lambda^{s+m}u}{ \Lambda^{-(s+m)} P^* \Lambda^{2s}v} =  \acoup{u}{ \Lambda^{-2(s+m)} P^* \Lambda^{2s}v}_{s+m}.  
\end{equation*}
 Combining this with the density of $\cA_\theta$ in $\cH_\theta^{(s)}$ and $\cH_\theta^{(s+m)}$ gives the claim. 
 \end{proof}

Combining this claim with the fact that $\Lambda^{2s}:\cH_\theta^{(s)}\rightarrow \cH_\theta^{(-s)}$ and $\Lambda^{-2(s+m)}:\cH_\theta^{(-s-m)}\rightarrow \cH_\theta^{(s+m)}$ are unitary operators, we see that $\ker (P^{(s)})^*$ is isomorphic to $\ker P^*\cap \cH_\theta^{(-s)}=\ker P^*$. Thus, 
\begin{equation*}
 \ind P^{(s)}= \dim \ker P^{(s)}- \dim \ker (P^{(s)})^* = \dim \ker P - \dim \ker P^*. 
\end{equation*}
In particular, we see that $\ind P^{(s)}$ is independent of $s$. We shall simply denote it by $\ind P$. 

If $P'\in \Psi^{q-1}(\cA_\theta)$, then we know by Corollary~\ref{cor:Sob-Mapping.Prho-compactness} that $P'$ gives rise to a compact operator $ P':\cH^{(s+m)}_{\theta} \rightarrow \cH^{(s)}_{\theta}$, and so $\ind (P+P')=\ind P$. Thus, the index of $P$ depends only on the class of $P$ in $\Psi^{q}(\cA_\theta)\slash \Psi^{q-1}(\cA_\theta)$. Since this class is uniquely determined by the principal symbol $\rho_q(\xi)$, we deduce that the index of $P$ only depends on its principal symbol. 

Let $(\rho^t(\xi))_{0\leq t \leq 1}$ be a continuous path in $S_q(\R^n; \cA_\theta)$ such that $\rho^t(\xi)|_{t=0}=\rho_q(\xi)$ and $\rho^t(\xi)$ is invertible for all $t\in [0,1]$. 
Here continuity is meant in the sense that the map $t\rightarrow  \rho^t(\xi)$ is continuous from $[0,1]$ to $C^\infty(\R^n\setminus 0; \cA_\theta)$. 
Let $\chi(\xi)\in C^\infty_c(\R^n)$ be such that $\chi(\xi)=1$ near $\xi=0$. 
In addition, for $t\in [0,1]$ set $\tilde{\rho}^t(\xi)=(1-\chi(\xi))\rho^t(\xi)$, $\xi \in \R^n$. 
Then $(\tilde{\rho}^t(\xi))_{0\leq t \leq 1} $ is a continuous family with values in $\stS^m(\R^n; \cA_\theta)$ such that $\tilde{\rho}^t(\xi)\sim  \rho^t(\xi)$ for all $t\in [0,1]$. 
Combining this with Proposition~\ref{prop:Sob-Mapping.rho-on-Hs} we see that $P_{\tilde{\rho}^t}:\cH^{(m)}_\theta \rightarrow \cH_\theta$, $t\in [0,1]$, is a continuous path in $\cL(\cH^{(m)}_\theta, \cH_\theta)$. Moreover, as $\rho^t(\xi)$ is invertible, the operator $P_{\tilde{\rho}^t}$ is elliptic, and so the operator $P_{\tilde{\rho}^t}:\cH^{(m)}_\theta \rightarrow \cH_\theta$ is Fredholm. Therefore, we see that we have a continuous path of Fredholm operators. Thus, 
\begin{equation*}
 \ind P_{\tilde{\rho}_t}= \ind P_{\tilde{\rho}_t}|_{t=0} = \ind P_{(1-\chi)\rho_q}= \ind P. 
\end{equation*}
This shows that $ \ind P$ is a homotopy invariant of $\rho_q(\xi)$. The proof is complete. 
\end{proof}

\begin{remark}
 We refer to~\cite{Ba:CRAS88, Co:CRAS80} for formulas computing the indices of elliptic \psidos\ on $\cA_\theta$ in terms of the $K$-theory classes defined by their principal symbols. 
\end{remark}

\begin{corollary} \label{cor:Spectral.P-lambda-Fred}
 Let $P\in\Psi^q(\cA_\theta)$ be elliptic with $m:=\Re q>0$. In addition, let $\lambda \in \C$. 
 \begin{enumerate}
 \item For every $s\in \R$,  the operator $P-\lambda: \cH_\theta^{(s+m)}\rightarrow \cH^{(s)}_{\theta}$ is Fredholm and has same index as $P$. 
 
 \item $\ker (P-\lambda)$ is a finite dimensional subspace of $\cA_\theta$. 
\end{enumerate}
\end{corollary}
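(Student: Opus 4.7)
The key observation is that since $m = \Re q > 0$, the constant symbol $\lambda$ has order $0 < m$, so multiplication by $\lambda$ should behave like a lower-order (and in fact compact) perturbation between the relevant Sobolev spaces. This means $P - \lambda$ is elliptic with the same principal symbol as $P$, and the Fredholm theory applies uniformly in $\lambda$.

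First, since $P - \lambda \in \Psi^q(\cA_\theta)$ has the same principal symbol $\rho_q(\xi)$ as $P$, the operator $P - \lambda$ is itself elliptic of order $q$. Therefore, by Proposition~\ref{prop:Spectrum.Fredholm}, for every $s \in \R$ the induced map $P - \lambda : \cH_\theta^{(s+m)} \to \cH_\theta^{(s)}$ is Fredholm, and by Proposition~\ref{prop:Elliptic.regularity} (applied to the elliptic operator $P - \lambda$) we have $\ker(P - \lambda) \subset \cA_\theta$. Combined with the finite-dimensionality of $\ker(P-\lambda)$ on any Sobolev space guaranteed by Fredholmness, this establishes part~(2) of the corollary.

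For the index assertion in part~(1), the plan is to exhibit $\lambda \cdot \op{id} : \cH_\theta^{(s+m)} \to \cH_\theta^{(s)}$ as a compact operator. Factor it as the composition
\begin{equation*}
\cH_\theta^{(s+m)} \xhookrightarrow{\ \iota\ } \cH_\theta^{(s)} \xrightarrow{\ \lambda \cdot \op{id}\ } \cH_\theta^{(s)},
\end{equation*}
where $\iota$ is the natural inclusion. Since $s+m > s$ (this is exactly where the hypothesis $m > 0$ enters), the inclusion $\iota$ is compact by part~(3) of Proposition~\ref{prop:Sobolev.Sobolev-embedding}. Thus multiplication by $\lambda$ is compact as a map $\cH_\theta^{(s+m)} \to \cH_\theta^{(s)}$, so $P - \lambda = P - \lambda \cdot \op{id}$ is a compact perturbation of the Fredholm operator $P : \cH_\theta^{(s+m)} \to \cH_\theta^{(s)}$ provided by Proposition~\ref{prop:Spectrum.Fredholm}. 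By the homotopy (compact perturbation) invariance of the Fredholm index, we conclude $\ind(P - \lambda) = \ind P$.

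There is no real obstacle here: the whole argument is a two-line consequence of the already-established Fredholm property of elliptic \psidos\ (Proposition~\ref{prop:Spectrum.Fredholm}), the compactness of Sobolev embeddings (Proposition~\ref{prop:Sobolev.Sobolev-embedding}), and the elliptic regularity result (Proposition~\ref{prop:Elliptic.regularity} or equivalently Corollary~\ref{cor:Elliptic.P-lambda-hypoell}). The crucial hypothesis used is $m > 0$, which simultaneously ensures that $\lambda$ is a lower-order (hence compact) perturbation and that $P - \lambda$ remains elliptic of order $q$.
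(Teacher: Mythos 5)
Your compact-perturbation argument for part (1) is exactly the paper's: since $m>0$ the inclusion $\cH_\theta^{(s+m)}\hookrightarrow\cH_\theta^{(s)}$ is compact by Proposition~\ref{prop:Sobolev.Sobolev-embedding}, so $P-\lambda$ is a compact perturbation of the Fredholm operator $P:\cH_\theta^{(s+m)}\rightarrow\cH_\theta^{(s)}$ and has the same index. That part is correct and complete.

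The genuine problem is your opening claim that ``$P-\lambda\in\Psi^q(\cA_\theta)$ has the same principal symbol as $P$, hence is itself elliptic of order $q$,'' and the ensuing direct applications of Proposition~\ref{prop:Spectrum.Fredholm} and Proposition~\ref{prop:Elliptic.regularity} to $P-\lambda$. The classes $\Psi^q(\cA_\theta)$ are \emph{classical}: a symbol in $S^q(\R^n;\cA_\theta)$ must expand into components homogeneous of degrees $q, q-1, q-2,\dots$. The constant symbol $\lambda$ is homogeneous of degree $0$, so $\lambda\in S^q(\R^n;\cA_\theta)$ only when $q-j=0$ for some $j\in\N_0$, i.e.\ when $q$ is a non-negative integer (by the uniqueness of homogeneous components, Remark~\ref{rem:Symbols.classical-uniqueness}, a nonzero constant cannot lie in $S^q$ otherwise). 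For general $q$ with $\Re q>0$ (say $q=1/2$ or $q=1+i$), $P-\lambda$ is only a \psido\ with \emph{standard} symbol in $\stS^{\Re q}$, it does not belong to $\Psi^q(\cA_\theta)$, and the paper's notion of ellipticity and its consequences simply do not apply to it. This is precisely why the paper proves Corollary~\ref{cor:Elliptic.P-lambda-hypoell} by a separate bootstrap argument (iterating elliptic regularity of $P$ itself on the equation $Pu=(P-\lambda)u+\lambda u$) rather than by declaring $P-\lambda$ elliptic; the two statements are not ``equivalent'' as you suggest in your closing remark. The fix is to delete the first route entirely: get Fredholmness and the index from your compact-perturbation argument, and get $\ker(P-\lambda)\subset\cA_\theta$ from Corollary~\ref{cor:Elliptic.P-lambda-hypoell}; finite-dimensionality then follows because $\ker(P-\lambda)=\ker(P-\lambda)\cap\cH_\theta^{(s+m)}$ is the nullspace of a Fredholm operator.
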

\begin{proof}
Let $s\in \R$. As $m>0$, the inclusion of $\cH^{(s+m)}_\theta$ into $\cH^{(s)}_\theta$ is compact. This implies that 
$P-\lambda: \cH_\theta^{(s+m)}\rightarrow \cH^{(s)}_{\theta}$ is a compact perturbation of $P: \cH_\theta^{(s+m)}\rightarrow \cH^{(s)}_{\theta}$. Therefore, this is a Fredholm operator with same index as $P$. Incidentally, its nullspace has finite dimension. This nullspace is just $\ker (P-\lambda)\cap \cH_\theta^{(s+m)}$. Corollary~\ref{cor:Elliptic.P-lambda-hypoell} ensures us that $\ker (P-\lambda)$ is contained in $\cA_\theta$. Therefore, this nullspace agrees with $\ker (P-\lambda)$, and so $\ker (P-\lambda)$ is a finite dimensional subspace of $\cA_\theta$. The proof is complete. 
\end{proof}

\subsection{Spectra of positive order elliptic \psidos}
Throughout the remainder of this section we let $P\in \Psi^q(\cA_\theta)$ be an elliptic operator with $m:= \Re q >0$. We shall regard $P$ as an unbounded operator of $\cH_\theta$ with domain $\cH^{(m)}_\theta$. This is the maximal domain of $P$ since it follows from Proposition~\ref{prop:Elliptic.regularity} that $Pu\in \cH_\theta \Leftrightarrow u\in \cH^{(m)}_{\theta}$. In what follows we let $P^*\in \Psi^{\bar{q}}(\cA_\theta)$ be the formal adjoint. As mentioned above this is also an elliptic \psido. 

\begin{proposition}[\cite{Ba:CRAS88}]\label{prop:Spectrum.closedness}
 The operator $P$ with domain $\cH^{(m)}_\theta$ is closed. Its adjoint is $P^*$ with domain $\cH^{(m)}_\theta$.  If $P$ is formally selfadjoint (resp., $P$ commutes with its formal adjoint), then $P$ is selfdjoint (resp., normal). 
 \end{proposition}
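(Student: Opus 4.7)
The plan is to handle the four assertions in sequence, with the computation of the Hilbert space adjoint of $P$ being the crux.

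\smallskip
\textbf{Closedness.} The plan is to apply Proposition~\ref{prop:Elliptic.regularity}(3) with $s=0$ and $t=0$ (admissible since $t=0<m=s+m$) to obtain a constant $C>0$ with $\|u\|_m\leq C(\|Pu\|_0+\|u\|_0)$ for every $u\in\cH^{(m)}_\theta$. Given $(u_k)\subset\cH^{(m)}_\theta$ with $u_k\to u$ and $Pu_k\to v$ in $\cH_\theta$, this estimate converts the Cauchy property in $\cH_\theta$ into a Cauchy property in $\cH^{(m)}_\theta$. Completeness of $\cH^{(m)}_\theta$ together with continuity of the inclusion $\cH^{(m)}_\theta\hookrightarrow\cH_\theta$ identifies the limit with $u$, so $u\in\cH^{(m)}_\theta$, and continuity of $P:\cH^{(m)}_\theta\to\cH_\theta$ (Proposition~\ref{prop:Sob-Mapping.rho-on-Hs}) yields $Pu=v$.

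\smallskip
\textbf{Adjoint.} Write $P^{*}_{\textup{ad}}$ for the Hilbert space adjoint. The identity $\acoup{Pu}{v}=\acoup{u}{P^{*}v}$ holds for $u,v\in\cA_\theta$ by definition of the formal adjoint; since both $P$ and $P^{*}\in\Psi^{\bar q}(\cA_\theta)$ induce continuous maps $\cH^{(m)}_\theta\to\cH_\theta$ and $\cA_\theta$ is dense in $\cH^{(m)}_\theta$ (Proposition~\ref{prop:Sobolev.u-sequence}), the identity extends to all $u,v\in\cH^{(m)}_\theta$, giving $\cH^{(m)}_\theta\subset\dom(P^{*}_{\textup{ad}})$ and $P^{*}_{\textup{ad}}|_{\cH^{(m)}_\theta}=P^{*}$. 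For the reverse inclusion, let $u\in\dom(P^{*}_{\textup{ad}})$ and set $w:=P^{*}_{\textup{ad}}u\in\cH_\theta$. View $u$ as an element of $\cA_\theta'$ via the canonical embedding $\cH_\theta\hookrightarrow\cA_\theta'$, and apply the extension identity~(\ref{eq:Adjoints.Prho-Prhostar-inner}) with $P$ replaced by $P^{*}$ (whose formal adjoint is $P$): for every $v\in\cA_\theta$,
\[
\acou{P^{*}u}{v}=\acou{u}{[P(v^{*})]^{*}}=\acoup{u}{Pv^{*}}=\overline{\acoup{Pv^{*}}{u}}=\overline{\acoup{v^{*}}{w}}=\acoup{w}{v^{*}}=\acou{w}{v},
\]
where the identity $\acou{a}{b^{*}}=\acoup{a}{b}$ from~(\ref{eq:NCtori.distrb-innerproduct-eq}) is used at the second and last steps and the defining property of $w$ (applied with $v^{*}\in\cA_\theta\subset\cH^{(m)}_\theta$) at the fourth. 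Hence $P^{*}u=w$ in $\cA_\theta'$; since $w\in\cH^{(0)}_\theta$, elliptic regularity for the elliptic operator $P^{*}$ of order $\bar q$ (Proposition~\ref{prop:Elliptic.regularity}(2)) forces $u\in\cH^{(m)}_\theta$, and then $P^{*}_{\textup{ad}}u=P^{*}u$.

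\smallskip
\textbf{Selfadjoint and normal cases.} If $P=P^{*}$ on $\cA_\theta$, the previous step immediately yields $P^{*}_{\textup{ad}}=P$, so $P$ is selfadjoint. If instead $PP^{*}=P^{*}P$ on $\cA_\theta$, the identity $\|Pu\|_0^{2}=\acoup{P^{*}Pu}{u}=\acoup{PP^{*}u}{u}=\|P^{*}u\|_0^{2}$ holds on $\cA_\theta$ and extends by density of $\cA_\theta$ in $\cH^{(m)}_\theta$ and continuity of $P,P^{*}:\cH^{(m)}_\theta\to\cH_\theta$ to all $u\in\cH^{(m)}_\theta$; combined with $\dom(P)=\cH^{(m)}_\theta=\dom(P^{*}_{\textup{ad}})$ from the second step, this yields normality. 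The main obstacle in the plan is the reverse inclusion $\dom(P^{*}_{\textup{ad}})\subset\cH^{(m)}_\theta$, because $P^{*}_{\textup{ad}}$ is defined intrinsically on $\cH_\theta$ while $P^{*}$ is naturally a map of symbols; the bridge is identity~(\ref{eq:Adjoints.Prho-Prhostar-inner}), which lets one identify $P^{*}_{\textup{ad}}u$ with $P^{*}u$ inside $\cA_\theta'$, after which elliptic regularity for $P^{*}$ lifts the information back into $\cH^{(m)}_\theta$.
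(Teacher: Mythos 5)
Your proof is correct, and the treatment of the adjoint is essentially the paper's: you establish $G(P^*)\subset G(P^*_{\textup{ad}})$ by density, and for the reverse inclusion you use the action of $P^*$ on $\cA_\theta'$ via identity~(\ref{eq:Adjoints.Prho-Prhostar-inner}) to identify $P^*_{\textup{ad}}u$ with $P^*u$, then elliptic regularity. The two other parts depart from the paper's route.

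For closedness, the paper observes that $P$ maps $\cH_\theta$ continuously into $\cH_\theta^{(-m)}$, so $Pu_\ell\to Pu$ in $\cH_\theta^{(-m)}$, whence $Pu=v\in\cH_\theta$, and then elliptic regularity gives $u\in\cH_\theta^{(m)}$. You instead run the classical ``a priori estimate implies closedness'' argument via Proposition~\ref{prop:Elliptic.regularity}(3). Both arguments are valid and of comparable length; yours has the minor merit of using only the estimate rather than the full hypoellipticity statement.

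For normality, the paper shows explicitly that $\dom(P^\dagger P)=\dom(PP^\dagger)=\cH_\theta^{(2m)}$ (via elliptic regularity applied twice) and then uses $PP^*=P^*P$ to conclude. You instead appeal to the standard operator-theoretic characterization: a closed densely defined operator $T$ is normal if and only if $\dom(T)=\dom(T^*)$ and $\|Tu\|=\|T^*u\|$ for all $u\in\dom(T)$. This is a legitimate shortcut, but as written the step ``this yields normality'' silently invokes that nontrivial characterization; you should cite it (it is in, e.g., Conway's book~\cite{Co:Springer90}, which the paper already references). With that citation added, your argument is complete and somewhat tighter than the paper's direct computation of $\dom(P^*P)$ and $\dom(PP^*)$.
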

\begin{proof}
 Let $(u_\ell)_{\ell\geq 0}\subset \cH_{\theta}^{(m)}$ be such that $u_\ell \rightarrow u$ and $P u_\ell \rightarrow v$ in $\cH_\theta$ as $\ell \rightarrow \infty$. As $P$ maps continuously $\cH_\theta$ to $\cH^{(-m)}_{\theta}$ we see that $Pu_\ell \rightarrow Pu$ in $\cH^{(-m)}_{\theta}$ as $\ell \rightarrow \infty$. 
 Thus, $Pu=v\in \cH_\theta$. 
 Combining this with Proposition~\ref{prop:Elliptic.regularity} shows that $u\in \cH^{(m)}_{\theta}$, and so the pair $(u,v)=(u,Pu)$ is contained in the graph of $P$. This shows that the graph of $P$ is closed in $\cH_\theta\times \cH_\theta$, that is,  $P$ is a closed operator. 
 
Let $P^\dagger$ be the adjoint of $P$. This is the operator given by the graph, 
\begin{equation*}
 \op{G}(P^\dagger)=\left\{(u,v)\in \cH_\theta\times \cH_\theta; \ \acoup{u}{Pw}=\acoup{v}{w} \ \forall w\in \cH^{(m)}_\theta\right\}. 
\end{equation*}
 The graph of $P^*$ with domain $\cH^{(m)}_{\theta}$ is $G(P^*)=\{ (u,P^*u); u \in \cH^{(m)}_\theta\}$. In addition, as $P$ is the formal adjoint of $P^*$, it follows that the action of $P^*$ on $\cA_\theta'$ is given by 
 \begin{equation}
 \acou{P^*u}{v}=\acou{u}{P(v^*)^*} \qquad \text{for all $u\in \cA_\theta'$ and $v\in \cA_\theta$}.
 \label{eq:Spectral.action-P*-cA'} 
\end{equation}
If $u\in \cH^{(m)}_\theta$, then $P^*u\in \cH_\theta$, and so, for all $w\in \cA_\theta$, we have
\begin{equation*}
 \acoup{P^*u}{w}=\acou{P^*u}{w^*}=\acou{u}{(Pw)^*}=\acoup{u}{Pw}.
\end{equation*}
As $\cA_\theta$ is dense in $\cH_\theta^{(m)}$ and $P$ maps continuously $\cH^{(m)}_{\theta}$ to $\cH_\theta$, we deduce that $ \acoup{u}{Pw}=\acoup{P^*u}{w}$ for all $w\in \cH^{(m)}_\theta$. That is, $(u,P^*u)\in \op{G}(P^\dagger)$. Thus, $\op{G}(P^*)\subset \op{G}(P^\dagger)$. 

Conversely, let $(u,v)\in \op{G}(P^\dagger)$. Using~(\ref{eq:Spectral.action-P*-cA'}) we see that, for all $w\in \cA_\theta$, we have 
\begin{equation*}
 \acou{v}{w}=\acoup{v}{w^*}=\acoup{u}{P(w^*)}=\acou{u}{P(w^*)^*}=\acou{P^*u}{w}.  
\end{equation*}
 Thus, $v=P^*u$. As $P^*$ is elliptic and $v\in \cH_\theta$, we deduce from Proposition~\ref{prop:Elliptic.regularity} that $u\in \cH^{(m)}_\theta$, and so $(u,v)=(u,P^*u)\in G(P^*)$. This shows that $\op{G}(P^\dagger)\subset G(P^*)$. It then follows that the graph $P^\dagger$ is $G(P^*)$, and so $P^\dagger$ is $P^*$ with domain $\cH^{(m)}_\theta$. In particular, if $P=P^*$, then $P$ is selfadjoint. 
 
 Finally, suppose that $P$ commutes with its formal adjoint. Since the domain of $P^\dagger$ is $\cH^{(m)}_\theta$, the domain of $P^\dagger P$ consists of all 
 $u\in \cH_\theta$ such that $Pu\in \cH^{(m)}_\theta$. The ellipticity of $P$ and Proposition~\ref{prop:Elliptic.regularity} ensures us this is precisely the Sobolev space $\cH^{(2m)}_\theta$. Likewise, the domain of $PP^\dagger$ is $\cH^{(2m)}_\theta$. Moreover, as $P$ and $P^*$ commute, for all $u$ in $\cH^{(2m)}_\theta$, we have 
 \begin{equation*}
 P^\dagger Pu=P^*Pu=PP^*u=PP^\dagger u. 
\end{equation*}
This shows that $P^\dagger P=PP^\dagger$, i.e., $P$ is a normal operator. The proof is complete. 
\end{proof}

\begin{remark}
 In what follows we shall not distinguish between the adjoint and formal adjoint. We shall denote both of them by $P^*$. 
\end{remark}

\begin{definition}
 The \emph{resolvent set} of $P$ consists of all $\lambda\in \C$ such that $P-\lambda:\cH_\theta^{(m)}\rightarrow \cH_\theta$ is a bijection with bounded inverse. The \emph{spectrum} of $P$, denoted by $\Sp(P)$, is the complement of its resolvent set. 
\end{definition}

\begin{lemma}[see also~\cite{Ba:CRAS88}] \label{lem:Spectral.resolvent}
 Let $\lambda \in \C$. Then the following are equivalent:
 \begin{enumerate}
 \item[(i)] $\lambda$ is in the resolvent set of $P$. 
 
 \item[(ii)] $P-\lambda$ is a bijection from $\cH^{(m)}_\theta$ onto $\cH_\theta$. 
 
 \item[(iii)] $\ker (P-\lambda)=\ker(P^*-\overline{\lambda})=\{0\}$.  
\end{enumerate}
Moreover, if (i)--(iii) hold, then the inverse $(P-\lambda)^{-1}:\cH_\theta\rightarrow \cH_\theta$ is a compact operator. 
\end{lemma}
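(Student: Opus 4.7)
The plan is to establish the chain $(i)\Rightarrow(ii)\Rightarrow(iii)\Rightarrow(ii)\Rightarrow(i)$, with the Fredholm property from Corollary~\ref{cor:Spectral.P-lambda-Fred} doing the heavy lifting for the equivalence of (ii) and (iii), and the closed graph theorem giving the boundedness of the inverse in (ii)$\Rightarrow$(i).

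The implication $(i)\Rightarrow(ii)$ is immediate from the definition of the resolvent set. For $(ii)\Rightarrow(i)$, I would invoke Proposition~\ref{prop:Spectrum.closedness}: since $P$ is a closed operator on $\cH_\theta$ with domain $\cH^{(m)}_\theta$, so is $P-\lambda$. If $P-\lambda$ is a bijection $\cH^{(m)}_\theta\to\cH_\theta$, then its inverse is a closed operator defined on all of $\cH_\theta$, hence bounded by the closed graph theorem. This gives (i).

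For $(ii)\Leftrightarrow(iii)$, Corollary~\ref{cor:Spectral.P-lambda-Fred} tells us that $P-\lambda:\cH^{(m)}_\theta\to\cH_\theta$ is Fredholm, so in particular its range is closed. The orthogonal complement of $\op{ran}(P-\lambda)$ in $\cH_\theta$ consists exactly of those $u\in\cH_\theta$ satisfying $\acoup{u}{(P-\lambda)v}=0$ for every $v\in\cH^{(m)}_\theta$; this is the nullspace of the Hilbert space adjoint of $P-\lambda$. By Proposition~\ref{prop:Spectrum.closedness} this adjoint is $P^*-\overline{\lambda}$ with domain $\cH^{(m)}_\theta$, so
\begin{equation*}
\cH_\theta=\op{ran}(P-\lambda)\oplus \ker(P^*-\overline{\lambda}).
\end{equation*}
Thus $P-\lambda$ is surjective iff $\ker(P^*-\overline{\lambda})=\{0\}$, while injectivity is the condition $\ker(P-\lambda)=\{0\}$. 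Consequently (ii) and (iii) are equivalent.

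Finally, assuming (i)--(iii) hold, the bounded inverse $(P-\lambda)^{-1}:\cH_\theta\to\cH_\theta$ actually takes values in $\cH^{(m)}_\theta$ and the closed graph theorem applied between the Hilbert spaces $\cH_\theta$ and $\cH^{(m)}_\theta$ shows it is bounded as a map $\cH_\theta\to\cH^{(m)}_\theta$. Composing with the compact inclusion $\cH^{(m)}_\theta\hookrightarrow\cH_\theta$ from Proposition~\ref{prop:Sobolev.Sobolev-embedding} (which is compact because $m>0$) yields the compactness of $(P-\lambda)^{-1}$ on $\cH_\theta$. The only mildly delicate point is the identification of the cokernel with $\ker(P^*-\overline{\lambda})$, which requires both the closed range property (from Fredholmness) and the agreement of the formal adjoint with the Hilbert space adjoint provided by Proposition~\ref{prop:Spectrum.closedness}; everything else is routine Banach/Hilbert space functional analysis.
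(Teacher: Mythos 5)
Your proof is correct and follows essentially the same route as the paper's: the equivalence of (ii) and (iii) via the Fredholm property (closed range) of $P-\lambda$ together with the identification of the Hilbert space adjoint with $P^*-\overline{\lambda}$ from Proposition~\ref{prop:Spectrum.closedness}, and compactness of the inverse via the compact inclusion $\cH^{(m)}_\theta\hookrightarrow\cH_\theta$. The only cosmetic difference is that you deduce boundedness of $(P-\lambda)^{-1}$ from the closed graph theorem, whereas the paper applies the open mapping theorem to the continuous bijection $P-\lambda:\cH^{(m)}_\theta\to\cH_\theta$; the two are interchangeable here.
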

\begin{proof}
 As $P-\lambda$ maps continuously $\cH^{(m)}_\theta$ to $\cH_\theta$, it follows from the open mapping theorem that, if $P-\lambda$ is a bijection from $\cH^{(m)}_{\theta}$ onto $\cH_\theta$, then its inverse $(P-\lambda)^{-1}: \cH_\theta\rightarrow \cH^{(m)}_\theta$ is continuous. As $m>0$, we have a compact inclusion of $\cH^{(m)}_\theta$ into $\cH_\theta$, and so we obtain a compact operator $(P-\lambda)^{-1}:\cH_\theta\rightarrow \cH_\theta$. In particular, we see that (i) and (ii) are equivalent. 
 
The operator $P-\lambda: \cH^{(m)}_{\theta}\rightarrow \cH_\theta$ is a bijection if and only if $\ker (P-\lambda)=\{0\}$ and $\ran (P-\lambda)=\cH_\theta$. We know by Corollary~\ref{cor:Spectral.P-lambda-Fred} that the operator $P-\lambda:\cH^{(m)}_\theta \rightarrow \cH_\theta$ is Fredholm. In particular, it has closed range, and so 
 $\ran (P-\lambda)=\cH_\theta$ if and only if $(\ran (P-\lambda))^\perp =\{0\}$. As $(\ran (P-\lambda))^\perp= \ker (P-\lambda)^*=\ker (P^*-\overline{\lambda})$, we deduce that  
 $P-\lambda: \cH^{(m)}_{\theta}\rightarrow \cH_\theta$ is a bijection if and only if $\ker (P-\lambda)=\ker(P^*-\overline{\lambda})=\{0\}$. Thus, the conditions (ii) and (iii) are equivalent. The proof is complete. 
\end{proof}

We observe that in the condition (iii) of Lemma~\ref{lem:Spectral.resolvent} the pairs $(P,\lambda)$ and $(P^*,\lambda^*)$ play a symmetric role. Therefore, the respective conditions (i) for these two pairs are equivalent. We thus arrive at the following result. 

\begin{corollary} \label{cor:Spectral.adjoint-spectrum}
 We have $\Sp (P^*)=\left\{ \overline{\lambda};  \lambda \in \Sp(P)\right\}$. 
\end{corollary}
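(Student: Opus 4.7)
The plan is to derive this corollary directly from the symmetry built into condition (iii) of Lemma~\ref{lem:Spectral.resolvent}. The key observation is that $P^*$ fits the hypotheses under which the spectral analysis of this subsection was carried out: by Proposition~\ref{prop:Adjoints.adjoint-classical-pdos}, $P^* \in \Psi^{\overline{q}}(\cA_\theta)$ is elliptic (its principal symbol is $\rho_q(\xi)^*$, which is pointwise invertible since $\rho_q(\xi)$ is), and $\Re \overline{q} = m > 0$. Consequently, everything said about $P$ in Lemma~\ref{lem:Spectral.resolvent} holds verbatim for $P^*$.

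The second ingredient I will use is that the formal-adjoint operation is involutive on classical \psidos: we have $(P^*)^* = P$. This can be read off from the action on $\cA_\theta$ via the inner product, since $\acoup{P^*u}{v} = \acoup{u}{Pv}$ forces $(P^*)^* = P$ by uniqueness of the formal adjoint.

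With these two points in hand, the proof is a one-line symmetry argument. First, I would invoke the equivalence (i)$\Leftrightarrow$(iii) of Lemma~\ref{lem:Spectral.resolvent} applied to $P$ at $\lambda$: the number $\lambda$ lies in the resolvent set of $P$ if and only if
\begin{equation*}
\ker(P-\lambda) = \ker(P^* - \overline{\lambda}) = \{0\}.
\end{equation*}
Next, I would apply the same equivalence, now to $P^*$ at $\overline{\lambda}$ (which is legitimate by the first paragraph), to obtain that $\overline{\lambda}$ lies in the resolvent set of $P^*$ if and only if
\begin{equation*}
\ker(P^* - \overline{\lambda}) = \ker\bigl((P^*)^* - \lambda\bigr) = \ker(P-\lambda) = \{0\},
\end{equation*}
where the equality $(P^*)^* = P$ is used. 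The two conditions coincide, so $\lambda$ belongs to the resolvent set of $P$ precisely when $\overline{\lambda}$ belongs to the resolvent set of $P^*$. Taking complements yields $\Sp(P^*) = \{\overline{\lambda};\ \lambda \in \Sp(P)\}$.

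There is essentially no obstacle here — the entire substance of the corollary has already been absorbed into Lemma~\ref{lem:Spectral.resolvent}. The only point one must be careful about is making sure the Lemma may legitimately be applied to $P^*$ as well as to $P$, and recording the involutivity $(P^*)^* = P$; both are immediate from material established earlier in the paper.
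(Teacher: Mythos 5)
Your argument is correct and is exactly the paper's: the paper also deduces the corollary from the observation that condition (iii) of Lemma~\ref{lem:Spectral.resolvent} is symmetric in the pairs $(P,\lambda)$ and $(P^*,\overline{\lambda})$, so the respective conditions (i) are equivalent. You merely make explicit the two points the paper leaves tacit — that the lemma applies to $P^*$ (elliptic of order $\overline{q}$ with $\Re\overline{q}=m>0$) and that $(P^*)^*=P$ — which is fine.
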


We also mention the following consequence concerning the Fredholm index of $P$. 

\begin{corollary}
 If $\Sp(P)\neq \C$, then $\ind P =0$. 
\end{corollary}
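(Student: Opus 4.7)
The plan is to reduce this statement immediately to Corollary~\ref{cor:Spectral.P-lambda-Fred}, which already delivers nearly everything needed. The hypothesis $\Sp(P) \neq \C$ furnishes a point $\lambda_0$ in the resolvent set of $P$, and by definition this means that $P - \lambda_0 : \cH^{(m)}_\theta \to \cH_\theta$ is a bijection with bounded inverse. For such a bijection, both $\ker(P-\lambda_0)$ and $\coker(P - \lambda_0)$ are zero, so the Fredholm index of $P - \lambda_0$ is $0$.

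On the other hand, Corollary~\ref{cor:Spectral.P-lambda-Fred} guarantees that for every $\lambda \in \C$ the operator $P - \lambda : \cH^{(m)}_\theta \to \cH_\theta$ is Fredholm and has the same index as $P$. Applying this with $\lambda = \lambda_0$ gives $\ind P = \ind(P - \lambda_0) = 0$, which is precisely the claim.

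There is really no obstacle here, since all the substantive work, namely showing that $P - \lambda$ is Fredholm and that its index is index-invariant in $\lambda$, has already been done in the proof of Corollary~\ref{cor:Spectral.P-lambda-Fred} (via the compactness of the inclusion $\cH^{(m)}_\theta \hookrightarrow \cH_\theta$, which holds because $m > 0$). The only point to emphasize in the write-up is the choice of $\lambda_0$ in the non-empty complement of $\Sp(P)$ and the observation that bijectivity forces the index to vanish at that specific point. The argument is therefore essentially a one-line deduction from the previously established Fredholm theory.
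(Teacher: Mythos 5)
Your proof is correct and follows exactly the paper's own argument: both invoke Corollary~\ref{cor:Spectral.P-lambda-Fred} to get that $P-\lambda$ is Fredholm with the same index as $P$ for every $\lambda$, and then observe that for $\lambda_0$ in the (nonempty) resolvent set the operator is invertible, hence has index zero. Nothing is missing.
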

\begin{proof}
 We know by Corollary~\ref{cor:Spectral.P-lambda-Fred} that, for every $\lambda \in \C$, the operator $P-\lambda:\cH^{(m)}_{\theta}\rightarrow \cH_\theta$ is a  Fredholm operator with same index as $P$. 
 Moreover, if $\lambda$ is in the resolvent set of $P$, then this operator is invertible, and so its index is zero. It then follows that if $\Sp(P)\neq \C$, then the index of $P$ must be zero. The result is proved. 
 \end{proof}

\begin{proposition}\label{prop:Spectral.spectrum-P}
 There are only two possibilities for the spectrum of $P$. Either $\Sp(P)=\C$, or $\Sp(P)$ is a discrete set consisting of isolated eigenvalues with finite multiplicity. 
 \end{proposition}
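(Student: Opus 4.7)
The plan is to reduce to the Riesz--Schauder theory of a single compact operator by inverting a resolvent, and then transfer the conclusions back to $P$ via a spectral mapping.

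First, the dichotomy itself. If the resolvent set of $P$ is empty, then $\Sp(P)=\C$ by definition and there is nothing to prove. So I assume there exists $\lambda_0 \in \C\setminus \Sp(P)$. Then, by the last assertion of Lemma~\ref{lem:Spectral.resolvent}, the inverse $T:=(P-\lambda_0)^{-1}:\cH_\theta\rightarrow \cH_\theta$ is a compact operator on a Hilbert space, so the Riesz--Schauder theorem applies to $T$: the spectrum $\sigma(T)$ is at most countable, its only possible accumulation point is $0$, and every $\mu\in\sigma(T)\setminus\{0\}$ is an isolated eigenvalue with $\dim\ker(T-\mu I)<\infty$.

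Next I would set up the algebraic correspondence between $\Sp(P)$ and $\sigma(T)$. For any $\lambda\in\C$ with $\lambda\neq \lambda_0$, put $\mu=(\lambda-\lambda_0)^{-1}$. A direct check shows that on $\cH_\theta^{(m)}$ one has the factorization
\begin{equation*}
P-\lambda = -(\lambda-\lambda_0)\,(T-\mu I)\,(P-\lambda_0),
\end{equation*}
where $P-\lambda_0$ is a bijection from $\cH_\theta^{(m)}$ onto $\cH_\theta$ whose inverse is precisely $T$. Consequently $P-\lambda:\cH_\theta^{(m)}\rightarrow \cH_\theta$ is bijective if and only if $T-\mu I:\cH_\theta\rightarrow \cH_\theta$ is bijective (and, by the open mapping theorem, bijectivity entails bounded invertibility). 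Combining this with Lemma~\ref{lem:Spectral.resolvent}, we obtain $\lambda\in\Sp(P)\setminus\{\lambda_0\}$ if and only if $\mu=(\lambda-\lambda_0)^{-1}\in\sigma(T)\setminus\{0\}$. Moreover the factorization gives $\ker(P-\lambda)\hookrightarrow\ker(T-\mu I)$ via $u\mapsto (P-\lambda_0)u$, and conversely any $v\in\ker(T-\mu I)$ satisfies $v=\mu^{-1}Tv\in\cH_\theta^{(m)}$ and then $(P-\lambda)v=0$; hence $\ker(P-\lambda)\cong \ker(T-\mu I)$.

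Finally I would transfer the Riesz--Schauder data through the homeomorphism $\Phi:\C\setminus\{0\}\rightarrow \C\setminus\{\lambda_0\}$, $\mu\mapsto\lambda_0+\mu^{-1}$. Since $\Phi$ sends the accumulation point $0$ of $\sigma(T)$ to infinity, it maps the discrete set $\sigma(T)\setminus\{0\}$ onto a subset of $\C$ with no finite accumulation point, which by the above correspondence equals $\Sp(P)\setminus\{\lambda_0\}$. Adjoining the (isolated) point $\lambda_0$, which lies in the resolvent set anyway, we conclude that $\Sp(P)$ is discrete, and that each of its points $\lambda$ is an eigenvalue with $\dim\ker(P-\lambda)=\dim\ker(T-\mu I)<\infty$, as required. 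The only non-routine step is the compatibility of eigenspaces under the factorization, which I expect to be the main place where care is needed because $P$ is unbounded while the Riesz--Schauder framework is phrased for bounded operators; all other steps are standard translations of classical spectral theory.
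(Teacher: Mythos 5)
Your proposal is correct and follows essentially the same route as the paper: pick $\lambda_0$ in the resolvent set, use the compactness of $T_0=(P-\lambda_0)^{-1}$ from Lemma~\ref{lem:Spectral.resolvent}, and transfer the Riesz--Schauder data via the factorization $P-\lambda=-(\lambda-\lambda_0)\bigl[T_0-(\lambda-\lambda_0)^{-1}\bigr](P-\lambda_0)$, including the identification of eigenspaces. The only cosmetic difference is that you realize the eigenspace isomorphism by $u\mapsto(P-\lambda_0)u$ whereas the paper uses its inverse $(P-\lambda_0)^{-1}$; both are valid.
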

\begin{proof}
 Suppose that $\Sp (P)\neq \C$, so that there is $\lambda_0\in \C$ which is contained in the resolvent set of $P$. By Lemma~\ref{lem:Spectral.resolvent} the resolvent $T_0:=(P-\lambda_0)^{-1}$ is a compact operator of $\cH_\theta$ which maps onto $\cH^{(m)}_\theta$.  In particular, the spectrum of $T_0$ has no limit points except may $\lambda=0$. Moreover, each non-zero spectral value of $T_0$ is an eigenvalue with finite multiplicity. 
 
 Let $\lambda \in \C\setminus \{\lambda_0\}$. Then on $\cH^{(m)}_\theta$ we have 
 \begin{equation} \label{eq:Spectral.P-lambda-perturb}
 P-\lambda = P-\lambda_0 -(\lambda-\lambda_0) = -(\lambda-\lambda_0) \left[ T_0 -(\lambda-\lambda_0)^{-1}\right] (P-\lambda_0).  
\end{equation}
 As $P-\lambda_0$ is a bijection of $\cH^{(m)}_\theta$ onto $\cH_\theta$, we deduce that $P-\lambda$ is a bijection of $\cH^{(m)}_\theta$ onto $\cH_\theta$ if and only if $T_0 -(\lambda-\lambda_0)^{-1}$ is a bijection of $\cH_\theta$ onto itself. Combining this with Lemma~\ref{lem:Spectral.resolvent} shows that $\lambda \rightarrow (\lambda-\lambda_0)^{-1}$ induces a bijection from $\Sp (P)$ onto $\Sp(T_0)\setminus 0$. Thus, $\Sp (P)$ is a discrete set with no limit points. Moreover, if $\lambda \in \Sp (P)$, then it follows from~(\ref{eq:Spectral.P-lambda-perturb}) that we have 
 \begin{equation*}
 \ker (P-\lambda)= (P-\lambda_0)^{-1}\left(\ker \left[T_0- (\lambda-\lambda_0)^{-1}\right]\right)\simeq \ker \left[T_0- (\lambda-\lambda_0)^{-1}\right]. 
\end{equation*}
Therefore, we see that $\lambda$ is an eigenvalue with finite multiplicity. The proof is complete. 
 \end{proof}

\begin{proposition}\label{prop:spectrum.normal}
 Suppose that $P$ is normal and $\Sp (P)\neq \C$ (e.g., $P$ is selfadjoint). 
 \begin{enumerate}
 \item We have the following orthogonal decomposition, 
 \begin{equation*}
 \cH_\theta = \bigoplus_{\lambda \in \Sp(P)} \ker (P-\lambda). 
\end{equation*}

\item There is an orthonormal basis $(e_\ell)_{\ell \geq 0}$ of $\cH_\theta$ such that  each $e_\ell\in\cA_\theta $ and 
$P e_\ell =\lambda_\ell e_\ell$ with  $|\lambda_\ell|\rightarrow \infty$ as $\ell \rightarrow \infty$. 
\end{enumerate}
\end{proposition}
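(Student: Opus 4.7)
My plan is as follows. Since $\Sp(P)\neq\C$, fix some $\lambda_0$ in the resolvent set of $P$ and consider the resolvent $T_0:=(P-\lambda_0)^{-1}$. By Lemma~\ref{lem:Spectral.resolvent}, $T_0$ is a compact operator on $\cH_\theta$, and in fact $T_0$ induces a bijection from $\cH_\theta$ onto $\cH^{(m)}_\theta$. I will argue that $T_0$ is also normal. Indeed, by Proposition~\ref{prop:Spectrum.closedness}, $P$ is a closed normal unbounded operator on $\cH_\theta$ whose adjoint is $P^*$ with domain $\cH^{(m)}_\theta$, and the adjoint of $T_0$ is the bounded operator $T_0^* = (P^*-\overline{\lambda_0})^{-1}$. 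The commutation $PP^* = P^*P$ on $\cH^{(2m)}_\theta$, together with the compatibility of domains coming from the elliptic regularity result of Proposition~\ref{prop:Elliptic.regularity}, yields $(P-\lambda_0)(P^*-\overline{\lambda_0}) = (P^*-\overline{\lambda_0})(P-\lambda_0)$ on $\cH^{(2m)}_\theta$; inverting then gives $T_0T_0^* = T_0^*T_0$.

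The spectral theorem for compact normal operators applies to $T_0$, providing an orthonormal basis $(e_\ell)_{\ell\geq 0}$ of $\cH_\theta$ with $T_0 e_\ell = \mu_\ell e_\ell$, together with an orthogonal decomposition $\cH_\theta = \bigoplus_\mu \ker(T_0-\mu)$ in which each eigenspace is finite-dimensional. Since $T_0$ is injective, all $\mu_\ell$ are non-zero, and $|\mu_\ell|\to 0$ as $\ell\to\infty$ (assuming $\dim\cH_\theta=\infty$, which holds here).

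I then transfer this back to $P$. Each $e_\ell$ lies in $\ran T_0 = \cH^{(m)}_\theta$ and satisfies $(P-\lambda_0)e_\ell = \mu_\ell^{-1}e_\ell$, hence $Pe_\ell = \lambda_\ell e_\ell$ with $\lambda_\ell := \lambda_0+\mu_\ell^{-1}$; in particular $|\lambda_\ell|\to\infty$. The identity $(P-\lambda_0-\mu^{-1})u = -\mu^{-1}(T_0 - \mu)(P-\lambda_0)u$ for $u\in\cH^{(m)}_\theta$, together with the argument already used in the proof of Proposition~\ref{prop:Spectral.spectrum-P}, shows that $\mu\mapsto \lambda_0+\mu^{-1}$ sets up a bijection between the non-zero eigenvalues of $T_0$ and the eigenvalues of $P$, identifying the corresponding eigenspaces. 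This converts the $T_0$-decomposition into the desired orthogonal decomposition $\cH_\theta = \bigoplus_{\lambda\in\Sp(P)}\ker(P-\lambda)$. Finally, as $P$ is elliptic, Corollary~\ref{cor:Elliptic.P-lambda-hypoell} guarantees $\ker(P-\lambda_\ell)\subset\cA_\theta$, so each $e_\ell$ lies in $\cA_\theta$.

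The step I expect to require the most care is the verification that $T_0$ is normal as a bounded operator on $\cH_\theta$: one has to handle the two unbounded inverses on the correct common domain, which is precisely where the elliptic regularity identification of $\op{Dom}(PP^*) = \op{Dom}(P^*P) = \cH^{(2m)}_\theta$ intervenes. Once this normality is in hand, the remainder is a routine application of the spectral theorem for compact normal operators combined with the earlier results of the section.
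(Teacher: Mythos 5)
Your proposal is correct and follows essentially the same route as the paper: pass to the compact resolvent $T_0=(P-\lambda_0)^{-1}$, check it is normal, apply the spectral theorem for compact normal operators, and transfer the eigendecomposition back to $P$ via the identity relating $P-\lambda$ and $T_0-(\lambda-\lambda_0)^{-1}$, with hypoellipticity giving $e_\ell\in\cA_\theta$. Your extra care in verifying normality of $T_0$ on the common domain $\cH^{(2m)}_\theta$, and your direct deduction of $|\lambda_\ell|\to\infty$ from $\mu_\ell\to 0$ (the paper instead argues via the absence of accumulation points of $\Sp(P)$), are both sound refinements of the same argument.
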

\begin{proof}
We know by Corollary~\ref{cor:Spectral.P-lambda-Fred} and Proposition~\ref{prop:Spectral.spectrum-P} that, for every $\lambda \in \Sp(P)$, the eigenspace $\ker (P-\lambda)$ is a finite dimensional subspace of $\cA_\theta$. Let $\lambda_0\in \C\setminus \Sp(P)$, and set $T_0=(P-\lambda_0)^{-1}$. We know that $T_0\in \cL(\cH_\theta,\cH_\theta^{(m)})$. Moreover, by~(\ref{eq:Spectral.P-lambda-perturb}) for $\lambda \neq \lambda_0$ we have 
 \begin{equation*}
 P-\lambda  =-(\lambda-\lambda_0)(P-\lambda_0)\left[ T_0 -(\lambda-\lambda_0)^{-1}\right]. 
\end{equation*}
As $(P-\lambda_0)$ is one-to-one we deduce that $\ker [ T_0 -(\lambda-\lambda_0)^{-1}]=\ker (P-\lambda)$. Note that $T_0^*=(P^*-\overline{\lambda}_0)^{-1}$. As $P-\lambda_0$ is a normal operator, we see that $T_0$ is a normal compact operator on $\cH_\theta$. Thus, it diagonalizes in an orthonormal basis. As $T_0$ is one-to-one, we obtain an orthogonal decomposition, 
\begin{equation*}
 \cH_\theta= \bigoplus_{\mu \in \Sp (T_0)} \ker (T_0-\mu)= \bigoplus_{\lambda \in \Sp(P)} \ker (P-\lambda). 
\end{equation*}

The above orthogonal splitting implies that there is an orthonormal basis $(e_\ell)_{\ell \geq 0}$ of $\cH_\theta$ such that $e_\ell\in \cA_\theta$ and $Pe_\ell=\lambda_\ell e_\ell$. We observe that, as each eigenspace $\ker (P-\lambda)$ has finite dimension, each eigenvalue of $P$ is repeated at most finitely many times in the sequence $(\lambda_\ell)_{\ell \geq 0}$. Thus, any accumulation point of this sequence would be an accumulation point of $\Sp (P)$. As $\Sp (P)$ has no accumulation points,  we deduce that 
the sequence  $(\lambda_\ell)_{\ell \geq 0}$ has no accumulation points. It then follows that $|\lambda_\ell |\rightarrow \infty$ as $\ell \rightarrow \infty$. The proof is complete. 
\end{proof}

\begin{remark}
 We refer to~\cite{Se:CPDE86} for an example of normal elliptic operator whose spectrum is $\C$. 
\end{remark}
 
\section{Trace-Class and Schatten-Classes Properties of $\Psi$DOs}\label{sec:Schatten}
In this section, we look at the trace-class and Schatten-classes properties of \psidos\ on noncommutative tori. 

\subsection{Schatten classes} 
In this subsection we briefly review the construction of the Schatten classes $\cL^p$, $p\geq 1$, and their interpolated ideals $\cL^{(p,\infty)}$. We refer to the monographs~\cite{GK:AMS69,  LSZ:deGruyter13, Si:AMS05} for more detailed accounts on operator ideals, including Schatten ideals and their interpolated spaces.  

In what follows we denote by $\cK$ the closed two-sided ideal of compact operators on $\cH_\theta$. Given any $T\in \cK$, for $k=0,1,\ldots$ we denote by $\mu_k(T)$ its $(k+1)$-th characteristic value, i.e., the $(k+1)$-th eigenvalue of the absolute value $|T|=\sqrt{T^*T}$. Recall that by the min-max principle we have 
\begin{equation*}
 \mu_k(T)=\inf\left\{ \left\|T_{\left| E^\perp\right.} \right\| ; \ \dim E =k\right\}. 
\end{equation*}
We then have the following properties:
\begin{gather}
\mu_k(T)= \mu_k(T^*)=\mu_k(|T|),\nonumber \\
\mu_k(\lambda T)=|\lambda|\mu_k(T), \qquad \lambda \in \C,\nonumber \\
\label{eq:Trace.muATB-estimates} \mu_k(ATB)\leq \|A\| \mu_k(T) \|B\| , \qquad A,B\in \cL(\cH_\theta),\\
\mu_k(U^* TU)=\mu_k(T), \qquad U\in \cL(\cH_\theta), \ \text{$U$ unitary}. \nonumber 
\end{gather}
 In addition, for $N=1,2,\ldots $ set 
 \begin{equation*}
 \sigma_N(T)=\sum_{k<N} \mu_k(T). 
\end{equation*}
We have the following sub-additivity property, 
\begin{equation*}
 \sigma_N(S+T)\leq \sigma_N(S) +\sigma_N(T), \qquad S,T\in \cK. 
\end{equation*}

The trace-class $\cL^1$ consists of operators $T\in \cK$ such that
\begin{equation*}
 \|T\|_1:= \sum_{k\geq 0} \mu_k(T) <\infty. 
\end{equation*}
This is a two-sided ideal on which $\|\cdot\|_1$ defines a Banach norm. If $T\in \cK$ is positive or trace-class, then its trace is defined by 
\begin{equation*}
\Tra (T) =\sum_{k\geq 0} \acoup{T\xi_k}{\xi_k},  
\end{equation*}
 where $(\xi_k)_{k\geq 0}$ is any orthonormal basis of $\cH_\theta$. In particular, we have 
 \begin{equation*}
\Trace (|T|) = \sum_{k\geq 0} \mu_k(T)= \|T\|_1 \qquad \text{for all $T\in \cK$}. 
\end{equation*}

For $p>1$ the Schatten class $\cL^p$ consists of operators $T\in \cK$ such that
\begin{equation*}
 \Tra\left( |T|^p\right) = \sum_{k\geq 0} \mu_k(T)^p <\infty. 
\end{equation*}
Let $p'$ be such that $\frac{1}{p}+\frac{1}{p'}=1$. It can be shown that, for all $T\in \cK$, we have
\begin{equation*}
 \Tra\left( |T|^p\right)^{\frac1p} = \sup\left\{ \left| \Tra(ST)\right|; \ \Tra \left( |S|^{p'}\right)\leq 1\right\}. 
\end{equation*}
The Schatten class $\cL^p$ is a two-sided ideal of $\cL(\cH_\theta)$ and a Banach space with respect to the norm, 
\begin{equation*}
 \|T\|_p=  \Tra\left( |T|^p\right)^{\frac1p} =\biggl(  \sum_{k\geq 0} \mu_k(T)^p\biggr)^{\frac1p}, \qquad T\in \cL^p. 
\end{equation*}
In fact, it follows from~(\ref{eq:Trace.muATB-estimates}) that for $p\geq 1$ the norm $\|\cdot \|_p$ is actually a Banach ideal norm, i.e., 
\begin{equation*}
\|ATB\|_p \leq \|A\| \|T\|_p\|B\| \qquad \text{for all $T\in \cL^p$ and $A,B\in \cL(\cH_\theta)$}.  
\end{equation*}
 
The ideal $\cL^{(p,\infty)}$, $p\geq 1$, is obtained by interpolating $\cL^p$ and $\cL^\infty=\cK$. For $p>1$, it consists of operators $T\in \cK$ such that
\begin{equation*}
 \sigma_N(T)= \op{O}\left(N^{1-\frac1{p}}\right) \qquad \text{as $N\rightarrow \infty$}. 
\end{equation*}
This is a Banach ideal with respect to the norm, 
\begin{equation*}
 \|T\|_{(p,\infty)}= \sup_{N\geq 1} \frac{\sigma_N(T)}{N^{1-\frac{1}{p}}}, \qquad T\in \cL^{(p,\infty)}. 
\end{equation*}
For $p=1$ the ideal $\cL^{(1,\infty)}$ consists of operators $T\in \cK$ such that 
\begin{equation*}
 \sigma_N(T)= \op{O}\left(\log N\right) \qquad \text{as $N\rightarrow \infty$}. 
\end{equation*}
This is a Banach ideal with respect to the norm, 
\begin{equation*}
 \|T\|_{(1,\infty)}= \sup_{N\geq 2} \frac{\sigma_N(T)}{\log N}, \qquad T\in \cL^{(1,\infty)}. 
\end{equation*}
In particular, $\cL^{(1,\infty)}$ is the natural domain of the Dixmier trace~\cite{Di:CRAS66} (see also~\cite{Co:NCG, LSZ:deGruyter13}). Recall that this trace plays the role of the integral in the framework of the quantized calculus of Connes~\cite{Co:NCG}. 

\begin{lemma}
 If $p>1$, then
 \begin{equation*}
 \cL^{(p,\infty)}=\left\{T\in \cK; \ \mu_k(T)= \op{O}\left( k^{-\frac{1}p}\right) \right\}. 
\end{equation*}
When $p=1$ we have a strict inclusion, 
\begin{equation*}
 \cL^{(1,\infty)} \supsetneq \left\{T\in \cK; \ \mu_k(T)= \op{O}\left( k^{-1}\right) \right\}. 
\end{equation*}
\end{lemma}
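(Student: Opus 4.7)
The plan is to handle the case $p>1$ by sandwiching $\mu_k(T)$ between the asymptotic behaviors of $\sigma_N(T)/N$ and of partial sums of $O(k^{-1/p})$, and to handle $p=1$ by producing an explicit diagonal operator on $\cH_\theta$ whose characteristic values are piecewise constant on a lacunary sequence of intervals.

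For the case $p>1$, I would argue both inclusions. First, suppose $\mu_k(T)\leq C(k+1)^{-1/p}$ for all $k\geq 0$. Then by comparison with an integral,
\begin{equation*}
\sigma_N(T)=\sum_{k<N}\mu_k(T)\leq C\sum_{k<N}(k+1)^{-1/p}\leq C' N^{1-1/p},
\end{equation*}
so $T\in\cL^{(p,\infty)}$. Conversely, suppose $T\in\cL^{(p,\infty)}$. Since the sequence $\mu_k(T)$ is nonincreasing, $N\mu_{N-1}(T)\leq \sigma_N(T)\leq \|T\|_{(p,\infty)}N^{1-1/p}$, so $\mu_{N-1}(T)=O(N^{-1/p})$. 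This gives equality of the two sets.

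For the inclusion in the case $p=1$, the same comparison works: if $\mu_k(T)\leq C(k+1)^{-1}$, then $\sigma_N(T)\leq \mu_0(T)+C\sum_{k=1}^{N-1}k^{-1}=O(\log N)$. The main obstacle is the strict inclusion. I will construct a nonincreasing sequence $(\mu_k)_{k\geq 0}$ of positive numbers with $\sigma_N=O(\log N)$ but $k\mu_k$ unbounded, and then realize it as the eigenvalue sequence of a positive diagonal operator with respect to an orthonormal basis of $\cH_\theta$ (such a basis exists since $\cH_\theta$ is a separable Hilbert space, e.g.\ $(U^k)_{k\in\Z^n}$). Explicitly, set $N_n=2^{2^n}$ and let $\mu_k=a_n:=2^n/N_{n+1}$ for $k\in[N_n,N_{n+1})$. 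Then $(a_n)$ is strictly decreasing, and the corresponding diagonal operator $T$ is compact with characteristic values $\mu_k$.

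To check the properties of this example, I will verify that for $N=N_{m+1}$,
\begin{equation*}
\sigma_N(T)=\sum_{n=0}^{m}(N_{n+1}-N_n)a_n\leq \sum_{n=0}^{m}N_{n+1}\cdot \frac{2^n}{N_{n+1}}=2^{m+1}-1=O(\log N),
\end{equation*}
while at $k=N_{n+1}-1$ we have $k\mu_k\geq (N_{n+1}-1)\cdot 2^n/N_{n+1}\to\infty$, so $\mu_k(T)\neq O(k^{-1})$. Monotonicity of $\sigma_N/\log N$ between consecutive $N_{m+1}$'s (up to a bounded factor) will give $T\in\cL^{(1,\infty)}$, completing the proof of strictness. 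The mildly delicate point is ensuring the estimate $\sigma_N=O(\log N)$ holds for all $N$, not just at the $N_{m+1}$'s, but this follows by monotonicity since between two consecutive breakpoints $\sigma_N$ grows at most linearly in $N$ with slope $a_m$, and a short computation shows this extension is still $O(\log N)$.
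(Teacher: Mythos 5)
Your argument is correct, and for the case $p>1$ it coincides with what the paper records in the remark following the lemma: the easy inclusion by summation, and the converse from monotonicity via $N\mu_{N-1}(T)\leq\sigma_N(T)\leq\|T\|_{(p,\infty)}N^{1-1/p}$. Where you genuinely add something is the strictness at $p=1$: the paper simply cites Gracia-Bond\'ia--V\'arilly--Figueroa for the existence of an operator in $\cL^{(1,\infty)}\setminus\cL^{1+}$, whereas you give a self-contained lacunary construction. Your example checks out: with $N_n=2^{2^n}$ and $a_n=2^n/N_{n+1}=2^{\,n-2^{n+1}}$ one has $a_{n+1}/a_n=2^{\,1-2^{n+1}}<1$, so the sequence is nonincreasing; $\sigma_{N_{m+1}}\leq 2^{m+1}-1$ while $\log N_{m+1}=2^{m+1}\log 2$, and for $N_{m+1}<N\leq N_{m+2}$ the linear interpolation gives $\sigma_N\leq 2^{m+1}+N a_{m+1}\leq 2^{m+2}$, so $\sigma_N/\log N\leq 2/\log 2$ throughout; and $k\mu_k\geq(N_{n+1}-1)2^n/N_{n+1}\to\infty$ along $k=N_{n+1}-1$ kills the $\op{O}(k^{-1})$ bound. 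Two cosmetic points: your blocks $[N_n,N_{n+1})$ start at $N_0=2$, so you should explicitly assign $\mu_0=\mu_1=a_0$ (monotonicity is preserved); and it is worth one sentence to note that for a positive diagonal compact operator with nonincreasing diagonal entries the characteristic values are exactly those entries, which is what lets the numerical sequence be realized as $(\mu_k(T))$.
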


\begin{remark}
 Set $\cL^{p+}:=\{T\in \cK; \ \mu_k(T)= \op{O}( k^{-\frac{1}p})\}$, $p\geq 1$. It is immediate that $\cL^{p+}\subset \cL^{(p,\infty)}$. For $p>1$ the converse inclusion is a consequence of the inequalities, 
 \begin{equation} \label{eq:Trace.mu-T-estimates}
 \mu_k(T) \leq \frac{1}{k} \sigma_k(T) \leq \|T\|_{(p,\infty)} k^{-\frac{1}p}. 
\end{equation}
When $p=1$ there are operators in $\cL^{(1,\infty)}$ that are not in $\cL^{1+}$ (see, e.g., \cite{GVF:Birkh01}). 
\end{remark}

\begin{lemma}
 Let $p\geq 1$ and $q>p$. Then we have continuous inclusions, 
 \begin{equation*}
 \cL^p\subset \cL^{(p,\infty)} \subset \cL^q. 
\end{equation*}
\end{lemma}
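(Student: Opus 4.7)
The plan is to establish the two inclusions separately, each by a direct estimate on the sequence of characteristic values, and then to observe that the estimates also give the continuity of the inclusions (since boundedness and continuity of a linear inclusion between Banach spaces are equivalent).

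First I would prove $\cL^p \subset \cL^{(p,\infty)}$. For $p=1$ the inclusion is trivial: given $T \in \cL^1$, the partial sums satisfy $\sigma_N(T) \leq \|T\|_1$ for all $N$, hence $\sigma_N(T) = \op{O}(\log N)$ and $\|T\|_{(1,\infty)} \leq (\log 2)^{-1}\|T\|_1$. For $p>1$, apply H\"older's inequality to the decomposition $\sigma_N(T) = \sum_{k<N} \mu_k(T)\cdot 1$ with conjugate exponents $p$ and $p'= p/(p-1)$:
\begin{equation*}
\sigma_N(T) \leq \Bigl(\sum_{k<N}\mu_k(T)^p\Bigr)^{1/p} N^{1-1/p} \leq \|T\|_p\, N^{1-1/p},
\end{equation*}
which gives $\|T\|_{(p,\infty)} \leq \|T\|_p$. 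In both cases the inequality is linear in $\|T\|_p$, so the inclusion is continuous.

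Next I would prove $\cL^{(p,\infty)} \subset \cL^q$ for $q>p$. The key input is the monotonicity of the $\mu_k(T)$, which yields $(k+1)\mu_k(T) \leq \sigma_{k+1}(T)$ and hence, for $T\in\cL^{(p,\infty)}$,
\begin{equation*}
\mu_k(T) \leq \frac{\sigma_{k+1}(T)}{k+1} \leq \begin{cases} \|T\|_{(p,\infty)}\,(k+1)^{-1/p} & \text{if } p>1,\\ \|T\|_{(1,\infty)}\,\dfrac{\log(k+1)}{k+1} & \text{if } p=1.\end{cases}
\end{equation*}
(For $p>1$ this is exactly inequality \eqref{eq:Trace.mu-T-estimates}.) Raising to the $q$-th power and summing gives a convergent series, because $q/p>1$ (resp.\ $q>1$ in the case $p=1$, where $\sum_{k\geq 1}(\log k)^q/k^q<\infty$). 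Explicitly, one obtains
\begin{equation*}
\|T\|_q^q = \sum_{k\geq 0}\mu_k(T)^q \leq C_{p,q}\,\|T\|_{(p,\infty)}^q,
\end{equation*}
with $C_{p,q} = \sum_{k\geq 0}(k+1)^{-q/p}$ (resp.\ $C_{1,q}=\sum_{k\geq 1}(\log(k+1))^q(k+1)^{-q}$). Taking $q$-th roots yields $\|T\|_q \leq C_{p,q}^{1/q}\,\|T\|_{(p,\infty)}$, proving both the set-theoretic inclusion and its continuity.

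There is no substantial obstacle: the argument is entirely elementary once the basic min-max inequality $(k+1)\mu_k(T)\leq \sigma_{k+1}(T)$ is in hand. The only mild care required is the separation of the case $p=1$ in both inclusions, since the defining growth rate $\op{O}(\log N)$ for $\cL^{(1,\infty)}$ differs from the power-law rate in $\cL^{(p,\infty)}$, $p>1$; in the second inclusion this forces the extra logarithmic factor in the bound on $\mu_k(T)$, which is nevertheless summable against $k^{-q}$ for any $q>1$.
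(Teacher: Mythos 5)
Your argument is correct and is essentially the proof the paper gives (in the two remarks following the lemma): H\"older's inequality for $\cL^p\subset\cL^{(p,\infty)}$ when $p>1$, the trivial bound $\sigma_N(T)\leq\|T\|_1$ when $p=1$, and the monotonicity estimate $\mu_k(T)\leq\sigma_{k+1}(T)/(k+1)$ followed by summation for $\cL^{(p,\infty)}\subset\cL^q$. The only point to tidy is the $k=0$ term in the case $p=1$ (where $\log(k+1)=0$ gives a vacuous bound on $\mu_0(T)$; use $\mu_0(T)\leq\sigma_2(T)\leq\|T\|_{(1,\infty)}\log 2$ instead), a slip the paper's own version of the inequality shares.
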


\begin{remark}
 Let $p>1$. 
 The inclusion of $\cL^{p}$ into $\cL^{(p,\infty)}$ and its continuity are consequences of the H\"older inequality, 
 \begin{equation*} 
 \sigma_N(T)=\sum_{k<N}\mu_k(T) \leq \left( \sum_{k<N} 1 \right)^{1-\frac1{p}} \left( \sum_{k<N} \mu_k(T)^p \right)^{\frac1{p}} \leq N^{1-\frac{1}{p}} \|T\|_{p}. 
\end{equation*}
Moreover, if $q>p$, then~(\ref{eq:Trace.mu-T-estimates}) implies that $\cL^{(p,\infty)}\subset \cL^q$ and this inclusion is continuous. 
\end{remark}

\begin{remark}
 Suppose that $p=1$. The fact that there is a continuous inclusion of $\cL^1$ into $\cL^{(1,\infty)}$ is a consequence of the obvious inequality, 
 \begin{equation*}
 (\log N)^{-1} \sigma_N(T) \leq (\log 2)^{-1} \|T\|_1, \qquad N\geq 2. 
\end{equation*}
Moreover, if $T\in \cL^{(1,\infty)}$ and $q>1$, then in the same way as in~(\ref{eq:Trace.mu-T-estimates}) we have 
\begin{equation*}
 \mu_k(T)^q \leq \left( k^{-1}\sigma_k(T)\right)^q \leq \|T\|_{(1,\infty)}^q (k^{-1}\log k)^q. 
\end{equation*}
As $\sum (k^{-1}\log k)^q<\infty$, we deduce that $T\in \cL^q$ and we have a continuous inclusion of $\cL^{(1,\infty)}$ into $\cL^q$. 
\end{remark}

\subsection{Schatten-classes properties of \psidos} 
Let us now look at the Schatten-classes properties of \psidos\ on $\cA_\theta$. They are consequences of the following lemma. 

\begin{lemma} \label{lem:Trace.mu-Prho-estimates}
 Let $\rho(\xi)\in \stS^m(\R^n; \cA_\theta)$, $m<0$. Then, we have
 \begin{equation} \label{eq:Trace.mu-Prho-estimates}
 \mu_k(P_{\rho})=\op{O}\left(k^{\frac{m}{n}}\right)\qquad \text{as $k\rightarrow \infty$}. 
\end{equation}
\end{lemma}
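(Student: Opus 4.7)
The idea is to factor $P_\rho$ through the operator $\Lambda^m$, whose characteristic values can be read off from its eigenvalues in the Fourier basis $(U^k)_{k\in\Z^n}$, and then transfer the bound via the ideal inequality $\mu_k(AB)\leq \|A\|\mu_k(B)$.

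First I would write
\[
P_\rho = (P_\rho\Lambda^{-m})\Lambda^m .
\]
Since $\Lambda^{-m}$ is a classical $\Psi$DO of order $-m$ with principal symbol $|\xi|^{-m}$ (Proposition~\ref{prop:PsiDOs.Lambdas-properties}) and $\rho(\xi)\in\stS^m(\Rn;\cA_\theta)$, the composition result Proposition~\ref{prop:Composition-Sym.sharp-property} gives that $P_\rho\Lambda^{-m}$ is a \psido\ with symbol in $\stS^0(\Rn;\cA_\theta)$. By the $L^2$-boundedness result Proposition~\ref{prop:Sob-Mapping.Prho-extension}, $P_\rho\Lambda^{-m}$ extends to a bounded operator on $\cH_\theta$, say with norm $C_0>0$. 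Using the standard inequality $\mu_k(AB)\leq \|A\|\mu_k(B)$ recalled in~(\ref{eq:Trace.muATB-estimates}) then gives
\[
\mu_k(P_\rho) \leq C_0\,\mu_k(\Lambda^m) \qquad \forall k\geq 0.
\]

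It remains to show that $\mu_k(\Lambda^m)=\op{O}(k^{m/n})$. By Proposition~\ref{prop:PsiDOs.Lambdas-properties}, $\Lambda^m$ is diagonal in the orthonormal basis $(U^k)_{k\in \Z^n}$ of $\cH_\theta$ with eigenvalues $(1+|k|^2)^{m/2}$, and since $m<0$ these eigenvalues are positive and tend to $0$ at infinity. Enumerate $\Z^n$ as $k_0,k_1,\ldots$ with $|k_0|\leq |k_1|\leq\cdots$. Then $\mu_N(\Lambda^m)=(1+|k_N|^2)^{m/2}$. A standard lattice-point count gives a constant $C_n>0$ such that
\[
\#\bigl\{k\in \Z^n;\ |k|<R\bigr\}\leq C_n R^n \qquad \text{for $R\geq 1$}.
\]
If $R=(N/C_n)^{1/n}$, then $\#\{k\in\Z^n;\ |k|<R\}<N$, and hence $|k_N|\geq R = (N/C_n)^{1/n}$. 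Consequently, for $N$ large enough,
\[
\mu_N(\Lambda^m)=\bigl(1+|k_N|^2\bigr)^{m/2}\leq \bigl(1+(N/C_n)^{2/n}\bigr)^{m/2} = \op{O}\bigl(N^{m/n}\bigr).
\]
Combining the two estimates yields~(\ref{eq:Trace.mu-Prho-estimates}).

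No step looks genuinely hard: the $L^2$-boundedness of zeroth order \psidos\ is already established (Proposition~\ref{prop:Sob-Mapping.Prho-extension}), the composition calculus is available (Proposition~\ref{prop:Composition-Sym.sharp-property}), and the isospectral description of $\Lambda^m$ reduces the problem to an elementary Weyl-type lattice count. The only point requiring mild care is keeping track of the direction of the inequality in the Weyl count: since $m<0$, an upper bound on $\mu_N(\Lambda^m)$ requires a \emph{lower} bound on $|k_N|$, which is why one uses the upper bound $\#\{|k|<R\}\leq C_n R^n$ rather than the complementary estimate.
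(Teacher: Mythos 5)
Your proof is correct and follows essentially the same route as the paper: factor $P_\rho=(P_\rho\Lambda^{-m})\Lambda^m$, note that $P_\rho\Lambda^{-m}=P_{\rho\sharp\brak{\xi}^{-m}}$ is a zeroth order \psido\ and hence bounded on $\cH_\theta$, and apply $\mu_k(AB)\leq\|A\|\mu_k(B)$ together with the eigenvalue asymptotics of $\Lambda^m$. The only cosmetic difference is that you carry out the elementary lattice-point count for $\mu_N(\Lambda^m)$ by hand, whereas the paper quotes Weyl's law via the isospectrality of $\Delta$ with the flat Laplacian on $\T^n$; these are the same estimate.
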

\begin{proof}
 It follows from~(\ref{eq:PsiDOs.Laplacian-eigenvalues}) that the Laplacian $\Delta=\delta_1^{2} +\cdots +\delta^2_n$ is isospectral to the flat Laplacian on the ordinary torus $\mathbb{T}^n=\R^n\slash 2\pi \Z^n$. Thus, if we let $0=\lambda_0(\Delta)\leq \lambda_1(\Delta)\leq \cdots $ be the eigenvalues of $\Delta$ counted with multiplicity, then by Weyl's law, as $k\rightarrow \infty$, we have 
 \begin{equation*}
 \lambda_k(\Delta) \sim \left( c^{-1} k\right)^{\frac2{n}}, \qquad \text{where $c=\pi^{\frac{n}2} \Gamma\left( \frac{n}2+1\right)^{-1}$}. 
\end{equation*}
As $m<0$ and the eigenvalues of $\Lambda^m=(1+\Delta)^{\frac{m}{2}}$ are $(1+\lambda_k(\Delta))^{\frac{m}2}$ we see that 
\begin{equation} \label{eq:Trace.mu-Lambda^m-estimates}
 \mu_k\left( \Lambda^m\right) = (1+\lambda_k(\Delta))^{\frac{m}2} = \op{O}\left(k^{\frac{m}{n}}\right) \qquad \text{as $k\rightarrow \infty$}.
\end{equation}

Bearing this in mind, we have $P_\rho \Lambda^{-m}=P_\sigma$, where $\sigma=\rho \sharp \brak{\xi}^{-m}$ is a symbol of order~$0$, and so $P_\rho \Lambda^{-m}$ is bounded  on $\cH_\theta$. Therefore, using~(\ref{eq:Trace.muATB-estimates}) we get
\begin{equation*}
 \mu_k(P_\rho) =\mu_k\left((P_\rho\Lambda^{-m})\Lambda^m\right) \leq \left\|P_\rho\Lambda^{-m}\right\|   \mu_k\left( \Lambda^m\right). 
\end{equation*}
Combining this with~(\ref{eq:Trace.mu-Lambda^m-estimates}) then shows that $ \mu_k(P_\rho)=\op{O}(k^{\frac{m}{n}})$ as $k\rightarrow \infty$, proving the lemma. 
\end{proof}

\begin{remark}
 In the language of the quantized calculus of Connes~\cite{Co:NCG}, the estimate~(\ref{eq:Trace.mu-Prho-estimates}) means that $P_\rho$ is an infinitesimal operator of order~$\leq \frac{|m|}{n}$. 
\end{remark}

\begin{proposition} \label{prop:Trace.Classifying-Prho}
 Let $m\in [-n,0)$, and set $p={n}|m|^{-1}$. 
 \begin{enumerate}
 \item For every symbol $\rho(\xi)\in \stS^m(\R^n; \cA_\theta)$, the operator $P_\rho$ is in the class $\cL^{(p,\infty)}$. 
 
 \item We have a continuous linear map $\rho \rightarrow P_\rho$ from $\stS^m(\R^n; \cA_\theta)$ to $\cL^{(p,\infty)}$.
\end{enumerate}
\end{proposition}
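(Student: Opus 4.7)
\textbf{Proof plan for Proposition~\ref{prop:Trace.Classifying-Prho}.} The plan is to deduce both assertions from Lemma~\ref{lem:Trace.mu-Prho-estimates} together with the factorization $P_\rho = (P_\rho \Lambda^{-m})\Lambda^m$, in which the first factor is a zeroth order $\Psi$DO depending continuously on~$\rho$ and the second factor is a fixed operator in $\cL^{(p,\infty)}$.

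First I would show that $\Lambda^m \in \cL^{(p,\infty)}$. Since $\mu_k(\Lambda^m) = (1+\lambda_k(\Delta))^{m/2}$ and Weyl's law gives $\lambda_k(\Delta) \sim (k/c)^{2/n}$ (where $c = \pi^{n/2}\Gamma(n/2+1)^{-1}$, as in the proof of Lemma~\ref{lem:Trace.mu-Prho-estimates}), we have $\mu_k(\Lambda^m) = O(k^{m/n}) = O(k^{-1/p})$. When $p>1$ this gives $\Lambda^m \in \cL^{(p,\infty)}$ immediately; when $p=1$ (i.e.\ $m=-n$), we get $\mu_k(\Lambda^{-n}) \sim c\, k^{-1}$, so $\sigma_N(\Lambda^{-n}) = O(\log N)$ and hence $\Lambda^{-n}\in \cL^{(1,\infty)}$. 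In particular, the ideal norm $\|\Lambda^m\|_{(p,\infty)}$ is a finite constant that no longer depends on~$\rho$.

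Next, given any $\rho(\xi)\in \stS^m(\R^n;\cA_\theta)$, Proposition~\ref{prop:Composition-Sym.sharp-property} ensures that $\tilde\rho(\xi) := \rho \sharp \brak{\xi}^{-m}$ lies in $\stS^0(\R^n;\cA_\theta)$, and Proposition~\ref{prop:PsiDOs.Lambdas-properties} then gives $P_\rho \Lambda^{-m} = P_{\tilde\rho}$. By Proposition~\ref{prop:Sob-Mapping.Prho-extension}, $P_{\tilde\rho}$ extends to a bounded operator on $\cH_\theta$. Using the Banach-ideal property $\|AT\|_{(p,\infty)}\leq \|A\|_{\cL(\cH_\theta)}\|T\|_{(p,\infty)}$, I obtain
\begin{equation*}
 \|P_\rho\|_{(p,\infty)} \leq \|P_{\tilde\rho}\|_{\cL(\cH_\theta)}\,\|\Lambda^m\|_{(p,\infty)}.
\end{equation*}
This proves the first assertion, and simultaneously provides the inequality needed for the second.

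To conclude the continuity claim, I would chain three already-established continuous maps: (i) $\rho \mapsto \rho\sharp\brak{\xi}^{-m}$ from $\stS^m(\R^n;\cA_\theta)$ to $\stS^0(\R^n;\cA_\theta)$ (Proposition~\ref{prop:Composition-Sym.sharp-property}); (ii) $\sigma \mapsto P_\sigma$ from $\stS^0(\R^n;\cA_\theta)$ to $\cL(\cH_\theta)$ (Proposition~\ref{prop:Sob-Mapping.Prho-extension}); (iii) $A \mapsto A\Lambda^m$ from $\cL(\cH_\theta)$ to $\cL^{(p,\infty)}$, which is continuous with norm $\|\Lambda^m\|_{(p,\infty)}$ by the ideal property. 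Their composition is precisely $\rho \mapsto P_\rho$, viewed as a map into $\cL^{(p,\infty)}$, so continuity from the Fr\'echet space $\stS^m(\R^n;\cA_\theta)$ to the Banach space $\cL^{(p,\infty)}$ follows (equivalently, the displayed inequality combined with the fact that $\rho \mapsto \|P_{\tilde\rho}\|_{\cL(\cH_\theta)}$ is a continuous seminorm on $\stS^m(\R^n;\cA_\theta)$ yields a continuous seminorm estimate of the form $\|P_\rho\|_{(p,\infty)}\leq C\, p_N^{(m)}(\rho)$ for some $N$). No step presents a serious obstacle: the only mild care is needed at the endpoint $p=1$, where one must verify by hand that the Weyl-law asymptotics $\mu_k(\Lambda^{-n}) = O(k^{-1})$ suffices to place $\Lambda^{-n}$ in $\cL^{(1,\infty)}$ (even though $\cL^{1+}$ sits strictly inside $\cL^{(1,\infty)}$).
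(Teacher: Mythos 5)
Your proof is correct, and for the continuity assertion it takes a genuinely different route from the paper. The paper deduces part (1) directly from Lemma~\ref{lem:Trace.mu-Prho-estimates} and then obtains part (2) by a soft argument: the map $\rho\rightarrow P_\rho$ is continuous into $\cL(\cH_\theta)$ by Proposition~\ref{prop:Sob-Mapping.Prho-extension}, hence has closed graph as a map into $\cL^{(p,\infty)}$, and the closed graph theorem (using that both $\stS^m(\R^n;\cA_\theta)$ and $\cL^{(p,\infty)}$ are Fr\'echet spaces) upgrades this to continuity for the $\|\cdot\|_{(p,\infty)}$-norm, with no explicit bound. You instead unwind the factorization $P_\rho=P_{\rho\sharp\brak{\xi}^{-m}}\Lambda^m$ that underlies the proof of Lemma~\ref{lem:Trace.mu-Prho-estimates}, verify $\Lambda^m\in\cL^{(p,\infty)}$ from Weyl's law (correctly handling the endpoint $p=1$, where $\mu_k(\Lambda^{-n})=\op{O}(k^{-1})$ gives $\sigma_N=\op{O}(\log N)$ even though $\cL^{1+}\subsetneq\cL^{(1,\infty)}$), and invoke the Banach-ideal property to get the explicit estimate $\|P_\rho\|_{(p,\infty)}\leq\|P_{\tilde\rho}\|_{\cL(\cH_\theta)}\|\Lambda^m\|_{(p,\infty)}$, from which continuity follows by composing three continuous maps. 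What your approach buys is a quantitative seminorm bound and independence from the closed graph theorem (in particular you only use that $\|\cdot\|_{(p,\infty)}$ is an ideal norm, not completeness of $\cL^{(p,\infty)}$); what the paper's approach buys is brevity, since Lemma~\ref{lem:Trace.mu-Prho-estimates} is already in place, and it transfers verbatim to any (quasi-)Banach ideal $\cI$ containing $\Lambda^m$, as the paper notes in a subsequent remark — though your argument generalizes in that direction just as well.
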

\begin{proof}
 It immediately follows from Lemma~\ref{lem:Trace.mu-Prho-estimates} that, for every symbol $\rho(\xi)\in \stS^m(\R^n; \cA_\theta)$, the operator $P_\rho$ is in the class $\cL^{(p,\infty)}$. We thus have a linear map from $\stS^m(\R^n; \cA_\theta)$ to $\cL^{(p,\infty)}$. Its graph is closed since we know by Proposition~\ref{prop:Sob-Mapping.Prho-extension} that this map is continuous with respect to the
topology on $\cL^{(p,\infty)}$ induced by that of $\cL(\cH_\theta)$. As $\stS^m(\R^n; \cA_\theta)$ and $\cL^{(p,\infty)}$ are Fr\'echet spaces, the closed graph theorem then ensures us that we have a continuous map with respect the $\|\cdot \|_{(p,\infty)}$-norm. The proof is complete. 
\end{proof}

\begin{remark}
 In dimension $n=2$ it was shown by Fathizadeh-Khalkhali~\cite{FK:LMP13} that any classical \psido\ of order~$-2$ is in the class $\cL^{(1,\infty)}$. Thus, Proposition~\ref{prop:Trace.Classifying-Prho} can be seen as a generalization of Fathizadeh-Khalkhali's result. 
\end{remark}

\begin{remark}
 For $m=-n$ Proposition~\ref{prop:Trace.Classifying-Prho} implies that any \psido\ of order~$\leq -n$ is in the domain of the Dixmier trace. When $n=2$ we refer to~\cite{FK:LMP13} for a computation of the Dixmier trace of \psidos\ on noncommutative 2-tori. 
\end{remark}

Combining Proposition~\ref{prop:Trace.Classifying-Prho} with the continuity of the inclusion of $\cL^{(p,\infty)}$ into $\cL^q$, $q>p$, we immediately arrive at the following result. 

\begin{corollary}\label{prop:Trace.Classifying-Prho-Lp}
  Let $m\in [-n,0)$ and $q>{n}|m|^{-1}$. 
 \begin{enumerate}
 \item For every symbol $\rho(\xi)\in \stS^m(\R^n; \cA_\theta)$, the operator $P_\rho$ is in the Schatten class $\cL^{q}$. 
 
 \item We have a continuous linear map $\rho \rightarrow P_\rho$ from $\stS^m(\R^n; \cA_\theta)$ to $\cL^{q}$.
\end{enumerate}  
\end{corollary}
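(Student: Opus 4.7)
The plan is to obtain this corollary as a direct consequence of Proposition~\ref{prop:Trace.Classifying-Prho} combined with the continuous inclusion of the weak Schatten class $\cL^{(p,\infty)}$ into the Schatten class $\cL^q$ for $q>p$, which was recalled among the basic properties of these ideals earlier in the section.

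First I would set $p=n|m|^{-1}$, so that the hypothesis $q>n|m|^{-1}$ reads simply $q>p$. Since $m\in[-n,0)$, we have $p\in[1,\infty)$, which places us in the setting where both $\cL^{(p,\infty)}$ and $\cL^q$ are defined as Banach ideals of $\cL(\cH_\theta)$. By Proposition~\ref{prop:Trace.Classifying-Prho}, the assignment $\rho\mapsto P_\rho$ is a continuous linear map from $\stS^m(\R^n;\cA_\theta)$ into $\cL^{(p,\infty)}$. Next I would invoke the continuous inclusion $\cL^{(p,\infty)}\hookrightarrow \cL^q$ recalled above (which follows from the elementary estimate $\mu_k(T)\leq k^{-1}\sigma_k(T)\leq \|T\|_{(p,\infty)}k^{-1/p}$ when $p>1$, and from $\mu_k(T)\leq (\log 2)^{-1}\|T\|_{(1,\infty)}k^{-1}\log k$ when $p=1$, together with the convergence of $\sum k^{-q/p}$ or $\sum (k^{-1}\log k)^q$ respectively).

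Composing these two continuous linear maps yields the continuous linear map $\rho\mapsto P_\rho$ from $\stS^m(\R^n;\cA_\theta)$ to $\cL^q$ asserted in~(2). Part~(1) is then immediate: for any $\rho(\xi)\in\stS^m(\R^n;\cA_\theta)$ the operator $P_\rho$ lies in $\cL^q$. Since every step is a direct appeal to results already established in the paper or to standard properties of the weak Schatten ideals, there is no substantive obstacle in the argument; the only care needed is to confirm that the range $m\in[-n,0)$ indeed forces $p\geq 1$ so that both ideals and the inclusion between them are the standard ones.
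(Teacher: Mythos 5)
Your proposal is correct and follows exactly the paper's route: the corollary is obtained by composing the continuous linear map $\rho\mapsto P_\rho$ from $\stS^m(\R^n;\cA_\theta)$ to $\cL^{(p,\infty)}$ of Proposition~\ref{prop:Trace.Classifying-Prho} with the continuous inclusion $\cL^{(p,\infty)}\subset\cL^q$ for $q>p$ recalled earlier in the section. Nothing is missing.
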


\begin{remark}
 More generally, let $\cI$ be a Banach ideal of $\cL(\cH_\theta)$ or even a quasi-Banach ideal. By arguing along the same lines as in the proof of Proposition~\ref{prop:Trace.Classifying-Prho} it can be shown that,  if $\Lambda^m \in \cI$ for some $m<0$, then $P_\rho\in \cI$ for all $\rho \in \stS^m(\R^n; \cA_\theta)$, and this provides us with a linear map from $\stS^m(\R^n; \cA_\theta)$ to $\cI$. In particular, this allows us to extend Proposition~\ref{prop:Trace.Classifying-Prho} and Corollary~\ref{prop:Trace.Classifying-Prho-Lp} to standard symbols of order $m<-n$. In that case the results are stated in terms of the Schatten classes $\cL^q$ and weak Schatten classes $\cL^{(p,\infty)}$ with $0<p<q<1$, where $p=n|m|^{-1}$.  We refer to~\cite{KPR:LMS85, Ma:Yokoham04, Su:IM14} and the references therein for background on quasi-Banach ideals. 
\end{remark}

\subsection{Trace-class \psidos}
Let us now focus on the trace-class properties of \psidos\ on noncommutative tori. 

\begin{proposition}\label{prop:Schatten.trace-class} 
Let $m\in (-\infty, -n)$. 
\begin{enumerate}
\item For every $\rho(\xi)\in \stS^m(\R^n; \cA_\theta)$, the operator $P_\rho$ is trace-class.  
 
 \item We have a continuous linear map $\rho \rightarrow P_\rho$ from $\stS^m(\R^n; \cA_\theta)$ to $\cL^{1}$.
 
 \item For every $\rho(\xi)\in \stS^m(\R^n; \cA_\theta)$, we have 
           \begin{equation} \label{eq:Trace.trace-formula}
                       \Tra(P_\rho) = \sum_{k\in\Z^n} \tau\left[\rho(k)\right] .
               \end{equation}
\end{enumerate}
\end{proposition}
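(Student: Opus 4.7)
The plan is to establish the three parts sequentially, with part~(1) following from a trace-class factorization through $\Lambda^m$, part~(2) from the closed graph theorem, and part~(3) from an explicit computation in the Fourier basis $(U^k)_{k\in\Z^n}$ of $\cH_\theta$.

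For part~(1), I would first verify that $\Lambda^m\in\cL^1$ when $m<-n$. By Proposition~\ref{prop:PsiDOs.Lambdas-properties}, $\Lambda^m$ acts diagonally in the orthonormal basis $(U^k)_{k\in\Z^n}$ with eigenvalues $(1+|k|^2)^{m/2}$, which are positive and summable since $m<-n$ (the series is comparable to $\int_{\R^n}(1+|\xi|^2)^{m/2}d\xi$). As $\Lambda^m$ is positive selfadjoint, this gives $\|\Lambda^m\|_1=\sum_{k\in\Z^n}(1+|k|^2)^{m/2}<\infty$. Next, I would use Proposition~\ref{prop:Composition-Sym.sharp-property} to factor
\begin{equation*}
P_\rho=(P_\rho\Lambda^{-m})\Lambda^m=P_{\rho\sharp\brak{\xi}^{-m}}\Lambda^m,
\end{equation*}
where $\rho\sharp\brak{\xi}^{-m}\in\stS^0(\R^n;\cA_\theta)$. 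By Proposition~\ref{prop:Sob-Mapping.Prho-extension}, $P_{\rho\sharp\brak{\xi}^{-m}}$ is bounded on $\cH_\theta$, and the ideal property of $\cL^1$ then yields $P_\rho\in\cL^1$.

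For part~(2), I would apply the closed graph theorem, noting that $\stS^m(\R^n;\cA_\theta)$ is a Fr\'echet space and $\cL^1$ is a Banach space. By Proposition~\ref{prop:Sob-Mapping.Prho-extension} combined with the continuity of the inclusion $\stS^m(\R^n;\cA_\theta)\subset\stS^0(\R^n;\cA_\theta)$ (valid since $m<0$), the map $\rho\mapsto P_\rho$ is continuous from $\stS^m(\R^n;\cA_\theta)$ to $\cL(\cH_\theta)$. Because the inclusion $\cL^1\hookrightarrow\cL(\cH_\theta)$ is continuous, any sequence $(\rho_j)$ with $\rho_j\to\rho$ in $\stS^m$ and $P_{\rho_j}\to T$ in $\cL^1$ satisfies both $P_{\rho_j}\to P_\rho$ and $P_{\rho_j}\to T$ in $\cL(\cH_\theta)$, forcing $T=P_\rho$. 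Hence the graph is closed and continuity from $\stS^m$ to $\cL^1$ follows.

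For part~(3), since $P_\rho$ is trace-class by~(1), its trace can be computed in any orthonormal basis of $\cH_\theta$. Using the basis $(U^k)_{k\in\Z^n}$ together with Proposition~\ref{prop:PsiDOs.Prhou-equation} (applied to $u=U^k$, whose only non-zero Fourier coefficient is $1$ at index $k$), we have $P_\rho U^k=\rho(k)U^k$, whence
\begin{equation*}
\Trace(P_\rho)=\sum_{k\in\Z^n}\acoup{P_\rho U^k}{U^k}=\sum_{k\in\Z^n}\tau\bigl(\rho(k)U^k(U^k)^*\bigr)=\sum_{k\in\Z^n}\tau\bigl[\rho(k)\bigr],
\end{equation*}
where we have used $\acoup{u}{v}=\tau(uv^*)$ and the unitarity $U^k(U^k)^*=1$. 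I do not anticipate any significant obstacle; the main technical step is the trace-class factorization in part~(1), while the trace formula itself reduces to the diagonal action of $P_\rho$ on the Fourier basis.
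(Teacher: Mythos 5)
Your proof is correct and follows essentially the same route as the paper: the factorization $P_\rho=P_{\rho\sharp\brak{\xi}^{-m}}\Lambda^m$ with a bounded first factor, the closed graph theorem for continuity into $\cL^1$, and the diagonal computation $P_\rho U^k=\rho(k)U^k$ in the Fourier basis for the trace formula. The only (harmless) difference is that you verify $\Lambda^m\in\cL^1$ directly by summing its eigenvalues and invoke the Banach ideal property of $\|\cdot\|_1$, whereas the paper routes part~(1) through the singular value estimate $\mu_k(P_\rho)=\op{O}(k^{m/n})$ of Lemma~\ref{lem:Trace.mu-Prho-estimates}, which it also needs for the weak Schatten-class results.
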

\begin{proof}
The first part is an immediate consequence of Lemma~\ref{lem:Trace.mu-Prho-estimates}. The 2nd part can be proved by arguing as in the proof of Proposition~\ref{prop:Trace.Classifying-Prho}. It then remains to prove the trace formula~(\ref{eq:Trace.trace-formula}). 

Let $\rho(\xi)\in \stS^m(\R^n; \cA_\theta)$. As $(U^k)_{k\in\Z^n}$ is an orthonormal basis of $\cH_\theta$, we have
\begin{equation} \label{eq:Trace.Tr-Prho-computation}
\Tra(P_\rho) = \sum_{k\in\Z^n}\acoup{P_\rho(U^k)}{U^k}=  \sum_{k\in\Z^n} \tau\left[P_\rho(U^k)(U^k)^*\right].  
\end{equation}
As we know by Proposition~\ref{prop:PsiDOs.Prhou-equation} that $P_\rho U^k = \rho(k)U^k$, we get
\begin{equation*} 
\tau\left[P_\rho(U^k)(U^k)^*\right] = \tau\left[\rho(k)U^k(U^k)^*\right]=  \tau\left[\rho(k)U^k(U^k)^{-1}\right]=  \tau\left[\rho(k)\right]. 
\end{equation*}
Combining this with~(\ref{eq:Trace.Tr-Prho-computation}) gives the trace-formula~(\ref{eq:Trace.trace-formula}). The proof is complete. 
\end{proof}

It has been inferred by some authors (see~\cite{CT:Baltimore11, FGK:MPAG17}) that if $\rho(\xi)\in \stS^{m}(\R^n; \cA_\theta)$, $m<-n$, then the trace of $P_\rho$ is given by
\begin{equation} \label{eq:Trace.trace-formula-FK}
 \Tra(P_\rho) = \int_{\R^n} \tau\left[ \rho(\xi)\right] d\xi. 
\end{equation}
At first glance, the above formula is not consistent with~(\ref{eq:Trace.trace-formula}). The latter is the correct formula. However, as we shall now see, some form of~(\ref{eq:Trace.trace-formula-FK}) still holds. 

\begin{lemma}[{\cite[Lemma 4.5.1]{RT:Birkhauser10}}] \label{lem:trace.phi} There exists a function $\phi(\xi)\in \cS(\Rn)$ such that
\begin{enumerate}
\item[(i)] $\phi(0)=1$ and $\phi(k)=0$ for all $k\in \Z^n\setminus 0$. 

\item[(ii)] For every multi-order $\alpha$, there is $\phi_\alpha(\xi)\in \cS(\Rn)$ such that $\partial_\xi^\alpha \phi(\xi) =\overline{\Delta}^\alpha \phi_\alpha(\xi)$.

\item[(iii)] $\int \phi(\xi) d\xi=1$.  
\end{enumerate}
\end{lemma}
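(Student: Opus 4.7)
The plan is to reduce to one dimension by taking $\phi(\xi) = \prod_{j=1}^n \psi(\xi_j)$ for a suitable $\psi\in\cS(\R)$. Under the tensor product ansatz all three conditions decouple: (i) becomes $\psi(0)=1$ and $\psi(k)=0$ for $k\in\Z\setminus 0$; (iii) becomes $\int_\R\psi=1$; and since $\overline{\Delta}^\alpha$ is a product of one-dimensional difference operators acting each on a single coordinate, setting $\phi_\alpha(\xi):=\prod_j\psi_{\alpha_j}(\xi_j)$ reduces (ii) to the one-dimensional factorization $\psi^{(k)}=\overline{\Delta}^k\psi_k$ for every integer $k\geq 0$.

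To build such a $\psi$ I would work on the Fourier side. Pick $\rho\in C_c^\infty(\R)$ with $\supp\rho\subsubset(-2\pi,2\pi)$ and $\rho>0$ on $[-\pi,\pi]$, and let $S(\xi):=\sum_{m\in\Z}\rho(\xi-2\pi m)$, which is smooth, $2\pi$-periodic, and strictly positive. Setting $\widehat\psi:=\rho/S$ yields $\widehat\psi\in C_c^\infty(\R)$ with $\supp\widehat\psi\subset(-2\pi,2\pi)$ and, by construction, $2\pi$-periodization
\begin{equation*}
\sum_{m\in\Z}\widehat\psi(\xi-2\pi m)\equiv 1.
\end{equation*}
One then defines $\psi:=\cF^{-1}(\widehat\psi)\in\cS(\R)$.

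To verify (i), observe that the sampled values $\psi(k)$, $k\in\Z$, coincide (up to the appropriate normalization) with the Fourier coefficients of the periodization of $\widehat\psi$, which is the constant function $1$; hence $\psi(k)=\delta_{k,0}$. For (iii), we have $\int_\R\psi=\widehat\psi(0)=1$, the last equality obtained by evaluating the periodization identity at $\xi=0$ (the terms $\widehat\psi(-2\pi m)$ with $m\neq 0$ vanish because $\supp\widehat\psi\subset(-2\pi,2\pi)$). For (ii), note that the rational factor $i\xi/(1-e^{-i\xi})$ has a removable singularity at $\xi=0$ with value $1$, while its next singularities occur at $\pm 2\pi$, outside $\supp\widehat\psi$. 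Hence
\begin{equation*}
\widehat{\psi_k}(\xi):=\Bigl(\frac{i\xi}{1-e^{-i\xi}}\Bigr)^{\!k}\widehat\psi(\xi)
\end{equation*}
lies in $C_c^\infty(\R)$, so $\psi_k:=\cF^{-1}(\widehat{\psi_k})\in\cS(\R)$, and Fourier-inverting the identity $(i\xi)^k\widehat\psi(\xi)=(1-e^{-i\xi})^k\widehat{\psi_k}(\xi)$ yields $\psi^{(k)}=\overline{\Delta}^k\psi_k$, as desired.

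The delicate point is condition~(ii): it is a rigid algebraic constraint forcing differentiation to factor through the finite-difference operator modulo a Schwartz function. The Fourier-side construction handles it cleanly, but crucially requires $\supp\widehat\psi\subsubset(-2\pi,2\pi)$ in order to avoid the nonzero zeros of $1-e^{-i\xi}$; any extension of the support past $\pm 2\pi$ would introduce genuine poles that cannot be cancelled while preserving the smoothness of $\widehat{\psi_k}$.
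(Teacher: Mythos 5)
Your construction is correct and is essentially the one the paper itself sketches in the remark following the lemma (and the one behind the cited Lemma~4.5.1 of Ruzhansky--Turunen): take the (inverse) Fourier transform of a $C_c^\infty$ function supported strictly inside $(-2\pi,2\pi)$ whose $2\pi$-periodization is constant, the paper encoding that constancy by the symmetry condition $\theta_1(t)+\theta_1(2\pi-t)=(2\pi)^{-1}$ while you obtain it by normalizing a bump by its own periodization, and with (ii) handled in both cases by the smoothness of $\bigl(i\xi/(1-e^{-i\xi})\bigr)^k$ on a neighborhood of the support. The only nit is that you should require $\rho\geq 0$ on all of $\R$ (not merely $\rho>0$ on $[-\pi,\pi]$) so that the periodization $S$ is guaranteed to be strictly positive.
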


\begin{remark}
 We have denoted by $\overline{\Delta}^\alpha$ the finite-difference operator $(\overline{\Delta}_1)^{\alpha_1} \cdots (\overline{\Delta}_n)^{\alpha_n}$, where the operator $\overline{\Delta}_i: C^\infty(\R^n; \cA_\theta) \rightarrow  C^\infty(\R^n; \cA_\theta)$ is given by 
 \begin{equation*}
 \overline{\Delta}_iu(\xi)= u(\xi)-u(\xi-e_i), \qquad u \in C^\infty(\R^n; \cA_\theta). 
\end{equation*}
Here $(e_1,\ldots, e_n)$ is the canonical basis of $\R^n$. 
\end{remark}

\begin{remark}
The idea of Lemma~\ref{lem:trace.phi} is due to Yves Meyer (see~\cite[page 4]{Da:Springer92}).
\end{remark}

\begin{remark}
Our convention for the Fourier transform differs from that in~\cite{RT:Birkhauser10}. We get a function $\phi(\xi)$ satisfying the properties (i)--(ii) of Lemma~\ref{lem:trace.phi} as the Fourier transform 
$\phi(\xi)=\hat{\theta}(\xi)$, where $\theta(x)=\theta_1(x_1) \cdots \theta_1(x_n)$ and $\theta_1(t)$ is a smooth even function on $\R$ with support in $(-2\pi, 2\pi)$ such that $\theta_1(t)+\theta_1(2\pi-t)=(2\pi)^{-1}$ on $[0,2\pi]$. Note this implies that $\theta_1(0)=(2\pi)^{-1}$, and so $\theta(0)=(2\pi)^{-n}$. As observed in~\cite{LNP:TAMS16} this yields the property (iii), since we have 
\begin{equation*}
 \int \phi(\xi) d\xi = (2\pi)^{n} \check{\phi}(0)= (2\pi)^{n} \big(\hat{\theta}\big)^\vee(0)= (2\pi)^{n}\theta(0)=1. 
\end{equation*}
\end{remark}

Given $\rho(\xi)\in \stS^m(\R^n; \cA_\theta)$, we define the map $\tilde{\rho}:\R^n \rightarrow \cA_\theta$ by
\begin{equation}
 \tilde{\rho}(\xi) = \sum_{k\in \Z^n} \phi(\xi-k) \rho(k), \qquad \xi \in \R^n. 
 \label{eq:trace.trho}
\end{equation}
where $\phi(\xi)\in \cS(\R^n)$ satisfies the properties (i)--(iii) of Lemma~\ref{lem:trace.phi}.

\begin{lemma}[see also~\cite{LNP:TAMS16}] \label{lem:trace.trho}
The following holds.
\begin{enumerate}
 \item[(i)] $\tilde{\rho}(\xi)$ is a symbol in $\stS^m(\R^n; \cA_\theta)$ that agrees with $\rho(\xi)$ on $\Z^n$. 
 
 \item[(ii)] The difference $\tilde{\rho}(\xi) - \rho(\xi)$ is contained in $\cS(\R^n; \cA_\theta)$. 
 
 \item[(iii)] If $\rho(\xi)\in S^q(\R^n; \cA_\theta)$, $q\in \C$, then $\tilde{\rho}(\xi)\in S^q(\R^n; \cA_\theta)$ and has the same homogeneous components as $\rho(\xi)$. 
 
 \item[(iv)] If $m<-n$, then $\int \tilde{\rho}(\xi) d\xi = \sum_{k \in \Z^n} \rho(k)$. 
\end{enumerate}
\end{lemma}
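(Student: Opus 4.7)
My plan is to handle the four parts in sequence. Parts (i) and (iv) are direct, while parts (ii) and (iii) require more care.

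For part (i), agreement on $\Z^n$ follows immediately from property~(i) of $\phi$:
\begin{equation*}
\tilde\rho(k) = \sum_{\ell\in\Z^n}\phi(k-\ell)\rho(\ell) = \phi(0)\rho(k) = \rho(k).
\end{equation*}
For the symbol estimates, I differentiate the defining series term-by-term (convergence being ensured by the rapid decay of $\phi_\beta$ below), apply property~(ii) of $\phi$ to write $\partial_\xi^\beta\phi(\xi-k) = (\overline{\Delta}^\beta\phi_\beta)(\xi-k)$, and then use the discrete integration-by-parts identity
\begin{equation*}
\sum_k(\overline{\Delta}_i f)(\xi-k)\, g(k) = \sum_k f(\xi-k)\,\overline{\Delta}_i g(k)
\end{equation*}
(obtained by reindexing) to transfer the finite differences onto $\rho$, yielding
\begin{equation*}
\delta^\alpha\partial_\xi^\beta\tilde\rho(\xi) = \sum_k\phi_\beta(\xi-k)\,\overline{\Delta}^\beta\delta^\alpha\rho(k).
\end{equation*}
The integral representation $\overline{\Delta}^\beta\rho(k) = \int_{[0,1]^{|\beta|}}\partial^\beta\rho(k-s)\,ds$ combined with the symbol estimates on $\rho$ gives $\|\overline{\Delta}^\beta\delta^\alpha\rho(k)\|\leq C(1+|k|)^{m-|\beta|}$, and Peetre's inequality~(\ref{eq:Composition-Amp.Peetre-ineq}) together with the Schwartz decay of $\phi_\beta$ then closes the bound $\|\delta^\alpha\partial_\xi^\beta\tilde\rho(\xi)\|\leq C(1+|\xi|)^{m-|\beta|}$.

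Part (ii) is the heart of the argument. The key step is to establish two ``hidden'' identities that go beyond the abstract properties in Lemma~\ref{lem:trace.phi}: the partition of unity $\sum_k\phi(\xi-k) = 1$ and the vanishing moments $\sum_k(\xi-k)^\alpha\phi(\xi-k) = 0$ for $|\alpha|\geq 1$. Both follow from Poisson summation applied to the explicit construction $\phi = \hat\theta$, using the observation that the defining relation $\theta_1(t)+\theta_1(2\pi-t) = (2\pi)^{-1}$ combined with $\theta_1\equiv 0$ in a neighborhood of $\pm 2\pi$ forces $\theta_1$ to be constant equal to $(2\pi)^{-1}$ in a neighborhood of $0$, so that $\partial^\alpha\theta(0) = 0$ for all $|\alpha|\geq 1$. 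Since Fourier inversion gives $\hat\phi(x) = (2\pi)^n\theta(-x)$, one obtains $\hat\phi(2\pi l) = \delta_{l,0}$ and $(\partial^\alpha\hat\phi)(0) = 0$ for $|\alpha|\geq 1$, and the Poisson summation formula then delivers both identities. With the partition of unity in hand, I write
\begin{equation*}
\tilde\rho(\xi) - \rho(\xi) = \sum_k\phi(\xi-k)[\rho(k) - \rho(\xi)],
\end{equation*}
Taylor-expand $\rho(k)-\rho(\xi)$ around $\xi$ to order $N$, and use the vanishing moments to annihilate every polynomial term, leaving only the Taylor remainder $R_N(k,\xi)$. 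The symbol estimates on $\rho$ combined with Peetre yield
\begin{equation*}
\|R_N(k,\xi)\| \leq C |k-\xi|^N (1+|\xi|)^{m-N}(1+|k-\xi|)^{N-m},
\end{equation*}
and the Schwartz decay of $\phi(\xi-k)$ absorbs the $(1+|k-\xi|)$ factors to give $\|\tilde\rho(\xi)-\rho(\xi)\|\leq C_N(1+|\xi|)^{m-N}$ for every $N\geq 0$.

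Since $\delta^\alpha$ commutes with the tilde construction and $\delta^\alpha\rho\in\stS^m$, the same bound gives pointwise Schwartz decay for $\delta^\alpha(\tilde\rho-\rho)$. To upgrade pointwise Schwartz decay to Schwartz decay of all mixed derivatives $\delta^\alpha\partial_\xi^\beta(\tilde\rho-\rho)$ (as required for $\tilde\rho-\rho\in\cS(\R^n;\cA_\theta)$), I combine the pointwise bound with the symbol bounds from part (i) via a Landau--Kolmogorov interpolation on the unit ball $B(\xi_0,1)$: for $|\beta|<K$, the interpolation inequality
\begin{equation*}
\|\partial_\xi^\beta\sigma\|_{L^\infty(B(\xi_0,1))}\leq C\|\sigma\|_{L^\infty(B(\xi_0,1))}^{1-|\beta|/K}\|\sigma\|_{C^K(B(\xi_0,1))}^{|\beta|/K}
\end{equation*}
combined with $\|\tilde\rho-\rho\|_{L^\infty(B(\xi_0,1))}\leq C_N(1+|\xi_0|)^{-N}$ and $\|\tilde\rho-\rho\|_{C^K(B(\xi_0,1))}\leq C_K(1+|\xi_0|)^{|m|+K}$ yields arbitrary polynomial decay by choosing both $N$ and $K$ large. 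For part (iii), (i) together with (ii) shows $\tilde\rho$ and $\rho$ agree modulo a Schwartz map, so $\tilde\rho$ inherits from $\rho$ the classical asymptotic expansion with the same homogeneous components (since Schwartz maps are asymptotically negligible in the sense of Definition~\ref{def:Symbols.classicalsymbols}), proving $\tilde\rho\in S^q(\R^n;\cA_\theta)$. For (iv), the assumption $m<-n$ gives $\sum_k\|\rho(k)\|\leq C\sum_k(1+|k|)^m<\infty$; then Fubini (justified by $\sum_k\int|\phi(\xi-k)|\|\rho(k)\|\,d\xi = \|\phi\|_{L^1}\sum_k\|\rho(k)\|<\infty$) combined with property~(iii) of $\phi$ gives $\int\tilde\rho(\xi)\,d\xi = \sum_k\rho(k)$.

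The main obstacle is part~(ii): the derivation of the vanishing-moment identities requires unpacking the explicit construction $\phi = \hat\theta$ and exploiting the (non-obvious) flatness of $\theta$ at the origin via Poisson summation, going well beyond the abstract properties of $\phi$ listed in Lemma~\ref{lem:trace.phi}. The interpolation argument needed to promote pointwise Schwartz decay to full Schwartz decay of all derivatives is also a delicate point.
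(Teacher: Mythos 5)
Your proposal is correct, but for the two substantive parts it takes a genuinely different route from the paper. For part~(i) you reprove the symbol estimates directly via the finite-difference summation by parts $\sum_k(\overline{\Delta}_i f)(\xi-k)g(k)=\sum_k f(\xi-k)\overline{\Delta}_i g(k)$ and property~(ii) of Lemma~\ref{lem:trace.phi}; the paper instead just invokes the toroidal-symbol extension machinery of Part~I (your computation is essentially the argument behind that cited result, so nothing is lost). The real divergence is in part~(ii): the paper disposes of it in two lines --- since $\tilde{\rho}$ and $\rho$ agree on $\Z^n$ and both lie in $\stS^m(\R^n;\cA_\theta)$ by~(i), Corollary~\ref{cor:PsiDOs.P-Prho-relation} gives $P_{\tilde{\rho}-\rho}=0$, and Proposition~\ref{prop:PsiDOs.vanishing-Prho} then yields $\tilde{\rho}-\rho\in\cS(\R^n;\cA_\theta)$ --- whereas you build a self-contained analytic proof: partition of unity and vanishing moments for $\phi$ (correctly extracted from the explicit Meyer-type construction $\phi=\hat{\theta}$ via Poisson summation and the flatness of $\theta$ at the origin), Taylor expansion with the moments killing the polynomial terms, and a Landau--Kolmogorov interpolation on unit balls to upgrade rapid pointwise decay of $\delta^\alpha(\tilde{\rho}-\rho)$ to rapid decay of all mixed derivatives. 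All of these steps check out (the interpolation inequality survives for Banach-space-valued maps since its proof uses only Taylor's formula with integral remainder, applied seminorm by seminorm). The trade-off: your argument is independent of the \psido\ machinery and of Part~I, but it is much longer and, as you note yourself, the vanishing-moment identities genuinely require the explicit construction of $\phi$ --- they do not follow from the abstract properties (i)--(iii) of Lemma~\ref{lem:trace.phi} alone, which is all the paper's proof needs. Parts~(iii) and~(iv) coincide with the paper's treatment.
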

\begin{proof}
 The part (i) is a consequence of the results of Section~6 of Part~I on the relationship between toroidal symbols and standard symbols. We refer to Part~I and~\cite{LNP:TAMS16} for the precise definition of the classes of toroidal symbols $\stS^\mu(\Z^n;\cA_\theta)$, $\mu\in \R$. They are discrete versions of the symbol classes $\stS^\mu(\R^n;\cA_\theta)$, $\mu\in \R$. Thus, if $\sigma(\xi)\in \stS^\mu(\R^n;\cA_\theta)$, then the sequence $(\sigma(k))_{k\in \Z^n}$ is a toroidal symbol in 
 $\stS^\mu(\Z^n;\cA_\theta)$ (see Part~I). In particular, the sequence $(\rho(k))_{k\in \Z^n}$ is contained in $\stS^m(\Z^n;\cA_\theta)$. 
 
 As shown in Part~I,  if $(\rho_k)_{k\in \Z^n}\in \stS^m(\Z^n;\cA_\theta)$, then $\sum_{k\in \Z^n} \phi(\xi-k)\rho_k$ is a symbol in $\stS^m(\R^n;\cA_\theta)$ that agrees with the $\rho_k$ on $\Z^n$ (see also~\cite{LNP:TAMS16, RT:Birkhauser10}). Applying this construction to $\rho_k=\rho(k)$, $k\in \Z^n$, shows that $\tilde{\rho}(\xi)$  is a symbol in 
 $\stS^m(\R^n; \cA_\theta)$. Moreover, it is not difficult to check that $\tilde{\rho}$ and $\rho$ agree on $\Z^n$. Indeed, let $k\in \Z^n$, then by using the the property~(i) of Lemma~\ref{lem:trace.phi} we get
\begin{equation*}
 \tilde{\rho}(k)= \sum_{\ell \in \Z^n} \phi(k-\ell) \rho(\ell)=  \sum_{\ell \in \Z^n} \delta_{\ell,k} \rho(\ell)=\rho(k).
 \end{equation*}
 
 As the symbols $\tilde{\rho}(\xi)$ and $\rho(\xi)$ agree on $\Z^n$, it follows from Proposition~\ref{prop:PsiDOs.vanishing-Prho} that they differ by a symbol in $\cS(\R^n;\cA_\theta)$. In particular, if $\rho(\xi)$ is a classical symbol in $S^q(\R^n; \cA_\theta)$, $q\in \C$, then $\tilde{\rho}$ is in $S^q(\R^n; \cA_\theta)$ as well and it has the same homogeneous components as $\rho(\xi)$. 
 
 It remains to prove (iv). Suppose that $m<-n$. As $\delta^\alpha\rho(k)=\op{O}(|k|^m)$ as $|k|\rightarrow \infty$ for all $\alpha\in \N_0^n$, we see that the series $\sum_{k\in \Z^n} \phi(\xi-k)\rho(k)$ converges in $L^1(\R^n;\cA_\theta)$ (\emph{cf}.\ Part~I, Appendix~B). Recall also that the property~(iii) of Lemma~\ref{lem:trace.phi} asserts that $\int \phi(\xi) d\xi=1$. Thus, 
\begin{equation*}
 \int \tilde{\rho}(\xi) d\xi = \sum_{k\in \Z^n} \int \phi(\xi-k) \rho(k)d\xi=   \sum_{k\in \Z^n} \left(\int \phi(\xi)d\xi\right)  \rho(k)=   \sum_{k\in \Z^n} \rho(k). 
\end{equation*}
 This gives (iv). The proof is complete. 
 \end{proof}

\begin{definition}
 We say that a symbol $\rho(\xi) \in \stS^m(\R^n; \cA_\theta)$ is \emph{normalized} when 
\begin{equation*}
\int \rho(\xi)d\xi= \sum_{k\in \Z^n} \rho(k). 
\end{equation*}
\end{definition}

\begin{lemma}
 Any $P\in \Psi^q(\cA_\theta)$, $\Re q<-n$, admits a normalized symbol. 
\end{lemma}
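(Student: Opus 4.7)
The plan is to simply apply Lemma~\ref{lem:trace.trho} to an arbitrary symbol of $P$. First, since $P\in \Psi^q(\cA_\theta)$, by definition there exists $\sigma(\xi)\in S^q(\R^n;\cA_\theta)$ with $P=P_\sigma$. By Remark~\ref{rmk:Symbols.classical-inclusion} we have $\sigma(\xi)\in \stS^{\Re q}(\R^n;\cA_\theta)$, and the hypothesis $\Re q<-n$ places us in the setting where all four parts of Lemma~\ref{lem:trace.trho} apply.

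Next, I would form the associated symbol $\tilde{\sigma}(\xi)=\sum_{k\in\Z^n}\phi(\xi-k)\sigma(k)$ as in~(\ref{eq:trace.trho}). Part~(ii) of Lemma~\ref{lem:trace.trho} gives $\tilde{\sigma}-\sigma\in \cS(\R^n;\cA_\theta)$, so $\tilde{\sigma}\in S^q(\R^n;\cA_\theta)$; moreover, by part~(i), $\tilde{\sigma}$ and $\sigma$ coincide on $\Z^n$, whence Corollary~\ref{cor:PsiDOs.P-Prho-relation} yields $P_{\tilde{\sigma}}=P_\sigma=P$. Thus $\tilde{\sigma}$ is a symbol of $P$ in $S^q(\R^n;\cA_\theta)$.

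It remains to verify the normalization identity. By part~(iv) of Lemma~\ref{lem:trace.trho} we have
\[
\int \tilde{\sigma}(\xi)\,d\xi=\sum_{k\in\Z^n}\sigma(k),
\]
and since $\tilde{\sigma}(k)=\sigma(k)$ for every $k\in\Z^n$ by part~(i), the right-hand side equals $\sum_{k\in\Z^n}\tilde{\sigma}(k)$. Therefore $\int \tilde{\sigma}(\xi)\,d\xi=\sum_{k\in\Z^n}\tilde{\sigma}(k)$, which is precisely the normalization condition; taking $\rho=\tilde{\sigma}$ completes the proof.

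There is really no obstacle here: once Lemma~\ref{lem:trace.trho} is in hand, the statement reduces to the tautology that replacing a symbol with its ``periodic averaging'' $\tilde{\sigma}$ forces the symbol to match its Fourier-series values on $\Z^n$ while simultaneously integrating to that same sum. The only minor point to be careful about is to ensure the class $S^q$ (not merely $\stS^{\Re q}$) is preserved, which is guaranteed by part~(iii) of Lemma~\ref{lem:trace.trho}.
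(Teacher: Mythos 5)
Your proof is correct and follows essentially the same route as the paper: replace an arbitrary classical symbol $\sigma$ of $P$ by its interpolation $\tilde{\sigma}$ from~(\ref{eq:trace.trho}), use Lemma~\ref{lem:trace.trho} and Corollary~\ref{cor:PsiDOs.P-Prho-relation} to see that $\tilde{\sigma}$ is still a symbol of $P$ in $S^q(\R^n;\cA_\theta)$, and use parts~(i) and~(iv) to check the normalization identity. Your write-up in fact spells out the verification that $\tilde{\sigma}$ is normalized (via $\int\tilde{\sigma}\,d\xi=\sum_k\sigma(k)=\sum_k\tilde{\sigma}(k)$) slightly more explicitly than the paper does.
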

\begin{proof}
 Let $P\in \Psi^q(\cA_\theta)$, $\Re q<-n$. Then $P=P_\rho$ for some symbol $\rho(\xi)\in S^q(\R^n;\cA_\theta)$. If $\rho(\xi)$ is not normalized, then by Lemma~\ref{lem:trace.trho} the map $\tilde{\rho}(\xi)$ given by~(\ref{eq:trace.trho}) is a normalized symbol in $S^q(\R^n;\cA_\theta)$ that agrees with $\rho(\xi)$ on $\Z^n$. In particular, by Corollary~\ref{cor:PsiDOs.P-Prho-relation} we have $P_{\tilde{\rho}}=P_\rho=P$. This proves the result. 
\end{proof}

We are now in a position to state the correct version of the integral formula~(\ref{eq:Trace.trace-formula-FK}). 
\begin{proposition}\label{prop:trace.integral-formula}
 Let $P\in \Psi^q(\cA_\theta)$, $\Re q<-n$. Then 
 \begin{equation*}
  \Tra (P)= \int_{\R^n} \tau\big[ \rho(\xi)\big] d\xi,
\end{equation*}
where $\rho(\xi)\in S^q(\R^n;\cA_\theta)$ is any normalized symbol of $P$. 
\end{proposition}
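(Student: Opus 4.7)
The plan is to combine the trace formula~(\ref{eq:Trace.trace-formula}) of Proposition~\ref{prop:Schatten.trace-class} with the definition of a normalized symbol and the continuity of $\tau$. The chain of reasoning is short: for any symbol $\rho(\xi)\in S^q(\R^n;\cA_\theta)$ representing $P$, Proposition~\ref{prop:Schatten.trace-class} gives $\Tra(P)=\sum_{k\in\Z^n}\tau[\rho(k)]$; if $\rho(\xi)$ is normalized, then by definition $\sum_{k\in\Z^n}\rho(k)=\int_{\R^n}\rho(\xi)d\xi$ in $\cA_\theta$, so applying $\tau$ should yield the desired identity.

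First I would verify that both the sum $\sum_{k\in\Z^n}\tau[\rho(k)]$ and the integral $\int_{\R^n}\tau[\rho(\xi)]d\xi$ are absolutely convergent. Since $\rho(\xi)\in S^q(\R^n;\cA_\theta)\subset \stS^{\Re q}(\R^n;\cA_\theta)$ with $\Re q<-n$, the symbol estimates~(\ref{eq:Symbols.standard-estimates}) give $\|\rho(\xi)\|=\op{O}(|\xi|^{\Re q})$, and the continuity of $\tau:A_\theta\rightarrow\C$ implies $|\tau[\rho(\xi)]|\leq \|\tau\|\|\rho(\xi)\|=\op{O}(|\xi|^{\Re q})$. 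As $\Re q<-n$, this shows both the scalar sum and the scalar integral converge absolutely.

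Next I would commute $\tau$ with the integral. The $\cA_\theta$-valued map $\xi\mapsto\rho(\xi)$ is integrable on $\R^n$ (in the sense of Part~I, Appendix~B) because the semi-norms $\|\delta^\alpha\rho(\xi)\|$ are $\op{O}(|\xi|^{\Re q})$ and hence integrable. By the defining property of the $\cA_\theta$-valued integral (see the construction recalled in Section~\ref{section:Amplitudes}), for every continuous linear functional $\varphi\in\cA_\theta'$ we have $\varphi(\int\rho(\xi)d\xi)=\int\varphi[\rho(\xi)]d\xi$. Applying this to $\varphi=\tau$ yields
\begin{equation*}
\int_{\R^n}\tau[\rho(\xi)]d\xi=\tau\!\left[\int_{\R^n}\rho(\xi)d\xi\right].
\end{equation*}
Similarly, since the series $\sum_{k\in\Z^n}\rho(k)$ converges in $\cA_\theta$ (its partial sums form a Cauchy sequence with respect to every semi-norm thanks to the symbol decay) and $\tau$ is continuous, we obtain $\tau[\sum_{k\in\Z^n}\rho(k)]=\sum_{k\in\Z^n}\tau[\rho(k)]$.

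Finally I would invoke the normalization hypothesis $\int_{\R^n}\rho(\xi)d\xi=\sum_{k\in\Z^n}\rho(k)$ and apply $\tau$ to both sides. Using the two identities above and the trace formula~(\ref{eq:Trace.trace-formula}) of Proposition~\ref{prop:Schatten.trace-class}, we conclude
\begin{equation*}
\int_{\R^n}\tau[\rho(\xi)]d\xi=\tau\!\left[\sum_{k\in\Z^n}\rho(k)\right]=\sum_{k\in\Z^n}\tau[\rho(k)]=\Tra(P),
\end{equation*}
which is the desired equality. There is no real obstacle here; the only delicate point is the justification of interchanging $\tau$ with the integral and with the infinite sum, and both follow directly from the continuity of $\tau$ and the integrability/convergence facts established in Part~I.
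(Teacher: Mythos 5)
Your proof is correct and takes essentially the same route as the paper: invoke the trace formula $\Tra(P)=\sum_k\tau[\rho(k)]$ from Proposition~\ref{prop:Schatten.trace-class}, use the normalization identity $\sum_k\rho(k)=\int\rho(\xi)\,d\xi$, check absolute convergence from the symbol estimates, and commute $\tau$ with the sum and the integral by continuity. The only minor cosmetic difference is that you justify the interchange via the defining property of the vector-valued integral (treating $\tau$ as an element of $\cA_\theta'$), while the paper appeals directly to the continuity of $\tau$ on $A_\theta$ together with absolute convergence there; these amount to the same thing.
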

\begin{proof}
 Let $\rho(\xi)\in S^q(\R^n;\cA_\theta)$  be a normalized symbol of $P$; such a symbol exists by Lemma~\ref{lem:trace.trho}. Here $P=P_\rho$ and $\rho(\xi)\in \stS^m(\R^n;\cA_\theta)$, with $m=\Re(q)<-n$. Therefore, by Proposition~\ref{prop:Schatten.trace-class} we have
\begin{equation}
 \Tra (P)= \Tra (P_\rho)=  \sum_{k\in\Z^n} \tau\left[\rho(k)\right].
 \label{eq:trace.TraP} 
\end{equation}

As $\rho(\xi)$ is a normalized symbol, we have $ \sum_{k\in\Z^n} \rho(k)=\int \rho(\xi)d\xi$. Moreover, the fact that $\rho(\xi)$ is a symbol of order~$m<-n$ ensures us that the series $ \sum_{k\in\Z^n} \rho(k)$ and the integral $\int \rho(\xi)d\xi$ converge absolutely in $A_\theta$. Combining this with the continuity of $\tau$ on $A_\theta$ then gives
\begin{equation*}
 \sum_{k\in\Z^n} \tau\big[\rho(k)\big] = \tau\left(  \sum_{k\in\Z^n} \rho(k) \right)= \tau\left( \int \rho(\xi)d\xi\right) = \int \tau\big[\rho(\xi)\big]d\xi. 
\end{equation*}
 Combining this with~(\ref{eq:trace.TraP}) proves the result. 
 \end{proof}

\begin{remark}
 More generally, for any standard symbol $\rho(\xi)\in \stS^m(\R^n; \cA_\theta)$, $m<-n$, we have 
\begin{equation*}
           \Tra(P_\rho)=  \int_{\R^n} \tau\big[ \tilde{\rho}(\xi)\big] d\xi,
\end{equation*}
where $\tilde{\rho}(\xi)\in \stS^m(\R^n;\cA_\theta)$ is any normalized symbol that agrees with $\rho(\xi)$ on $\Z^n$.  
\end{remark}

\begin{remark}
 We refer to~\cite{LM:GAFA16, LM:arXiv18} for a trace formula for pseudodifferential operators on Heisenberg modules over noncommutative tori. 
\end{remark}

\appendix

\section{Proofs of Proposition~\ref{prop:Sobolev.Hs-duality} and Proposition~\ref{prop:Sobolev.Hs-inclusion-cAtheta}}\label{Appendix:Sobolev}
In this appendix, we include proofs of Proposition~\ref{prop:Sobolev.Hs-duality} and Proposition~\ref{prop:Sobolev.Hs-inclusion-cAtheta}. 

\begin{proof}[Proof of Proposition~\ref{prop:Sobolev.Hs-duality}]
The proof is similar to the proof for the Sobolev spaces of $\R^n$. We first deal with the case $s=0$. We know that, given any $u,v\in \cA_\theta$, we have 
$\acou{u}{v}=\acoup{v}{u^*}$. Thanks to the density of $\cA_\theta$ in $\cA_\theta'$ and $\cH_\theta$ the equality continues to hold when $u\in \cH_\theta$. In fact, if we let $\Phi: \cH_\theta \rightarrow \cH_\theta'$ be the Riesz isomorphism, then, for all $u\in \cH_\theta$ and $v \in \cA_\theta$, we have 
\begin{equation} \label{eq:Sobolev.Riesz-involution}
 \acou{u}{v}= \acoup{v}{u^*}=\acou{\Phi(u^*)}{v}. 
\end{equation}
Therefore, we see that $\acou{u}{\cdot}$ uniquely extends to a continuous linear form on $\cH_\theta$, and so we obtain a linear map $ \cH_\theta \ni u \rightarrow \acou{u}{\cdot}\in \cH_\theta'$. In fact, as~(\ref{eq:Sobolev.Riesz-involution}) shows, this map is just the composition of $\Phi$ with the involution $u\rightarrow u^*$ of $\cH_\theta$. As these maps are isometric anti-linear isomorphisms we deduce that we have obtained an isometric (linear) isomorphism $\tilde{\Phi}:\cH_\theta\rightarrow \cH_\theta'$. 

Suppose that $s\neq 0$. Given any $u\in \cH^{(-s)}_{\theta}$ and $v \in \cA_\theta$, by Lemma~\ref{lem:Sobolev.Lambda-innerproduct-relation} we have 
\begin{equation*}
 \acou{u}{v}=\acou{\Lambda^{-s}u}{\Lambda^sv}= \acou{\tilde{\Phi}\circ \Lambda^{-s}(u)}{\Lambda^s v} =  \acou{(\Lambda^{s})^t\circ \tilde{\Phi}\circ \Lambda^{-s}(u)}{ v}, 
\end{equation*}
where $(\Lambda^s)^t: \cH_\theta'\rightarrow {\cH^{(s)}_{\theta}}'$ is the transpose of the operator $\Lambda^s:\cH^{(s)}_{\theta} \rightarrow \cH_\theta$. 
As $(\Lambda^{s})^t\circ \tilde{\Phi}\circ \Lambda^{-s}(u)\in{\cH^{(s)}_{\theta}}'$ we see that $\acou{u}{\cdot}$ uniquely extends to a continuous linear form on
 $\cH^{(s)}_{\theta}$. 
We thus obtain a linear map from $\cH^{(-s)}_{\theta}$ to ${\cH^{(s)}_{\theta}}'$. 
This map is precisely the composition $(\Lambda^{s})^t\circ \tilde{\Phi}\circ \Lambda^{-s}$, and so this is a unitary operator, since all three maps $(\Lambda^{s})^t$, $ \tilde{\Phi}$, and  $\Lambda^{-s}$ are unitary operators. The proof is complete. 
\end{proof}

\begin{proof}[Proof of Proposition~\ref{prop:Sobolev.Hs-inclusion-cAtheta}]
 We know that $\cA_\theta \subset \cH^{(s)}_{\theta}$ for all $s\in \R$. Conversely, let $u=\sum u_kU^k$ be in every $\cH^{(s)}_{\theta}$, $s\in \R$. This means that $\sum (1+|k|^2)^s |u_k|^2<\infty$ for all $s\in \R$. In particular, the sequence $((1+|k|)^N u_k)_{k\in \Z^n}$ is bounded for every $N\geq 0$. This means that $(u_k)_{k\in \Z^n}\in \cS(\Z^n)$, and hence $u\in \cA_\theta$. Therefore, we see that $\cA_\theta = \bigcap_{s\in \R} \cH_\theta^{(s)}$. 
 
 Let $\alpha\in \N_0^n$ and $u=\sum u_k U^k\in \cA_\theta$. As $\delta^\alpha(u)= \sum k^\alpha u_kU^k$, we have 
 \begin{equation*}
 \| \delta^\alpha(u) \| \leq \sum_{k \in \Z^n} \left|k^\alpha u_k\right| \| U^k\| =  \sum_{k \in \Z^n} \left|k^\alpha u_k\right| . 
\end{equation*}
Let $m>\frac{n}2$. Then we have 
\begin{align*}
 \| \delta^\alpha(u) \|^2 & \leq \biggl( \sum_{k\in \Z^n} \left(1+|k|^2\right)^{-m}\biggr)  \biggl( \sum_{k\in \Z^n} \left(1+|k|^2\right)^{m}k^{2\alpha} |u_k|^2\biggr)\\
 & \leq \biggl( \sum_{k\in \Z^n} \left(1+|k|^2\right)^{-m}\biggr)  \biggl( \sum_{k\in \Z^n} \left(1+|k|^2\right)^{m+|\alpha|} |u_k|^2\biggr). 
\end{align*}
Combining this with~(\ref{eq:Sobolev.Hs-norm}) we see there is a constant $C_{m}>0$ such that
\begin{equation} \label{eq:Sobolev.deltau-estimate}
 \| \delta^\alpha(u) \| \leq C_m \| u\|_{m+|\alpha|} \qquad \text{for all $u\in \cA_\theta$}. 
\end{equation}

Let $s\in \R$ and $N\in \N$ be such that $N\geq s$. We observe that $\| u\|=\sup_{\| v\|_0\leq 1}\| uv\|_0\geq \| u 1\|_0=\| u\|_0$. Applying this to $\delta^\alpha(u)$, $|\alpha|\leq N$, we get
\begin{equation*}
 \|\delta^\alpha(u)\|^2\geq \|\delta^\alpha(u)\|_0^2 = \sum_{k \in \Z^n} k^{2\alpha} |u_k|^2. 
\end{equation*}
Combining this with the inequality $(1+|k|^2)^N \leq (n+1)^N \sup_{|\alpha|\leq N} k^{2\alpha}$, we deduce that 
\begin{equation*}
\| u\|_N^2 =  \sum_{k \in \Z^n} \left(1+|k|^2\right)^{\frac{N}2} |u_k|^2  \leq (n+1)^N \sup_{|\alpha|\leq N} \|\delta^\alpha(u)\|^2. 
\end{equation*}
 As $\| u\|_s\leq \| u\|_N$ we deduce that 
 \begin{equation*}
 \| u\|_{s} \leq (n+1)^{\frac{N}{2}}  \sup_{|\alpha|\leq N} \|\delta^\alpha(u)\| \qquad \text{for all $u\in \cA_\theta$}. 
\end{equation*}
 This estimate and the estimate~(\ref{eq:Sobolev.deltau-estimate}) show that the Sobolev norms $(\|\cdot \|_s)_{s\in \R}$ and the semi-norms $u\rightarrow \|\delta^\alpha(u)\|$, $\alpha \in \N_0^n$, are equivalent semi-norm families, and so they generate the same topology. This proves the 2nd part of Proposition~\ref{prop:Sobolev.Hs-inclusion-cAtheta}. 
 
 By definition the Sobolev spaces $\cH^{(s)}_{\theta}$, $s\in \R$, are subspaces of $\cA_\theta'$. Conversely, let $u \in \cA_\theta'$. As the topology of $\cA_\theta$ is generated by the Sobolev norms $\|\cdot \|_s$, $s\in \R$, there is $s\in \R$ and $C_s>0$ such that 
  \begin{equation*}
 |\acou{u}{v}|\leq C \| v\|_{-s} \qquad \forall v \in \cA_\theta. 
\end{equation*}
By Corollary~\ref{cor:Sobolev.u-boundedness} this means that $u\in \cH^{(s)}_{\theta}$. Therefore, we see that $ \cA_\theta'= \bigcup_{s\in \R} \cH^{(s)}_{\theta}$. This completes the proof of the first part of Proposition~\ref{prop:Sobolev.Hs-inclusion-cAtheta}. 

It remains to prove the 3rd part of Proposition~\ref{prop:Sobolev.Hs-inclusion-cAtheta}. Let us denote by $\cT$ the strongest locally convex topology on $\cA_\theta'$ with respect to which  the inclusion of $\cH^{(s)}_{\theta}$ into $\cA_\theta'$ is continuous for every $s\in \R$. A basis of neighborhoods of the origin for $\cT$ consists of all convex balanced sets $\cU \subset \cA_\theta'$ such that $\cU \cap \cH_\theta^{(s)}$ is a neighborhood of the origin in $ \cH_\theta^{(s)}$ for every $s \in \R$.  We have to show that $\cT$ agrees with the strong topology of $\cA_\theta'$. Recall that the strong topology is generated by the semi-norms, 
\begin{equation} \label{eq:Sobolev.distrib-semi-norms}
 q_B(u) = \sup_{v \in B} |\acou{u}{v}|, \qquad \text{$B\subset \cA_\theta$ bounded}. 
\end{equation}

Given $s\in \R$, let $B$ be a bounded subset of $\cA_\theta$. As $\|\cdot \|_{-s}$ is a continuous norm on $\cA_\theta$, there is $C>0$ such that $\| v\|_{-s}\leq C$ for all $v \in  B$. Using~(\ref{eq:Sobolev.distrib-semi-norms}) we see that, for every $u\in \cH^{(s)}_{\theta}$,  we have
\begin{equation*}
 q_B(u) \leq \sup_{\| v\|_{-s}\leq C} |\acou{u}{v}| \leq C \sup_{\| v\|_{-s}\leq 1}  |\acou{u}{v}|=C\| u\|_{s}. 
\end{equation*}
This shows that the inclusion of $\cH^{(s)}_{\theta}$ into $\cA_\theta'$ is continuous for every $s\in \R$. It then follows that the topology $\cT$ is stronger than the strong topology. 

Conversely, let us denote by $\hat{\cA}_\theta'$ the space $\cA_\theta'$ equipped with the $\cT$-topology. In addition, let $\cB$ be the closed unit ball for some continuous semi-norm on $\hat{\cA}_\theta'$, i.e., $\cB$ is a closed balanced convex neighborhood of the origin in $\hat{\cA}_\theta'$. Set
\begin{equation*} 
 B=\left\{u \in \cA_\theta; |\acou{v}{u}|\leq 1 \ \forall v \in \cB \right\}. 
\end{equation*}
Let $s\in \R$. As $\cB\cap  \cH^{(-s)}_{\theta}$ is a neighborhood of the origin in 
$ \cH^{(-s)}_{\theta}$, there is $\epsilon>0$ such that $\cB\cap \cH^{(-s)}_{\theta}\supset \{u\in \cH^{(-s)}_{\theta}; \| u\|_{-s}\leq \epsilon\}$. Combining this with~(\ref{eq:Sobolev.isometric-duality}) we deduce that, for all $u\in B$,  we have
\begin{equation*}
 \| u\|_s = \! \! \sup_{\| v\|_{-s}\leq 1}\! \left|\acou{u}{v}\right| =\epsilon^{-1} \! \!\sup_{\| v\|_{-s}\leq \epsilon} \!\left|\acou{v}{u}\right| \leq \epsilon^{-1} \!\!\!\!
 \sup_{v\in \cB \cap\cH^{(-s)}_{\theta}} \!\!\! \left|\acou{v}{u}\right| \leq \epsilon^{-1}. 
\end{equation*}
Therefore, we see that $B$ is bounded with respect to all the Sobolev norms $\|\cdot \|_s$, $s\in \R$. As these norms generate the topology of $\cA_\theta$, it then follows that $B$ is a bounded set of $\cA_\theta$. 
Let $q_B:\cA_\theta'\rightarrow [0,\infty)$ be the semi-norm~(\ref{eq:Sobolev.distrib-semi-norms}) associated with $B$. As $B$ is a bounded set of $\cA_\theta$ this semi-norm is continuous with respect to the strong topology. Let $\overline{B}_{q_B}(0,1)$ be its closed unit ball. Note that
\begin{equation}
 \overline{B}_{q_B}(0,1)= \left\{v\in \cA_\theta'; \ q_B(v)\leq 1\right\}=  \left\{v\in \cA_\theta'; \ |\acou{v}{u}|\leq 1 \ \forall u \in B\right\}. 
 \label{eq:Sobolev.Ball-polar}
\end{equation}

As we shall now see, there is a natural identification between $\cA_\theta$ and the topological dual $(\hat{\cA}_\theta')'$. Given any $u\in \cA_\theta$, the set 
$\{v\in\cA_\theta'; \ |\acou{v}{u}|<1\}$ is an open set of the strong topology, and so this is an open set of $\hat{\cA}'_\theta$ since the topology $\cT$ is stronger than the strong topology. Therefore, the evaluation map $v\rightarrow \acou{v}{u}$ is a continuous linear form on  $\hat{\cA}_\theta'$. This gives rise to a linear map $\hat{\iota}:\cA_\theta \rightarrow  (\hat{\cA}_\theta')'$. Furthermore, this map is one-to-one, since $\acou{u^*}{u}=\acoup{u}{u}=\|u\|_0^2 \neq 0$ for all $u\in \cA_\theta\setminus 0$. 

\begin{claim*}
 The linear map $\hat{\iota}:\cA_\theta \rightarrow  (\hat{\cA}_\theta')'$ is onto. 
\end{claim*}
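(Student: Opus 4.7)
The plan is to produce, for any $\varphi\in(\hat{\cA}_\theta')'$, a single element $u\in\cA_\theta$ whose evaluation pairing recovers $\varphi$ on every Sobolev space. Fix $\varphi\in(\hat{\cA}_\theta')'$. By the very definition of the topology $\cT$, the inclusion $\cH_\theta^{(s)}\hookrightarrow\hat{\cA}_\theta'$ is continuous for every $s\in\R$, so the restriction $\varphi_{|\cH_\theta^{(s)}}$ is a continuous linear form on the Hilbert space $\cH_\theta^{(s)}$. Proposition~\ref{prop:Sobolev.Hs-duality} then produces a unique $u_s\in\cH_\theta^{(-s)}$ such that
\begin{equation*}
 \varphi(v)=\acou{u_s}{v}\qquad\text{for all }v\in\cH_\theta^{(s)}.
\end{equation*}

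Next, I will check that the family $(u_s)_{s\in\R}$ is independent of $s$. For $s\leq s'$ we have the inclusions $\cH_\theta^{(s')}\subset\cH_\theta^{(s)}$ and $\cH_\theta^{(-s)}\subset\cH_\theta^{(-s')}$, so both $u_s$ and $u_{s'}$ belong to $\cH_\theta^{(-s')}$ and, for every $v\in\cH_\theta^{(s')}$,
\begin{equation*}
 \acou{u_s-u_{s'}}{v}=\varphi(v)-\varphi(v)=0.
\end{equation*}
Because Proposition~\ref{prop:Sobolev.Hs-duality} identifies $\cH_\theta^{(-s')}$ isometrically with the dual of $\cH_\theta^{(s')}$ through this pairing, the pairing is nondegenerate and therefore $u_s=u_{s'}$ in $\cH_\theta^{(-s')}$. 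Hence there is a well-defined element $u\in\cA_\theta'$ such that $u=u_s$ in $\cH_\theta^{(-s)}$ for every $s\in\R$.

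Finally I will conclude using the two equalities already established in the preceding part of the proof of Proposition~\ref{prop:Sobolev.Hs-inclusion-cAtheta}, namely $\cA_\theta=\bigcap_{s\in\R}\cH_\theta^{(s)}$ and $\cA_\theta'=\bigcup_{s\in\R}\cH_\theta^{(s)}$. The first one gives $u\in\bigcap_{s\in\R}\cH_\theta^{(-s)}=\cA_\theta$, while the second one, combined with the identity $\varphi(v)=\acou{u}{v}$ on each $\cH_\theta^{(s)}$, yields $\varphi(v)=\acou{u}{v}$ for every $v\in\cA_\theta'$, i.e.\ $\varphi=\hat\iota(u)$. The only genuinely substantive step is the consistency of the family $(u_s)$, and it is entirely fueled by the nondegeneracy of the Sobolev duality pairing provided by Proposition~\ref{prop:Sobolev.Hs-duality}; everything else is bookkeeping.
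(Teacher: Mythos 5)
Your proof is correct and follows essentially the same route as the paper's: restrict $\varphi$ to each Sobolev space, apply the duality of Proposition~\ref{prop:Sobolev.Hs-duality} to get representatives $u_s\in\cH_\theta^{(-s)}$, check they all coincide by nondegeneracy of the pairing, and conclude with $\cA_\theta=\bigcap_s\cH_\theta^{(s)}$ and $\cA_\theta'=\bigcup_s\cH_\theta^{(s)}$. The only cosmetic difference is that the paper anchors the consistency check at $s=0$ (comparing every $u_{(s)}$ to the $\cH_\theta$-representative) whereas you compare representatives pairwise, which changes nothing of substance.
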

\begin{proof}[Proof of the Claim] 
 Let $\varphi \in  (\hat{\cA}_\theta')'$. As the inclusion of $\cH_\theta$ into $\hat{\cA}_\theta'$ is continuous, the restriction of $\varphi$ to $\cH_\theta$ defines a continuous linear form on $\cH_\theta$. Thus, by Proposition~\ref{prop:Sobolev.Hs-duality} there is $u\in \cH_\theta$ such that $\acou{\varphi}{v}=\acou{v}{u}$ for all $v\in \cH_\theta$. More generally, given any $s>0$, there is  $u_{(s)}\in \cH_\theta^{(s)}$ such that $\acou{\varphi}{v}=\acou{v}{u_{(s)}}$ for all $v\in \cH_\theta^{(-s)}$. We then have $\acou{v}{u_{(s)}}=\acou{v}{u}$ for all $v\in \cH_\theta$, which implies that $u=u_{(s)}$. Therefore, we see that $u$ is contained in every Sobolev space $\cH_\theta^{(s)}$, $s \geq 0$. The first part of Proposition~\ref{prop:Sobolev.Hs-inclusion-cAtheta} then implies that $u\in \cA_\theta$. 
 
 Let $v \in \cA_\theta'$. The first part of Proposition~\ref{prop:Sobolev.Hs-inclusion-cAtheta} also implies there is $s>0$ such that $v\in \cH_\theta^{(-s)}$, and hence $\acou{\varphi}{v}=\acou{v}{u_{(s)}}=\acou{v}{u}$. It then follows that $\varphi=\hat{\iota}(u)$, and so $\hat{\iota}$ is onto. This proves the claim. 
\end{proof}

Bearing all this in mind, let $\cB^\wedge$ be the polar of $\cB$ in $(\hat{\cA}_\theta')'$, i.e., 
\begin{equation*}
 \cB^\wedge=\left\{ \varphi \in \left(\hat{\cA}_\theta'\right)'; \ \left|\acou{\varphi}{v}\right| \leq 1\ \forall v\in \cB\right\}. 
\end{equation*}
We observe that $\hat{\iota}^{-1}( \cB^\wedge)= \left\{ u \in \cA_\theta; \ \left|\acou{v}{u}\right| \leq 1 \   \forall v\in \cB\right\}=B$. Thus, by using~(\ref{eq:Sobolev.Ball-polar}) we see that  the bipolar set $( \cB^\wedge)^\vee$  of $\cB$ is equal to
\begin{equation*}
 \left\{ v\in \cA_\theta';  \ \left|\acou{\varphi}{v}\right| \leq 1 \   \forall \varphi \in \cB^\wedge \right\} =\left\{ v\in \cA_\theta';  \ \left|\acou{v}{u}\right| \leq 1 \   \forall u \in B \right\} = \overline{B}_{q_B}(0,1). 
\end{equation*}
In addition, the bipolar theorem ensures us that $( \cB^\wedge)^\vee$ is the closed convex balanced hull of $\cB$ (see, e.g., \cite{Co:Springer90}). As $\cB$ is a closed convex balanced set, we see that $\cB=( \cB^\wedge)^\vee=\overline{B}_{q_B}(0,1)$. In particular, $\cB$ is a neighborhood of the origin with respect to the strong topology. As the closed unit balls of $\cT$-continuous semi-norms form a basis of neighborhoods of the origin in $\hat{\cA}_\theta'$,  it then follows that the strong topology is stronger than the topology $\cT$. As the latter is stronger than the former, we conclude that these topologies agree. This completes the proof of Proposition~\ref{prop:Sobolev.Hs-inclusion-cAtheta}. 
\end{proof}

\begin{remark}
 The estimate~(\ref{eq:Sobolev.deltau-estimate}) was proved in~\cite{Sp:Padova92} (see also~\cite{Ta:JPCS18}). As mentioned, e.g., in~Part~I, 
 the algebra $A_\theta^{(N)}$, $N\geq 0$, in~(\ref{eq:Atheta.AthetaN}) is a Banach algebra with respect to the norm, 
 \begin{equation*}
 \NormN{u}= \sup_{|\beta|\leq N} \|\delta^\beta(u)\|, \qquad u\in A_\theta^{(N)}. 
\end{equation*}
 The estimate~(\ref{eq:Sobolev.deltau-estimate}) then implies that for $s>\frac{n}2+N$ we have a continuous embedding of $\cH_\theta^{(s)}$ into  $A_\theta^{(N)}$ (see~\cite{Sp:Padova92}; see also~\cite{Ta:JPCS18}). In particular, we have a continuous embedding of $\cH_\theta^{(s)}$ into $A_\theta$ for all $s>\frac{n}2$. 
\end{remark}

\begin{remark}
The 3rd part of Proposition~\ref{prop:Sobolev.Hs-inclusion-cAtheta} can be also deduced from Proposition~\ref{prop:Sobolev.Hs-duality} and the 2nd part by using the general duality between projective limits of Banach spaces and inductive limits of their duals (see, e.g,~\cite[Theorem~11]{Ko:JMSJ68}). 
\end{remark}

\end{document}